\newif\ifdraft
\definecolor{labelkey}{gray}{0.5}
\newlength{\myarrowsize} 
\newenvironment{diagram*}[2]{%
\[%
\begin{tikzpicture}[>=cmto,baseline=(current bounding box.center),%
	to/.style={->,font=\scriptsize,cap=round},%
	into/.style={cmhook->,font=\scriptsize,cap=round},%
	onto/.style={-cmonto,font=\scriptsize,cap=round},%
	math/.style={matrix of math nodes, row sep=#2, column sep=#1,%
		text height=1.5ex, text depth=0.25ex}]%
}{%
\end{tikzpicture}%
\]%
\ignorespacesafterend%
}
\newcommand{\Dmod}{\mathscr{D}}
\newcommand{\Mmod}{\mathcal{M}}
\newcommand{\Nmod}{\mathcal{N}}
\newcommand{\derR}{\mathbf{R}}
\newcommand{\derL}{\mathbf{L}}
\newcommand{\shH}{\mathcal{H}}
\newcommand{\shExt}{\mathcal{E}\hspace{-1.5pt}\mathit{xt}}
\newcommand{\ZZ}{\mathbb{Z}}
\newcommand{\QQ}{\mathbb{Q}}
\newcommand{\CC}{\mathbb{C}}
\newcommand{\PP}{\mathbb{P}}
\DeclareMathOperator{\depth}{depth}
\newcommand*{\sheafhom}{\mathscr{H}\kern -.5pt om}
\newcommand{\shf}[1]{\mathscr{#1}}
\def\overbar#1#2#3{{%
	\setbox0=\hbox{\displaystyle{#1}}%
	\dimen0=\wd0
	\advance\dimen0 by -#2 
	\vbox {\nointerlineskip \moveright #3 \vbox{\hrule height 0.3pt width \dimen0}%
		\nointerlineskip \vskip 1.5pt \box0}%
}}
\newcommand{\onto}{\to\hspace{-0.7em}\to}
\newcommand{\shF}{\shf{F}}
\newcommand{\shG}{\shf{G}}
\newcommand{\shE}{\shf{E}}
\newcommand{\shN}{\shf{N}}
\newcommand{\shO}{\shf{O}}
\let\@@seccntformat\@seccntformat
\renewcommand*{\@seccntformat}[1]{%
  \expandafter\ifx\csname @seccntformat@#1\endcsname\relax
    \expandafter\@@seccntformat
  \else
    \expandafter
      \csname @seccntformat@#1\expandafter\endcsname
  \fi
    {#1}%
}
\newcommand*{\@seccntformat@subsection}[1]{%
  \textbf{\csname the#1\endcsname.}
}
\let\@paragraph\paragraph
\renewcommand*{\paragraph}[1]{%
	\vspace{0.3\baselineskip}%
	\@paragraph{\textit{#1}}%
}
\newtheorem{theorem}[equation]{Theorem}
\newtheorem*{theorem*}{Theorem}
\newtheorem{lemma}[equation]{Lemma}
\newtheorem*{lemma*}{Lemma}
\newtheorem{corollary}[equation]{Corollary}
\newtheorem{proposition}[equation]{Proposition}
\newtheorem*{proposition*}{Proposition}
\newtheorem{conjecture}[equation]{Conjecture}
\theoremstyle{definition}
\newtheorem{definition}[equation]{Definition}
\newtheorem*{definition*}{Definition}
\newtheorem{remark}[equation]{Remark}
\newtheorem{question}[equation]{Question}
\newtheorem{example}[equation]{Example}
\newtheorem*{example*}{Example}
\newtheorem*{problem*}{Problem}
\theoremstyle{plain}
\newcommand{\theoremref}[1]{\hyperref[#1]{Theorem~\ref*{#1}}}
\newcommand{\lemmaref}[1]{\hyperref[#1]{Lemma~\ref*{#1}}}
\newcommand{\definitionref}[1]{\hyperref[#1]{Definition~\ref*{#1}}}
\newcommand{\propositionref}[1]{\hyperref[#1]{Proposition~\ref*{#1}}}
\newcommand{\conjectureref}[1]{\hyperref[#1]{Conjecture~\ref*{#1}}}
\newcommand{\corollaryref}[1]{\hyperref[#1]{Corollary~\ref*{#1}}}
\newcommand{\exampleref}[1]{\hyperref[#1]{Example~\ref*{#1}}}
\let\old@caption\caption
\renewcommand*{\caption}[1]{%
	\setcounter{figure}{\value{equation}}%
	\stepcounter{equation}%
	\old@caption{#1}\relax%
}
\newcounter{intro}
\newtheorem{intro-conjecture}[intro]{Conjecture}
\newtheorem{intro-corollary}[intro]{Corollary}
\newtheorem{intro-theorem}[intro]{Theorem}
\newcommand{\shL}{\mathcal{L}}
\newcommand{\parref}[1]{\hyperref[#1]{\S\ref*{#1}}}
\newcommand*\if@single[3]{%
  \setbox0\hbox{${\mathaccent"0362{#1}}^H$}%
  \setbox2\hbox{${\mathaccent"0362{\kern0pt#1}}^H$}%
  \ifdim\ht0=\ht2 #3\else #2\fi
  }
\newcommand*\rel@kern[1]{\kern#1\dimexpr\macc@kerna}
\newcommand*\widebar[1]{\@ifnextchar^{{\wide@bar{#1}{0}}}{\wide@bar{#1}{1}}}
\newcommand*\wide@bar[2]{\if@single{#1}{\wide@bar@{#1}{#2}{1}}{\wide@bar@{#1}{#2}{2}}}
\newcommand*\wide@bar@[3]{%
  \begingroup
  \def\mathaccent##1##2{%
    \if#32 \let\macc@nucleus\first@char \fi
    \setbox\z@\hbox{$\macc@style{\macc@nucleus}_{}$}%
    \setbox\tw@\hbox{$\macc@style{\macc@nucleus}{}_{}$}%
    \dimen@\wd\tw@
    \advance\dimen@-\wd\z@
    \divide\dimen@ 3
    \@tempdima\wd\tw@
    \advance\@tempdima-\scriptspace
    \divide\@tempdima 10
    \advance\dimen@-\@tempdima
    \ifdim\dimen@>\z@ \dimen@0pt\fi
    \rel@kern{0.6}\kern-\dimen@
    \if#31
      \overline{\rel@kern{-0.6}\kern\dimen@\macc@nucleus\rel@kern{0.4}\kern\dimen@}%
      \advance\dimen@0.4\dimexpr\macc@kerna
      \let\final@kern#2%
      \ifdim\dimen@<\z@ \let\final@kern1\fi
      \if\final@kern1 \kern-\dimen@\fi
    \else
      \overline{\rel@kern{-0.6}\kern\dimen@#1}%
    \fi
  }%
  \macc@depth\@ne
  \let\math@bgroup\@empty \let\math@egroup\macc@set@skewchar
  \mathsurround\z@ \frozen@everymath{\mathgroup\macc@group\relax}%
  \macc@set@skewchar\relax
  \let\mathaccentV\macc@nested@a
  \if#31
    \macc@nested@a\relax111{#1}%
  \else
    \def\gobble@till@marker##1\endmarker{}%
    \futurelet\first@char\gobble@till@marker#1\endmarker
    \ifcat\noexpand\first@char A\else
      \def\first@char{}%
    \fi
    \macc@nested@a\relax111{\first@char}%
  \fi
  \endgroup
}
\newcommand{\I}{\mathcal{I}}
\def\E{\mathcal{E}}
\def\J{{\mathcal J}}
\def\cH{{\mathcal H}}
\def\ZZ{{\mathbf Z}}
\def\CC{{\mathbf C}}
\def\AAA{{\mathbf A}}
\def\QQ{{\mathbf Q}}
\def\PP{{\mathbf P}}
\newtheorem*{thmA'}{Theorem~A$^\prime$}
\begin{document}

\vspace{\baselineskip}

\title[Hodge filtration on local cohomology]{Hodge filtration on local cohomology, du Bois complex, and local cohomological dimension}

\author[M. Musta\c{t}\u{a}]{Mircea~Musta\c{t}\u{a}}
\address{Department of Mathematics, University of Michigan, 530 Church Street,
Ann Arbor, MI 48109, USA}
\email{{\tt mmustata@umich.edu}}

\author[M.~Popa]{Mihnea~Popa}
\address{Department of Mathematics, Harvard University, 
1 Oxford Street, Cambridge, MA 02138, USA} 
\email{{\tt mpopa@math.harvard.edu}}

\thanks{MM was partially supported by NSF grants DMS-2001132 and DMS-1952399, and MP by NSF grant DMS-2040378.}

\subjclass[2020]{14B15, 14F10, 14B05, 32S35, 14F17}

\begin{abstract}
We study the Hodge filtration on the local cohomology sheaves of a smooth complex algebraic variety along a closed subscheme $Z$ in terms of log resolutions, and derive applications regarding the local cohomological dimension, the Du Bois complex, local vanishing, and reflexive differentials associated to $Z$.
\end{abstract}

\maketitle

\makeatletter
\newcommand\@dotsep{4.5}
\def\@tocline#1#2#3#4#5#6#7{\relax
  \ifnum #1>\c@tocdepth 
  \else
    \par \addpenalty\@secpenalty\addvspace{#2}%
    \begingroup \hyphenpenalty\@M
    \@ifempty{#4}{%
      \@tempdima\csname r@tocindent\number#1\endcsname\relax
    }{%
      \@tempdima#4\relax
    }%
    \parindent\z@ \leftskip#3\relax
    \advance\leftskip\@tempdima\relax
    \rightskip\@pnumwidth plus1em \parfillskip-\@pnumwidth
    #5\leavevmode\hskip-\@tempdima #6\relax
    \leaders\hbox{$\m@th
      \mkern \@dotsep mu\hbox{.}\mkern \@dotsep mu$}\hfill
    \hbox to\@pnumwidth{\@tocpagenum{#7}}\par
    \nobreak
    \endgroup
  \fi}
\def\l@section{\@tocline{1}{0pt}{1pc}{}{\bfseries}}
\def\l@subsection{\@tocline{2}{0pt}{25pt}{5pc}{}}
\makeatother

\tableofcontents

\section{Introduction}

In this paper we prove several results about basic invariants of a closed subscheme $Z$ of a smooth, irreducible complex $n$-dimensional algebraic variety $X$. We give a characterization of the local cohomological dimension ${\rm lcd} (X, Z)$ in terms of coherent sheaf data associated to a log resolution of $(X, Z)$, complementing the celebrated topological criterion in  \cite{Ogus}. We also obtain local vanishing results for sheaves of forms with log poles associated to such a resolution, generalizing Nakano-type results in \cite{Saito-LOG} and \cite{MP1}. We prove a vanishing result for cohomologies of the graded pieces of the Du Bois complex when $Z$ is a local complete intersection, extending the study of higher Du Bois singularities of hypersurfaces in \cite{MOPW} and \cite{Saito_et_al}. When $Z$ has isolated singularities, we refine a result in \cite{KeS} on the coincidence of $h$-differentials and reflexive differentials, for forms of low degree. As a byproduct we also obtain new proofs of various results in the literature, for instance an injectivity theorem in \cite{KS}, or various cases of results related to local cohomology in \cite{MSS} and \cite{DT}.

The common theme leading to the proof of all these results is the study of the Hodge filtration on local cohomology. 
The local cohomology sheaves of $\shO_X$ along $Z$ are important and subtle invariants of the pair $(X,Z)$; while they are not coherent over $\shO_X$, they are well-behaved modules over the sheaf $\Dmod_X$ of differential operators. Exploiting this fact has been the key to important developments, especially in commutative algebra, starting with the foundational paper of Lyubeznik \cite{Lyubeznik}. Our focus in this paper is the fact that they have an even more refined structure, namely that of mixed Hodge modules; as such, they come endowed with a canonical Hodge filtration. We study this filtration on local cohomology and relate it to various invariants of $Z$ mentioned above.

\noindent
{\bf Local cohomology sheaves as mixed Hodge modules.}
We consider the local cohomology sheaves $\cH^q_Z(\shO_X)$, where $q$ is a positive integer; for a review of these objects, see 
\S\ref{scn:LC}.\footnote{Most of our results are local, hence while we will usually use the geometric language, they can also be seen as results about the modules $H^q_I (A)$, where $A$ is a regular $\CC$-algebra of finite type.} It is well understood that all $\cH^q_Z(\shO_X)$ carry the structure of 
(regular, holonomic) filtered $\Dmod_X$-modules underlying mixed Hodge modules on $X$, with support in $Z$; see \S\ref{scn:HF}. In particular, they come endowed with a good filtration
$F_k \cH^q_Z(\shO_X)$ by coherent subsheaves,  with $k \ge 0$, called the Hodge filtration. This data depends only on the reduced structure of $Z$.

When $Z$ is a hypersurface only $\cH^1_Z(\shO_X)$ is non-trivial, and in fact 
$$\cH^1_Z(\shO_X) \simeq \shO_X (*Z) / \shO_X,$$
where $\shO_X (*Z)$ is the sheaf of rational functions on $X$ with poles along $Z$. Hence the study of the Hodge filtration on  $\cH^1_Z(\shO_X)$ reduces to that of the Hodge filtration on $\shO_X(*Z)$, or equivalently to that of the Hodge ideals treated in \cite{MP1}. As in that paper, for concrete applications one essential point is to provide an alternative description of the Hodge filtration in terms of log resolutions. We discuss this next.

Suppose that $f\colon Y\to X$ is a log resolution of the pair $(X, Z)$, assumed to be an isomorphism over the complement of $Z$ in $X$. We denote $E = f^{-1}(Z)_{\rm red}$, which is a simple normal crossing divisor on $Y$. 
We observe in \S\ref{scn:birational} that there is a filtered complex of right $f^{-1} \Dmod_X$-modules 
$$A^{\bullet}:\quad
0\to f^*\Dmod_X\to \Omega_Y^1(\log E)\otimes_{\shO_Y}f^*\Dmod_X\to\cdots\to\omega_Y(E)\otimes_{\shO_Y}f^*\Dmod_X\to 0,$$
such that, restricting the discussion to $q\geq 2$ for simplicity, we have an isomorphism 
$$R^{q-1}f_*A^{\bullet} \simeq \cH^q_Z(\omega_X)\simeq \cH^q_Z(\shO_X)\otimes \omega_X.$$
Moreover, the Hodge filtration $F_\bullet \cH^q_Z(\omega_X)$ is obtained as the image of the push-forward of a 
natural filtration $F_\bullet A^\bullet$, also described in \S\ref{scn:birational}. This description parallels the birational definition of Hodge ideals of hypersurfaces in \cite{MP1}.

Once this birational description has been established, the main engine towards applications is the \emph{strictness} property of the Hodge filtration on direct images of Hodge modules via projective morphisms. This is a vast generalization of the 
$E_1$-degeneration of the Hodge-to-de Rham spectral sequence, established by Saito \cite{Saito-MHP}, \cite{Saito-MHM}.
Its main consequence to local cohomology is described in Proposition \ref{comput_log_res}; here we start by mentioning the most immediate application, namely an injectivity theorem.

Note first that, with the notation above, there is a natural morphism $\shO_Z \to \derR f_* \shO_E$ in 
${\bf D}^b \big({\rm Coh} (X)\big)$, which 
by duality gives rise to a morphism
$$\alpha \colon \derR f_* \omega_E^\bullet \to \omega_Z^\bullet,$$
where the notation refers to the respective dualizing complexes (for $E$ we of course have $\omega_E^\bullet = \omega_E [n -1]$). On the other hand, there is a morphism 
$$\beta \colon \omega_Z^\bullet \to \derR \underline{\Gamma_Z} (\omega_X) [n]$$
to the total (derived) local cohomology of $\omega_X$,  arising from the natural morphism $\derR \shH om_X (\shO_Z, \omega_X) \to \derR \underline{\Gamma_Z} (\omega_X)$. Here $\underline{\Gamma_Z} (-)$ denotes the sheaf version of the functor of sections with support in $Z$.

\begin{intro-theorem}\label{inclusions}
The morphism obtained as the composition
 $$\beta \circ \alpha \colon \derR f_* \omega_E^\bullet \to \derR \underline{\Gamma_Z} (\omega_X) [n]$$
 is injective on cohomology, i.e. the induced morphisms on cohomology give for each $q$ an injection 
$$R^{q - 1} f_* \omega_E \to \shH^q_Z (\omega_X).$$
\end{intro-theorem}

Since the morphism in the theorem factors through  $\omega_Z^\bullet$ via $\alpha$, this recovers in particular the following very useful result of Kov\'acs and Schwede:

\begin{intro-corollary}[{\cite[Theorem 3.3]{KS}}]\label{KS-injection}
For each $i$, the natural homomorphism
$$\shH^i  (\derR f_* \omega_E^\bullet) \to \shH^i (\omega_Z^\bullet)$$
is injective.
\end{intro-corollary}

As the authors explain in \emph{loc. cit.}, this can be thought of as a Grauert-Riemenschneider type result. To be more explicit, it says that for each $q \ge 1$, the natural morphism
$$\alpha_q \colon R^{q -1} f_* \omega_E \to {\mathcal Ext}^q_{\shO_X}(\shO_Z,\omega_X)$$
is injective. (In particular, if $Z$ is Cohen-Macaulay of pure codimension $r$, then
$R^q  f_* \omega_E= 0$ for $q \neq r-1$.)  We make use of this when studying the local cohomological dimension of $Z$ 
in terms of depth.

\medskip

For a hypersurface $Z$, one of the key tools in the study of the Hodge filtration $F_k \shO_X(*Z)$ is its containment in the pole order filtration $P_k \shO_X(*Z) = \shO_X\big((k+1)Z\big)$, as noted in \cite{Saito-B}. In arbitrary codimension, it is 
still the case that 
$$F_k \cH^q_Z(\shO_X) \subseteq O_k\cH^q_Z(\shO_X):=\{u\in \cH^q_Z(\shO_X)\mid \I_Z^{k+1}u=0\}$$
for all $k$, where $O_k$ is an \emph{order filtration} analogous to $P_k$. Unless $q = {\rm codim}_X(Z)$ however, 
work of Lyubeznik \cite{Lyubeznik} implies that the sheaves $O_k\cH^q_Z(\shO_X)$ are not coherent. A natural replacement seems to be an \emph{Ext filtration} defined as 
$$E_k\cH^q_Z(\shO_X):={\rm Im} ~\big[ \shE xt^q_{\shO_X} \big(\shO_X/ \I_Z^{k+1}, \shO_X\big) \to 
\cH_Z^q(\shO_X) \big],\,\,\,\,\,\, k \ge 0,$$
and satisfying $E_k \subseteq O_k$. Note that both $O_k\cH^q_Z(\shO_X)$ and $E_k\cH^q_Z(\shO_X)$ depend on the scheme-theoretic structure of $Z$
(we get the smallest version by taking $Z$ to be reduced).
When $Z$ is a local complete intersection of pure codimension $r$, then the two filtrations on 
$\cH^r_Z(\shO_X)$ coincide; we show that in this case they also coincide with the Hodge filtration if and only if $Z$ is smooth, see Corollary 
\ref{sm-equiv}.

In general, Theorem \ref{inclusions} and the birational interpretation of the Hodge filtration in \S\ref{scn:birational} imply that 
$$F_0 \cH^q_Z(\shO_X) \subseteq E_ 0\cH^q_Z(\shO_X)$$
for all $q$. Furthermore, as a combination of our results with a well-known characterization of Du Bois singularities (see \cite{Steenbrink}, \cite{Schwede}), we obtain:

\begin{intro-theorem}\label{char_DuBois}
Let $Z \subseteq X$ be a closed reduced subscheme of codimension $r$. If $Z$ is Du Bois, then 
$$F_0 \cH^q_Z(\shO_X)= E_0 \cH^q_Z(\shO_X) \,\,\,\,\, {\rm for ~all ~}q.$$
If we assume that $Z$ is Cohen-Macaulay, of pure dimension, then $F_0\cH^q_Z(\shO_X) = 0$ for all $q \neq r$, and 
$$ Z ~{\rm is~Du~Bois} \iff F_0 \cH^r_Z(\shO_X)= E_0 \cH^r_Z(\shO_X).$$
\end{intro-theorem}

It is a very interesting question whether $F_k \subseteq E_k$ for all $k \ge 1$, as the difference between the two should be a subtle measure of the singularities of $Z$ by analogy with the case of hypersurfaces.  This does happen when $Z$ is a local complete intersection, in which case Theorem \ref{thm-DB-main} provides a vast generalization of the last equivalence in the theorem above. Under this assumption we also expect that the equality $F_1 = E_1$ implies that $Z$ has rational singularities and an even stronger statement regarding the equality  $F_k = E_k$ for higher $k$; see 
Conjectures \ref{F_1=E_1} and \ref{conj-BS}.

\noindent
{\bf Local vanishing and local cohomological dimension.}
Since the ambient space $X$ is smooth, it is well known that the lowest index $q$ for which $\shH^{q}_Z (\shO_X)\neq 0$
is equal to the codimension of $Z$. On the other hand, the highest such index is a more mysterious and much studied invariant, the \emph{local cohomological dimension} of $Z$ in $X$:
\begin{equation}\label{def_local_cohom_dim}
{\rm lcd}(X, Z) = {\rm max}~\{q~|~\cH_Z^q(\shO_X) \neq 0\}.
\end{equation}
(Note that ${\rm lcd}(X, Z)$ is more commonly defined as the minimal integer $q$ such that $\cH_Z^i(\shF) = 0$ for all $i > q$ and all quasi-coherent sheaves $\shF$ on $X$, but the two definitions agree; see e.g. \cite[Proposition 2.1]{Ogus}.) Our study of the Hodge filtration allows us to provide a new perspective on ${\rm lcd}(X, Z)$, by relating it to invariants arising from log resolutions.

Note to begin with that thanks to the Grauert-Riemenschneider theorem, we have 
$$f_* \omega_E \simeq f_* \omega_Y (E) / \omega_X \,\,\,\,\,\,{\rm and} \,\,\,\,\,\, R^q f_* \omega_E \simeq R^q f_* \omega_Y (E) \,\,\,\,\,\,{\rm for}\,\,\,\,\,\, q \ge 1,$$
hence we can alternatively think of Theorem \ref{inclusions} as a result about the higher direct images 
$R^q f_* \omega_Y (E)$. For instance, they must vanish when  $\shH^{q+1}_Z (\shO_X)= 0$ and $q \ge 1$.
Using the same circle of ideas, based on the strictness property and the birational description of the Hodge filtration, this can be extended to a Nakano-type vanishing result for the higher direct images of all bundles of forms with log poles along $E$.  

\begin{intro-theorem}\label{local-vanishing}
Let $Z \subseteq X$ be a closed subscheme of codimension $r$, and let 
$c: = {\rm lcd}(X, Z)$. If $f\colon Y \to X$ 
is a log resolution of $(X, Z)$ which is an isomorphism away from $Z$, and $E = f^{-1}(Z)_{\rm red}$, then
$$R^qf_* \Omega_Y^p (\log E) = 0$$ 
in either of the following two cases:
\begin{enumerate}
\item[i)] $p+q\geq n+1$ and $q\leq r-2$;
\item[ii)] $p+q\geq n+c$.
\end{enumerate}
\end{intro-theorem}

In the case when $Z$ is a hypersurface (so that $c =1$), this is a result of Saito \cite[Corollary~3]{Saito-LOG}; cf. also \cite[Theorem~32.1]{MP1}. We remark that the part of the statement saying that vanishing holds whenever $p + q \ge n + c$  can also be obtained as a consequence of Theorem \ref{thm:lcd} below. Various bounds on the local cohomological dimension that are relevant to this theorem can be found in \S\ref{scn:local-vanishing}.
At least for local complete intersections, the range of vanishing in Theorem \ref{local-vanishing} could perhaps be further improved by analogy with \cite[Theorem~D]{MP3}, though this will rely on connections with the Bernstein-Sato polynomial of $Z$ not known at the moment; cf. Remark \ref{rmk:BS}.

As one of the most important applications of the techniques in this paper, we go in the opposite direction and obtain a 
characterization of the local cohomological dimension ${\rm lcd}(X, Z)$ in terms of the vanishing of sheaves of the form 
$R^q f_* \Omega_Y^p (\log E)$ associated to a log resolution.

\begin{intro-theorem}\label{thm:lcd}
Let $Z$ be a closed subscheme of $X$, and $c$ a positive integer. Then the following are equivalent:
\begin{enumerate}
\item ${\rm lcd}(X, Z) \le c$.
\item For any (some) log resolution $f\colon Y\to X$ of the pair $(X, Z)$, assumed to be an isomorphism over the complement of $Z$ in $X$, if $E = f^{-1}(Z)_{\rm red}$, then
$$R^{j + i} f_* \Omega_Y^{n-i} (\log E) = 0, \,\,\,\,\,\,{\rm for ~all}\,\,\,\,j \ge c, \, i \ge 0.$$
\end{enumerate}
\end{intro-theorem}

Over our base field $\CC$, this provides an alternative algebraic criterion in terms of finitely many coherent sheaves, complementing Ogus' celebrated topological criterion \cite[Theorem 2.13]{Ogus}, and answering in particular a problem raised there. The equivalence between Ogus' criterion and ours seems unclear at the moment, and is an intriguing topic of study; see Remark \ref{rmk:Ogus} for further discussion. In \S\ref{scn:lcd} we also give concrete applications of this characterization; more on this below as well. 

\noindent
{\bf Note:} While throughout this paper we focus on the case of algebraic varieties, it is worth noting that the criterion in 
Theorem \ref{thm:lcd} holds in the \emph{analytic setting} as well, i.e. when $Z$ is an analytic subspace of a complex manifold $X$. The same holds for Theorem \ref{local-vanishing}. This is due to the fact that our constructions and arguments based on the theory of mixed Hodge modules apply equally well in this setting; see Remarks \ref{rmk:analytic1} and 
\ref{rmk:analytic2}.

The proof of the theorem relies on a simple strategy involving the Hodge filtration, namely showing, under the appropriate hypotheses, that:
\begin{enumerate}
\item $F_0 \cH^q_Z (\shO_X) = 0$, and
\item $F_\bullet \cH^q_Z (\shO_X)$ is generated at level $0$.
\end{enumerate}
The second condition essentially means that the entire Hodge filtration is determined by the initial term $F_0 \cH^q_Z (\shO_X)$, up to applying differential operators. The key technical tool is therefore a local vanishing criterion for the generation level of the Hodge filtration, in the style of \cite[Theorems 17.1]{MP1} in the case of hypersurfaces; this is stated as Theorem \ref{gen_level} below.

Numerous works have studied bounds on the local cohomological dimension in terms of the depth of the local rings at 
points of $Z$. Theorem \ref{thm:lcd} leads to a unified approach to previously known such bounds (for example some of the statements in \cite{Hartshorne}, \cite{Ogus}, \cite{Varbaro}, \cite{DT}), as well as to new results. Among the latter, we show
in Corollary \ref{cor:quotient} that if $Z$ has \emph{quotient singularities} and codimension $r$, then 
$${\rm lcd}(X, Z) = n - {\rm depth}(\shO_Z) = r.$$
Due to results of Ogus, this in turn implies that subvarieties $Z \subseteq \PP^n$ with quotient singularities behave like local complete intersections in other respects as well, for instance satisfying a Barth-Lefschetz-type result (see Corollary \ref{like-LCI}). We refrain from including more material here; for further details and examples see \S\ref{scn:lcd}.

\noindent
{\bf The Du Bois complex and differentials on singular spaces.}
Our results regarding the Hodge filtration on local cohomology, and perhaps somewhat surprisingly the characterization of local cohomological dimension in Theorem \ref{thm:lcd}, can be applied to the study of the 
Du Bois complex and of various types of differentials on a reduced closed subscheme $Z \subseteq X$.

Recall that the \emph{Du Bois complex} $\underline{\Omega}_Z^{\bullet}$ is an object in the derived category of filtered complexes on $Z$. The shifted graded pieces $\underline{\Omega}_Z^p:={\rm Gr}^p_F\underline{\Omega}_Z^{\bullet}[p]$ are objects in the derived category of coherent sheaves on $Z$, playing a role similar to that of the bundles of holomorphic forms $\Omega_Z^p$ on a smooth $Z$. There are in fact canonical morphisms $\Omega_Z^p\to \underline{\Omega}_Z^p$ that are isomorphisms over the smooth locus of $Z$. By definition, 
the one for $p=0$ is an isomorphism precisely when
$Z$ has Du Bois singularities. See Ch.\ref{ch:DuBois} for more details.

Following the terminology from \cite{Saito_et_al}, we say that $Z$ has only \emph{higher $p$-Du Bois singularities} if the canonical morphisms $\Omega_Z^k\to \underline{\Omega}_Z^k$ are isomorphisms for all $0\leq k\leq p$. The first result concerning varieties with this property was obtained in \cite{MOPW}, where is was shown that if $Z$ is a hypersurface whose minimal exponent is $\geq p+1$, then $Z$ has only higher
$p$-Du Bois singularities; recall that the minimal exponent of $Z$, which can be defined via the Bernstein-Sato polynomial of $Z$, roughly describes how close the Hodge filtration and pole order filtration are on the localization $\shO_X(*Z)$. The converse to this result was obtained in
\cite{Saito_et_al}. The Hodge filtration on local cohomology allows us to extend these results to all local complete intersections.

Concretely, if $Z$ is reduced and a local complete intersection of pure codimension $r$, then the \emph{singularity level} of the Hodge filtration on $\shH^r_Z \shO_X$ is 
$$p (Z) := {\rm sup}\{k ~| ~ F_k \shH^r_Z \shO_X = O_k \shH^r_Z \shO_X \},$$
with the convention that $p (Z) = -1$ if there are no such $k$. 
We show that this invariant only depends on $Z$ and not on its embedding in a smooth variety. It is easy to check that 
$p(Z) = \infty$ if and only if $Z$ is smooth (see Corollary \ref{sm-equiv}). In fact, we have the following explicit upper bound
for singular $Z$ (see Theorem~\ref{thm_upper_bound}):
\begin{equation}\label{eq_upper_bound}
p(Z)\leq \frac{\dim(Z)-1}{2}.
\end{equation}

By a result of Saito \cite{Saito-MLCT}, if  $Z$ is a hypersurface in $X$, then $p(Z)=[ \widetilde{\alpha}(Z)]-1$, where
$\widetilde{\alpha}(Z)$ is the minimal exponent of $Z$.
We expect that in general, $p(Z)$ can be described in terms of the Bernstein-Sato polynomial of $Z$ studied in \cite{BMS} 
(see Conjecture \ref{conj-BS} for the statement).  An interpretation of $p(Z)$ in terms of the Hodge ideals associated to products of equations defining $Z$ is given by Proposition \ref{charact_equality}, leading to restriction and semicontinuity results for this invariant; see Theorems \ref{thm_restriction} and \ref{thm_semicont}. The proof of (\ref{eq_upper_bound}) makes use of this semicontinuity property of $p(Z)$.

The following is our main result, relating $p(Z)$ to the behavior of the Du Bois complex of $Z$. The proof builds on the case of hypersurfaces which, as already mentioned, is treated in \cite{MOPW} and \cite{Saito_et_al}.

\begin{intro-theorem}\label{thm-DB-main}
If $Z$ is a reduced, local complete intersection closed subscheme of the smooth, irreducible variety $X$, then for every nonnegative integer $p$, we have
$p(Z)\geq p$ if and only if $Z$ has only higher $p$-Du Bois singularities.
\end{intro-theorem}

We also prove a related result concerning the vanishing of individual cohomology sheaves $\shH^i \underline{\Omega}_Z^p$, 
 with $i > 0$, in terms of the size of the locus in $Z$ where $p(Z)<p$ (see 
 Theorem~\ref{thm_van_DB} for the precise statement). A consequence is that
 if the singular locus of the local complete intersection $Z$ has dimension $s$, then for all $p \ge 0$ we have
$$\cH^i(\underline{\Omega}_Z^p) =0 \,\,\,\,\,\,{\rm for} \,\,\,\,\,\,1\leq i < \dim Z - s - p -1.$$


\smallskip

In a different direction, the criterion in Theorem \ref{thm:lcd} can be rephrased in terms of the Du Bois complex (see Corollary \ref{cor:lcd}), thanks to a result of Steenbrink \cite{Steenbrink}. This allows us to obtain in \S\ref{scn:hdiff} results on differentials on the singular variety $Z$ as consequences of bounds on ${\rm lcd}(X, Z)$. We state here one result regarding 
$h$-differentials; their theory is one of the possible approaches to differential forms on singular spaces, as explained in  \cite{HJ}, where it is also shown that they are isomorphic to $\shH^0 \underline{\Omega}_Z^k$.

In the ideal situation, $h$-differentials coincide with the reflexive differentials $\Omega_Z^{[k]} : = (\Omega_Z^k)^{\vee \vee}$, and a recent result of Kebekus-Schnell \cite[Corollary 1.12]{KeS} states that this is indeed the case for all $k$ if $Z$ is a variety with rational singularities. We show the following improvement for low $k$ when $Z$ has isolated singularities, using in addition input from mixed Hodge theory:

\begin{intro-theorem}\label{thm:hisolated}
If $Z$ is a variety with isolated singularities and ${\rm depth} (\shO_Z) \ge k +2$, then the $h$-differentials and reflexive differentials of degree $k$ on $Z$ coincide.
\end{intro-theorem}

Further applications of results on local cohomological dimension to $h$-differentials are obtained in \S\ref{scn:hdiff}, including a statement analogous to Theorem \ref{thm:hisolated} but depending only on the codimension of $Z$.

There are numerous conjectures and open problems suggested by this work that are scattered throughout the text, and 
that we believe are important for further developments. Here is an informal sample: the connection between the Hodge filtration on local cohomology and the Bernstein-Sato polynomial (and perhaps a version of the $V$-filtration) for local complete intersections, including applications to rational singularities; the equivalence between our and Ogus' characterization of local cohomological dimension; further local vanishing with applications to local cohomological dimension and reflexive differentials.

\medskip

\noindent
{\bf Acknowledgements.} We are indebted to C.~Raicu, whose answers to our questions were crucial for the material 
in \S\ref{scn:order}, and to C.~Schnell for many discussions on mixed Hodge modules. We thank 
S.-J. Jung, I.-K. Kim, M. Saito, and Y.~Yoon for sharing with us an early
version of \cite{Saito_et_al}. We also thank B.~Bhatt, L.~Ma, M.~Saito,  K.~Schwede and J.~Witaszek for their comments on a preliminary version of this paper. In particular, a suggestion of M.~Saito led us to a simplification of the proof Theorem~\ref{thm:lcd}. Finally, we thank the referee for comments that helped improve the exposition.

\medskip

\noindent {\bf Notation}. We collect here some notation that appears throughout the paper, with references to where each item is described. Note that $X$ will always be a smooth, irreducible complex variety of dimension $n$, and $Z$ a closed subscheme of $X$.
\begin{enumerate}

\item ${\rm lct}(X, Z)$, the local cohomological dimension of $Z$ in $X$, see formula (\ref{def_local_cohom_dim}). 

\item $\Dmod_X$, the sheaf of differential operators on  $X$.

\item ${\mathbf Q}_X^H[n]$, the trivial pure Hodge module on  $X$, see Section~\ref{scn:Dmod}.

\item $(\Mmod,F)(q)$, the Tate twist by $q$ of the filtered $\Dmod_X$-module $(\Mmod,F)$, see Section~\ref{scn:Dmod}.

\item ${\rm Gr}^F_k{\rm DR}_X(\Mmod, F)$, the $k^{\rm th}$ graded piece of the de Rham complex of the filtered $\Dmod_X$-module $(\Mmod,F)$, see Section~\ref{scn:Dmod}.

\item $\cH^q_Z(\Mmod)$, the $q^{\rm th}$ local cohomology of $\Mmod$ with support in $Z$, see Section~\ref{scn:LC}.

\item $\shO_X(*Z)$, the sheaf of rational functions on $X$ with poles along $Z$, when $Z$ is a hypersurface, see Example~\ref{ex:divisor}.

\item $F_k\cH^q_Z(\shO_X)$, the $k^{\rm th}$ term of the Hodge filtration on $\cH^q_Z(\shO_X)$, see Section~\ref{scn:HF}.

\item $O_k\cH^q_Z(\shO_X)$, the $k^{\rm th}$ term of the order filtration on $\cH^q_Z(\shO_X)$, see Definition~\ref{definition_order_filtration}.

\item $E_k\cH^q(\shO_X)$, the $k^{\rm th}$ term of the Ext filtration on $\cH^q_Z(\shO_X)$, see Definition~\ref{definition_Ext_filtration}.

\item $p(Z)$, the singularity level of the Hodge filtration on $\cH^r_Z(\shO_X)$, when $Z$ is a local complete intersection of codimension $r$, see
Definition~\ref{definition_singularity_level}.

\item $\Omega_Z^p$, the sheaf of $p$-K\"{a}hler differentials on $Z$.

\item $\Omega_Z^{[k]}=(\Omega_Z^k)^{\vee\vee}$, the sheaf of reflexive $p$-K\"{a}hler differentials on $Z$.

\item $\underline{\Omega}_Z^p$, the (shifted) $p^{\rm th}$ truncation of the Du Bois complex, see Section~\ref{ch:DuBois}.

\end{enumerate}

\section{Background and study of the Hodge filtration}

\subsection{$\Dmod$-modules and mixed Hodge modules}\label{scn:Dmod}
Given a smooth, irreducible, $n$-dimensional complex algebraic variety $X$, we denote by $\Dmod_X$ the sheaf of differential operators on $X$. For basic facts in the theory of $\Dmod_X$-modules, we refer to \cite{HTT}. 

We only recall here a couple of things: first, an exhaustive filtration $F_{\bullet}$ on a left $\Dmod_X$-module $\Mmod$ (always assumed to be compatible with the filtration $F_\bullet \Dmod_X$ by the order of differential operators)
is \emph{good} if $F_p\Mmod$ is coherent for all $p$ and there is $q$ such that
$$F_{q+k}\Mmod=F_k\Dmod_X\cdot F_q\Mmod\quad\text{for all}\quad k\geq 0.$$
In this case, we say that the filtration is \emph{generated at level} $q$. 

Next, there is an equivalence of categories between left and right $\Dmod_X$-modules, such that if $\Mmod^r$ is the right $\Dmod_X$-module corresponding to the left $\Dmod_X$-module $\Mmod$,
we have an isomorphism of underlying $\shO_X$-modules $\Mmod^r\simeq\omega_X\otimes_{\shO_X}\Mmod$. 
We will mostly work with left $\Dmod_X$-modules, but it will sometimes be convenient to work with their right counterparts. We note that if they are endowed with a good filtration as above, then the left-right rule for the filtration is by convention 
\begin{equation}\label{eq_left_right_shift}
F_{p-n}\Mmod^r=\omega_X\otimes_{\shO_X}F_p\Mmod\quad\text{for all}\quad p\in {\mathbf Z}.
\end{equation}

\smallskip

For the basic notions and results on mixed Hodge modules we refer to \cite{Saito-MHP} and \cite{Saito-MHM}. 
In what follows, we refer to a mixed Hodge module on $X$ simply as a \emph{Hodge module}.
Recall that such an object consists of a $\Dmod_X$-module on $X$ (always assumed to be holonomic and with regular singularities), endowed with a good filtration called the \emph{Hodge filtration}, and with several other pieces of data 
satisfying an involved set of conditions that we will not be directly concerned with in this paper. We will say that this filtered 
$\Dmod_X$-module \emph{underlies} the respective Hodge module. 
An important fact is that every morphism of Hodge modules is strict with respect to the Hodge filtrations. See also 
\cite[Ch. C]{MP1} and \cite[\S1]{MP5} for further review.


An important example of (pure) Hodge module is  the trivial Hodge module ${\mathbf Q}_X^H[n]$. The underlying 
$\Dmod_X$-module is $\shO_X$, while the Hodge filtration is defined by the condition ${\rm Gr}^F_p(\shO_X)=0$ for all $p\neq 0$. 

Given a filtered $\Dmod_X$-module $(\Mmod,F)$ and $q\in {\mathbf Z}$, the \emph{Tate twist} $(\Mmod,F)(q)$ is the filtered 
$\Dmod_X$-module $\big(\Mmod, F[q]\big)$, where 
$$F[q]_p\Mmod=F_{p-q}\Mmod \,\,\,\,\,\,{\rm  for~ all} \,\,\,\, p\in {\mathbf Z}.$$

For every left $\Dmod_X$-module $\Mmod$, the de Rham complex ${\rm DR}_X(\Mmod)$ is the complex
$$0\to \Mmod\to \Omega_X^1\otimes_{\shO_X}\Mmod\to\cdots\to\omega_X\otimes_{\shO_X}\Mmod\to 0,$$
placed in cohomological degrees $-n,\ldots,0$. If $\Mmod$ carries a good filtration $F$, the de Rham complex has an induced filtration
such that for every integer $k$, the graded piece ${\rm Gr}^F_k{\rm DR}_X(\Mmod,F)$ is given by
$$0\to {\rm Gr}^F_k \Mmod \to \Omega_X^1\otimes_{\shO_X}{\rm Gr}^F_{k+1} \Mmod \to\cdots\to\omega_X\otimes_{\shO_X}{\rm Gr}^F_{k+n} \Mmod \to 0,$$
where ${\rm Gr}^F_i \Mmod =F_i\Mmod/F_{i-1}\Mmod$ for all $i\in {\mathbf Z}$.
Note that this is a complex of coherent $\shO_X$-modules. The filtered de Rham complex of a filtered right $\Dmod_X$-module is the 
filtered de Rham complex of the corresponding left $\Dmod_X$-module. When $(\Mmod, F)$ underlies a Hodge module $M$, we sometimes use alternatively the notation ${\rm DR}_X(M)$ and ${\rm Gr}^F_p{\rm DR}_X(M)$.

Since morphisms of Hodge modules are strict with respect to the Hodge filtration, ${\rm Gr}^F_k{\rm DR}_X(-)$ gives an exact functor from the abelian category of Hodge modules on $X$ to the abelian category of bounded complexes of coherent sheaves on $X$. This induces an exact functor, also denoted 
${\rm Gr}^F_k{\rm DR}_X(-)$,  from the derived category ${\bf D}^b\big({\rm MHM}(X)\big)$ of Hodge modules on $X$ to the derived category
${\bf D}^b\big({\rm Coh}(X)\big)$. By considering the truncation functors associated to the standard $t$-structure on ${\bf D}^b\big({\rm MHM}(X)\big)$, one obtains for every $u\in {\bf D}^b\big({\rm MHM}(X)\big)$ and every $k\in {\mathbf Z}$ a spectral sequence
\begin{equation}\label{eq_spec_seq_DR}
E_2^{pp'}={\mathcal H}^p{\rm Gr}^F_k{\rm DR}_X\big({\mathcal H}^{p'}(u)\big)\Rightarrow {\mathcal H}^{p+p'}{\rm Gr}^F_k{\rm DR}_X(u).
\end{equation}

Given a morphism $f\colon Y\to X$ of smooth complex varieties, we use the notation $f_*$ for the push-forward of Hodge modules. Note that at the level of underlying $\Dmod$-modules, this corresponds to the usual push-forward $f_+$, defined for right $\Dmod$-modules as 
$$ f_+ \Mmod: = \derR f_* \big(\Mmod \overset{\derL}{\otimes}_{\Dmod_Y} \Dmod_{Y\to X} \big),$$
where the object on the right is in the derived category of right $\Dmod_X$-modules. Here $\Dmod_{Y\to X} : = \shO_Y \otimes_{f^{-1} \shO_X} f^{-1} \Dmod_X$ is the associated transfer $(\Dmod_Y, f^{-1} \Dmod_X)$-bimodule, which 
is isomorphic to $f^* \Dmod_X$ as an $\shO_Y$-module, and is filtered by 
$f^* F_k \Dmod_X$. See \cite[\S1.5]{HTT} for more details.  In general, $f_+$ is different from the push-forward $\derR f_*$
on quasi-coherent $\shO$-modules, but the two definitions agree if $f$ is an open immersion, and in this case we also use the $\derR f_*$  notation.

Moreover, if $f$ is proper and if we denote by ${\rm FM}(\Dmod_X)$ the category of filtered $\Dmod$-modules on $X$,  then
there is a functor
$$f_+ \colon {\bf D}^b \big({\rm FM}(\Dmod_Y)\big) \rightarrow {\bf D}^b \big({\rm FM}(\Dmod_X)\big)$$
defined in \cite{Saito-MHP}, 
which is compatible with the functor $f_+$ above. 
If $(\Mmod,F)$ underlies a Hodge module $M$ on $X$,  we write $f_+(\Mmod,F)$ for the object in ${\bf D}^b\big({\rm FM}(\Dmod_Y)\big)$ underlying $f_* M$.

An important feature of the push-forward of Hodge modules under projective morphisms is the \emph{strictness} property of the Hodge filtration; see \cite[Theorem~2.14]{Saito-MHM}. 
This says that if $f\colon Y\to X$ is projective and $(\Mmod,F)$ underlies a Hodge module on $Y$, then 
$f_+ (\Mmod, F)$ is strict as an object in ${\bf D}^b \big({\rm FM}(\Dmod_X)\big)$ (and moreover, each $\cH^i f_+ (\Mmod, F)$ underlies a Hodge module). Concretely, this means that the natural mapping 
\begin{equation}\label{strictness_formula}
R^i f_* \big(F_k (\Mmod \overset{\derL}{\otimes}_{\Dmod_Y} \Dmod_{Y\to X}) \big) \longrightarrow 
R^i f_* (\Mmod \overset{\derL}{\otimes}_{\Dmod_Y} \Dmod_{Y\to X})
\end{equation}
is injective for every $i, k\in\ZZ$. The filtration on $\cH^if_+(\Mmod,F)$ is obtained by taking $F_k\cH^if_+(\Mmod,F)$ to be the image of this map. Note that this strictness property is a vast generalization of the degeneration at $E_1$ of the Hodge-to-de Rham spectral sequence (cf. e.g. \cite[\S4]{MP1}).

\subsection{Brief review of local cohomology}\label{scn:LC}
Let $X$ be a smooth, irreducible $n$-dimensional complex algebraic variety and $Z$ a proper closed subscheme of $X$ defined by the coherent ideal sheaf $\I_Z$. For a quasi-coherent $\shO_X$-module $\Mmod$ and $j\geq 0$, we denote by $\cH^q_Z(\Mmod)$ the \emph{$q^{\rm th}$ local cohomology sheaf} of $\Mmod$, with support in $Z$. This is the $q^{\rm th}$ derived functor of the functor $\underline{\Gamma_Z} (-)$ given by the subsheaf of local sections 
with support in $Z$. The sheaves $\cH^q_Z(\Mmod)$
only depend on the support of $Z$, so in many situations it is convenient to assume that $Z$ is reduced.
For the basic facts on local cohomology see \cite{Hartshorne-LC}.

The sheaf $\cH^q_Z(\Mmod)$ is a quasi-coherent sheaf, whose local sections are annihilated by suitable powers of $\I_Z$. 
For every affine open subset $U\subseteq X$, if 
$$A=\shO_X(U), \,\,\,\,\,\,I=\I_Z(U),\,\,\,\,\,\, {\rm and} \,\,\,\,\,\,M=\Mmod(U),$$ 
then 
$\cH^q_Z(\Mmod)\vert_U$ is the sheaf associated to the local cohomology module $H^q_I(M)$. This can be computed as follows:
if $I=(f_1,\ldots,f_r)$ (or, more generally, if $I$ and $(f_1,\ldots,f_r)$ have the same radical)
and for a subset $J\subseteq\{1,\ldots,r\}$, we put $f_J:=\prod_{i\in J}f_i$, then we have the \v{C}ech-type complex
 \begin{equation}\label{eq_Cech}
C^{\bullet} : \quad 0\to  C^0\to C^1 \to  \cdots\to  C^r\to  0,
\end{equation}
with 
$$C^p=\bigoplus_{|J|=p}A_{f_J},$$
and with the maps given (up to suitable signs) by the localization homomorphisms. 
With this notation, we have functorial isomorphisms 
\begin{equation}\label{eq1_loc_coh}
H^q_I(M)\simeq H^q(C^{\bullet}\otimes_AM).
\end{equation}
The two main cases we will be interested in are those when $\Mmod$ is either $\shO_X$ or $\omega_X$.
Note that we have $\cH^0_Z(\shO_X) = \cH^0_Z(\omega_X) = 0$. 

In general, given $X$ and $Z$ as above, we write $U=X\smallsetminus Z$ and let $j\colon U\hookrightarrow X$
be the inclusion map. For every $\Mmod$, we have a functorial exact sequence
\begin{equation}\label{eq2_loc_coh}
0\to \cH^0_Z(\Mmod)\to\Mmod\to j_*(\Mmod\vert_U)\to \cH^1_Z(\Mmod)\to 0
\end{equation}
and functorial isomorphisms
\begin{equation}\label{eq3_loc_coh}
R^qj_*(\Mmod\vert_U)\simeq \cH^{q+1}_Z(\Mmod)\quad\text{for all}\quad q\geq 1.
\end{equation}

\begin{example}\label{ex:divisor}
If $Z$ has pure codimension 1 (that is, it is an effective divisor on $X$), then 
$\cH^1_Z(\shO_X)\simeq\shO_X(*Z)/\shO_X$ and $\cH^q_Z(\shO_X)=0$ for all $q\neq 1$. 
Here $\shO_X(*Z)$ denotes the sheaf of rational functions on $X$ with poles along $Z$.
\end{example}

\begin{remark}\label{van_criterion}
Since $X$ is smooth, hence Cohen-Macaulay, the usual description of depth in terms of local cohomology
(see \cite[Theorem~3.8]{Hartshorne-LC}) 
implies that 
$${\rm codim}_X(Z)=\min\{q\mid\cH_Z^q(\shO_X)\neq 0\}.$$
Moreover, from the description via the \v{C}ech complex in (\ref{eq1_loc_coh}) it follows that if $Z$ is locally cut out set-theoretically by $\leq N$ equations, then $\cH^q_Z(\shO_X)=0$ for $q>N$. In particular, we conclude that if $Z$ is a local complete intersection of pure codimension $r$, then $\cH^q_Z(\shO_X)=0$ for all $q\neq r$. 
\end{remark}

If $Z'\subseteq Z$ is another closed subset of $X$, then for every quasi-coherent sheaf $\Mmod$, we have natural maps
$$\cH^q_{Z'}(\Mmod)\to \cH^q_Z(\Mmod).$$
These are induced by the natural transformation of functors
$\cH^0_{Z'}(-)\to  \cH^0_Z(-)$.

It is a standard fact that if $\Mmod$ is a left $\Dmod_X$-module, then each $\cH_Z^q(\Mmod)$ has a canonical structure of 
left $\Dmod_X$-module. Locally, this can be seen by using the description in (\ref{eq1_loc_coh}). Indeed, each localization 
$M_{f_J}$ has an induced $\Dmod_X(U)$-module structure such that the morphisms in the complex $C^{\bullet}\otimes_AM$
are morphisms of $\Dmod_X(U)$-modules. Therefore each cohomology
module $H^q_I(M)$ has an induced $\Dmod_X(U)$-module structure and one can easily check that this is independent of the choice 
of generators for $I$. Hence these structures glue to a $\Dmod_X$-module structure on $\cH^q_Z(\Mmod)$. 

Note also that if $\Mmod$ is a left $\Dmod_X$-module, then the sheaves $R^qj_*(\Mmod\vert_U)$ are left $\Dmod_X$-modules as well; they are the cohomology sheaves of the $\Dmod$-module push-forward of $\Mmod$ via $j$. Moreover, in this case the exact sequence (\ref{eq2_loc_coh}) and the isomorphism
(\ref{eq3_loc_coh}) also hold at the level of $\Dmod_X$-modules. 

Similar considerations apply for right $\Dmod_X$-modules. It is easy to see, using the local description, 
that for every $q\geq 0$ we have a canonical isomorphism
$$\cH^q_Z(\Mmod)^r\simeq\cH^q_Z(\Mmod^r).$$
In particular, we have a canonical isomorphism of right $\Dmod_X$-modules 
$$\cH^q_Z(\shO_X)^r\simeq\cH^q_Z(\omega_X).$$

\subsection{The Hodge filtration on local cohomology}\label{scn:HF}
We continue to use the notation introduced in the previous sections. 
Denoting by $i\colon Z\hookrightarrow X$ the inclusion, there are objects 
$j_*\QQ_U^H[n]$ and $i_*i^!\QQ_X^H[n]$ in the derived category of Hodge modules, and an exact triangle
\begin{equation}\label{eq5_loc_coh}
i_*i^!\QQ_X^H[n]\longrightarrow \QQ_X^H[n]\longrightarrow j_*\QQ_U^H[n]\overset{+1}\longrightarrow,
\end{equation}
as shown in  \cite[Section~4]{Saito-MHM}. 
The cohomologies of these objects are Hodge modules whose underlying $\Dmod_X$-modules we have already seen:
$$\cH^q\big(i_*i^!\QQ_X^H[n]\big)=\cH_Z^q(\shO_X)\quad\text{and}\quad \cH^q\big(j_*\QQ_U^H[n]\big)=R^qj_*\shO_U.$$
In particular, these $\Dmod_X$-modules carry canonical Hodge filtrations; note that these only depend on the reduced subscheme $Z_{\rm red}$.
Moreover, the long exact sequence of cohomology corresponding to (\ref{eq5_loc_coh}) gives the counterparts of (\ref{eq2_loc_coh}) and
(\ref{eq3_loc_coh}) in this setting: an exact sequence of filtered $\Dmod_X$-modules
\begin{equation}\label{eq22_loc_coh}
0\to \shO_X\to j_* \shO_U \to \cH_Z^1(\shO_X)\to 0
\end{equation}
and an isomorphism of filtered $\Dmod_X$-modules
\begin{equation}\label{eq32_loc_coh}
R^qj_* \shO_U \simeq\cH_Z^{q+1}(\shO_U)\quad\text{for all}\quad q\geq 1.
\end{equation}

For example, when $Z = D$ is a reduced effective divisor, we have a canonical Hodge filtration on $\shO_X(*D) = j_* \shO_U$, which underlies the Hodge module $j_*\QQ_U^H[n]$; this is analyzed in \cite{MP1}. Up to modding out by $\shO_X$, this is therefore equivalent to the Hodge filtration on the local cohomology sheaf $\cH^1_D(\shO_X)$.
It turns out that for arbitrary $Z$ the Hodge filtration on the local cohomology sheaves 
$\cH^q_Z(\shO_X)$ can be defined using the one on sheaves of the form $\shO_X(*D)$. Note first that it is enough 
to describe the Hodge filtration in each affine chart $U$. In this case, if we consider $f_1,\ldots,f_r\in A=\shO_X(U)$ that generate an ideal having the same radical as $\I_Z(U)$, then each localization $A_{f_J}$ carries a Hodge filtration such that the corresponding \v{C}ech-type complex (\ref{eq_Cech}) is a complex of filtered 
$\Dmod$-modules. In fact, the maps come from morphisms of mixed Hodge modules: each component is, up to sign, induced by the canonical map
$${j_U}_*\QQ^H_U[n]\to {j_V}_*\QQ^H_V[n],$$
where $V\subseteq U$ are complements of suitable hypersurfaces, and $j_U\colon U\hookrightarrow X$ and $j_V\colon V\hookrightarrow X$
are the inclusion maps. Therefore the maps in the
complex (\ref{eq_Cech}) are strict and the Hodge filtration on the cohomology sheaves $\cH^q_Z(\shO_X)$ is the induced filtration.

\begin{remark}\label{rem_trivial}
We have 
$$F_p\cH_Z^q(\shO_X)=0 \,\,\,\,\,\,{\rm for ~all}\,\,\,\,p<0  \,\,\,\,{\rm and} \,\,\,\, q\geq 0.$$
Indeed, arguing locally and using the computation of local cohomology
via the \v{C}ech-type complex (\ref{eq_Cech}), we deduce this assertion from the fact that if $D$ is a reduced effective divisor on $X$, then
$F_p\shO_X(*D)=0$ for $p<0$ (see for example \cite[Section~9]{MP1}).
\end{remark}

\begin{remark}\label{rmk_functoriality}
Suppose that $Z'\subseteq Z$ is another closed subset. If $U'=X\smallsetminus Z'$, then $j$ factors as
$$U\overset{k}\hookrightarrow U'\overset{j'}\hookrightarrow X.$$
In the derived category of Hodge modules on $X$, we have an isomorphism
$$j_*\QQ^H_U[n]\simeq j'_*\big(k_*\QQ^H_U[n]\big).$$
The canonical morphism $\QQ^H_{U'}[n]\to k_*\QQ^H_U[n]$
induces therefore
$$j'_*\QQ^H_{U'}[n]\to j_*\QQ^H_U[n],$$
so that after passing to cohomology, we get morphisms of Hodge modules
$$R^qj'_*\QQ^H_{U'}[n]\to R^qj_*\QQ^H_U[n].$$
We deduce using (\ref{eq22_loc_coh}) and (\ref{eq32_loc_coh}) 
that the canonical morphisms 
$$\big(\cH_{Z'}^q(\shO_X), F\big)\to \big(\cH_Z^q(\shO_X), F\big)$$
are (strict) morphisms of filtered $\Dmod_X$-modules.
\end{remark}

\begin{remark}\label{comparison_two_embeddings}
It is not hard to compare the Hodge filtrations on the local cohomology sheaves along $Z$ with respect to two different embeddings in smooth varieties. Indeed, suppose that $k\colon X\hookrightarrow X'$ is a closed embedding, with $X'$ smooth, and 
$\dim(X)=n$ and $\dim(X')=n+d$. 
In this case, if $i\colon Z\hookrightarrow X$ and $i'\colon Z\hookrightarrow X'$ are the two embeddings, we have an isomorphism
$$i'_*{i'}^!\QQ_{X'}[n+d] \simeq \big(k_*i_*i^!\QQ_X^H[n]\big)[-d](-d),$$
where we recall that $(-d)$ denotes the Tate twist defined in \S\ref{scn:Dmod}.
By taking cohomology, we see that for every $q$, we have an isomorphism of filtered $\Dmod_{X'}$-modules
$$\big(\cH_Z^{q+d}(\shO_{X'}), F\big)\simeq \big(k_+\cH^{q}_Z(\shO_X)(-d), F\big).$$
Explicitly, if $y_1,\ldots,y_{n+d}$ are local coordinates on $X'$ such that $X$ is defined by $(y_1,\ldots,y_d)$, then we have an isomorphism
$$\cH_Z^{q+d}(\shO_{X'})\simeq\bigoplus_{\alpha_1,\ldots,\alpha_d\geq 0}\cH^q_Z(\shO_X)\otimes\partial_{y_1}^{\alpha_1}\cdots\partial_{y_d}^{\alpha_d}$$
such that the Hodge filtrations are related by
$$F_p\cH_Z^{q+d}(\shO_{X'})\simeq\bigoplus_{\alpha_1,\ldots,\alpha_r\geq 0}
F_{p-\sum_i\alpha_i}\cH^q_Z(\shO_X)\otimes\partial_{y_1}^{\alpha_1}\cdots\partial_{y_d}^{\alpha_d}.$$
\end{remark}

\begin{remark}
For simplicity, we only considered the local cohomology of $\shO_X$. More generally, one can consider an arbitrary Hodge module $M$ on $X$, with underlying filtered $\Dmod_X$-module $\Mmod$. In this case the local cohomology
sheaves $\cH_Z^q(\Mmod)$ continue to underlie Hodge modules, and thus carry canonical Hodge filtrations. 
\end{remark}

We end this section with two examples: the case of a smooth subvariety, and that of subsets defined by monomial ideals. We note in addition that the Hodge filtration on the local cohomology Hodge modules with support in generic determinantal ideals is studied extensively in \cite{PR} and \cite{Perlman}.

\begin{example}[Smooth subvarieties]\label{smooth_subvariety}
If $Z$ is a smooth, irreducible subvariety of $X$ of codimension $r$, then $\cH_Z^q(\shO_X)=0$ for $q\neq r$; see Remark~\ref{van_criterion}. We claim that
\begin{equation}\label{eq_smooth_subvariety}
F_p\cH^r_Z(\shO_X)=\{u\in\cH^r_Z(\shO_X)\mid\I_Z^{p+1}u=0\}.
\end{equation}

In order to see this, we may assume that $X$ is affine with $A=\shO_X(X)$ and we have global algebraic coordinates $x_1,\ldots,x_n$ on $X$ 
such that $Z$ is defined by $I=(x_1,\ldots,x_r)$. In this case, as we have previously discussed,
$H_I^r(A)$ is filtered isomorphic to the cokernel of the map
$$\bigoplus_{i=1}^rA_{x_1\cdots \widehat{x_i}\cdots x_r} \to A_{x_1\cdots x_r}.$$
Moreover, since $x_1\cdots x_r$ defines a simple normal crossing divisor on $X$, the Hodge filtration on
$A_{x_1\cdots x_r}$ is given by 
$$F_pA_{x_1\cdots x_r}=F_p\Dmod_X\cdot\frac{1}{x_1\cdots x_r}$$
(see \cite[Section~8]{MP1}).
This is generated over $A$ by the classes of $\frac{1}{x_1^{a_1}\cdots x_r^{a_r}}$,
with $a_1,\ldots,a_r\geq 1$,  such that $a_1+\cdots+a_r=p+r$.
The fact that this is equal to the right-hand side of (\ref{eq_smooth_subvariety}) follows easily from the fact that $x_1,\ldots,x_r$ form
a regular sequence in $A$ (for example, by reducing to the case when $A$ is the polynomial ring ${\mathbf C}[x_1,\ldots,x_r]$). 

\smallskip
Another way to obtain this description, relying on the formalism of mixed Hodge modules, is the following. 
If $i\colon Z\hookrightarrow X$ is the inclusion, then
$$
i^!\QQ_X^H[n]=\big(\QQ_Z^H[n-r]\big)[-r](-r).
$$
Applying $i_*$ and taking cohomology,  we get an isomorphism of filtered $\Dmod_X$-modules
\begin{equation}\label{eq_smooth_subvariety_MHM}
\cH^r_Z(\shO_X)\simeq i_+\shO_Z(-r).
\end{equation}
Explicitly, if $x_1,\ldots,x_n$ are local coordinates on $X$ such that $Z$ is defined by
$(x_1,\ldots,x_r)$, then we have an isomorphism
$$
\cH^r_Z(\shO_X)\simeq\bigoplus_{\alpha_1,\ldots,\alpha_r\geq 0}\shO_Z\otimes\partial_{x_1}^{\alpha_1}\cdots \partial_{x_r}^{\alpha_r}
$$
such that the Hodge filtration is given by
$$F_p\cH^r_Z(\shO_X)\simeq \bigoplus_{\alpha_1+\cdots+\alpha_r\leq p}\shO_Z\otimes \partial_{x_1}^{\alpha_1}\cdots \partial_{x_r}^{\alpha_r}.$$
Note that the element $1\otimes 1$ corresponds to the class of $\frac{1}{x_1\cdots x_r}$ in the \v{C}ech description of local cohomology,
hence we obtain the same description of the Hodge filtration as in ($\ref{eq_smooth_subvariety}$). For completeness, we note that it also follows from 
(\ref{eq_smooth_subvariety_MHM}) that $\cH^r_Z(\shO_X)$ underlies a pure Hodge module of weight $n+r$. 
\end{example}

\begin{example}[Monomial ideals]\label{monomial_ideal}
We now consider the case when $X=\AAA^n$ and $I\subseteq A=\CC[x_1,\ldots,x_n]$ is a monomial ideal.
The $(\CC^*)^n$-action on $\AAA^n$ induces a $(\CC^*)^n$-action on each $H^q_I(A)$, which translates into 
a $\ZZ^n$-grading on $H^q_I(A)$. The action of every element of $(\CC^*)^n$ induces an isomorphism of 
filtered $\Dmod_X$-modules $H^q_I(A)\to H^q_I(A)$, hence every $F_pH^q_I(A)$ is a graded $A$-submodule
of $H^q_I(A)$. 

Let us denote by $e_1,\ldots,e_n$ the standard basis of $\ZZ^n$.
It is easy to see that multiplication by $x_i$ induces an isomorphism
$$H^q_I(A)_u\to H^q_I(A)_{u+e_i}$$
for every $u\in\ZZ^n$, unless $u_i=-1$. This follows, for example, 
by computing the local cohomology via the \v{C}ech-type complex associated
to a system of reduced monomial generators of ${\rm Rad}(I)$; see \cite[Theorem~1.1]{EMS}.
Similarly, multiplication by $\partial_i$ induces an isomorphism
\begin{equation}\label{eq_eg_monomial}
H^q_I(A)_u\to H^q_I(A)_{u-e_i}
\end{equation}
for every $u\in\ZZ^n$, unless $u_i=0$. 

For every $u\in\ZZ^n$, we write $x^u$ for the corresponding Laurent monomial. Given $J\subseteq \{1,\ldots,n\}$,
let $u_J=\sum_{i\in J}e_i$ and $x_J=x^{u_J}$. If for $u=(u_1,\ldots,u_n)\in\ZZ^n$, we put ${\rm neg}(u):=\{i\mid u_i<0\}$,
then $A_{x_J}=\bigoplus_{{\rm neg}(u)\subseteq J}\CC x^u$. Furthermore, 
the explicit description of the Hodge filtration on $A_{x_J}$ that we have seen in Example~\ref{smooth_subvariety} can be rewritten as
$$F_pA_{x_J}=\bigoplus_u\CC x^u,$$
where the direct sum is over those $u$ with ${\rm neg}(u)\subseteq J$ and such that $\sum_{i\in {\rm neg}(u)}(-u_i-1)\leq p$. 
By computing the local cohomology via the \v{C}ech-type complex thus gives
$$F_pH^q_I(A)=\bigoplus_uH^q_I(A)_u,$$
where the sum is over those $u\in\ZZ^n$ such that $\sum_{i\in {\rm neg}(u)}(-u_i-1)\leq p$. 
In particular, we see that
$$F_0H^q_I(A)=\bigoplus_uH^q_I(A)_u,$$
where the sum is over all $u\in\ZZ^n$ such that $u_i\geq -1$ for all $i$. 
\end{example}

\subsection{Birational description and strictness}\label{scn:birational}
For us it will be important to have a description of the Hodge filtration on $\cH^q_Z(\shO_X)$ via a log resolution of the pair
 $(X,Z)$. 
To this end, it is more convenient to use the corresponding right Hodge modules, with corresponding 
$\Dmod_X$-modules $\cH^q_Z(\omega_X)$.

Suppose that $f\colon Y\to X$ is such a resolution; more precisely, we require $f$ to be a projective morphism that is an isomorphism over $U=X\smallsetminus Z$, such that $Y$ is a smooth variety, and the reduced inverse image
$f^{-1}(Z)_{\rm red}$ is a simple normal crossing divisor $E$. If $j'\colon V=Y\smallsetminus E\hookrightarrow Y$ is the inclusion map, then we have a commutative diagram
\begin{equation}\label{eq_diag_res}
\begin{tikzcd}
V\rar{j'}\dar  & Y
\dar{f} \\
U \rar{j} & X,
\end{tikzcd}
\end{equation}
in which the left vertical map is an isomorphism. We thus have an isomorphism 
$$j_*\QQ_U^H[n]\simeq f_*j'_*\QQ_V^H[n]$$
in the derived category of Hodge modules. As indicated in the paragraph after ($\ref{eq32_loc_coh}$), since $E$ is a divisor the underlying filtered right $\Dmod_Y$-module of the Hodge module $j'_*\QQ_V^H[n]$ is $\big(\omega_Y (*E), F\big)$, where $F$ is the Hodge filtration; see \cite[\S8]{MP1} for a more precise description in the present SNC setting. Taking cohomology in the isomorphism
above, we therefore obtain isomorphisms of filtered right $\Dmod_X$-modules
\begin{equation}\label{eqn:iso}
\shH^q f_+\omega_Y(*E) \simeq R^qj_*\omega_U \simeq 
\begin{cases}
\cH^{q+1}_Z(\omega_X) \,\,\,\,{\rm for} \,\,\,\, q \ge 1 \\
\\
j_* \omega_U \onto \cH^{1}_Z(\omega_X) \simeq j_* \omega_U/ \omega_X  \,\,\,\,{\rm for} \,\,\,\, q =0
\end{cases}
\end{equation}
where for the last isomorphism we use ($\ref{eq22_loc_coh}$) and ($\ref{eq32_loc_coh}$).

On the other hand, we have a filtered resolution of $\omega_Y(*E)$ by induced right $\Dmod_Y$-modules, given by the complex 
\begin{equation}\label{eq_filtered_res}
0\to\Dmod_Y\to\Omega_Y^1(\log E)\otimes_{\shO_Y}\Dmod_Y\to\cdots\to\omega_Y(E)\otimes_{\shO_Y}\Dmod_Y\to 0,
\end{equation}
placed in degrees $-n,\ldots,0$; see \cite[Proposition~3.1]{MP1}.  Using the notation and discussion in \S\ref{scn:Dmod}, by tensoring with $\Dmod_{Y \to X}$ over $\Dmod_Y$, we thus obtain a filtered complex
$$A^{\bullet}:\quad
0\to f^*\Dmod_X\to \Omega_Y^1(\log E)\otimes_{\shO_Y}f^*\Dmod_X\to\cdots\to\omega_Y(E)\otimes_{\shO_Y}f^*\Dmod_X\to 0,$$
which is filtered quasi-isomorphic to 
$\omega_Y (*E) \overset{\derL}{\otimes}_{\Dmod_Y} \Dmod_{Y\to X}$, and therefore can be used to compute 
$f_+ \omega_Y(*E)$. Note that the filtration on $A^{\bullet}$ is such that 
 $F_{p-n}A^{\bullet}$, for $p \ge 0$,  is the subcomplex
$$0\to f^*F_{p-n}\Dmod_X\to \Omega_Y^1(\log E)\otimes_{\shO_Y}f^*F_{p-n+1}\Dmod_X\to\cdots\to\omega_Y(E)\otimes_{\shO_Y}f^*F_p\Dmod_X\to 0.$$
A detailed exposition of all this can be found in \cite[\S2 and \S3]{MP1}.

Now by ($\ref{eqn:iso}$) and the definition of $\Dmod$-module push-forward, for all $q \ge 0$ we have 
$$R^qj_* \omega_U \simeq R^qf_*A^{\bullet}$$
as right $\Dmod_X$-modules. The filtration on the right hand side is described at the end of \S\ref{scn:Dmod}, and since 
these filtered $\Dmod$-modules underlie Hodge modules, the strictness property of the Hodge filtration in 
($\ref{strictness_formula}$) leads to the following key Hodge-theoretic consequence for this birational interpretation:

\begin{proposition}\label{comput_log_res}
With the above notation, for every $p,q\geq 0$, the inclusion $F_{p-n}A^{\bullet}\hookrightarrow A^{\bullet}$ induces an injective map
$$R^qf_*F_{p-n}A^{\bullet}\hookrightarrow R^qf_*A^{\bullet}\simeq R^qj_*\omega_U,$$
whose image is $F_{p-n}R^qj_*\omega_U$. Moreover, via ${\rm (}\ref{eqn:iso}{\rm )}$ this image coincides with $F_{p-n} \cH^{q+1}_Z(\omega_X)$ for $q \ge 1$, while for $q=0$, its quotient by $\shO_X$ gives $F_{p-n} \cH^1_Z(\omega_X)$.
\end{proposition}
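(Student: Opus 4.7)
The proof should be essentially a direct application of Saito's strictness theorem once we recognize the complex $A^\bullet$ as a filtered model of $\omega_Y(*E) \overset{\derL}{\otimes}_{\Dmod_Y} \Dmod_{Y\to X}$. My plan is to set this identification up carefully, invoke strictness, and then translate to local cohomology via $(\ref{eqn:iso})$.

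First I would argue that the complex $A^\bullet$, together with the given filtration $F_{p-n}A^{\bullet}$, is filtered quasi-isomorphic to $\omega_Y(*E) \overset{\derL}{\otimes}_{\Dmod_Y} \Dmod_{Y\to X}$ in the filtered derived category. This is immediate from the fact that the complex $(\ref{eq_filtered_res})$ is a filtered resolution of $\omega_Y(*E)$ by induced (hence locally $\Dmod_Y$-flat) right $\Dmod_Y$-modules, so tensoring over $\Dmod_Y$ with the transfer bimodule $\Dmod_{Y\to X} = f^*\Dmod_X$ (whose induced filtration is $f^*F_\bullet\Dmod_X$) produces the complex $A^\bullet$ with precisely the filtration indicated in the statement.

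Next, by the definition of the $\Dmod$-module pushforward recalled in \S\ref{scn:Dmod}, applying $\derR f_*$ and taking $q$-th cohomology of $A^\bullet$ gives $\cH^q f_+\omega_Y(*E)$; similarly, $R^qf_*F_{p-n}A^\bullet$ computes $R^qf_*$ of $F_{p-n}\bigl(\omega_Y(*E)\overset{\derL}{\otimes}_{\Dmod_Y}\Dmod_{Y\to X}\bigr)$. Since $f$ is projective and $\omega_Y(*E)$ underlies the Hodge module $j'_*\QQ_V^H[n]$, Saito's strictness property $(\ref{strictness_formula})$ applies: the natural map
$$R^qf_*F_{p-n}\bigl(\omega_Y(*E)\overset{\derL}{\otimes}_{\Dmod_Y}\Dmod_{Y\to X}\bigr) \longrightarrow \cH^q f_+\omega_Y(*E)$$
is injective, and by definition the image is the Hodge filtration $F_{p-n}\cH^q f_+\omega_Y(*E)$. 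Translated through the filtered quasi-isomorphism of the previous paragraph, this says exactly that $R^qf_*F_{p-n}A^\bullet \hookrightarrow R^qf_*A^\bullet$ is injective with image equal to the Hodge filtration level on $R^qf_*A^\bullet \simeq R^qj_*\omega_U$.

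Finally, I would translate the resulting statement through the isomorphism $(\ref{eqn:iso})$. For $q\geq 1$ this is direct: $(\ref{eq32_loc_coh})$ (applied in the Hodge module setting via $(\ref{eq5_loc_coh})$) is an isomorphism of filtered $\Dmod_X$-modules, so $F_{p-n}R^qj_*\omega_U$ corresponds to $F_{p-n}\cH^{q+1}_Z(\omega_X)$. For $q=0$, one uses instead the filtered short exact sequence $(\ref{eq22_loc_coh})$ (in the right $\Dmod$-module formulation): the quotient of $F_{p-n}j_*\omega_U$ by $\omega_X$ gives $F_{p-n}\cH^1_Z(\omega_X)$. There is no genuine obstacle here; the only subtlety is bookkeeping the index shift $p\mapsto p-n$ imposed by the left-right convention $(\ref{eq_left_right_shift})$ and verifying compatibility with the already-established mixed Hodge module structure on $R^qj_*\omega_U$. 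The substantive content — injectivity and identification of the image — is entirely carried by strictness.
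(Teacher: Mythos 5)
Your proposal is correct and follows essentially the same route as the paper, which in fact gives no separate proof of Proposition~\ref{comput_log_res}: it is presented as a direct summary of the preceding discussion, namely the identification of $A^\bullet$ (with its filtration) as a filtered model for $\omega_Y(*E) \overset{\derL}{\otimes}_{\Dmod_Y} \Dmod_{Y\to X}$, the resulting isomorphism $R^q f_* A^\bullet \simeq R^q j_* \omega_U$, Saito's strictness property $(\ref{strictness_formula})$, and the translation through $(\ref{eqn:iso})$, $(\ref{eq22_loc_coh})$, $(\ref{eq32_loc_coh})$. You have reproduced exactly this chain of reasoning, including the right-module index shift coming from $(\ref{eq_left_right_shift})$.
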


As a concrete example, we obtain the following description of the lowest term of the filtration (note that by
(\ref{eq_left_right_shift}) and Remark~\ref{rem_trivial}, we have $F_p\cH_Z^q(\omega_X)=0$ for all $q$ and all $p<-n$):

\begin{corollary}\label{cor_F0}
For every $q\geq 2$, we have a canonical isomorphism
$$F_{-n}\cH^q_Z(\omega_X)\simeq R^{q-1}f_*\omega_Y(E).$$
Moreover, we have a short exact sequence
$$0\to\omega_X\to f_*\omega_Y(E)\to F_{-n}\cH^1_Z(\omega_X)\to 0.$$
\end{corollary}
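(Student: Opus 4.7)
The plan is to apply Proposition \ref{comput_log_res} with $p=0$ and analyze what happens in the lowest graded piece. The main observation is that the filtered subcomplex $F_{-n}A^\bullet$ collapses to a very simple complex because $F_k\Dmod_X=0$ for $k<0$ and $F_0\Dmod_X=\shO_X$. Concretely, the $i$-th term of $A^\bullet$ (for $-n\le i\le 0$) is $\Omega_Y^{n+i}(\log E)\otimes_{\shO_Y}f^*\Dmod_X$, and its piece at level $-n$ involves the sheaf $f^*F_i\Dmod_X$. Since $F_i\Dmod_X=0$ for $i<0$, only the term in degree $0$ survives, and that term is $\omega_Y(E)\otimes_{\shO_Y}f^*\shO_X=\omega_Y(E)$. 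Thus $F_{-n}A^\bullet$ is quasi-isomorphic to $\omega_Y(E)$ sitting in degree $0$.

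Having made this identification, the proposition gives, for every $q\ge 0$, an injection
$$R^{q}f_*\omega_Y(E)\hookrightarrow R^{q}f_*A^{\bullet}\simeq R^{q}j_*\omega_U$$
whose image is $F_{-n}R^{q}j_*\omega_U$. For $q\ge 1$ we combine this with the isomorphism $R^{q}j_*\omega_U\simeq \cH^{q+1}_Z(\omega_X)$ from \eqref{eqn:iso} to obtain
$$R^{q}f_*\omega_Y(E)\simeq F_{-n}\cH^{q+1}_Z(\omega_X).$$
Reindexing $q\mapsto q-1$ yields the first assertion of the corollary for $q\ge 2$.

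For the short exact sequence in the case $q=1$, take $q=0$ in the above. The injection $f_*\omega_Y(E)\hookrightarrow j_*\omega_U$ has image $F_{-n}(j_*\omega_U)$. The right-$\Dmod$-module version of \eqref{eq22_loc_coh} is the short exact sequence
$$0\to \omega_X\to j_*\omega_U\to \cH^1_Z(\omega_X)\to 0$$
of Hodge modules, hence of filtered $\Dmod_X$-modules. Under the left-right convention \eqref{eq_left_right_shift}, the filtration on $\omega_X$ satisfies $F_{-n}\omega_X=\omega_X$, so $\omega_X\subseteq F_{-n}(j_*\omega_U)$. By strictness applied to the above sequence, taking $F_{-n}$ is exact, which gives
$$F_{-n}(j_*\omega_U)/\omega_X\;\simeq\;F_{-n}\cH^1_Z(\omega_X).$$
Substituting $F_{-n}(j_*\omega_U)=f_*\omega_Y(E)$ produces the desired short exact sequence $0\to\omega_X\to f_*\omega_Y(E)\to F_{-n}\cH^1_Z(\omega_X)\to 0$.

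The only non-routine point is verifying that the natural inclusion $\omega_X\hookrightarrow j_*\omega_U$ lands inside the subsheaf $f_*\omega_Y(E)$, i.e.\ that it factors through the identification of $f_*\omega_Y(E)$ with $F_{-n}(j_*\omega_U)$ coming from Proposition \ref{comput_log_res}; but this is immediate from naturality, since over $U$ the map $f$ is an isomorphism and both filtered pieces restrict to $\omega_U$ on that locus, while the strictness of morphisms of Hodge modules forces the inclusion $\omega_X\hookrightarrow j_*\omega_U$ to be compatible with Hodge filtrations. Thus no further work is required beyond unpacking Proposition \ref{comput_log_res} at the bottom of the filtration.
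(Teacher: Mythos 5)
Your proof is correct and follows exactly the paper's approach: the paper's argument is the one-line observation that $F_{-n}A^\bullet = \omega_Y(E)$ combined with Proposition \ref{comput_log_res}, and you have simply unpacked both ingredients in detail. The only slight redundancy is in your final paragraph: the compatibility of $\omega_X \hookrightarrow j_*\omega_U$ with the filtration, and the identification of $F_{-n}j_*\omega_U / \omega_X$ with $F_{-n}\cH^1_Z(\omega_X)$, is already stated explicitly as part of the conclusion of Proposition \ref{comput_log_res} (the clause "while for $q=0$, its quotient by $\shO_X$ gives $F_{p-n}\cH^1_Z(\omega_X)$"), so there is nothing left to verify there.
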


\begin{proof}
The assertion follows from Proposition~\ref{comput_log_res} and the fact that $F_{-n}A^{\bullet}=\omega_Y(E)$. 
\end{proof}

\begin{remark}[Analytic setting]\label{rmk:analytic1}
The results in this section also apply when $Z$ is an analytic subspace of a complex manifold $X$, due to the study of open direct images in the analytic category (see \cite[Proposition 2.11 and Corollary 2.20]{Saito-MHM}) and the fact that the strictness theorem \cite[Theorem 2.14]{Saito-MHM} holds for any projective morphism between analytic spaces, e.g. a resolution of singularities. Cf. also \cite[\S2.1]{JKSY}.
\end{remark}

\subsection{An injectivity theorem}\label{scn:compatibility}
We continue to use the notation in the previous section. Using the exact sequence
$$0 \to \omega_Y \to \omega_Y(E) \to \omega_E \to 0$$
and the Grauert-Riemenschneider vanishing theorem, Corollary \ref{cor_F0} tells us that for all $q \ge 1$, as a consequence of strictness for the Hodge filtration, we have an isomorphism 
$$\gamma_q \colon R^{q-1} f_* \omega_E \to F_{-n}\cH^q_Z(\omega_X).$$
We also consider the inclusion maps
$$i_q \colon F_{-n}\cH^q_Z(\omega_X) \hookrightarrow \cH^q_Z(\omega_X).$$

Theorem \ref{inclusions} in the Introduction follows therefore once the following 
compatibility is established:

\begin{proposition}\label{compatibility}
For each $q$, the composition 
$$i_q \circ \gamma_q \colon R^{q-1} f_* \omega_E \to \cH^q_Z(\omega_X)$$
coincides with the morphism on cohomology described in the statement of Theorem \ref{inclusions}.
\end{proposition}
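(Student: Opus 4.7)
The plan is to compare both morphisms to a common intermediate map $\delta \colon \derR f_* \omega_E^\bullet \to \derR \underline{\Gamma_Z}(\omega_X)[n]$, constructed as follows. The natural inclusion $\omega_E = \omega_Y(E)/\omega_Y \hookrightarrow \omega_Y(*E)/\omega_Y$, combined with the identification $\omega_Y(*E)/\omega_Y \simeq \derR \underline{\Gamma_E}(\omega_Y)[1]$ (valid since $\derR \underline{\Gamma_E}(\omega_Y)$ has cohomology only in degree $1$), shifts to a morphism $\omega_E^\bullet \hookrightarrow \derR \underline{\Gamma_E}(\omega_Y)[n]$. Applying $\derR f_*$ and post-composing with the canonical isomorphism $\derR f_* \derR \underline{\Gamma_E}(\omega_Y) \simeq \derR \underline{\Gamma_Z}(\omega_X)$---obtained by pushing forward the triangle $\derR \underline{\Gamma_E}(\omega_Y) \to \omega_Y \to \derR j'_* \omega_V$ and using $\derR f_* \omega_Y \simeq \omega_X$ (Grauert-Riemenschneider, since $f$ is birational between smooth varieties) together with $\derR f_* \derR j'_* \omega_V \simeq \derR j_* \omega_U$ (since $f|_V$ is an isomorphism)---produces $\delta$.

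First I would verify that $\shH^{q-n}(\delta) = i_q \circ \gamma_q$. Applying $\derR f_*$ to the commutative diagram formed by the short exact sequences $0 \to \omega_Y \to \omega_Y(E) \to \omega_E \to 0$ and $0 \to \omega_Y \to \omega_Y(*E) \to \omega_Y(*E)/\omega_Y \to 0$, linked by the inclusions $\omega_Y(E) \hookrightarrow \omega_Y(*E)$ and $\omega_E \hookrightarrow \omega_Y(*E)/\omega_Y$, and using Grauert-Riemenschneider vanishing, one finds that for $q \ge 2$ both quotient maps $R^{q-1}f_* \omega_Y(E) \to R^{q-1}f_* \omega_E$ and $R^{q-1}f_* \omega_Y(*E) \to R^{q-1}f_*(\omega_Y(*E)/\omega_Y)$ are isomorphisms; thus $\shH^{q-n}(\delta)$ is identified with the map $R^{q-1}f_* \omega_Y(E) \to R^{q-1}f_* \omega_Y(*E)$, which by Corollary~\ref{cor_F0} and the identification $R^{q-1}f_* \omega_Y(*E) \simeq \cH^q_Z(\omega_X)$ from (\ref{eqn:iso}) is exactly $i_q \circ \gamma_q$. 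The case $q=1$ is similar, using the second part of Corollary~\ref{cor_F0}.

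The main step is to show $\delta = \beta \circ \alpha$ via a support-adjunction argument. Since each cohomology sheaf of $\derR f_* \omega_E^\bullet$ is supported on $Z$, we have $j^*\derR f_*\omega_E^\bullet = 0$, and hence $\Hom(\derR f_*\omega_E^\bullet, \derR j_*\omega_U^\bullet[k]) = 0$ for all $k$ by the $(j^*, \derR j_*)$ adjunction. The exact triangle $\derR \underline{\Gamma_Z}(\omega_X)[n] \to \omega_X^\bullet \to \derR j_*\omega_U^\bullet$ then forces any morphism $\derR f_*\omega_E^\bullet \to \derR \underline{\Gamma_Z}(\omega_X)[n]$ to be determined by its composition with the forgetful map $\derR \underline{\Gamma_Z}(\omega_X)[n] \to \omega_X^\bullet$. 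Consequently, it suffices to verify that $\beta \circ \alpha$ and $\delta$ agree after this composition; in both cases the result is the Grothendieck-Serre trace for the proper map $f \circ i_E \colon E \to X$. For $\beta \circ \alpha$, this follows because $\alpha$ is by definition Grothendieck dual to $\shO_Z \to \derR f_*\shO_E$, while $\beta$ composed with the forgetful map identifies (via $\derR \underline{\Gamma_Z}(\omega_X^\bullet) = \varinjlim_k \derR \shHom(\shO_X/\I_Z^k, \omega_X^\bullet)$) with the morphism $\omega_Z^\bullet \to \omega_X^\bullet$ induced by $\shO_X \twoheadrightarrow \shO_Z$; for $\delta$ one applies $\derR f_*$ to the analogous identity on $Y$---that $\omega_E^\bullet \to \derR \underline{\Gamma_E}(\omega_Y)[n] \to \omega_Y^\bullet$ is the trace for $i_E$---and composes with the Grauert-Riemenschneider isomorphism $\derR f_*\omega_Y^\bullet \simeq \omega_X^\bullet$, obtaining the trace for $f \circ i_E$ by functoriality.

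The main obstacle will be disentangling the various incarnations of Grothendieck-Serre duality---for $f$, $i_E$, and $f \circ i_E$---and translating between the local-cohomology and derived-$\shHom$ pictures; once everything is reduced to the canonical trace $\derR (f \circ i_E)_* \omega_E^\bullet \to \omega_X^\bullet$, the $Z$-support adjunction forces equality.
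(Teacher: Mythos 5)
Your proof is correct, and the second half takes a genuinely different route from the paper. The paper's Step~1 is essentially your Step~A in disguise (both identify the natural map $R^{q-1}f_*\omega_Y(E) \to R^{q-1}j_*\omega_U \simeq \cH^q_Z(\omega_X)$ with $i_q \circ \gamma_q$; the paper is more explicit about the transfer-module morphism $\Dmod_Y \to \Dmod_{Y\to X}$ and the comparison map $\rho_q$ between $\shO$-module and $\Dmod$-module direct images, which is the actual content being glossed when you write ``which by Corollary~\ref{cor_F0} and (\ref{eqn:iso}) is exactly $i_q\circ\gamma_q$''). Where you diverge is the paper's Steps~2--3, which carefully rewrite the morphism through a chain of Grothendieck-duality identifications of $\derR\shHom(\J,\omega_X)$ with $\derR j_*\omega_V$ and track compatibilities with $f$. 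Your replacement is the support-adjunction observation: since $j^*\derR f_*\omega_E^\bullet = 0$, the triangle $\derR\underline{\Gamma_Z}(\omega_X)[n] \to \omega_X^\bullet \to \derR j_*\omega_U^\bullet$ forces $\Hom(\derR f_*\omega_E^\bullet,\derR\underline{\Gamma_Z}(\omega_X)[n]) \hookrightarrow \Hom(\derR f_*\omega_E^\bullet,\omega_X^\bullet)$, so it suffices to compare the two maps after the forgetful projection, where both collapse to the Grothendieck--Serre trace of $f\circ i_E$ (for $\delta$ by transitivity of traces through $Y$, for $\beta\circ\alpha$ by duality applied to $\shO_X\to\shO_Z\to\derR f_*\shO_E$). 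This is cleaner and more conceptual; it avoids having to match up the intermediate objects in Steps~2--3 term by term, at the cost of making explicit the identifications $\omega_E^\bullet\simeq\derR\shHom(\shO_E,\omega_Y^\bullet)$ and $\omega_Y(*E)/\omega_Y \simeq \cH^1_E(\omega_Y)$ and verifying that the map $\omega_E^\bullet \to \derR\underline{\Gamma_E}(\omega_Y)[n] \to \omega_Y^\bullet$ is indeed the trace for $i_E$ --- which you do correctly, since $\derR\shHom(\shO_E,\omega_Y)\to\derR\underline{\Gamma_E}(\omega_Y)$ is the $k=1$ term of the colimit presentation, and on $\cH^1$ this is precisely the inclusion $\omega_E\hookrightarrow\omega_Y(*E)/\omega_Y$.

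The one place I would tighten the writeup is Step~A: the map $R^{q-1}f_*\omega_Y(E)\to R^{q-1}f_*\omega_Y(*E)$ lives in the world of $\shO$-module direct images, while $\gamma_q$ from Corollary~\ref{cor_F0} is built through the filtered $\Dmod$-module push-forward of the complex $A^\bullet$. The assertion that the two coincide under the isomorphism $R^{q-1}f_*\omega_Y(*E) = R^{q-1}j_*\omega_U \simeq \cH^q_Z(\omega_X)$ requires the compatibility of the quasi-isomorphism $\omega_Y(*E)\otimes^{\mathbf L}_{\Dmod_Y}\Dmod_{Y\to X}\simeq A^\bullet$ with the unit map $\omega_Y(*E)\to\omega_Y(*E)\otimes^{\mathbf L}_{\Dmod_Y}\Dmod_{Y\to X}$, which is the substance of the paper's Step~1 (the map $\rho_q$). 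You should spell this out rather than fold it into ``exactly''.
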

\begin{proof}
For simplicity we write down the argument for $q \ge 2$, when $ R^{q-1} f_* \omega_E \simeq  R^{q-1} f_* \omega_Y(E)$.
The argument for $q =1$ is similar, only this time by definition one needs to consider $f_* \omega_Y(E) / \omega_Y$ (as opposed 
to $f_* \omega_Y(E)$).

\noindent
\emph{Step 1.} We consider the commutative diagram (\ref{eq_diag_res}) and we identify $U$ and $V$ via the left vertical map. Applying $\derR f_*$ to the canonical inclusion $\omega_Y (E) \hookrightarrow j'_* \omega_U \simeq \omega_Y (*E)$ and passing to cohomology, we obtain morphisms
\begin{equation}\label{eqn:step1}
R^{q-1} f_* \omega_Y (E) \to R^{q-1} j_* \omega_U \simeq \cH^q_Z (\omega_X)
\end{equation}
for each $q$, where the last isomorphism is the canonical isomorphism in ($\ref{eq3_loc_coh}$). We claim that these morphisms can be identified with the compositions $i_q \circ \gamma_q$.

To see this, recall that we have identified
$$\cH^q_Z (\omega_X) \simeq \cH^{q-1}f_+ \omega_Y (*E) : = R^{q-1} f_* \big(
\omega_Y (*E) \overset{\derL}{\otimes}_{\Dmod_Y} \Dmod_{Y\to X} \big).$$
The transfer module admits a canonical morphism $\Dmod_Y \to \Dmod_{Y \to X}$
of left $\Dmod_Y$-modules (induced by $T_Y \to f^* T_X$), which in turn induces a morphism 
$$\rho_q \colon R^{q-1} f_* \omega_Y(*E) \to  \cH^{q-1}f_+ \omega_Y (*E).$$
Now the morphism $i_q \circ \gamma_q$, defined using the resolution ($\ref{eq_filtered_res}$) of $\omega_Y(*E)$, is obtained more precisely by pushing forward the inclusion $\omega_Y (E) \hookrightarrow \omega_Y (*E)$, and then considering the composition
$$R^{q-1} f_* \omega_Y(E)  \longrightarrow R^{q-1} f_* \omega_Y(*E) \overset{\rho_q }{\longrightarrow}
 \cH^{q-1}f_+ \omega_Y (*E).$$
 But since $f \circ j' = j$, $\rho_q$ can also be identified canonically with the isomorphism $R^{q-1} j_* \omega_U \to \cH^{q-1} j_+ \omega_U$ appearing above, and we are done.

\noindent
\emph{Step 2.}
In this step we discuss the following general situation: assume that $W$ is a closed subscheme in $X$, with ideal sheaf $\J$. If $j \colon V \hookrightarrow X$ is the inclusion map of the complement $V = X \smallsetminus W$, we have a diagram of exact triangles
$$
\begin{tikzcd}
\omega_X  \dar{=} \rar & \derR \shH om_{\shO_X} (\J , \omega_X) \rar \dar & \derR \shH om_{\shO_X} (\shO_W , \omega_X)[1] \dar  \overset{+1}{\longrightarrow} \\
\omega_X \rar & \derR j_*\omega_V  \rar & \derR \underline{\Gamma_W} (\omega_X)[1]  \overset{+1}{\longrightarrow} 
 \end{tikzcd}
$$
where the vertical map on the right is the canonical morphism and where the middle vertical map can be described as the canonical morphism
\begin{equation}\label{descr_can_morphism}
\derR \shH om_{\shO_X} (\J , \omega_X) \to \derR j_* j^* \derR \shH om_{\shO_X} (\J , \omega_X)=\derR j_*\omega_V,
\end{equation}
since $\J_{|V} \simeq \shO_V$.

Moreover, in the presence of a proper morphism $f \colon Y \to X$, assumed to be an isomorphism over $V$, we can consider the ideal $\J_Y : = \J \cdot \shO_Y$ defining $f^{-1} (W)$, and completely similarly we have a canonical 
morphism 
$$\derR \shH om_{\shO_Y} (\J_Y ,f^{!} \omega_X) \to \derR j^{\prime}_*((f^{!} \omega_X)_{|V}) = \derR j^{\prime}_*
 \omega_V,$$
 where $j'$ is the inclusion of $V$ in $Y$. Applying $\derR f_*$ to this morphism, and Grothendieck duality to the first term, we obtain a diagram
 $$
\begin{tikzcd}
\derR f_* \derR \shH om_{\shO_Y} (\J_Y ,f^{!} \omega_X) \dar{\simeq} \rar & \derR f_* \derR j^{\prime}_* \omega_V \rar{=} &\derR f_* \derR j^{\prime}_* \omega_V \dar{\simeq} \\
\derR \shH om_{\shO_X} (\derR f_* \J_Y , \omega_X) \rar & \derR \shH om_{\shO_X} (\J , \omega_X) \rar &
\derR j_* \omega_V
\end{tikzcd}
$$
where we included the natural factorization of the bottom morphism through the object $\derR \shH om_{\shO_X} (\J , \omega_X)$, 
induced by the canonical morphism $\J \to \derR f_* \J_Y$; this factorization holds due to the description of the morphism in (\ref{descr_can_morphism}) and the fact that Grothendieck duality is compatible with restriction to open subsets.
 
 \noindent
\emph{Step 3.}
In this step we apply the  constructions in Step 2 to give another description of the morphism ($\ref{eqn:step1}$), which 
will finish the proof. First, ($\ref{eqn:step1}$) can be rewritten as the composition
$$R^{q-1} f_* \derR\shH om_{\shO_Y}\big(\shO_Y (-E), \omega_Y \big)  \to R^{q-1} f_* \derR\shH om_{\shO_Y} \big(\I_{f^{-1}(Z)}, \omega_Y \big) \to R^{q-1} j_* \omega_U,$$
where the factorization through the middle term holds since $E$ is the reduced structure on $f^{-1} (Z)$. Applying Grothendieck duality, this composition can be rewritten as
$$\shE xt^{q-1}_{\shO_X} \big(\derR f_* \shO_Y (-E), \omega_X \big)  \to \shE xt^{q-1}_{\shO_X}  \big(\derR f_* \I_{f^{-1}(Z)}, \omega_X \big) \to R^{q-1} j_* \omega_U,$$
and the map on the right factors further through $\shE xt^{q-1}  \big( \I_Z, \omega_X \big)$, as described in the last diagram 
in Step 2. Moreover, it is straightforward to see that for $q \ge 2$ we have canonical isomorphisms 
$$\shE xt^{q-1}_{\shO_X} \big(\derR f_* \shO_Y (-E), \omega_X \big) \simeq \shE xt^q_{\shO_X} \big(\derR f_* \shO_E , \omega_X \big)$$
and 
$$\shE xt^{q-1}_{\shO_X} \big(\I_Z, \omega_X \big) \simeq \shE xt^q_{\shO_X} \big(\shO_Z , \omega_X \big).$$
Altogether, the morphism  ($\ref{eqn:step1}$) can be identified with the natural composition 
$$\shE xt^q_{\shO_X} \big(\derR f_* \shO_E , \omega_X \big) \to \shE xt^q_{\shO_X} \big(\shO_Z , \omega_X \big) \to \cH^q_Z (\omega_X),$$
which is the same as the map on cohomology described in the statement of Theorem  \ref{inclusions}. (Note that Grothendieck duality gives $ \derR f_* \omega_E^\bullet \simeq \derR \shH om_{\shO_X} (\derR f_* \shO_E, \omega_X[n])$, while 
$\omega_Z^\bullet \simeq \derR \shH om_{\shO_X} (\shO_Z, \omega_X[n])$.)
\end{proof}

The upshot of Theorem \ref{inclusions} (and Proposition \ref{compatibility}) is that for each $q\ge1$ we have a commutative
diagram
$$
\begin{tikzcd}
R^{q-1} f_* \omega_E \rar{\alpha_q} \dar{\simeq}  & \E xt^q \big(\shO_Z, \omega_X \big)  \dar \\
 F_{-n} \mathcal{H}^q_Z \omega_X \rar &  \mathcal{H}^q_Z \omega_X,
 \end{tikzcd}
$$
where the left vertical map is the isomorphism in Corollary \ref{cor_F0}, the bottom horizontal map is the inclusion, the right vertical map is the natural map to local cohomology, while $\alpha_q$ is the morphism on cohomology induced by 
$$\alpha \colon \derR f_* \omega_E^\bullet \to \omega_Z^\bullet$$
in ${\bf D}^b\big({\rm Coh} (X)\big)$, obtained  in turn by dualizing the natural morphism $\shO_Z \to \derR f_* \shO_E$. (Here 
$\omega_E^\bullet = \omega_E [n -1]$ since $E$ is Gorenstein.)

As a consequence, the top horizontal map is injective. In other words this gives another proof of the injectivity result of Kov\'acs and Schwede, Corollary \ref{KS-injection} in the Introduction, applied in \cite{KS} to the study of deformations of Du Bois singularities.

\subsection{A local vanishing theorem}\label{scn:local-vanishing}
We use the constructions in \S\ref{scn:birational} to prove a Nakano-type vanishing result for log resolutions of arbitrary closed subsets. This generalizes a result for hypersurfaces due to Saito \cite[Corollary~3]{Saito-LOG} (cf. also \cite[Theorem~32.1]{MP1}).

\begin{proof}[Proof of  Theorem \ref{local-vanishing}]
We note that by the definition of $c$ and by Remark~\ref{van_criterion}, we have
\begin{equation}\label{eq1_thm_local_vanishing}
\cH_Z^{j}(\omega_X)=0\quad\text{if either}\quad j<r\quad\text{or}\quad j>c. 
\end{equation}

The vanishing in the statement holds trivially for $p>n$, hence we may assume $p\leq n$.  Note that in this case
the conditions in both i) and ii) imply $q\geq 1$.
We first check the case $p =n$.\footnote{Note that when $Z$ is a hypersurface, in which case $r=c=1$, this is the well-known 
fact that $R^q f_* \omega_Y (E) = 0$ for $q> 0$, a special case of the Local Vanishing theorem for multiplier ideals.}  This follows 
in both cases i) and ii) from (\ref{eq1_thm_local_vanishing}) and Corollary \ref{cor_F0}.

We next prove the theorem by descending induction on $p$. Let $p < n$ and consider the complex
$$ C^\bullet : = F_{-p} A^\bullet [p -n],$$
placed in cohomological degrees $0,\ldots,n-p$,
where $A^\bullet$ is as in Section~\ref{scn:birational}. 
Note that since $p+q\geq n+1$, we deduce from Proposition~\ref{comput_log_res}
that 
$$R^{p+q-n} f_* F_{-p} A^\bullet \hookrightarrow R^{p+q-n}f_* A^\bullet \simeq \cH_Z^{p+q-n+1} (\omega_X)\quad\text{for all}\quad q,$$
hence using again (\ref{eq1_thm_local_vanishing}), we see that in both cases i) and ii), we have
\begin{equation}\label{eqn:range}
R^q f_* C^\bullet = R^{p+q-n} f_* F_{-p} A^\bullet=0.
\end{equation}

Consider now the hypercohomology spectral sequence
$$E_1^{i,j}=R^jf_*C^i\Rightarrow R^{i+j}f_*C^{\bullet}.$$
By definition we have 
$$C^i= \Omega_Y^{p+i}(\log E)\otimes_{\shO_Y}f^*F_i\Dmod_X\quad\text{for}\quad 0\leq i\leq n-p,$$
hence we want to show that $E_1^{0,q}=0$. First, ($\ref{eqn:range}$) implies that in both cases i) and ii) we have 
$E_{\infty}^{0,q}=0$.

Now for every $k \ge 1$ we clearly have $E_k^{-k,q+k-1}=0$, since this is a first-quadrant spectral sequence.
On the other hand, we also have $E_k^{k,q-k+1}=0$ by induction. Indeed, this is a subquotient of
$$E_1^{k,q-k+1}=R^{q-k+1}f_*C^k=R^{q-k+1}f_*\Omega^{p+k}_Y(\log E)\otimes_{\shO_X}F_k\Dmod_X$$
and the right-hand side vanishes by induction. 
We thus conclude that $E_1^{0,q}=E_{\infty}^{0,q}=0$,
completing the proof. 
\end{proof}

\begin{remark}
(1) In the statement of the theorem, we may replace $c = {\rm lcd}(Z, X)$ by any $s$ such that $Z$ is locally cut out by $s$ equations. Indeed, the fact that $c\leq s$ follows from Remark~\ref{van_criterion}.

\noindent
(2) There exist further useful upper bounds on $c$ that depend only on the codimension $r$ of $Z$. We only list a couple here. In complete generality, Faltings \cite{Faltings} showed that 
$$c \le n - \left[ \frac{n-1}{r} \right].$$
Among many other improvements, Huneke and Lyubeznik \cite{HL} showed that if $Z$ is normal, then
$$c \le n - \left[ \frac{n}{r + 1} \right] -  \left[ \frac{n-1}{r + 1} \right].$$
Further results along these lines, assuming $S_k$ conditions on $Z$, appear in \cite{DT}.  We obtain in particular:

\begin{corollary}\label{explicit-vanishing}
Let $Z$ be a closed subscheme of codimension $r$ in a smooth, irreducible $n$-dimensional variety $X$. If $f\colon Y \to X$ 
is a log resolution of $(X, Z)$ which is an isomorphism away from $Z$, and $E = f^{-1}(Z)_{\rm red}$, then
$$R^qf_* \Omega_Y^p (\log E) = 0 \,\,\,\,\,\,{\rm for} \,\,\,\, p + q \ge 2n -  \left[ \frac{n-1}{r} \right].$$
If moreover $Z$ is assumed to be normal, then the same holds for 
$$p + q \ge 2n -  \left[ \frac{n}{r + 1} \right] -  \left[ \frac{n-1}{r + 1} \right].$$
\end{corollary}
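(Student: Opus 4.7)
The statement is really a direct packaging of the vanishing range in case ii) of Theorem \ref{local-vanishing} together with the explicit upper bounds on $c = {\rm lcd}(X,Z)$ listed in the remark immediately preceding it. So my plan is essentially a two-line deduction, but let me spell out the structure.

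First, I would recall that part ii) of Theorem \ref{local-vanishing} gives
$$R^q f_* \Omega_Y^p(\log E) = 0 \quad \text{whenever} \quad p+q \ge n + c,$$
where $c = {\rm lcd}(X,Z)$. Hence any upper bound of the form $c \le n - d$, for some nonnegative integer $d$ depending only on $n$ and $r$, automatically upgrades the vanishing range to $p + q \ge 2n - d$.

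For the first assertion, I would invoke Faltings' bound $c \le n - \bigl[\tfrac{n-1}{r}\bigr]$, giving the stated range $p+q \ge 2n - \bigl[\tfrac{n-1}{r}\bigr]$. For the second assertion, under the normality hypothesis on $Z$, I would apply the Huneke--Lyubeznik bound $c \le n - \bigl[\tfrac{n}{r+1}\bigr] - \bigl[\tfrac{n-1}{r+1}\bigr]$, which yields the improved range.

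There is no real obstacle here: the content of the corollary lies entirely in Theorem \ref{local-vanishing} and in the cited bounds from \cite{Faltings} and \cite{HL}; the only thing to check is the arithmetic rearrangement of the inequality $p+q \ge n+c$ using the bounds on $c$, which is immediate. The only minor point worth mentioning in the writeup is that both cited bounds apply verbatim in our setting since $X$ is a smooth complex variety (so in particular regular) and $Z$ is a closed subscheme of pure codimension $r$ (normality in the second case ensures pure codimension automatically, and one reduces to the local, Noetherian situation in the standard way).
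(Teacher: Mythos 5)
Your proposal is correct and is exactly the argument the paper intends: Theorem~\ref{local-vanishing}(ii) gives vanishing for $p+q\ge n+c$ with $c={\rm lcd}(X,Z)$, and substituting the Faltings bound $c\le n-\bigl[\tfrac{n-1}{r}\bigr]$ (resp.\ the Huneke--Lyubeznik bound $c\le n-\bigl[\tfrac{n}{r+1}\bigr]-\bigl[\tfrac{n-1}{r+1}\bigr]$ for $Z$ normal) yields the two stated ranges. One small inaccuracy in your closing aside: normality of $Z$ does not by itself imply pure codimension (a normal scheme may have components of different dimensions), but this is immaterial here since the hypothesis already specifies the codimension and the cited bounds are formulated in terms of the height of the defining ideal.
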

\end{remark} 

\medskip

We conclude by noting that in \cite[Theorem 32.1]{MP1} it is shown that when $Z$ is a Cartier divisor, in order to have local vanishing as in Theorem \ref{local-vanishing} it is enough to assume only that $X$ is smooth away from $Z$. It is therefore natural to ask:

\begin{question}
\emph{Is there an appropriate generalization of Theorem \ref{local-vanishing} that does not assume $X$ to be smooth?}
\end{question}

\section{Order and Ext filtrations, and some comparisons}

\subsection{Order and Ext filtration}\label{scn:order}
We now aim to define analogues of the pole order filtration associated to hypersurfaces, and compare them
with the Hodge filtration. We thank C. Raicu, whose answers to our questions have helped shape the material in this section.

We start by recording the following basic property of the Hodge filtration:

\begin{proposition}\label{inclusion_filtration}
For every $p,q\geq 0$, we have
$$\I_Z\cdot F_p\cH_Z^q(\shO_X)\subseteq F_{p-1}\cH_Z^q(\shO_X).$$
In particular, we have
$$\I_Z^{p+1}F_p\cH^q_Z(\shO_X)=0\quad\text{for every}\quad p\geq 0.$$
\end{proposition}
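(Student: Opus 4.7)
Since $\I_Z$ is locally generated by finitely many sections and the Hodge filtration is compatible with the $\shO_X$-action, it suffices to prove $h \cdot F_p \cH_Z^q(\shO_X) \subseteq F_{p-1} \cH_Z^q(\shO_X)$ for each local section $h \in \I_Z$. The plan is to pass to an affine open, choose generators $f_1, \ldots, f_r$ of $\I_Z$, and exploit the description from \S\ref{scn:HF}: $\cH_Z^q(\shO_X)$ is the $q^{\rm th}$ cohomology of the \v{C}ech-type complex $C^\bullet$ with $C^p = \bigoplus_{|J|=p} A_{f_J}$, each $A_{f_J} \simeq \shO_X(\ast D_J)$ carrying the Hodge filtration of the reduced divisor $D_J := (\Div f_J)_{\rm red}$. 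The differentials of $C^\bullet$ are morphisms of mixed Hodge modules, hence strict for $F$; consequently $F_p \cH_Z^q = \bigl(F_p C^q \cap \ker d\bigr)/ d\bigl(F_p C^{q-1}\bigr)$.

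The core technical input is the following statement for a single reduced divisor: if $D$ is reduced and $h$ is a local section of the ideal of a component of $D$, then
\[
h \cdot F_p \shO_X(\ast D) \subseteq F_{p-1} \shO_X(\ast D) + \shO_X(\ast D'),
\]
where $D'$ denotes $D$ with that component removed. This can be proved via the description $F_p \shO_X(\ast D) = I_p(D) \cdot \shO_X((p+1)D)$ in terms of the Hodge ideals of \cite{MP1}, combined with the monotonicity $I_p(D) \subseteq I_{p-1}(D)$ for $p \geq 1$ and a boundary-case analysis when $p = 0$ (where the $\shO_X(\ast D')$ summand absorbs the log-canonical contribution). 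Alternatively, it follows from the birational picture of \S\ref{scn:birational}: if $g\colon Y \to X$ is a log resolution and $t$ is a local equation of the relevant component of $E = g^{-1}(D)_{\rm red}$, then multiplication by $g^*h$ trades one power of $t$ in the $\Omega^\bullet(\log E)$-slot for one differential operator $\partial_t$ in the $\Dmod_X$-slot of the complex $A^\bullet$ via the commutator $[\partial_t, t] = 1$, dropping the filtration level by one modulo smaller log divisors.

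Assembling the argument, one analyzes $f_i \tilde u_J$ componentwise for a cocycle $\tilde u = (\tilde u_J) \in F_p C^q$ representing a class in $F_p \cH_Z^q$. When $i \in J$, the divisor input places $f_i \tilde u_J$ in $F_{p-1} A_{f_J}$ modulo contributions from the smaller localization $A_{f_{J \setminus \{i\}}}$; these contributions combine across varying $J$ to form a coboundary $dv$, and strictness of $d$ ensures $v$ can be chosen in $F_{p-1} C^{q-1}$. When $i \notin J$, the term $f_i \tilde u_J$ is handled by symmetric bookkeeping using that the \v{C}ech differential connects to $A_{f_{J \cup \{i\}}}$. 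The main obstacle I anticipate is precisely this combinatorial bookkeeping, which must be carefully synchronized with strictness; the boundary case $p = 0$ is also delicate, since one must exploit the quotient structure of local cohomology (elements of $\shO_X$ being killed). The ``in particular'' clause follows by iterating the first inclusion $p+1$ times and invoking $F_{-1} \cH_Z^q = 0$ from Remark~\ref{rem_trivial}.
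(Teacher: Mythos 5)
The paper's proof of the first inclusion is a one-line citation to \cite[Lemma~3.2.6]{Saito-MHP}, which states precisely that for \emph{any} filtered $\Dmod_X$-module $(\Mmod,F)$ underlying a mixed Hodge module whose support is contained in $Z$, one has $\I_Z\cdot F_p\Mmod\subseteq F_{p-1}\Mmod$; this is a formal consequence of the compatibility of the Hodge filtration with the $V$-filtration along hypersurfaces through $Z$. Your route is fundamentally different: you try to reprove the statement from scratch by tracking the filtration through the \v{C}ech complex, reducing everything to a single-divisor lemma. That is not unreasonable as a first attempt, but it is both much longer and, as written, has a genuine gap.

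The gap is in your ``core technical input.'' You need, for a reduced divisor $D=D'+D''$ and $h$ vanishing on $D''$, the inclusion
\[
h\cdot F_p\shO_X(*D)\subseteq F_{p-1}\shO_X(*D)+\shO_X(*D').
\]
Observe that this is \emph{exactly} the proposition you are trying to prove, applied to the Hodge module $Q=\shO_X(*D)/\shO_X(*D')$, which is supported on $D''$; by strictness of $\shO_X(*D')\hookrightarrow\shO_X(*D)$, the displayed inclusion is equivalent to $h\,F_p Q\subseteq F_{p-1}Q$, i.e.\ $\I_{D''}\cdot F_pQ\subseteq F_{p-1}Q$. So you have not reduced the problem to something more elementary; you have reduced it to a special case of itself. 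Your two sketched arguments for the core lemma do not close this circle. The Hodge-ideal argument gives $h\cdot F_p\shO_X(*D)=I_p(D)\shO_X(pD+D')\subseteq I_{p-1}(D)\shO_X(pD+D')$ using monotonicity, but getting from there into $I_{p-1}(D)\shO_X(pD)+\shO_X(*D')$ is a genuine decomposition problem: a local section $a/(g^p g')$ with $a\in I_{p-1}(D)$ does not in general split as (something with poles of order $\le p$ along $D$) plus (something regular away from $D'$) when $D'\cap D''\neq\emptyset$. The ``commutator $[\partial_t,t]=1$'' heuristic from the birational picture is not a proof either. Only in the case $D''=D$, $D'=0$ (i.e.\ $h$ is a local equation of all of $D$) does the monotonicity of Hodge ideals give the result directly, because then $h\shO_X((p+1)D)=\shO_X(pD)$ exactly; but that is precisely the case that does not occur in the \v{C}ech complex, where $f_i$ cuts out only part of $D_J$. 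Separately, the combinatorial assembly — matching the $A_{f_{J\setminus\{i\}}}$ corrections to a coboundary and handling the terms with $i\notin J$ via the cocycle condition at $J\cup\{i\}$ together with strictness of the localization maps — is plausible but not actually carried out, and you yourself flag it as the main difficulty. The ``in particular'' clause is fine: it is just iteration combined with $F_{-1}=0$ from Remark~\ref{rem_trivial}, and matches what the paper does. The key idea you are missing is that this statement is best seen as an instance of a general structural fact about Hodge modules supported on $Z$, rather than something to be re-derived case-by-case from the hypersurface theory.
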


\begin{proof}
The first assertion is a general property of filtered $\Dmod_X$-modules underlying Hodge modules whose support is
contained in $Z$; see \cite[Lemma~3.2.6]{Saito-MHP}. The second assertion then follows from the fact that $F_{-1}\cH^q_Z(\shO_X)=0$ (see Remark~\ref{rem_trivial}).
\end{proof}

\begin{remark}\label{rmk_inclusion_filtration}
If $Z$ is a reduced divisor, the second assertion in the above proposition is equivalent with the fact that 
$F_p\shO_X(*Z)\subseteq\shO_X\big((p+1)Z\big)$, i.e. the Hodge filtration is contained in the pole order filtration; see \cite[Proposition 0.9]{Saito-B}, and also \cite[Lemma 9.2]{MP1}. Moreover, in terms of Hodge ideals, the first assertion says that $I_p(Z)\subseteq I_{p-1}(Z)$, see \cite[Proposition~13.1]{MP1}.
\end{remark}

\smallskip

\begin{definition}\label{definition_order_filtration}
The \emph{order filtration} on $\cH^q_Z(\shO_X)$ is the increasing filtration given by 
$$O_k\cH^q_Z(\shO_X):=\{u\in \cH^q_Z(\shO_X)\mid \I_Z^{k+1}u=0\},\,\,\,\,\,\, k \ge 0.$$
Note that we have a canonical isomorphism
$$O_k\cH^q_Z(\shO_X) \simeq \cH om_{\shO_X} \big(\shO_X /\I_Z^{k+1},   \cH^q_Z(\shO_X)\big).$$
\end{definition}

If $Z$ is a reduced divisor, then 
$$O_k\cH^1_Z(\shO_X) = \shO_X \big((k+1)Z\big) / \shO_X,$$
and the inclusions $F_k \shO_X(*Z) \subseteq \shO_X \big((k+1)Z\big)$ in Remark~\ref{rmk_inclusion_filtration}
say that 
$$F_k\cH^1_Z(\shO_X) \subseteq O_k\cH^1_Z(\shO_X) \,\,\,\,\,\,{\rm for~all} \,\,\,\,k \ge 0.$$
This last fact continues to be true in general:

\begin{proposition}\label{one_inclusion}
For arbitrary $Z$, and for every $k$ and $q$, we have 
$$F_k\cH^q_Z(\shO_X) \subseteq O_k\cH^q_Z(\shO_X).$$
\end{proposition}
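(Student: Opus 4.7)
The statement is essentially immediate from Proposition~\ref{inclusion_filtration}, and my plan is to extract it as a direct corollary rather than to reprove anything substantive. The idea is that the definition of $O_k\cH^q_Z(\shO_X)$ is exactly the subsheaf annihilated by $\I_Z^{k+1}$, so the claim $F_k \subseteq O_k$ is equivalent to the annihilation statement $\I_Z^{k+1}\cdot F_k\cH^q_Z(\shO_X)=0$. But this is precisely the ``in particular'' clause in Proposition~\ref{inclusion_filtration}.

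The one-line argument I would write is: take $u \in F_k\cH^q_Z(\shO_X)$; by Proposition~\ref{inclusion_filtration} applied $k+1$ times (or directly by the last displayed equation of that proposition), we get $\I_Z^{k+1}u \subseteq F_{-1}\cH^q_Z(\shO_X)$, and the latter vanishes by Remark~\ref{rem_trivial}. Hence $u \in O_k\cH^q_Z(\shO_X)$.

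There is no obstacle here: the entire content has been established in Proposition~\ref{inclusion_filtration}, whose proof in turn invokes the standard fact \cite[Lemma~3.2.6]{Saito-MHP} that the Hodge filtration on a Hodge module supported in $Z$ is compatible with the $\I_Z$-adic filtration in the sense that multiplication by a section of $\I_Z$ shifts it down by one. The only slight temptation would be to reprove this fact from the birational description in \S\ref{scn:birational}, but since Proposition~\ref{inclusion_filtration} is already recorded just a few lines above, invoking it is the cleanest route. So my proposed proof is simply to cite Proposition~\ref{inclusion_filtration} and unwind the definition of $O_k$.
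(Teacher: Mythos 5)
Your proposal is correct and matches the paper's proof exactly: the paper likewise disposes of this in one line by noting that the statement is equivalent to the second assertion of Proposition~\ref{inclusion_filtration}. Your unwinding of the definitions (including the appeal to $F_{-1}=0$ via Remark~\ref{rem_trivial}) is precisely the content of that assertion and its proof, so nothing is missing.
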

\begin{proof}
The statement is equivalent to the second assertion in Proposition \ref{inclusion_filtration}.
\end{proof}

\begin{remark}\label{infinite-generated}
The order filtration on $\cH^q_Z(\shO_X)$ is compatible with the filtration on $\Dmod_X$ by order of differential operators:
$$F_\ell \Dmod_X \cdot O_k\cH^q_Z(\shO_X) \subseteq O_{k + \ell} \cH^q_Z(\shO_X)\quad\text{for all}\quad k,\ell\geq 0.$$
However, unless $q = r = {\rm codim}_X(Z)$, the sheaves $O_k\cH^q_Z(\shO_X)$ are not 
coherent as long as $\cH^q_Z(\shO_X)\neq 0$. This makes the order filtration less suitable for $q > r$; this is 
a rather deep result of Lyubeznik \cite[Corollary 3.5]{Lyubeznik}.\footnote{The same statement was proved by Huneke-Koh \cite{HK} in positive characteristic.} For $q = r$ the situation is better; see Proposition \ref{E=O-codim} below.
\end{remark}

We also consider a related filtration. Recall that a well-known characterization of 
local cohomology (see \cite[Theorem~2.8]{Hartshorne-LC}) is
$$\cH_Z^q(\shO_X) = \underset{\underset{k}{\longrightarrow}}{\rm lim} ~\shE xt^q_{\shO_X} \big(\shO_X/ \I_Z^k, \shO_X\big),$$
where the morphisms 
\begin{equation}\label{ext-maps}
\shE xt^q_{\shO_X} \big(\shO_X/ \I_Z^k, \shO_X\big) \longrightarrow 
\shE xt^q_{\shO_X} \big(\shO_X/ \I_Z^{k+1}, \shO_X\big)
\end{equation}
between the terms in the direct limit are induced from the short exact sequence
$$0 \longrightarrow \I_Z^k /\I_Z^{k+1} \longrightarrow  \shO_X /\I_Z^{k+1}  \longrightarrow \shO_X/\I_Z^k 
\longrightarrow 0.$$

\medskip

\begin{definition}\label{definition_Ext_filtration}
The \emph{Ext filtration} on $\cH^q_Z(\shO_X)$ is the increasing filtration given by 
$$E_k\cH^q_Z(\shO_X):={\rm Im} ~\big[ \shE xt^q_{\shO_X} \big(\shO_X/ \I_Z^{k+1}, \shO_X\big) \to 
\cH_Z^q(\shO_X) \big],\,\,\,\,\,\, k \ge 0.$$
\end{definition}

\smallskip

\begin{remark}
It is clear from the definitions of the order and Ext filtrations that they depend on the scheme-theoretic structure of $Z$ and not just
on the underlying set. A natural choice, which gives the deepest such filtrations, is to take $Z$ to be reduced. However, it can be convenient 
to have the flexibility of allowing filtrations associated to non-reduced schemes (for example, in the case of set-theoretic complete intersections).
\end{remark}

\begin{remark}\label{rmk:injective}
If $q = r = {\rm codim}_X(Z)$, then 
$$E_k\cH^q_Z(\shO_X) = \shE xt^r_{\shO_X} \big(\shO_X/ \I_Z^{k+1}, \shO_X\big),$$
i.e. the maps in the above definition are injective. Indeed, in this case the maps in ($\ref{ext-maps}$) are all injective, since
$$\shE xt^{r-1}_{\shO_X} \big(\I_Z^k /\I_Z^{k+1} , \shO_X\big) =0.$$
This last fact follows from the following well-known (see e.g. \cite[Proposition~1.17]{BS}):

\begin{lemma}\label{lem_BS}
If $\shF$ is a coherent sheaf on a smooth variety, then 
$$\shE xt^i_{\shO_X} (\shF, \shO_X) = 0, \,\,\,\,\,\,{\rm for~ all}\,\,\,\,i < {\rm codim}~{\rm Supp}(\shF).$$
\end{lemma}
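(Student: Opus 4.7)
The plan is to argue at each stalk and then invoke standard commutative algebra for finitely generated modules over regular local rings. At $x \in X$ we have
\[
\shE xt^i_{\shO_X}(\shF,\shO_X)_x \;=\; \Ext^i_R(M,R), \qquad R := \shO_{X,x},\ M := \shF_x.
\]
If $x \notin \Supp(\shF)$ then $M = 0$ and the assertion is vacuous, so I may assume $x \in \Supp(\shF)$. Then $R$ is a regular, in particular Cohen--Macaulay, local ring, and $J := \operatorname{Ann}_R(M)$ satisfies $V(J) = \Supp(M)$ in $\Spec R$.

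The key input is the standard equality, valid over any Cohen--Macaulay local ring,
\[
\operatorname{grade}_R(M) \;:=\; \min\{\, i : \Ext^i_R(M,R) \neq 0\,\} \;=\; \operatorname{height}(J).
\]
This combines two well-known facts: first, $\operatorname{grade}_R(M)$ coincides with the length of any maximal $R$-regular sequence contained in $J$ (proved by induction, using the long exact $\Ext$ sequence of $0 \to R \xrightarrow{f} R \to R/(f) \to 0$ for an $R$-regular element $f \in J$, and $fM = 0$ to split off the middle map); second, in a Cohen--Macaulay ring the depth of an ideal coincides with its height.

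To conclude, I would check the inequality $\operatorname{height}(J) \geq c := \codim_X \Supp(\shF)$. Every minimal prime of $J$ in $R$ is the ideal of an irreducible component of $\Supp(\shF)$ passing through $x$, and its height in $R$ equals the codimension in $X$ of the corresponding component, which is $\geq c$ by the definition of $\codim_X \Supp(\shF)$. Hence $\operatorname{grade}_R(M) \geq c$, i.e.\ $\Ext^i_R(M,R) = 0$ for every $i < c$, as required.

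The only step that is not purely formal is the grade/height identity in the Cohen--Macaulay setting; it is a staple of the theory and already packaged in the reference \cite{BS}, so no further obstacle remains.
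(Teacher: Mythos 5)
The paper does not prove this lemma itself; it is dispatched with a citation to \cite[Proposition~1.17]{BS}. Your argument is a correct filling-in of the standard commutative-algebra proof underlying that reference: localize, identify $\min\{\,i:\Ext^i_R(M,R)\neq 0\,\}$ with $\operatorname{grade}(\operatorname{Ann}_R M, R)$ (Rees's theorem), use the Cohen--Macaulay property of $R=\shO_{X,x}$ to equate grade with height, and compare heights of minimal primes of $\operatorname{Ann}_R M$ with $\codim_X\Supp(\shF)$. Each step is standard and the chain of inequalities is sound, so the proof is correct and is essentially the one the paper invokes by reference.
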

\end{remark}

In terms of comparing these two natural filtrations, we clearly have
\begin{equation}\label{E_in_O}
E_k\cH^q_Z(\shO_X) \subseteq O_k\cH^q_Z(\shO_X), \,\,\,\,\,\, {\rm for~all} \,\,\,\, k \ge 0.
\end{equation}

\begin{remark}
This time we obviously have that $E_k\cH^q_Z(\shO_X)$ are coherent sheaves;  by Remark \ref{infinite-generated}, this means in particular that $E_k \neq O_k$ when $q >r$ and $\cH^q_Z(\shO_X)\neq 0$. On the other hand, in general it is not clear any more whether the Ext filtration
is compatible with the filtration on $\Dmod_X$.
\end{remark}

Let's try to understand the inclusion (\ref{E_in_O}) more canonically. According to the proof of \cite[Proposition 3.1(i)]{HK}, for any ideal sheaf $\mathcal{J}\subseteq \shO_X$ such that ${\rm Supp} \left(\shO_X/ \mathcal{J} \right) = Z$, there is a spectral sequence
\begin{equation}\label{spec_seq_O}
E^{p,q}_2 = \shE xt^p_{\shO_X} \big(\shO_X/ \mathcal{J},   \cH^q_Z(\shO_X)\big) \implies 
H^{p+q} = \shE xt^{p+q}_{\shO_X} \big(\shO_X/ \mathcal{J}, \shO_X\big),
\end{equation}
which is simply the spectral sequence of the composition of the functors $\cH^0_Z (-)$ and $\cH om_{\shO_X} (\shO_X/\mathcal{J}, \cdot)$, since
$$\cH om_{\shO_X} \big(\shO_X/\mathcal{J}, \cH^0_Z(\Mmod)\big) \simeq \cH om_{\shO_X} (\shO_X/\mathcal{J}, 
\Mmod),$$
for every $\shO_X$-module $\Mmod$ and $\cH^0_Z (-)$ takes injective objects to injective objects.

Hence taking $\mathcal{J} = \I_Z^{k+1}$, in general the picture is this: $O_k\cH^q_Z(\shO_X)$ is the $E^{0,q}_2$-term of this spectral sequence, we have 
$E^{0,q}_\infty \hookrightarrow E^{0,q}_2$ (as there are no non-trivial differentials coming into $E^{0,q}_r$), while
$E^{0,q}_\infty$ is a quotient of $H^q = \shE xt^{q}_{\shO_X} \big(\shO_X/ \mathcal{I}_Z^{k+1}, \shO_X\big)$ which is indentified with $E_k\cH^q_Z(\shO_X)$.

The drawbacks for both the order and the Ext filtration disappear when $q =r={\rm codim}_X(Z)$ due to the following:

\begin{proposition}\label{E=O-codim}
We have $E_k\cH^r_Z(\shO_X) = O_k\cH^r_Z(\shO_X)$ for all $k \ge 0$.
\end{proposition}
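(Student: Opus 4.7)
My plan is to exploit the spectral sequence (\ref{spec_seq_O}) with $\mathcal{J}=\I_Z^{k+1}$, using in an essential way the fact that $r$ is exactly the codimension of $Z$.

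First, recall that the inclusion $E_k\cH^r_Z(\shO_X) \subseteq O_k\cH^r_Z(\shO_X)$ is already given by (\ref{E_in_O}), so only the reverse inclusion needs proof. By Remark~\ref{rmk:injective}, the natural map
$$H^r := \shE xt^r_{\shO_X}\bigl(\shO_X/\I_Z^{k+1},\shO_X\bigr) \longrightarrow \cH^r_Z(\shO_X)$$
is injective and its image is, by definition, $E_k\cH^r_Z(\shO_X)$. Thus it suffices to prove that $H^r$, viewed inside $\cH^r_Z(\shO_X)$ via this map, coincides with $O_k\cH^r_Z(\shO_X) = E^{0,r}_2$.

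The key observation is that, because $X$ is smooth and $r={\rm codim}_X(Z)$, Remark~\ref{van_criterion} gives $\cH^j_Z(\shO_X)=0$ for all $j<r$. Consequently $E^{p,j}_2=0$ whenever $j<r$. Two consequences will then drop out of the spectral sequence machinery: (i) every outgoing differential $d_s\colon E^{0,r}_s\to E^{s,r-s+1}_s$ (with $s\geq 2$) lands in a subquotient of $E^{s,r-s+1}_2=0$, while every incoming differential is trivially zero, so $E^{0,r}_\infty = E^{0,r}_2 = O_k\cH^r_Z(\shO_X)$; (ii) the graded pieces $E^{p,r-p}_\infty$ of the induced filtration on $H^r$ vanish for all $p\geq 1$, so the edge map $H^r \twoheadrightarrow E^{0,r}_\infty$ is an isomorphism. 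Combining (i) and (ii) gives $H^r\simeq O_k\cH^r_Z(\shO_X)$.

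The final step is to check that this isomorphism is compatible with the natural map to $\cH^r_Z(\shO_X)$, i.e.\ that the composition $H^r \xrightarrow{\sim} E^{0,r}_2 \hookrightarrow \cH^r_Z(\shO_X)$ agrees with the natural map whose image defines $E_k\cH^r_Z(\shO_X)$. This is the standard functoriality of the edge map of a Grothendieck-type spectral sequence and should not present any real difficulty; it is in fact the only point where some care is needed, since everything else is a purely formal consequence of the codimension vanishing $\cH^{<r}_Z(\shO_X)=0$. Once this identification is in place, we conclude $E_k\cH^r_Z(\shO_X) = H^r = O_k\cH^r_Z(\shO_X)$, as desired.
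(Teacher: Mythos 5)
Your proof is correct and takes essentially the same approach as the paper: both exploit the spectral sequence (\ref{spec_seq_O}) with $\mathcal{J}=\I_Z^{k+1}$ and the vanishing $\cH^q_Z(\shO_X)=0$ for $q<r$ to show $E^{0,r}_\infty = E^{0,r}_2$ and that the edge map $H^r\to E^{0,r}_\infty$ is an isomorphism. The paper simply states the resulting isomorphism $\cH om_{\shO_X}(\shO_X/\mathcal{J},\cH^r_Z(\shO_X))\simeq\shE xt^r_{\shO_X}(\shO_X/\mathcal{J},\shO_X)$ without spelling out the degeneration, but the underlying argument is the one you wrote down.
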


\begin{proof}
Using the spectral sequence (\ref{spec_seq_O}), since $\cH^q_Z(\shO_X) = 0$ for $q < r$, 
for any ideal sheaf $\mathcal{J}\subseteq \shO_X$ such that ${\rm Supp} \left(\shO_X/ \mathcal{J} \right) = Z$ we have
$$\cH om_{\shO_X} \big(\shO_X/ \mathcal{J},   \cH^r_Z(\shO_X)\big) \simeq 
\shE xt^r_{\shO_X} \big(\shO_X/ \mathcal{J}, \shO_X\big).$$
The statement follows again by taking $\mathcal{J} = \I_Z^{k+1}$; see also Remark \ref{rmk:injective}.
\end{proof}

In particular, for $q =r$, Proposition \ref{one_inclusion} can be reinterpreted as saying that 
\begin{equation}\label{FinE}
F_k\cH^r_Z(\shO_X) \subseteq E_k\cH^r_Z(\shO_X) \,\,\,\,\,\,{\rm for ~all}\,\,\,\,k\ge0.
\end{equation}
A natural question, potentially interesting for the study of the singularities of $Z$, is whether this extends to higher 
values of $q$ as well.

\begin{question}\label{question2}
\emph{(When) do we have inclusions $F_k \cH_Z^q(\shO_X)  \subseteq E_k \cH_Z^q(\shO_X)$ for $q> r$?} 
\end{question}

We will answer this question positively for $k =0$ and all $q$ in Proposition \ref{F0E0} below. However we first note that in the smooth case we indeed have equality between all three filtrations, as expected. If $Z$ is a smooth irreducible subvariety of $X$ of codimension $r$, then $\cH_Z^q(\shO_X)=0$ for $q\neq r$ (just as for any local complete intersection; see Remark~\ref{van_criterion}). 

\begin{example}\label{smooth_subvariety2}
If $Z$ is a smooth, irreducible subvariety of $X$ of codimension $r$, then 
$$F_k\cH^r_Z(\shO_X) =  O_k \cH_Z^r(\shO_X) = E_k \cH_Z^r (\shO_X) \,\,\,\,\,\,{\rm for~all}\,\,\,\,k \ge 0.$$ 
Indeed, we have seen the first equality in Example~\ref{smooth_subvariety} and 
the second equality follows from Proposition \ref{E=O-codim}.
\end{example}

\begin{remark}
We will see in Corollary~\ref{sm-equiv} below that if $Z$ is a singular local complete intersection in $X$, of pure codimension $r$, then 
$F_k\cH^r_Z(\shO_X) \neq  O_k \cH_Z^r(\shO_X)$ for $k\gg 0$. However, this can fail beyond the local complete intersection case,
even for nice varieties. For example, C.~Raicu pointed out to us that if $X$ is the variety of $m\times n$ matrices, with $m>n$, and
$Z$ is the subset consisting of matrices of rank $\leq p$ (so that ${\rm codim}_X(Z)=r=(m-p)(n-p)$, then one can show using
\cite[Corollary~1.6]{Perlman} that
$$F_k\cH^r_Z(\shO_X) = O_k \cH_Z^r(\shO_X)\quad\text{for all}\quad k\in\ZZ.$$
\end{remark}

\begin{remark}\label{F0_smooth}
Another way of thinking about the isomorphism
$$F_0 \cH_Z^r(\shO_X)  \simeq \shE xt^r_{\shO_X} (\shO_Z, \shO_X) \simeq \omega_Z \otimes \omega_X^{-1}$$
when $Z$ is smooth is in terms of the description in Corollary \ref{cor_F0}. Indeed, considering the log resolution of $(X, Z)$  to be the blow-up of $X$ along $Z$, it translates into the isomorphism $R^{r-1}f_* \omega_E \simeq \omega_Z$, which is well known.
\end{remark}

As promised, in general we have a positive answer to Question \ref{question2} for the lowest piece of the Hodge filtration.

\begin{proposition}\label{F0E0}
For every $q \ge 0$ we have an inclusion $F_0 \cH^q_Z(\shO_X) \subseteq E_0\cH^q_Z(\shO_X)$.
\end{proposition}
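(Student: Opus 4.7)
The plan is to deduce this directly from the compatibility diagram displayed at the end of \S\ref{scn:compatibility}, which summarizes Theorem~\ref{inclusions} and Proposition~\ref{compatibility}, after translating the statement to the right $\Dmod_X$-module side. By the left-right convention \eqref{eq_left_right_shift} and the fact that $\shE xt$ commutes with twisting by an invertible sheaf, tensoring with $\omega_X$ identifies the desired inclusion $F_0 \cH^q_Z(\shO_X) \subseteq E_0 \cH^q_Z(\shO_X)$ with
$$F_{-n} \cH^q_Z(\omega_X) \;\subseteq\; \mathrm{Im}\bigl[\shE xt^q_{\shO_X}(\shO_Z, \omega_X) \to \cH^q_Z(\omega_X)\bigr].$$
The case $q=0$ is trivial since $\cH^0_Z(\omega_X) = 0$, so assume $q \ge 1$.

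Fix a log resolution $f \colon Y \to X$ of $(X, Z)$ that is an isomorphism over $X \smallsetminus Z$, and set $E = f^{-1}(Z)_{\mathrm{red}}$. By Corollary~\ref{cor_F0}, combined (in the case $q=1$) with the exact sequence $0 \to \omega_Y \to \omega_Y(E) \to \omega_E \to 0$ and Grauert--Riemenschneider vanishing to replace $\omega_Y(E)$ by $\omega_E$, we obtain an isomorphism
$$\gamma_q \colon R^{q-1} f_* \omega_E \xrightarrow{\,\sim\,} F_{-n}\cH^q_Z(\omega_X),$$
so that $F_{-n}\cH^q_Z(\omega_X) \subseteq \cH^q_Z(\omega_X)$ is exactly the image of the natural map $R^{q-1} f_* \omega_E \to \cH^q_Z(\omega_X)$.

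The commutative square at the end of \S\ref{scn:compatibility} then tells us that this natural map factors as the composition
$$R^{q-1} f_* \omega_E \xrightarrow{\;\alpha_q\;} \shE xt^q_{\shO_X}(\shO_Z, \omega_X) \longrightarrow \cH^q_Z(\omega_X),$$
where the second arrow is the canonical one. Consequently the image of the composition, which is $F_{-n}\cH^q_Z(\omega_X)$, is contained in the image of the second arrow, which by definition is $E_0 \cH^q_Z(\omega_X)$ (in the right-module formulation). Untwisting by $\omega_X$ yields the desired inclusion.

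Nothing substantial stands in the way here: the proposition is essentially a repackaging of Theorem~\ref{inclusions}. The only bookkeeping required is (a) shifting between left and right $\Dmod_X$-modules to identify $F_0$ with $F_{-n}$ and (b) the slightly different statement of Corollary~\ref{cor_F0} in the borderline case $q=1$, which nonetheless yields the same image identification via Grauert--Riemenschneider.
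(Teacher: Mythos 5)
Your proposal is correct and follows essentially the same route as the paper: both reduce to the right-module statement $F_{-n}\cH^q_Z(\omega_X)\subseteq\mathrm{Im}\bigl[\shE xt^q_{\shO_X}(\shO_Z,\omega_X)\to\cH^q_Z(\omega_X)\bigr]$ and then read off the inclusion from the commutative diagram established after Proposition~\ref{compatibility} (the paper's diagram~\eqref{triangle}), using Corollary~\ref{cor_F0} to identify $F_{-n}\cH^q_Z(\omega_X)$ with $R^{q-1}f_*\omega_E$. The small $q=1$ adjustment via Grauert--Riemenschneider and the $q=0$ triviality are the same bookkeeping the paper performs in \S\ref{scn:compatibility}.
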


\begin{proof}
Equivalently, the statement says that there is an inclusion
$$F_{-n}\cH^q_Z(\omega_X) \subseteq E_0\cH^q_Z(\shO_X)\otimes_{\shO_X}\omega_X=
{\rm Im} ~\big[ \shE xt^q_{\shO_X} \big(\shO_Z, \omega_X\big)\to 
\cH_Z^q(\omega_X) \big].$$
This is an immediate consequence of Theorem \ref{inclusions}, in which we established the existence of commutative 
diagrams 
\begin{equation}\label{triangle}
\begin{tikzcd}
R^{q-1} f_* \omega_E \rar \drar& \shE xt^q_{\shO_X} (\shO_Z, \omega_X) \dar \\
& \cH^q_Z(\omega_X) 
\end{tikzcd}
\end{equation}
where the diagonal map is injective, identified with the inclusion $F_{-n}\cH^q_Z(\omega_X) \hookrightarrow 
\cH^q_Z(\omega_X)$ via Corollary \ref{cor_F0}.
\end{proof}

\begin{remark}[Normal schemes]
If $Z$ is normal of codimension $r$, then the inclusion
$$F_0 \cH_Z^r(\shO_X) \subseteq E_0 \cH_Z^r(\shO_X) \simeq \omega_Z \otimes \omega_X^{-1}.$$
has an alternative interpretation: since $Z$ is normal, the dualizing sheaf 
$$\omega_Z=\shE xt^r_{\shO_X} 
(\shO_Z, \shO_X) \otimes \omega_X$$ and the canonical sheaf $i_* \omega_U$, where $i\colon U\hookrightarrow Z$ is the inclusion of the smooth locus
of $Z$, are isomorphic. Since $\omega_Z \otimes \omega_X^{-1}$ is reflexive, and coincides with $F_0 \cH_Z^r(\shO_X)$ on $U$, the conclusion follows. 
\end{remark}

\subsection{The lowest term and Du Bois singularities.}
When $Z$ is a reduced divisor, it is known that the pair $(X,Z)$ is log canonical if and only if $Z$ has du Bois singularities; see \cite[Corollary 6.6]{KS2}. On the other hand, the condition of being log-canonical is equivalent to the equality 
$F_0 \shO_X(*Z) = P_0 \shO_X(*Z)$, where $P_\bullet$ is the pole order filtration, or in other words 
$I_0 (Z) = \shO_X$ in the language of Hodge ideals; see \cite[Corollary 10.3]{MP1}.

We now show that Theorem \ref{inclusions} (and the discussion in \S\ref{scn:compatibility}), together with a criterion for Du Bois singularities due to Steenbrink and Schwede, implies that in general, if $Z$ has Du Bois singularities, then $F_0 \cH_Z^q(\shO_X) = E_0 \cH_Z^q(\shO_X)$ for all $q$;
moreover, the converse holds if $Z$ is Cohen-Macaulay.

\begin{proof}[Proof of Theorem \ref{char_DuBois}]
It follows from work of Steenbrink \cite{Steenbrink} (see Theorem \ref{Steenbrink} below), that $Z$ has Du Bois singularities if and only if the canonical morphism $\shO_Z\to\derR f_*\shO_E$ is an isomorphism; see also Schwede's \cite[Theorem~4.6]{Schwede} for a more general criterion. Via duality, this is equivalent to the map
$$\alpha \colon \derR f_* \omega_E^\bullet  \to \omega_Z^\bullet$$
in the statement of Theorem \ref{inclusions} being an isomorphism, hence to the horizontal map in ($\ref{triangle}$) being an isomorphism for each $q$. This shows the first assertion.

Under the extra assumption of $Z$ being Cohen-Macaulay of pure codimension $r$, we have 
$$\shE xt^q_{\shO_X}(\shO_Z,\omega_X) = 0 \,\,\,\,{\rm for} \,\,\,\,q \neq r,$$
so by Proposition \ref{F0E0} we also have $F_0\cH^q_Z(\shO_X) = 0$ for all $q \neq r$.
Now as explained in Remark \ref{rmk:injective}, 
for $q =r$ the vertical map in ($\ref{triangle}$) is injective, so $F_{-n}\cH^r_Z(\omega_X)= E_{-n}\cH^r_Z(\omega_X)$
is equivalent (cf. Proposition \ref{compatibility}) to the canonical map 
$$\shE xt^r_{\shO_X}(\derR f_*\shO_E,\omega_X)\to \shE xt^r_{\shO_X}(\shO_Z,\omega_X)$$ 
being an isomorphism. Since all the other $\shE xt$ sheaves are zero, it follows (using Grothendieck duality) 
that the morphism $\alpha$ is an isomorphism, which as noted is equivalent to $Z$ being Du Bois.
\end{proof}

\smallskip

\begin{remark}[Non-Cohen-Macaulay case]\label{rmk_Ma}
L.~Ma has pointed out that when $Z$ is not Cohen-Macaulay, it can happen that $Z$ is not Du Bois, but 
$F_0 \cH_Z^q(\shO_X) = E_0 \cH_Z^q(\shO_X)$ for all $q$. For example, this is the case if
$Z={\rm Spec}\big(\CC[s^4,s^3t,st^3,t^4]\big)\hookrightarrow {\mathbf A}^4$. We leave the argument
for Ch.~\ref{ch:DuBois}, in which we discuss some basic facts about Du Bois complexes (see Example~\ref{eg_Ma}).
\end{remark}

On a related note, the following corollary of Theorem \ref{inclusions} recovers \cite[Theorem~B]{MSS} in the case when the ambient space $X$ is smooth; see also Remark \ref{Gorenstein} below for the general case of this result.

\begin{corollary}\label{cor_MSS}
If $Z\subseteq X$ is a closed subscheme with Du Bois singularities, then the natural maps 
$${\mathcal Ext}^q_{\shO_X}(\shO_Z,\shO_X) \to \cH^q_Z(\shO_X)$$
are injective for all $q$.
\end{corollary}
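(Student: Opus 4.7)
The strategy is to read off the statement directly from the commutative triangle constructed in the proof of Proposition~\ref{F0E0}, once the Du~Bois hypothesis is used as in Theorem~\ref{char_DuBois}. Choose a log resolution $f\colon Y\to X$ of $(X,Z)$ that is an isomorphism over $X\smallsetminus Z$, and let $E=f^{-1}(Z)_{\mathrm{red}}$.

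First, I would recall the commutative diagram \eqref{triangle}, which for each $q\geq 1$ takes the form
\[
\begin{tikzcd}
R^{q-1}f_*\omega_E \rar{\alpha_q} \drar & \shE xt^q_{\shO_X}(\shO_Z,\omega_X) \dar{v_q} \\
& \cH^q_Z(\omega_X),
\end{tikzcd}
\]
where the horizontal map $\alpha_q$ is induced on cohomology by the morphism $\alpha\colon \derR f_*\omega_E^\bullet \to \omega_Z^\bullet$ dual to $\shO_Z\to \derR f_*\shO_E$, the vertical map $v_q$ is the natural map from $\shE xt$ to local cohomology (i.e.\ the one obtained from $\omega_X$-tensored version of the statement to be proved), and the diagonal map is the one from Theorem~\ref{inclusions}. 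The crucial input is that Theorem~\ref{inclusions} makes the diagonal map \emph{injective}.

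Second, I would invoke the Du~Bois characterization already used in the proof of Theorem~\ref{char_DuBois}: by the result of Steenbrink, $Z$ has Du~Bois singularities if and only if $\shO_Z\to \derR f_*\shO_E$ is an isomorphism; Grothendieck duality then turns this into the assertion that $\alpha\colon \derR f_*\omega_E^\bullet \to \omega_Z^\bullet$ is an isomorphism. Passing to cohomology, each $\alpha_q$ is an isomorphism. Combined with the injectivity of the diagonal, this forces $v_q$ to be injective, i.e.
\[
\shE xt^q_{\shO_X}(\shO_Z,\omega_X)\hookrightarrow \cH^q_Z(\omega_X) \quad\text{for every } q.
\]

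Finally, tensoring by the line bundle $\omega_X^{-1}$ preserves injectivity and identifies the displayed map with the natural map $\shE xt^q_{\shO_X}(\shO_Z,\shO_X)\to \cH^q_Z(\shO_X)$, which is the desired statement. There is no real obstacle beyond carefully matching the identifications; all the substantive work has been carried out in Theorem~\ref{inclusions} and in the Du~Bois criterion exploited in the proof of Theorem~\ref{char_DuBois}.
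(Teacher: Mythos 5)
Your proposal is correct and takes essentially the same route as the paper: both use Steenbrink's characterization of Du Bois singularities to deduce that $\alpha\colon\derR f_*\omega_E^\bullet\to\omega_Z^\bullet$ is an isomorphism, and then combine that with the injectivity from Theorem~\ref{inclusions} (via the commutative triangle) to conclude that the vertical map $\shE xt^q_{\shO_X}(\shO_Z,\omega_X)\to\cH^q_Z(\omega_X)$ is injective. The paper's version is just more compressed; the substance is identical.
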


\begin{proof}
As above, if $Z$ is Du Bois the natural morphism $\shO_Z \to \derR f_* \shO_E$ is an isomorphism, and therefore 
the canonical maps
$$\shE xt^q_{\shO_X}(\derR f_*\shO_E,\omega_X)\to \shE xt^q_{\shO_X}(\shO_Z,\omega_X)$$ 
are isomorphisms for each $q$. The result then follows from Theorem \ref{inclusions}.
\end{proof}

\begin{remark}\label{Gorenstein}
The question of when this injectivity holds is asked in \cite{EMS}; see \cite{MSS} for further discussion and applications. 
In fact, as L.~Ma has pointed out, one can deduce from Corollary~\ref{cor_MSS} the full statement of \cite[Theorem~B]{MSS}
(in which the ambient variety $X$ is only assumed to be Gorenstein). Indeed, we may assume that
$X={\rm Spec}(R)$ and $Z={\rm Spec}(S)$ are affine, with $S=R/I$. 
The assertion in the corollary implies via
\cite[Proposition~2.1]{DDSM} that $S$ is $i$-cohomologically full for every $i$ (equivalently, in the language of \cite{KK}, $S$ has 
liftable local cohomology). This implies that for every maximal ideal ${\mathfrak m}$ of $R$ and every $i$ and $k$, the natural map
$H_{{\mathfrak m}}^i(R/I^k)\to H^i_{{\mathfrak m}}(R/I)$ is surjective. If $R$ is Gorenstein, then $\omega_{R_{{\mathfrak m}}}\simeq R_{\mathfrak m}$,
and local duality implies that the natural map
$${\rm Ext}_R^{n-i}(R/I,R)\to {\rm Ext}_R^{n-i}(R/I^k,R)$$
is injective, where $n={\rm dim}(R_{\mathfrak m})$. By taking the direct limit over $k$, we obtain the injectivity of
$${\rm Ext}_R^{n-i}(R/I,R)\to H_I^{n-i}(R).$$
\end{remark}

\medskip

We conclude this section by noting that a study of the equality $F_1 \cH_Z^q(\shO_X) = E_1 \cH_Z^q(\shO_X)$ should also be very interesting. Recall that in \cite[Theorem C]{MP1} it is shown that for a reduced hypersurface $D$ the equality 
$F_1 \shO_X(*D) = P_1 \shO_X(*D)$ (or equivalently $I_1 (D) = \shO_X$) implies that $D$ has rational singularities. By analogy we make the following:

\begin{conjecture}\label{F_1=E_1}
If $Z$ is a local complete intersection of pure codimension $r$ in $X$ and if
$F_1 \cH_Z^r(\shO_X) = E_1\cH_Z^r(\shO_X)$, then $Z$ has rational singularities.
\end{conjecture}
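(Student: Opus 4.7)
The proof would proceed in three steps. First, since $Z$ has pure codimension $r$, \propositionref{E=O-codim} yields $E_1\cH^r_Z(\shO_X)=O_1\cH^r_Z(\shO_X)$, so the hypothesis is equivalent to $F_1\cH^r_Z(\shO_X)=O_1\cH^r_Z(\shO_X)$, that is, $p(Z)\geq 1$ in the sense of the singularity level defined earlier. By \theoremref{thm-DB-main}, this is in turn equivalent to $Z$ having higher $1$-Du Bois singularities, so that both canonical morphisms $\Omega_Z^k\to\underline{\Omega}_Z^k$ for $k=0,1$ are isomorphisms. The substance of the conjecture is therefore the implication ``higher $1$-Du Bois $+$ LCI $\Rightarrow$ rational singularities''.

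Second, I would attempt to deduce rationality using the birational description of the Hodge filtration from \S\ref{scn:birational}. Fix a log resolution $f\colon Y\to X$ of $(X,Z)$ that is an isomorphism over $X\smallsetminus Z$, set $E=f^{-1}(Z)_{\rm red}$, and denote by $\pi\colon E\to Z$ the restriction of $f$. Since $Z$ is LCI and hence Cohen--Macaulay, rational singularities are equivalent to $R^i\rho_{*}\shO_{\widetilde{Z}}=0$ for $i\geq 1$ for some (equivalently any) resolution $\rho\colon\widetilde{Z}\to Z$. The $0$-Du Bois condition already gives $\shO_Z\simeq\derR f_*\shO_E$ by Steenbrink's theorem (as used in the proof of \theoremref{char_DuBois}), so the additional content of $1$-Du Bois must be what pinpoints the difference between $E$ and a genuine resolution of $Z$.

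Third, the concrete implementation would parallel the strictness arguments behind \propositionref{comput_log_res} and \theoremref{local-vanishing}, applied to the filtered piece $F_{1-n}A^{\bullet}$. In view of the vanishing $F_k\Dmod_X=0$ for $k<0$, this is the two-term complex
\[
F_{1-n}A^{\bullet}:\quad \Omega_Y^{n-1}(\log E)\longrightarrow \omega_Y(E)\otimes_{\shO_Y}f^{*}F_1\Dmod_X
\]
sitting in cohomological degrees $-1$ and $0$. The hypothesis $F_1\cH^r_Z(\shO_X)=E_1\cH^r_Z(\shO_X)$, read through \propositionref{comput_log_res}, translates into a surjectivity/vanishing statement for the higher direct images of this complex. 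Combining it with the Poincar\'e residue exact sequences relating $\Omega_Y^{p}(\log E)$, $\Omega_Y^{p}$, and differentials on the strata of $E$, together with the $1$-Du Bois vanishing on those strata, should yield vanishing of $R^if_*\Omega_Y^{p}(\log E)$ beyond the range covered by \theoremref{local-vanishing}, and ultimately the higher direct image vanishing on a resolution of $Z$ characterizing rationality.

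The main obstacle is making rigorous the passage from the $1$-Du Bois condition to the vanishing required for rational singularities on a resolution of $Z$. In the hypersurface case ($r=1$), Saito's identification $p(Z)=[\widetilde{\alpha}(Z)]-1$ together with the characterization of rational singularities by $\widetilde{\alpha}(Z)>1$ provides a direct bridge; in higher codimension no such Bernstein--Sato interpretation of $p(Z)$ is currently available, this being precisely the content of \conjectureref{conj-BS}. A successful proof would therefore likely require either progress on \conjectureref{conj-BS}, or a self-contained Hodge-theoretic argument pushing the strictness-based induction of \theoremref{local-vanishing} one step further along the Hodge filtration so as to extract the needed vanishing directly from the $1$-Du Bois hypothesis.
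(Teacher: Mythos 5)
This statement is labeled a \emph{conjecture} in the paper; the authors offer no proof of it, so there is no ``paper's own proof'' against which to compare. Your proposal correctly recognizes this and is honest about not constituting a proof. Your reduction in the first step is accurate but slightly undercited: since $Z$ is not assumed reduced in the conjecture's hypotheses, one should invoke Lemma~\ref{preservation} (to get $F_0=O_0$ from $F_1=O_1$) together with Lemma~\ref{case_nonreduced} (to conclude $Z$ is in fact reduced, so that $p(Z)\geq 1$ makes sense and \theoremref{thm-DB-main} applies). With these citations supplied, your reformulation of the conjecture as ``higher $1$-Du Bois $+$ LCI $\Rightarrow$ rational'' is exactly right, and your identification of the two possible routes to a proof matches the authors' own assessment: they explicitly note that the conjecture follows from \conjectureref{conj-BS} (since $p(Z)\geq 1$ would then force $\widetilde{\alpha}(Z)\geq r+1>r$, which by \cite[Theorem~4]{BMS} characterizes rational singularities for reduced LCI), and they leave open any direct Hodge-theoretic argument.

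Your second and third steps are speculative and you say so; the concrete sub-step that would have to be made to work—deducing from the $1$-Du Bois condition the vanishing of $R^ig_*\shO_{\widetilde Z}$ for $i\geq 1$ on a genuine resolution $g\colon\widetilde Z\to Z$ (rather than on the SNC boundary $E\to Z$, where the $0$-Du Bois condition only gives $\shO_Z\simeq\derR f_*\shO_E$)—is precisely the missing idea, and the Poincar\'e residue / strictness induction you sketch does not currently close it. So the ``gap'' here is a genuine mathematical open problem rather than an error in your argument, and your writeup already identifies it as such.
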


We will show in Lemma~\ref{preservation} below that in the setting of the conjecture, the condition $F_1=E_1$ implies that $F_0=E_0$ also holds. 
We also note that the assertion in the conjecture follows from the stronger Conjecture \ref{conj-BS} below.
It is an interesting question whether a similar condition on the Hodge filtration on all local cohomology sheaves $\cH^q_Z(\shO_X)$ would imply
the fact that $Z$ has rational singularities for any reduced $Z$.

\subsection{The case of local complete intersections}\label{section_lci}
All throughout this section $Z$ is assumed to be a local complete intersection subscheme of $X$, of pure codimension $r$, defined by the ideal 
$\I_Z$. In this case we have
$\cH^q_Z(\shO_X)=0$ for $q\neq r$ by Remark \ref{van_criterion}. We have seen in Propositions \ref{one_inclusion}
and \ref{E=O-codim} that 
$$F_k\cH^r_Z(\shO_X)\subseteq E_k \cH^r_Z(\shO_X) = O_k\cH^r_Z(\shO_X)$$
for all $k \ge 0$.  

We start by giving more precise descriptions of the order and Ext filtrations; even though they coincide, each of the two filtration provides interesting information. We denote by $\shN_{Z/X}$ the normal sheaf $(\I_Z/\I_Z^2)^{\vee}$.

\begin{lemma}\label{descriptionE}
For every $k\geq 0$, the quotient 
$${\rm Gr}_k^E\cH^r_Z(\shO_X)=E_k \cH^r_Z(\shO_X)/E_{k-1} \cH^r_Z(\shO_X)$$
is a locally free $\shO_Z$-module; in fact, it is isomorphic to ${\rm Sym}^{k}(\shN_{Z/X})\otimes\omega_Z\otimes\omega_X^{-1}$. 
\end{lemma}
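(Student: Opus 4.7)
The plan is to compute the graded pieces directly via the Ext interpretation, using the LCI structure to resolve everything in sight. By Remark~\ref{rmk:injective}, for $q=r$ the natural maps are injective, so
$$E_k \cH^r_Z(\shO_X) = \shE xt^r_{\shO_X}\big(\shO_X/\I_Z^{k+1}, \shO_X\big).$$
Applying $\derR \shHom_{\shO_X}(-,\shO_X)$ to the short exact sequence
$$0 \to \I_Z^k/\I_Z^{k+1} \to \shO_X/\I_Z^{k+1} \to \shO_X/\I_Z^k \to 0$$
yields a long exact sequence of $\shExt$ sheaves. The first step is to show that it degenerates to
$$0 \to \shE xt^r\big(\shO_X/\I_Z^k,\shO_X\big) \to \shE xt^r\big(\shO_X/\I_Z^{k+1},\shO_X\big) \to \shE xt^r\big(\I_Z^k/\I_Z^{k+1},\shO_X\big) \to 0,$$
which identifies $\Gr^E_k \cH^r_Z(\shO_X)$ with $\shE xt^r_{\shO_X}(\I_Z^k/\I_Z^{k+1},\shO_X)$.

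The vanishings required on the left and right come from depth considerations. On the left, Lemma~\ref{lem_BS} gives $\shE xt^{r-1}(\I_Z^k/\I_Z^{k+1},\shO_X)=0$ since this coherent sheaf is supported on $Z$. On the right, one needs $\shE xt^{r+1}(\shO_X/\I_Z^k,\shO_X)=0$; I would prove this by induction on $k$, using the LCI hypothesis to see that $\I_Z^k/\I_Z^{k+1}$ is a locally free $\shO_Z$-module (see below) and hence has depth $n-r$ as an $\shO_X$-module, so that by Auslander--Buchsbaum its higher $\shExt$'s vanish above degree $r$; the inductive step is then immediate from the long exact sequence and the base case $\shO_Z$ (which is a local complete intersection, so locally resolved by a Koszul complex of length $r$).

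The second step is to invoke the LCI identification
$$\I_Z^k/\I_Z^{k+1} \simeq \Sym^k\big(\I_Z/\I_Z^2\big) = \Sym^k\big(\shN_{Z/X}^{\vee}\big),$$
which is a locally free $\shO_Z$-module, and then use the projection formula for $\shExt$: for any locally free $\shO_Z$-module $\shF$ one has
$$\shE xt^r_{\shO_X}(\shF,\shO_X) \simeq \shF^{\vee}\otimes_{\shO_Z}\shE xt^r_{\shO_X}(\shO_Z,\shO_X).$$
Combined with the standard identification $\shE xt^r_{\shO_X}(\shO_Z,\shO_X)\simeq \omega_Z\otimes\omega_X^{-1}$ (which for LCIs can be seen directly from the Koszul resolution, or via Grothendieck duality), this yields
$$\Gr^E_k \cH^r_Z(\shO_X)\simeq \Sym^k(\shN_{Z/X})\otimes \omega_Z\otimes \omega_X^{-1},$$
and the local freeness over $\shO_Z$ is manifest. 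The main technical obstacle is really the higher Ext vanishing $\shE xt^{i}(\shO_X/\I_Z^k,\shO_X)=0$ for $i>r$, but as indicated it reduces via induction to the locally-free-over-$\shO_Z$ property of the graded pieces of the $\I_Z$-adic filtration, which is the defining feature of LCIs.
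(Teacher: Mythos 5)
Your proof is correct and follows essentially the same route as the paper: identify $E_k\cH^r_Z(\shO_X)$ with $\shE xt^r_{\shO_X}(\shO_X/\I_Z^{k+1},\shO_X)$ via Remark~\ref{rmk:injective}, extract the short exact sequence of $\shExt^r$'s from the conormal filtration, and conclude via the LCI isomorphism $\I_Z^k/\I_Z^{k+1}\simeq\Sym^k(\shN_{Z/X}^\vee)$ together with the projection formula. The paper states the short exact sequence without comment, implicitly using that $\shO_X/\I_Z^k$ has projective dimension $r$ over $\shO_X$; your explicit inductive Auslander--Buchsbaum argument for $\shE xt^{r+1}(\shO_X/\I_Z^k,\shO_X)=0$ is a correct way to supply that detail.
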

\begin{proof}
By Remark \ref{rmk:injective} we have  $E_k\cH^r_Z(\shO_X) =  {\mathcal Ext}_{\shO_X}^r(\shO_X/\I_Z^{k+1},\shO_X)$, 
and moreover each exact sequence
$$0\to \I_Z^{k}/\I_Z^{k+1}\to\shO_X/\I_Z^{k+1}\to\shO_X/\I_Z^{k}\to 0$$
induces an exact sequence
$$0\to {\mathcal Ext}^r_{\shO_X}(\shO_X/\I_Z^{k},\shO_X)\to {\mathcal Ext}^r_{\shO_X}(\shO_X/\I_Z^{k+1},\shO_X)\to
{\mathcal Ext}^r_{\shO_X}(\I_Z^{k}/\I_Z^{k+1},\shO_X)\to 0.$$
We thus see that 
$${\rm Gr}_k^E\cH^r_Z(\shO_X)\simeq {\mathcal Ext}^r_{\shO_X}(\I_Z^{k}/\I_Z^{k+1},\shO_X)$$
$$\simeq
{\rm Sym}^k(\shN_{Z/X}^{\vee})^{\vee}\otimes {\mathcal Ext}^r_{\shO_X}(\shO_Z,\omega_X)\otimes\omega_X^{-1}
\simeq {\rm Sym}^k(\shN_{Z/X})\otimes \omega_Z\otimes\omega_X^{-1}.$$
\end{proof}

Furthermore, working locally  we may assume that $Z$ is the closed subscheme of $X$ defined by $f_1,\ldots,f_r\in\shO_X(X)$, with $f_1,\ldots,f_r$ forming a regular sequence at every point of $Z$. 
Given this, an easy computation shows that as the case of smooth subvarieties in Example~\ref{smooth_subvariety}, we have:

\begin{lemma}\label{descriptionO}
The sheaf $O_k\cH^r_Z(\shO_X)$ is generated over $\shO_X$ by the classes of
$\frac{1}{f_1^{a_1}\cdots f_r^{a_r}}$, where $a_1,\ldots,a_r\geq 1$, with $\sum_ia_i\leq k+r$. 
\end{lemma}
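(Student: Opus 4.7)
The plan is to work locally, so that $Z$ is cut out by the regular sequence $f_1,\ldots,f_r$ and $\cH^r_Z(\shO_X)$ admits the usual \v{C}ech description, and then to establish the two inclusions separately.

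First, I would verify directly that each class $\bigl[\tfrac{1}{f_1^{a_1}\cdots f_r^{a_r}}\bigr]$ with $a_i\geq 1$ and $\sum a_i\leq k+r$ lies in $O_k\cH^r_Z(\shO_X)$. Since $\I_Z^{k+1}$ is generated by the monomials $f^\beta$ with $|\beta|=k+1$, and
$$f^\beta\cdot\Bigl[\frac{1}{f_1^{a_1}\cdots f_r^{a_r}}\Bigr]=\Bigl[\frac{1}{f_1^{a_1-\beta_1}\cdots f_r^{a_r-\beta_r}}\Bigr]$$
vanishes in $\cH^r_Z(\shO_X)$ whenever some exponent is non-positive, a pigeonhole argument suffices: if every $a_i\geq\beta_i+1$, then $\sum a_i\geq k+1+r$, contradicting $\sum a_i\leq k+r$.

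For the converse inclusion I would proceed by induction on $k$. The base case $k=0$ follows from Proposition~\ref{E=O-codim}, which gives $O_0\cH^r_Z(\shO_X)\simeq{\mathcal Ext}^r_{\shO_X}(\shO_Z,\shO_X)$; the Koszul resolution of $\shO_Z$ by $f_1,\ldots,f_r$ identifies the right-hand side with a rank-one free $\shO_Z$-module generated by the class of $\bigl[\tfrac{1}{f_1\cdots f_r}\bigr]$. For the inductive step, let $\mathcal{G}_k\subseteq O_k\cH^r_Z(\shO_X)$ denote the $\shO_X$-submodule generated by the prescribed classes, so that $\mathcal{G}_{k-1}=O_{k-1}\cH^r_Z(\shO_X)$ by hypothesis. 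To conclude $\mathcal{G}_k=O_k\cH^r_Z(\shO_X)$, it suffices to show that $\mathcal{G}_k$ surjects onto the quotient $O_k/O_{k-1}\simeq{\rm Sym}^k(\shN_{Z/X})\otimes\omega_Z\otimes\omega_X^{-1}$ provided by Lemma~\ref{descriptionE}, which is locally free over $\shO_Z$ of rank $\binom{k+r-1}{r-1}$.

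The main step, and the one requiring genuine computation, is to verify that the $\binom{k+r-1}{r-1}$ new generators $\bigl[\tfrac{1}{f^\alpha}\bigr]$ with $|\alpha|=k+r$ and $\alpha_i\geq 1$ map to an $\shO_Z$-basis of this quotient. Since the cardinality matches the rank, it is enough to prove linear independence over $\shO_Z$, which I would establish by the following pairing argument. Suppose $\sum_\alpha g_\alpha\bigl[\tfrac{1}{f^\alpha}\bigr]\in O_{k-1}$ and fix any index $\alpha_0$ with $|\alpha_0|=k+r$ and $\alpha_{0,i}\geq 1$. Multiplying by $f^{\alpha_0-(1,\ldots,1)}\in\I_Z^k$ annihilates $O_{k-1}$ by definition of the order filtration; moreover, it kills every summand $\bigl[\tfrac{1}{f^\alpha}\bigr]$ with $\alpha\neq\alpha_0$, since the resulting exponent vector $\alpha-\alpha_0+(1,\ldots,1)$ has components summing to $r$ yet cannot be componentwise positive unless $\alpha=\alpha_0$. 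Thus only the $\alpha_0$-term survives, yielding $g_{\alpha_0}\cdot\bigl[\tfrac{1}{f_1\cdots f_r}\bigr]=0$. The regular sequence property identifies the annihilator of this class as precisely $\I_Z$ (via the colon formula $(f_1^{N+1},\ldots,f_r^{N+1}):(f_1\cdots f_r)^N=(f_1,\ldots,f_r)$), so $g_{\alpha_0}\in\I_Z$. Since $\alpha_0$ was arbitrary, all $g_\alpha$ vanish modulo $\I_Z$, giving the desired linear independence and completing the induction.
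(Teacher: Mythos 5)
Your argument is a direct induction in $A$, whereas the route the paper indicates (see Example~\ref{smooth_subvariety}) is to reduce to the polynomial ring $\CC[y_1,\ldots,y_r]$ via the flat map $y_i\mapsto f_i$ (flatness being exactly the regular-sequence hypothesis), where the statement becomes an explicit monomial computation on a $\CC$-basis of $H^r_{(y)}$. The first part of your proof (the generators lie in $O_k$), the base case $k=0$ via the Koszul complex, and the linear independence computation using the colon ideal are all correct; the flaw is in the final inference.

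You conclude that the $\binom{k+r-1}{r-1}$ classes $\bigl[\tfrac{1}{f^\alpha}\bigr]$ with $|\alpha|=k+r$ generate $O_k/O_{k-1}$ over $\shO_Z$ because their number equals the rank of this locally free $\shO_Z$-module and they are linearly independent. That implication is false: an injective map $\shO_Z^m\to N$ with $N$ locally free of rank $m$ need not be surjective (consider $\shO_Z\xrightarrow{\,t\,}\shO_Z$ for a nonzerodivisor nonunit $t$). Your pairing argument establishes precisely that $\sum_\alpha g_\alpha\bigl[\tfrac{1}{f^\alpha}\bigr]\in O_{k-1}$ forces every $g_\alpha\in\I_Z$, i.e.\ injectivity of $\shO_Z^m\hookrightarrow O_k/O_{k-1}$; it says nothing about the cokernel, whose support could a priori be a divisor sitting inside the singular locus of $Z$ (and $Z$ is not assumed normal or even reduced here). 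To repair the argument you would need to trace through the isomorphism of Lemma~\ref{descriptionE} and check that the $\bigl[\tfrac{1}{f^\alpha}\bigr]$ map to the standard basis $(\bar f^\vee)^{\alpha-(1,\ldots,1)}\otimes(\bar f_1^\vee\wedge\cdots\wedge\bar f_r^\vee)$ of $\Sym^k(\shN_{Z/X})\otimes\omega_Z\otimes\omega_X^{-1}$ rather than to a merely independent set; or, more cleanly, drop the induction and use the flat base change from $\CC[y_1,\ldots,y_r]$, which sidesteps the issue entirely.
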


\medskip

By analogy with the case of hypersurfaces \cite{MP1}, one of the main questions to understand is when, given $p\geq 0$, we have 
$$F_k\cH_Z^r(\shO_X)= O_k\cH^r_Z(\shO_X) \,\,\,\,\,\,{\rm for} \,\,\,\,k\leq p.$$
We first note that it suffices to check the equality for $k = p$:

\begin{lemma}\label{preservation}
If $F_p \cH_Z^r(\shO_X)= O_p \cH^r_Z(\shO_X)$, then 
$$F_k  \cH_Z^r(\shO_X)= O_k \cH^r_Z(\shO_X) \,\,\,\,\,\,{\rm for ~all}\,\,\,\,k \le p.$$
\end{lemma}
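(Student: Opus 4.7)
The plan is to proceed by descending induction on $k$, with the hypothesis providing the base case $k=p$. The entire argument reduces to establishing the following identity, which I would prove first:
\begin{equation*}
\I_Z \cdot O_k \cH^r_Z(\shO_X) \;=\; O_{k-1} \cH^r_Z(\shO_X) \quad \text{for every } k \ge 1.
\end{equation*}

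Granting this identity, the inductive step is immediate. Assume $F_k \cH^r_Z(\shO_X) = O_k \cH^r_Z(\shO_X)$. Combining the filtration-lowering property $\I_Z \cdot F_k \cH^r_Z(\shO_X) \subseteq F_{k-1} \cH^r_Z(\shO_X)$ from Proposition~\ref{inclusion_filtration} with the identity yields
\begin{equation*}
O_{k-1} \cH^r_Z(\shO_X) \;=\; \I_Z \cdot O_k \cH^r_Z(\shO_X) \;=\; \I_Z \cdot F_k \cH^r_Z(\shO_X) \;\subseteq\; F_{k-1} \cH^r_Z(\shO_X),
\end{equation*}
and the reverse containment $F_{k-1} \cH^r_Z(\shO_X) \subseteq O_{k-1} \cH^r_Z(\shO_X)$ is Proposition~\ref{one_inclusion}. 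This gives the equality at level $k-1$ and closes the induction.

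For the key identity, the inclusion $\I_Z \cdot O_k \subseteq O_{k-1}$ is immediate from the definition $O_k \cH^r_Z(\shO_X) = \{u \mid \I_Z^{k+1} u = 0\}$. The reverse inclusion is local on $X$, so I may assume $Z$ is cut out by a regular sequence $f_1,\ldots,f_r$. By Lemma~\ref{descriptionO}, $O_{k-1} \cH^r_Z(\shO_X)$ is generated over $\shO_X$ by the classes $\tfrac{1}{f_1^{a_1}\cdots f_r^{a_r}}$ with $a_j \ge 1$ and $\sum_j a_j \le k+r-1$. Each such generator equals $f_i \cdot \tfrac{1}{f_1^{a_1}\cdots f_i^{a_i+1}\cdots f_r^{a_r}}$ for any chosen index $i$, and the right-hand factor lies in $O_k \cH^r_Z(\shO_X)$ by Lemma~\ref{descriptionO} since its exponents now sum to at most $k+r$. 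Hence every local generator of $O_{k-1}$ lies in $\I_Z \cdot O_k$, as required.

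I do not anticipate any genuine obstacle: the argument is formal once one pairs the deep Hodge-theoretic input that $\I_Z$ lowers the Hodge filtration (Proposition~\ref{inclusion_filtration}) with the transparent local presentation of the order filtration in Lemma~\ref{descriptionO}. The only mild subtlety is the passage from the local model to a global statement, but this is automatic since the relevant sheaves are quasi-coherent and, by hypothesis, $Z$ is locally cut out by a regular sequence on an open cover of $X$.
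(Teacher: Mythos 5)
Your proof is correct and follows essentially the same route as the paper's: both hinge on the identity $\I_Z \cdot O_k = O_{k-1}$ (which the paper leaves as a ``brief inspection'' of Lemma~\ref{descriptionO} and you spell out), combined with $\I_Z \cdot F_k \subseteq F_{k-1}$ from Proposition~\ref{inclusion_filtration} and the containment $F_{k-1}\subseteq O_{k-1}$ from Proposition~\ref{one_inclusion}. The only cosmetic difference is that the paper states it suffices to check the single step $k=p-1$ rather than phrasing the argument as a descending induction.
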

\begin{proof}
It suffices to check this for $k = p-1$. We use the notation $F_k$ and $O_k$ for simplicity. Note first that by Proposition 
\ref{inclusion_filtration} and Proposition \ref{one_inclusion} we have
$$\\I_Z\cdot O_p = \\I_Z \cdot F_p \subseteq F_{p -1} \subseteq O_{p-1}.$$
On the other hand, a brief inspection of the concrete description of $O_k$ given in Lemma \ref{descriptionO} shows that 
$\I_Z \cdot O_p = O_{p-1}$. We conclude by combining these two facts.
\end{proof}

The next lemma shows that this question regarding the comparison between the Hodge and order filtration is interesting only if we
assume that $Z$ is reduced.

\begin{lemma}\label{case_nonreduced}
If $Z$ is non-reduced, then
$F_0\cH_Z^r(\shO_X)\neq O_0\cH^r_Z(\shO_X)$.
\end{lemma}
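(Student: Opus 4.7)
The plan is to exhibit, locally on $X$, an explicit element $u\in O_0\cH^r_Z(\shO_X)$ that cannot lie in $F_0\cH^r_Z(\shO_X)$. The key leverage is the contrast between the two filtrations: $O_0$ depends on the scheme-theoretic structure of $Z$, whereas $F_0$ depends only on $Z_{\rm red}$ (\S\ref{scn:HF}). Consequently
\[
F_0\cH^r_Z(\shO_X)=F_0\cH^r_{Z_{\rm red}}(\shO_X)\subseteq O_0\cH^r_{Z_{\rm red}}(\shO_X),
\]
the inclusion being Proposition~\ref{one_inclusion} applied to $Z_{\rm red}$. It therefore suffices to produce a $u\in O_0\cH^r_Z(\shO_X)$ and an element $g\in \I_{Z_{\rm red}}\smallsetminus \I_Z$ with $g\cdot u\neq 0$, since such a $u$ cannot belong to $O_0\cH^r_{Z_{\rm red}}(\shO_X)$.

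Since the question is local, I would work near a point of $Z$ where $\I_Z\neq \I_{Z_{\rm red}}$ and pick such a $g$. There $\I_Z=(f_1,\ldots,f_r)$ with $f_1,\ldots,f_r$ a regular sequence, and I would take $u$ to be the class of $\tfrac{1}{f_1\cdots f_r}$ in $\cH^r_Z(\shO_X)$. The inclusion $u\in O_0\cH^r_Z(\shO_X)$ is then immediate from the \v{C}ech description of local cohomology, or equivalently from Lemma~\ref{descriptionO}.

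The substantive point is to verify $g\cdot u\neq 0$ whenever $g\notin \I_Z$. The cleanest route is through the identification
\[
O_0\cH^r_Z(\shO_X)=E_0\cH^r_Z(\shO_X)\simeq \shE xt^r_{\shO_X}(\shO_Z,\shO_X),
\]
coming from Proposition~\ref{E=O-codim} and Remark~\ref{rmk:injective}. By the $k=0$ case of Lemma~\ref{descriptionE} this is an $\shO_Z$-locally free module of rank one, isomorphic to $\omega_Z\otimes\omega_X^{-1}$; an unwinding of the isomorphism via the Koszul resolution of $\shO_Z$ associated to $f_1,\ldots,f_r$ shows that $u$ is a free $\shO_Z$-module generator. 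Hence multiplication by $g$ annihilates $u$ if and only if $g$ vanishes in $\shO_Z$, i.e.\ $g\in\I_Z$; this rules out the chosen $g$.

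Combining the pieces: $g\cdot u\neq 0$, yet every element of $F_0\cH^r_Z(\shO_X)\subseteq O_0\cH^r_{Z_{\rm red}}(\shO_X)$ is killed by $g\in\I_{Z_{\rm red}}$, so $u\notin F_0\cH^r_Z(\shO_X)$, giving the desired strict containment. The only place requiring real care is identifying the generator of $\omega_Z\otimes\omega_X^{-1}$ under Lemma~\ref{descriptionE} with the class $\tfrac{1}{f_1\cdots f_r}$; this is a routine (but not automatic) bookkeeping step tracing through the Koszul-to-\v{C}ech comparison, and I expect it to be the main technical obstacle, albeit a minor one.
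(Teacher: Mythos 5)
Your argument is correct, and it reaches the conclusion by a genuinely different route than the paper. You and the paper both reduce to showing that $O'_0 := O_0\cH^r_{Z_{\rm red}}(\shO_X)$ is strictly contained in $O_0\cH^r_Z(\shO_X)$, using $F_0\cH^r_Z(\shO_X)=F_0\cH^r_{Z_{\rm red}}(\shO_X)\subseteq O'_0$. The divergence is in how the strict containment is established. The paper passes to a generic point of a non-reduced component (where $Z_{\rm red}$ is smooth), chooses coordinates $x_1,\ldots,x_r$ cutting out $\I_{Z_{\rm red}}$, writes $f_i=\sum_j a_{ij}x_j$, and uses the \v{C}ech change-of-variables isomorphism to show that the generator of $O'_0$ corresponds to $\det(a_{ij})\cdot\tfrac{1}{f_1\cdots f_r}$, with $\det(a_{ij})$ a non-unit in $\shO_Z$; this gives $O'_0=\det(a_{ij})\cdot O_0\subsetneq O_0$. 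You instead stay at an arbitrary point where $\I_Z\subsetneq\I_{Z_{\rm red}}$, choose $g\in\I_{Z_{\rm red}}\smallsetminus\I_Z$, and observe that $O'_0$ is killed by $g$ (directly from the definition of the order filtration) while $g\cdot u\neq 0$ for $u=\tfrac{1}{f_1\cdots f_r}$ because $O_0$ is $\shO_Z$-free of rank one with $u$ as a generator. This is structurally cleaner: it needs no generic-point reduction and no explicit determinant computation, at the cost of invoking the identification $O_0=E_0\simeq\omega_Z\otimes\omega_X^{-1}$ from Lemma~\ref{descriptionE}.

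One small remark: the "bookkeeping step" you flag as the main technical obstacle can be dispensed with entirely. You do not need to trace the generator of $\omega_Z\otimes\omega_X^{-1}$ through the Koszul-to-\v{C}ech comparison. It is enough to observe that $O_0$ is generated over $\shO_X$ (hence over $\shO_Z$, since $\I_Z$ kills $O_0$) by $u$ (Lemma~\ref{descriptionO}), and that $O_0\simeq\omega_Z\otimes\omega_X^{-1}$ is locally free of rank one over $\shO_Z$ (Lemma~\ref{descriptionE}). A surjection from $\shO_Z$ onto a locally free rank-one $\shO_Z$-module is automatically an isomorphism, so $u$ is a free generator with no further work; in particular $g\cdot u=0$ forces $g\in\I_Z$.
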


\begin{proof}
Let $O'_k\cH^r_Z(\shO_X)$ be the order filtration on $\cH^r_Z(\shO_X)$ corresponding to $Z_{\rm red}$. Since we have the inclusions
$$F_0\cH^r_Z(\shO_X)\subseteq O'_0\cH^r_Z(\shO_X)\subseteq O_0 \cH^r_Z(\shO_X),$$
 it is enough to show that $O'_0\cH^r_Z(\shO_X)\neq O_0\cH^r_Z(\shO_X)$.

Note that $Z$ is Cohen-Macaulay, being a local complete intersection; since it is not reduced, it is not generically reduced. After restricting to a suitable open subset, we may thus assume that $X$ is affine, with coordinates $x_1,\ldots,x_n$, such that the ideal of $Z_{\rm red}$ is generated by $x_1,\ldots,x_r$, and if we denote by $f_1,\ldots,f_r$  the generators of $\I_Z$, and write
$f_i=\sum_{j=1}^ra_{i,j}x_j$, then ${\rm det}(a_{i,j})\in (x_1,\ldots,x_r)$. The assertion in the lemma follows from the fact that via the isomorphisms
$$\cH_Z^r(\shO_X)\simeq\shO(X)_{f_1\cdots f_r}/\sum_{i=1}^r\shO(X)_{f_1\cdots\widehat{f_i}\cdots f_r}\simeq
\shO(X)_{x_1\cdots x_r}/\sum_{i=1}^r\shO(X)_{x_1\cdots\widehat{x_i}\cdots x_r}$$
given by the \v{C}ech-complex description in \S\ref{scn:LC}, the class of $\frac{1}{x_1\cdots x_r}$, which generates
$O'_0\cH_Z^r(\shO_X)$, corresponds to ${\rm det}(a_{i,j})\frac{1}{f_1\cdots f_r}$. On the other hand, 
$O_0\cH^r_Z(\shO_X)$ is generated by the class of $\frac{1}{f_1\cdots f_r}$, hence it is different from $O'_0\cH_Z^r(\shO_X)$.
\end{proof}

Before stating the next result, we note that a stronger bound will be obtained in Theorem \ref{thm_upper_bound}, however with much more work; the simple argument here is sufficient for establishing Corollary \ref{sm-equiv}.

\begin{proposition}\label{char_smooth_for_lci}
If $Z$ is not smooth, then
$$F_k\cH_Z^r(\shO_X)\subsetneq O_k\cH^r_Z(\shO_X)\quad\text{for every}\quad k\geq n-r+1.$$
\end{proposition}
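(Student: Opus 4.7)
The approach is by contradiction. Suppose $F_{n-r+1}\cH^r_Z(\shO_X) = O_{n-r+1}\cH^r_Z(\shO_X)$. Lemma~\ref{preservation} together with Proposition~\ref{E=O-codim} immediately gives $F_k\cH^r_Z(\shO_X) = E_k\cH^r_Z(\shO_X)$ for every $0 \leq k \leq n-r+1$, and Lemma~\ref{descriptionE} then yields the explicit identification
\[
\Gr^F_k\cH^r_Z(\shO_X) \simeq \Sym^k\shN_{Z/X}\otimes_{\shO_Z}\omega_Z\otimes\omega_X^{-1}|_Z
\]
for all such $k$. Moreover, because $E_k$ is the image of $\shE xt^r_{\shO_X}(\shO_X/\I_Z^{k+1},\shO_X)$, the induced action of $T_X = \Gr^F_1\Dmod_X$ on these graded pieces factors through the canonical surjection $\phi\colon T_X|_Z \twoheadrightarrow \shN_{Z/X}$, i.e. it is simply multiplication in the symmetric algebra via $\phi$.

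The heart of the argument is to examine the graded de Rham complex $\Gr^F_0\DR_X\cH^r_Z(\shO_X)$, which lives in cohomological degrees $-n,\ldots,0$, with the term in degree $-n+p$ being $\Omega^p_X\otimes\Gr^F_p\cH^r_Z(\shO_X)$. Under our standing assumption, for $0 \leq p \leq n-r+1$ this term is the explicit sheaf $\Omega^p_X\otimes\Sym^p\shN_{Z/X}\otimes\omega_Z\otimes\omega_X^{-1}|_Z$, and the differentials in this range coincide, up to the uniform twist by $\omega_Z\otimes\omega_X^{-1}|_Z$, with the Koszul-type differentials associated to the canonical element $\phi\in\Omega^1_X|_Z\otimes\shN_{Z/X}$, or equivalently to the dual inclusion $\shN^\vee_{Z/X} = \I_Z/\I_Z^2\hookrightarrow\Omega^1_X|_Z$.

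Since $\cH^r_Z(\shO_X)$ underlies a Hodge module with support contained in $Z$ of dimension $n-r$, Saito's theorem on the filtered de Rham complex \cite{Saito-MHP} forces the cohomology sheaves of $\Gr^F_0\DR_X\cH^r_Z(\shO_X)$ to vanish in cohomological degrees below $-(n-r)$, i.e.\ for $i\in[-n,-n+r-1]$. A sufficient portion of this range lies in $[-n,-r+1]$, where we have the explicit Koszul identification; the forced exactness of the corresponding initial segment of the Koszul complex would in turn force the map $\I_Z/\I_Z^2\hookrightarrow\Omega^1_X|_Z$ to be a split injection pointwise on $Z$, which by the Jacobian criterion means $Z$ is smooth. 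At a singular point $z\in Z$ one instead selects minimal generators $f_1,\ldots,f_r$ of $\I_Z$ at $z$ and uses the nontrivial relation among the $df_i|_z$ to exhibit an explicit nonzero class in a cohomology group of the Koszul complex within the required vanishing range, producing the desired contradiction.

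The main technical obstacle is the Koszul analysis itself: first identifying the de Rham differentials cleanly with the Koszul maps, and then producing the obstruction class at a singular point. A secondary obstacle appears when $r$ is large relative to $n$, so that the required vanishing range $[-n,-n+r-1]$ is not entirely contained in the explicit range $[-n,-r+1]$; handling this uniformly may require combining the analysis of $\Gr^F_0\DR_X$ with the analogous analysis of $\Gr^F_k\DR_X$ for other values of $k$, or invoking Saito's duality between $\Gr^F_k\DR$ of a Hodge module and $\Gr^F_{-k-n}\DR$ of its dual.
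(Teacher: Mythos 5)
Your proposal takes a genuinely different route from the paper's, which is a short direct calculation: using Theorem~\ref{gen_level} (generation at level $n-r$), for $k \ge n-r+1$ one has $F_k\cH^r_Z(\shO_X) \subseteq F_1\Dmod_X\cdot O_{k-1}\cH^r_Z(\shO_X)$, and at a singular point $Q$ where some generator $f_1$ of $\I_Z$ lies in $\I_Q^2$, one exhibits the class of $1/(f_1^{k+1}f_2\cdots f_r)$ as a minimal generator of $O_k$ that is missed by $F_1\Dmod_X\cdot O_{k-1}$. There is no Koszul or de~Rham analysis at all. Your proposed route via $\Gr^F_0\DR_X$ is closer in spirit to the paper's later Theorem~\ref{thm-DB-main} (and its Eagon--Northcott complex ${\mathcal C}_p^\bullet$), but it does not prove the Proposition.

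The real problem is not the two obstacles you flag but the central claim that \emph{exactness} of the initial segment of the Koszul complex in the forced degree range implies $\I_Z/\I_Z^2 \hookrightarrow \Omega^1_X|_Z$ is a pointwise split injection, hence $Z$ smooth. Exactness of a complex of sheaves is far weaker than fiberwise injectivity, and the implication is false. The hypersurface case already kills it: take $n = 2$, $r = 1$, $Z = V(f) \subset \CC^2$ a reduced singular plane curve. The only degree in the required vanishing range $[-n, -n+r-1]$ is $i = -2$, and the first differential of $\Gr^F_0\DR_X\cH^1_Z(\shO_X)$ is multiplication by $\pm df$ as a map of line bundles $\shN_{Z/X} \to \Omega^1_X|_Z \otimes \shN_{Z/X}^{\otimes 2}$. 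Since $Z$ is reduced and $df$ does not vanish on any component, this map is injective as a sheaf map (a nonzero section of a line bundle on a reduced curve is a nonzerodivisor), so $\cH^{-2} = 0$ even though $df|_z = 0$ at the singular point $z$. The Saito-type degree bound on $\Gr^F\DR$ is thus perfectly consistent with $Z$ being singular, and there is no "nonzero obstruction class" in the vanishing range to exploit. The $df_i|_z$ having a relation over the residue field does not produce a nonzero kernel at the sheaf level, which is what cohomology of the complex actually measures. To detect singularity of $Z$ via this circle of ideas, one needs either the depth-sensitivity statement for the full Eagon--Northcott complex combined with an upper bound on $p(Z)$ (which is what Theorem~\ref{thm_upper_bound} ultimately does, by reduction to the hypersurface case via restriction and semicontinuity), or the paper's direct generator-level computation; neither is a consequence of sheaf-level exactness of a truncated Koszul complex.
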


\begin{proof}
We may assume that $X$ is affine and $\I_Z$ is generated by $f_1,\ldots,f_r$. Since $Z$ is not smooth, it follows that there is a point $Q\in Z$
defined by the ideal $\I_Q$ such that, after possibly renumbering and replacing $f_1$ by a linear combination of $f_1,\ldots,f_r$, we have $f_1\in\I_Q^2$. We now need to appeal to a result that will be proved later, Theorem~\ref{gen_level}, saying that the Hodge filtration on $\cH_Z^r(\shO_X)$ is generated at level $n-r$. If $k\geq n-r+1$, we thus have
$$F_k\cH_Z^r(\shO_X)\subseteq F_1\Dmod_X\cdot F_{k-1}\cH_Z^r(\shO_X).$$
Recall that $O_{k-1}\cH_Z^r(\shO_X)$ is generated by the classes of $\frac{1}{f_1^{a_1}\cdots f_r^{a_r}}$, with $a_i\geq 1$ for all $i$ and 
$\sum_ia_i\leq k-1+r$.
Moreover, using the fact that $f_1,\ldots,f_r$ form a regular sequence in $\shO_{X,Q}$, it is easy to see that these elements form a minimal system of
generators of $O_{k-1}\cH_Z^r(\shO_X)$ at $Q$. A similar assertion holds for $O_k\cH_Z^r(\shO_X)$. 

Since $f_1\in \I_Q^2$, a straightforward calculation shows that
$$F_k\cH_Z^r(\shO_X)\subseteq F_1\Dmod_X\cdot O_{k-1}\cH_Z^r(\shO_X)\subseteq \I_Q\cdot\frac{1}{f_1^{k+1}f_2\cdots f_r}+\sum_{a_1,\ldots,a_r}
\shO_X\cdot\frac{1}{f_1^{a_1}\cdots f_r^{a_r}},$$
where the last sum is over those $a_1,\ldots,a_r$ such that $a_i\geq 1$ for all $i$, with the inequality being strict for some $i\geq 2$, and such that
$\sum_ia_i=k+r$. This shows that $F_k\cH_Z^r(\shO_X)$ is a proper subset of $O_k\cH_Z^r(\shO_X)$ at $Q$.
\end{proof}

In particular the coincidence of the two filtrations characterizes smoothness, similarly to \cite[Theorem A]{MP1} for hypersurfaces
(for another approach to the same result, see Theorem~\ref{thm_upper_bound} below).

\begin{corollary}\label{sm-equiv}
$Z$ is smooth if and only if $F_k\cH_Z^r(\shO_X) =  O_k\cH^r_Z(\shO_X)$ for all $k$.
\end{corollary}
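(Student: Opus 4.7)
The corollary follows almost immediately by combining the results established just above with the base case of smooth subvarieties. My plan is to prove the two implications separately.

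For the forward direction, assuming $Z$ is smooth, the equality $F_k\cH_Z^r(\shO_X) = O_k\cH_Z^r(\shO_X)$ for all $k \ge 0$ has already been recorded in Example~\ref{smooth_subvariety2} (which in turn is an application of Example~\ref{smooth_subvariety} together with Proposition~\ref{E=O-codim}). So this direction requires no further work beyond citing these results.

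For the reverse direction, I would argue by contrapositive: if $Z$ is not smooth, I must exhibit some $k$ for which $F_k\cH_Z^r(\shO_X) \subsetneq O_k\cH_Z^r(\shO_X)$. This is precisely the content of Proposition~\ref{char_smooth_for_lci}, which guarantees the strict inclusion for every $k \geq n-r+1$. Note that Proposition~\ref{char_smooth_for_lci} does not require $Z$ to be reduced; the non-reduced case is also handled by Lemma~\ref{case_nonreduced}, which provides the strict inclusion already at $k=0$, so either way some index produces a strict inclusion.

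There is no substantial obstacle here, since the main work has been done in establishing Proposition~\ref{char_smooth_for_lci}, whose proof relied on the generation level bound of Theorem~\ref{gen_level}. The proof of the corollary is therefore a direct assembly of the two implications above.
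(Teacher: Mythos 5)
Your proof is correct and matches the paper's implicit argument: the paper states the corollary immediately after Proposition~\ref{char_smooth_for_lci} as a direct consequence of that proposition together with Example~\ref{smooth_subvariety2}. Your observation that Lemma~\ref{case_nonreduced} independently handles the non-reduced case is a fine (if redundant) remark, since Proposition~\ref{char_smooth_for_lci} already applies to any non-smooth $Z$.
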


\smallskip

We formalize the discussion above by introducing a measure of singularities that will figure prominently in the study of the Du Bois complex of $Z$.

\begin{definition}\label{definition_singularity_level}
The \emph{singularity level} of the Hodge filtration on $\shH^r_Z \shO_X$ is 
$$p (Z) := {\rm sup}\{~k ~| ~ F_k \shH^r_Z \shO_X = O_k \shH^r_Z \shO_X \},$$
with the convention that $p (Z) = -1$ if equality never holds.
\end{definition}

\begin{remark}\label{indep_embedding}
The invariant $p(Z)$ only depends on $Z$, and not on the embedding in a smooth ambient variety. In order to see this, let 
us temporarily denote by $p(Z\hookrightarrow X)$ the invariant corresponding to a closed embedding in a smooth variety $X$. 
Given a closed immersion of smooth varieties $X\hookrightarrow Y$, by Remark~\ref{comparison_two_embeddings} we have
$$p(Z\hookrightarrow X)=p(Z\hookrightarrow X\hookrightarrow Y).$$
Suppose next that $Z$ is affine and consider closed immersions $i\colon Z\hookrightarrow  X$ and $i'\colon Z\hookrightarrow X'$,
with $X$ and $X'$ smooth and affine. In order to show that $p(Z\hookrightarrow X)=p(Z\hookrightarrow X')$, by the identity above we may assume that $X={\mathbf A}^m$
and $X'={\mathbf A}^n$.  There is a morphism $f\colon {\mathbf A}^m\to {\mathbf A}^n$ such that $f\circ i=i'$.
Since $({\rm id}_{{\mathbf A}^m},f)$ is a closed immersion with $ ({\rm id}_{{\mathbf A}^m},f)\circ i=(i,i') \colon X\to {\mathbf A}^m\times {\mathbf A}^n$, it follows from what we have already seen that
$p(X\hookrightarrow {\mathbf A}^m)=p(X\hookrightarrow {\mathbf A}^m\times {\mathbf A}^n)$ and we similarly see that
$p(X\hookrightarrow {\mathbf A}^m\times {\mathbf A}^n)=p(X\hookrightarrow {\mathbf A}^n)$.
Noting that given an open cover $Z=\bigcup V_i$, we have $p(Z)=\min_ip(V_i)$, we leave it as an exercise for the reader to deduce the general case of our assertion.
\end{remark}

\begin{remark}
(1) By Lemma \ref{preservation} we have $F_k \shH^r_Z \shO_X = O_k \shH^r_Z \shO_X$  for all $k \le p (Z)$.

\noindent 

\noindent
(2) Corollary \ref{sm-equiv} says that $Z$ is smooth if and only if $p (Z) = \infty$, while Lemma \ref{case_nonreduced} says that if $Z$ is not 
reduced, then $p (Z) = -1$. 

\noindent
(3) We have seen in Theorem \ref{char_DuBois} that $p (Z) \ge 0$ if and only if $Z$ has Du Bois singularities. 
Moreover, Conjecture \ref{F_1=E_1} states that if $p (Z) \ge 1$, then $Z$ has rational singularities. Conjecture \ref{conj-BS} predicts an
explicit formula for $p(Z)$ in terms of the Bernstein-Sato polynomial of $Z$.

\noindent
(4) Another interpretation of the singularity level $p(Z)$, in terms of the Hodge ideals associated to products of equations defining $Z$ is given by Proposition \ref{charact_equality} below. Note that an algorithm for computing Hodge ideals is provided in \cite{Blanco}.
\end{remark}

\begin{remark}[Bernstein-Sato polynomial]\label{rmk:BS}
For a reduced hypersurface $D\subseteq X$, we have 
$$p (D) = [ \widetilde{\alpha}(D) ] - 1,$$
where $\widetilde{\alpha} (D)$ is the \emph{minimal exponent} of $D$, i.e. the negative of the largest root of the reduced 
Bernstein-Sato polynomial $b_D( s)/(s+1)$; see \cite[Corollary 1]{Saito-MLCT}. This interpretation and its extension to $\QQ$-divisors have proven to be very useful for studying both the Hodge filtration and the minimal exponent, see \cite{MP2}, \cite{MP3}.

When $Z$ is a reduced local complete intersection, it would similarly be interesting to translate the condition 
$F_k\cH_Z^r(\shO_X)= O_k\cH^r_Z(\shO_X)$ for $k\leq p$
in terms of the Bernstein-Sato polynomial $b_{Z}(s)$ (see \cite{BMS} for its definition).
Note that by \cite[Theorem~1]{BMS}, the largest root of $b_{Z}(s)$ is $-{\rm lct}(X,Z)$, the negative of the log canonical threshold of the pair $(X,Z)$.
Since $Z$
is reduced, $-r$ is a root of $b_{Z}(s)$, and it is natural to consider the reduced version 
$\widetilde{b}_{Z}(s)=b_{Z}(s)/ (s+r)$.
If we write $\widetilde{\alpha} (Z)$ for the negative of the largest root of $\widetilde{b}_{Z}(s)$, then 
we see that ${\rm lct}(X, Z)=\min\{r,\widetilde{\alpha} (Z) \}$, and
it follows from \cite[Theorem~4]{BMS} that $\widetilde{\alpha}(Z)>r$ if and only if $Z$ has rational singularities. 

\begin{conjecture}\label{conj-BS}
If $Z$ is a reduced local complete intersection of codimension $r$, then 
$$p (Z) = \max\{[ \widetilde{\alpha}(Z)] - r,-1\}.$$
\end{conjecture}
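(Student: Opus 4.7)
The plan is to extend Saito's proof of the hypersurface formula $p(D) = [\widetilde{\alpha}(D)] - 1$ in \cite{Saito-MLCT} to local complete intersections, via a multi-variable $V$-filtration analysis. Arguing locally (which is allowed by Remark~\ref{indep_embedding}), I would assume $Z$ is cut out by a regular sequence $f_1,\ldots,f_r \in \shO_X(X)$ and pass to the graph embedding $i_{\mathbf{f}} \colon X \hookrightarrow X \times \AAA^r$ sending $x$ to $\bigl(x, f_1(x), \ldots, f_r(x)\bigr)$. The pushforward $B_{\mathbf{f}} := (i_{\mathbf{f}})_+ \shO_X$ underlies a pure Hodge module on $X \times \AAA^r$, and a \v{C}ech-type computation realizes $\cH^r_Z \shO_X$ as the quotient of a $\Dmod$-submodule of $B_{\mathbf{f}}$, in such a way that both the Hodge filtration $F_\bullet$ and the order filtration $O_\bullet$ on $\cH^r_Z \shO_X$ lift to natural filtrations on $B_{\mathbf{f}}$: the Hodge one by the pure Hodge module structure, and the order one by the total degree in $\partial_{t_1},\ldots,\partial_{t_r}$.

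I would then equip $B_{\mathbf{f}}$ with the Kashiwara--Malgrange $V$-filtration along $T = \{t_1=\cdots=t_r=0\}$, indexed by $\QQ$ through the Euler operator $\theta = \sum_i \partial_{t_i} t_i$. By \cite{BMS}, the Bernstein--Sato polynomial $b_Z(s)$ is (up to a shift) the minimal polynomial of $\theta$ acting on the relevant graded piece $\mathrm{gr}^V B_{\mathbf{f}}$, so the roots of $\widetilde{b}_Z(s)$ govern the jumps of $V^\bullet B_{\mathbf{f}}$ near the critical index $-r$. The central step is to prove the multi-variable analogue of Saito's identity: the Hodge filtration $F_k$ on the relevant piece should agree with the intersection of the order filtration with $V^{>-k-r}$. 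Granting this, the equality $F_k = O_k$ becomes equivalent to the absence of roots of $\widetilde{b}_Z(s)$ in the open interval $(-k-r, -r]$, i.e.\ to the inequality $\widetilde{\alpha}(Z) > k + r$, which is exactly the content of the conjecture when $\widetilde{\alpha}(Z) \geq r$.

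The boundary case $\widetilde{\alpha}(Z) < r$, where the conjecture predicts $p(Z) = -1$, would reduce via Theorem~\ref{char_DuBois} (together with $E_0 = O_0$ for local complete intersections) to the statement that $Z$ fails to be Du Bois when $\widetilde{\alpha}(Z) < r$. This should itself be a consequence of the same $V$-filtration dictionary, matching the hypersurface fact that $D$ is Du Bois if and only if $\widetilde{\alpha}(D) \geq 1$. Alternatively, one could first settle the integer part of the statement using the higher Du Bois correspondence of Theorem~\ref{thm-DB-main}, leveraging the semicontinuity and restriction results of Theorems~\ref{thm_restriction} and~\ref{thm_semicont}, and then handle the fractional refinement via the $V$-filtration analysis.

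The main obstacle is making this Hodge--$V$-filtration correspondence on $B_{\mathbf{f}}$ sufficiently precise in the multi-variable setting. In the hypersurface case, $B_f$ is cyclically generated over $\Dmod_X[\partial_t]$ by the distribution $\delta(t-f)$, and the Hodge filtration is transparent on the cyclic basis $\{\partial_t^j \delta\}_{j \geq 0}$. For $r \geq 2$, the module $B_{\mathbf{f}}$ is indexed by multi-indices, and while $\theta$ still provides a single-parameter $V$-filtration, monomials $\partial_{t_1}^{a_1}\cdots\partial_{t_r}^{a_r}\delta$ of the same total degree can sit at different $V$-levels, so the two filtrations interleave nontrivially. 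Navigating this mismatch — perhaps by first reducing to weighted-homogeneous $Z$ via the semicontinuity of $p(Z)$, where $b_Z(s)$ admits an explicit description — appears to be the technical heart of the problem.
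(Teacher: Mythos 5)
This statement is Conjecture~\ref{conj-BS} in the paper, not a theorem: the authors do not prove it, and they explicitly present it as an open problem (noting only that it implies Conjecture~\ref{F_1=E_1}). So there is no ``paper's own proof'' to compare against, and any proposal must be judged on whether it actually closes the gap.

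Your proposal does not. The framework you sketch is entirely reasonable and is almost certainly what the authors have in mind when they write, in Remark~\ref{rmk:BS}, that ``it would similarly be interesting to translate the condition $F_k\cH_Z^r(\shO_X)= O_k\cH^r_Z(\shO_X)$ for $k\leq p$ in terms of the Bernstein--Sato polynomial $b_Z(s)$.'' You correctly recall the hypersurface identity $p(D)=[\widetilde{\alpha}(D)]-1$ from \cite{Saito-MLCT}, set up the graph embedding $i_{\mathbf f}\colon X\hookrightarrow X\times\AAA^r$, the module $B_{\mathbf f}$, and the $V$-filtration along $t_1=\cdots=t_r=0$ with Euler operator $\theta$; and the \cite{BMS} dictionary between roots of $b_Z(s)$ and jumps of $V^\bullet B_{\mathbf f}$ is the right ingredient. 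But the central step --- the ``multi-variable analogue of Saito's identity'' asserting that $F_k$ on the relevant subquotient of $B_{\mathbf f}$ equals the intersection of the order filtration with $V^{>-k-r}$ --- is itself a statement at the same level of difficulty as the conjecture, and you do not derive it; you only ``grant'' it. You also put your finger on precisely why it is hard: for $r\geq 2$ the monomials $\partial_{t_1}^{a_1}\cdots\partial_{t_r}^{a_r}\delta$ of a fixed total degree spread across several $V$-levels, so the order and $V$-filtrations interleave rather than being comparable term by term, and the clean hypersurface induction does not transport. Until that interleaving is controlled (or until one establishes the Hodge-theoretic description of $V^\bullet B_{\mathbf f}$ for regular sequences that would replace Saito's microlocal $V$-filtration computation), the argument is a program, not a proof. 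Likewise the boundary case: the implication ``$\widetilde{\alpha}(Z)<r$ $\Rightarrow$ $Z$ not Du Bois'' that you invoke is not currently available for general local complete intersections and would need to be established, again most plausibly via the same unestablished dictionary. In short, your plan is a sensible blueprint for attacking the conjecture, but it does not constitute a proof, and the paper itself offers none.
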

Note that this conjecture implies Conjecture \ref{F_1=E_1}.
\end{remark}

We next give a criterion for the coincidence between the Hodge filtration and the order filtration in terms of Hodge ideals of hypersurfaces, and deduce some applications to the behavior of the singularity level. We continue to work locally, assuming that $X = {\rm Spec}(A)$ is affine and $Z$ is defined by equations 
$f_1, \ldots, f_r \in A$ which form a regular sequence at every point of $Z$. We define the ideal  
$$J_k (f_1,\ldots,f_r):=\big(f_1^{b_1}\cdots f_r^{b_r}\mid 0 \leq b_i\leq k, \,\, \sum_i  b_i = k(r-1) \big) \subseteq A.$$
We will assume that $f : =f_1\cdots f_r$ defines a reduced divisor $D$. This is a harmless assumption: we are interested in understanding $p(Z)$
and in light of Lemma~\ref{case_nonreduced}, this is only interesting when $Z$ is reduced; in this case the assumption on $f$ is always satisfied after 
possibly replacing $f_1,\ldots,f_r$ by general linear combinations.
We use the notation $I_k (f)$ for the Hodge ideals $I_k (D)$ of \cite{MP1}.

\begin{example}\label{ex:SNC}
When $x_1, \ldots, x_r$ are part of a system of algebraic coordinates, so that $f = x_1 \cdots x_r$ defines a divisor with simple normal crossings, we have 
$$I_k (f) = J_k (x_1, \ldots, x_r) \,\,\,\,\,\, {\rm for ~all~} \,\,\,\,k \ge 0,$$
by \cite[Proposition 8.2]{MP1}. 
\end{example}

\begin{lemma}\label{incl_in_J}
If $f$ defines a reduced divisor, then $I_k (f)\subseteq J_k (f_1,\ldots,f_r)$ for all $k \ge 0$.
\end{lemma}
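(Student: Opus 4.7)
My plan is to use a log resolution together with the birational description of the Hodge filtration from Proposition \ref{comput_log_res}, reducing the assertion to the SNC case treated in Example \ref{ex:SNC}. The question is local on $X$, so I would take a log resolution $\mu : Y \to X$ of $(X, D_1 + \cdots + D_r)$ that is an isomorphism over $X \setminus D$, and set $E = \mu^{-1}(D)_{\mathrm{red}}$. In a local chart on $Y$ with coordinates $t_1, \ldots, t_n$ for which $E = (t_1 \cdots t_e)$, each pullback takes the monomial form $\mu^* f_j = u_j \prod_{i=1}^{e} t_i^{a_{j,i}}$ for units $u_j$ and non-negative integers $a_{j,i}$, and $m_i := \sum_j a_{j,i}$ is the multiplicity of the component $E_i$ in $\mu^* D$.

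Next, by Example \ref{ex:SNC} applied on $Y$, one has $I_k(t_1 \cdots t_e) = J_k(t_1, \ldots, t_e)$, i.e.\ $F_k \shO_Y(*E)$ is locally the $\shO_Y$-span of $1 / \prod_i t_i^{\gamma_i}$ with $\gamma_i \in [1, k+1]$ and $\sum_i \gamma_i \leq k + e$. Proposition \ref{comput_log_res} applied to the hypersurface case $q = 1$, combined with strictness of the Hodge filtration under the projective morphism $\mu$, expresses $F_k \shO_X(*D)$ as the image of $R^0 \mu_*$ of a filtered subcomplex built out of the SNC-filtered complex on $Y$. Thus any local section $\phi = g/f^{k+1} \in F_k \shO_X(*D)$ descends from a section on $Y$ controlled by the above SNC description.

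The plan is then to translate this combinatorial constraint on $Y$ back to the $f_j$'s on $X$: using $\mu^* f_j = u_j \prod t_i^{a_{j,i}}$, the generators $1/\prod_j f_j^{\beta_j}$ of $f^{-(k+1)} J_k(f_1, \ldots, f_r)$ (with $\beta_j \in [1, k+1]$ and $\sum_j \beta_j = k + r$) pull back to $\shO_Y$-multiples of monomials $1/\prod_i t_i^{\sum_j \beta_j a_{j,i}}$; conversely, one wants to show that the SNC-allowed monomials $1/\prod_i t_i^{\gamma_i}$ that arise as pullbacks from $X$ must come from such $\beta_j$'s. Under the factorizations $\mu^* f_j = u_j \prod t_i^{a_{j,i}}$ and $\mu^* f = u \prod t_i^{m_i}$, the constraints $\gamma_i \leq k + 1$ and $\sum \gamma_i \leq k + e$ should force, after descent to $X$, the relations $\beta_j \leq k + 1$ and $\sum \beta_j \leq k + r$ on the pole profile of $\phi$; multiplying by $f^{k+1}$ then yields $g \in J_k(f_1, \ldots, f_r)$.

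The main obstacle will be making the descent step precise, since the pushforward of the SNC filtration is subtle and only gives the correct upper bound after the careful analysis of the filtered complex $F_k A^\bullet$ in the sense of \S\ref{scn:birational}. Concretely, one has to verify that any term in the explicit description of $R^0 \mu_* F_k A^\bullet$ already lies in the $\shO_X$-span of the target generators $1/\prod_j f_j^{\beta_j}$; I expect this to reduce to a combinatorial check using the multiplicities $a_{j,i}$ and $m_i = \sum_j a_{j,i}$, combined with the fact that such $\shO_X$-combinations with $\sum \beta_j \leq k+r$ form exactly $f^{-(k+1)} J_k(f_1, \ldots, f_r)$ (as can be seen by factoring out excess powers of individual $f_j$'s).
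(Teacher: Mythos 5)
Your proposal and the paper's proof aim at the same SNC reduction via Example~\ref{ex:SNC}, but by very different machinery. The paper simply applies the Restriction Theorem for Hodge ideals (\cite[Corollary~3.4]{MP4}, \cite[Corollary~3.17]{MP6}) to the morphism $\varphi=(f_1,\ldots,f_r)\colon X\to\AAA^r$ and the SNC divisor $x_1\cdots x_r=0$: since $I_k(x_1\cdots x_r)=J_k(x_1,\ldots,x_r)$ and $\varphi^{-1}J_k(x_1,\ldots,x_r)\cdot\shO_X=J_k(f_1,\ldots,f_r)$, the inclusion $I_k(f)\subseteq J_k(f_1,\ldots,f_r)$ falls out immediately. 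You instead resolve $(X,D_1+\cdots+D_r)$ by $\mu\colon Y\to X$ and try to descend the SNC description on $Y$ through the identification of $F_k\shO_X(*D)$ with the image of $R^0\mu_*F_{k-n}A^\bullet$.

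The descent step you flag is indeed where the proposal stops, and it is not a combinatorial formality. The complex $F_{k-n}A^\bullet$ has nonzero terms $\Omega_Y^p(\log E)\otimes\mu^*F_j\Dmod_X$ in every degree, so $R^0\mu_*F_{k-n}A^\bullet$ mixes differential operators, log forms, and higher direct images of those, and is not handed to you as an $\shO_X$-span of monomials $1/\prod_j f_j^{\beta_j}$. Extracting the constraints $\beta_j\le k+1$, $\sum_j\beta_j\le k+r$ on the pole profile from that pushforward is precisely the content of the Restriction Theorem applied to the graph of $(f_1,\ldots,f_r)$; the proof of that theorem in \cite{MP4} passes through the $V$-filtration on a graph embedding, not through a direct bookkeeping of the multiplicities $a_{j,i}$. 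So your route, carried to completion, would amount to reproving a case of the Restriction Theorem, and as written the central step is asserted as an expectation rather than argued. The intended proof is the one-line application of the Restriction Theorem together with Example~\ref{ex:SNC}.
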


\begin{proof}
This follows from the Restriction Theorem for Hodge ideals, applied to the morphism 
$\varphi\colon X \to \AAA^r$ defined by $(f_1, \ldots, f_r)$ and the divisor $(x_1\cdots x_r = 0) \subseteq \AAA^r$,
in combination with Example \ref{ex:SNC}. We note that the key case of the Restriction Theorem is for closed immersions,
for which we refer to \cite[Corollary~3.4]{MP4}. The case of an arbitrary morphism between smooth varieties
is an immediate consequence;
see \cite[Corollary~3.17]{MP6} for this assertion (in a more general setting).
\end{proof}

The criterion we are after is the following:

\begin{proposition}\label{charact_equality}
If $f$ defines a reduced divisor, then
for a nonnegative integer $k$, we have
$F_k \cH_Z^r(\shO_X)= O_k \cH_Z^r(\shO_X)$ if and only if  
$$J_k (f_1,\ldots,f_r)\subseteq I_k (f)+(f_1^{k+1},\ldots,f_r^{k+1})$$ 
in a neighborhood of $Z$.
\end{proposition}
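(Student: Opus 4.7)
My plan is to realize both the Hodge filtration and the order filtration as images in $\cH^r_Z(\shO_X)$ of concrete fractional ideals in $A_f$, and then translate the desired equality into an ideal containment in $A$ modulo the kernel of the quotient map. Working in a neighborhood of $Z$ (away from which both sides vanish), the \v{C}ech-type complex of \S\ref{scn:LC} associated to the regular sequence $f_1,\ldots,f_r$ computes $\cH^r_Z(\shO_X)$ as the cokernel of
$$\bigoplus_{i=1}^r A_{f_1\cdots\widehat{f_i}\cdots f_r} \longrightarrow A_f,$$
yielding a surjection $\pi\colon A_f\twoheadrightarrow \cH^r_Z(\shO_X)$. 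Each term of this complex is a localization $\shO_X(\ast D_J)$, and so underlies a mixed Hodge module whose Hodge filtration is described locally by Hodge ideals as in \cite{MP1}; the differentials are morphisms of Hodge modules. By strictness of the Hodge filtration,
$$F_k\cH^r_Z(\shO_X) = \pi\bigl(F_k A_f\bigr) = \pi\bigl(I_k(f)\cdot f^{-(k+1)}\bigr).$$

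On the order side, Lemma~\ref{descriptionO} together with the rewriting
$$\frac{1}{f_1^{a_1}\cdots f_r^{a_r}} = \frac{f_1^{k+1-a_1}\cdots f_r^{k+1-a_r}}{f^{k+1}}$$
shows that $O_k\cH^r_Z(\shO_X)$ is the image under $\pi$ of $\mathfrak{a}\cdot f^{-(k+1)}$, where $\mathfrak{a}$ is the ideal generated by monomials $f_1^{b_1}\cdots f_r^{b_r}$ with $0\le b_i\le k$ and $\sum b_i\ge k(r-1)$ (the latter coming from $\sum a_i\le k+r$). Pulling out factors of $f_i$ from any such generator with $\sum b_i>k(r-1)$ reduces the exponent sum to equality while preserving the bounds $0\le b_i\le k$, so $\mathfrak{a}=J_k(f_1,\ldots,f_r)$ and hence
$$O_k\cH^r_Z(\shO_X) = \pi\bigl(J_k(f_1,\ldots,f_r)\cdot f^{-(k+1)}\bigr).$$
Combining this with the containments $F_k\subseteq O_k$ (Proposition~\ref{one_inclusion}) and $I_k(f)\subseteq J_k(f_1,\ldots,f_r)$ (Lemma~\ref{incl_in_J}), the equality $F_k=O_k$ becomes the assertion that every $g\in J_k(f_1,\ldots,f_r)$ differs from some element of $I_k(f)$ by an element $h\in A$ with $h\cdot f^{-(k+1)}\in\ker\pi$.

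The remaining, and main, ingredient is the identification
$$\menge{g\in A}{g\cdot f^{-(k+1)}\in\ker\pi} = (f_1^{k+1},\ldots,f_r^{k+1}),$$
after which the proposition is immediate. The inclusion $\supseteq$ is clear, since $f_i^{k+1}\cdot f^{-(k+1)}\in A_{f_1\cdots\widehat{f_i}\cdots f_r}$. For the reverse inclusion, which is the only step genuinely using the regular-sequence hypothesis, I would invoke the standard Koszul-theoretic identification
$$\cH^r_Z(\shO_X)\simeq\varinjlim_n A/(f_1^n,\ldots,f_r^n),$$
with transition maps given by multiplication by $f=f_1\cdots f_r$, together with the colon-ideal identity
$$(f_1^{n+N},\ldots,f_r^{n+N}):(f_1\cdots f_r)^N = (f_1^n,\ldots,f_r^n)$$
valid for every regular sequence. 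The latter implies that each map $A/(f_1^n,\ldots,f_r^n)\to\cH^r_Z(\shO_X)$ in the direct system is injective, so that $g\cdot f^{-(k+1)}=0$ in $\cH^r_Z(\shO_X)$ indeed forces $g\in(f_1^{k+1},\ldots,f_r^{k+1})$. This colon-ideal identity is the only nontrivial commutative-algebra input; everything else is formal once the strictness of the Hodge filtration on the \v{C}ech complex and the description of $F_k\shO_X(*D)$ via Hodge ideals are in place.
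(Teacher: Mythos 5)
Your proof is correct and follows essentially the same strategy as the paper's: both filtrations are realized as images under $\pi\colon A_f\twoheadrightarrow\cH_Z^r(\shO_X)$ of the fractional ideals $I_k(f)\cdot f^{-(k+1)}$ (by strictness for the \v{C}ech-type complex of Hodge modules) and $J_k(f_1,\ldots,f_r)\cdot f^{-(k+1)}$, and the equivalence is then reduced to identifying $\menge{g\in A}{g/f^{k+1}\in\ker\pi}$ with $(f_1^{k+1},\ldots,f_r^{k+1})$ near $Z$. The only presentational difference is that you phrase the key commutative-algebra input via the direct-limit description $\cH_Z^r(\shO_X)\simeq\varinjlim_n A/(f_1^n,\ldots,f_r^n)$ and injectivity of the structural maps, which hinges on $(f_1^{n+1},\ldots,f_r^{n+1}):f=(f_1^n,\ldots,f_r^n)$ for a regular sequence; the paper instead isolates the equivalent statement as Lemma~\ref{lem_charact_equality}, proved by clearing denominators in the \v{C}ech relations and invoking $\big((f_1^N,\ldots,f_r^N):(f_1\cdots f_r)^{N-1}\big)\subseteq(f_1,\ldots,f_r)$, reduced via flatness to the polynomial case. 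These are the same fact in slightly different clothing, and both correctly account for the restriction to a neighborhood of $Z$ where $f_1,\ldots,f_r$ is a regular sequence.
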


\begin{proof}
Let $I = (f_1, \ldots, f_r) \subseteq A$. 
Recall that we have an exact sequence
$$\bigoplus_{i=1}^rA_{f_1\cdots\widehat{f_i}\cdots f_r}\to A_f\to H_I^r(A)\to 0,$$
with the maps being strict with respect to the Hodge filtration. We also recall from Lemma \ref{descriptionO} that
$$O_k H_I^r(A)=\bigoplus_{a_1,\ldots,a_r} A\cdot\left[\frac{1}{f_1^{a_1}\cdots f_r^{a_r}}\right],$$
where the sum is over those $a_1,\ldots,a_r$ such that $a_i\geq 1$ for all $i$ and $\sum_ia_i \le r+k $, 
while by the definition of Hodge ideals the Hodge filtration on $A_f$ can be written as
$$F_k A_f = \frac{1}{f^{k+1}}\cdot I_k(f).$$
We thus have $O_k H^r_I (A)\subseteq F_k H^r_I (A)$ if and only if for every $a_1,\ldots,a_r$ as above, we have
$$\frac{1}{f_1^{a_1}\cdots f_r^{a_r}}\in \frac{1}{f^{k+1}}\cdot I_k(f)+\sum_{i=1}^r A_{f_1\cdots\widehat{f_i}\cdots f_r}.$$

Let us first prove the ``if" part of the assertion. Given $a_1,\ldots,a_r$ as above, let $b_i=k +1-a_i$. Note that we have $b_i\geq 0$ for all $i$
and $\sum_ib_i\geq k (r-1)$. We thus have 
$$\prod_if_i^{b_i}\in J_k (f_1,\ldots,f_r)\subseteq I_k (f)+(f_1^{k +1},\ldots,f_r^{k+1}).$$
If we write $\prod_if_i^{b_i}=h+\sum_{i=1}^rf_i^{k+1}u_i$, with $h\in I_k (f)$ and $u_i\in A$, it follows that
$$\frac{1}{f_1^{a_1}\cdots f_r^{a_r}}-\frac{h}{f^{k+1}}\in \sum_{i=1}^r A_{f_1\cdots\widehat{f_i}\cdots f_r}.$$

For the proof of the ``only if" part, we simply reverse the above arguments, using the fact that if $u\in A\cdot \frac{1}{f_1^{k+1}\cdots f_r^{k+1}}$
lies in $\sum_{i=1}^rA_{f_1\cdots\widehat{f_i}\cdots f_r}$, then
$$u\in \sum_{i=1}^rA\cdot\frac{1}{f_1^{k+1}\cdots \widehat{f_i^{k+1}}\cdots f_r^{k +1}}\quad\text{in a neighborhood of}\,\,Z.$$
This follows from the lemma below.
\end{proof} 

\begin{lemma}\label{lem_charact_equality}
Let $(R,{\mathfrak m})$ be a local Noetherian ${\mathbf C}$-algebra and $f_1,\ldots,f_r$ a regular sequence of elements in ${\mathfrak m}$. 
If $p$ is a positive integer and $u\in R\cdot\frac{1}{f_1^p\cdots f_r^p}$ lies in $\sum_{i=1}^rR_{f_1\cdots\widehat{f_i}\cdots f_r}$, then
$u\in\sum_{i=1}^rR\cdot\frac{1}{f_1^p\cdots\widehat{f_i^p}\cdots f_r^p}$.
\end{lemma}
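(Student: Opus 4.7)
The plan is to reduce this to a classical colon-ideal identity for regular sequences. Writing $u=a/(f_1^p\cdots f_r^p)$ with $a\in R$, the desired conclusion $u\in\sum_{i=1}^r R\cdot\frac{1}{f_1^p\cdots\widehat{f_i^p}\cdots f_r^p}$ is equivalent, after clearing the common denominator $f_1^p\cdots f_r^p$, to the inclusion $a\in (f_1^p,\ldots,f_r^p)$. Meanwhile, the hypothesis provides an equation
\[
\frac{a}{f_1^p\cdots f_r^p}=\sum_{i=1}^r\frac{b_i}{(f_1\cdots\widehat{f_i}\cdots f_r)^{m_i}}
\]
in $R_{f_1\cdots f_r}$; choosing $M\ge\max(p,m_1,\ldots,m_r)$ and multiplying through by $(f_1\cdots f_r)^M$ turns this into an identity
\[
a\cdot(f_1\cdots f_r)^{M-p}=\sum_{i=1}^r c_i f_i^M
\]
with $c_i\in R$, which, since $f_1\cdots f_r$ is a non-zero-divisor in $R$, holds already in $R$.

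It therefore suffices to prove the colon-ideal identity
\[
\bigl(f_1^M,\ldots,f_r^M\bigr):(f_1\cdots f_r)^{M-p}=(f_1^p,\ldots,f_r^p)
\]
for any regular sequence $f_1,\ldots,f_r$ in a local Noetherian ring and any $M\ge p\ge 1$; this is the main technical step. I would prove it by induction on $r$. The base case $r=1$ is immediate from $f_1$ being a non-zero-divisor. For the inductive step, starting from the identity above, I would factor out $f_1^{M-p}$ to rewrite it as
\[
f_1^{M-p}\bigl(a\,(f_2\cdots f_r)^{M-p}-c_1 f_1^p\bigr)\in (f_2^M,\ldots,f_r^M),
\]
use that $f_1$ is a non-zero-divisor on $R/(f_2^M,\ldots,f_r^M)$ (a consequence of the facts that powers of a regular sequence remain regular and that regular sequences in a local Noetherian ring can be reordered) to cancel $f_1^{M-p}$, and then pass to $R/(f_1^p)$, in which $f_2,\ldots,f_r$ is still a regular sequence, to apply the inductive hypothesis for $r-1$.

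The colon identity admits a conceptual reading: it says exactly that multiplication by $(f_1\cdots f_r)^{M-p}$ is injective on $R/(f_1^p,\ldots,f_r^p)$, equivalently that the canonical map $R/(f_1^p,\ldots,f_r^p)\to H^r_{(f_1,\ldots,f_r)}(R)$ is injective — the classical fact that for a regular sequence, the transition maps in the direct-limit description of local cohomology are injective. The only nontrivial point is this identity; once it is in hand, the rest of the argument is routine bookkeeping with localizations, so the main obstacle I expect is the careful inductive verification of the colon formula.
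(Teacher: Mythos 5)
Your proposal is correct, and it reaches the same intermediate statement as the paper — the inclusion $a(f_1\cdots f_r)^{M-p}\in(f_1^M,\ldots,f_r^M)\Rightarrow a\in(f_1^p,\ldots,f_r^p)$ — but proves it by a genuinely different method. The paper first reduces to $p=1$ by replacing $f_1,\ldots,f_r$ with $f_1^p,\ldots,f_r^p$ (which leaves the localizations unchanged), and then disposes of the colon inclusion $\bigl((f_1^N,\ldots,f_r^N):(f_1\cdots f_r)^{N-1}\bigr)\subseteq(f_1,\ldots,f_r)$ in one stroke: since $f_1,\ldots,f_r$ is a regular sequence, the local homomorphism $\CC[x_1,\ldots,x_r]_{(x_1,\ldots,x_r)}\to R$, $x_i\mapsto f_i$, is flat, so colon-ideal computations transfer from the polynomial ring, where the inclusion is an easy monomial check. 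You instead prove the sharper colon equality $(f_1^M,\ldots,f_r^M):(f_1\cdots f_r)^{M-p}=(f_1^p,\ldots,f_r^p)$ for general $p$ directly by induction on $r$, cancelling $f_1^{M-p}$ modulo $(f_2^M,\ldots,f_r^M)$ and passing to $R/(f_1^p)$. Both routes rely on the local Noetherian hypothesis at essentially the same point — your induction uses permutability of regular sequences in a local ring, while the paper uses it (implicitly) to establish flatness of the map from the localized polynomial ring. Your argument is more elementary and self-contained, whereas the paper's flatness trick reduces everything to the polynomial case at the cost of invoking a slightly heavier tool; your closing remark that the colon identity is equivalent to injectivity of the transition maps $R/(f_1^p,\ldots,f_r^p)\to H^r_{(f_1,\ldots,f_r)}(R)$ correctly identifies the conceptual content shared by both proofs.
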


\begin{proof}
After replacing $f_1,\ldots,f_r$ by $f_1^p,\ldots,f_r^p$, we may assume that $p=1$. By assumption, we can write
$$u=\frac{h}{f_1\cdots f_r}=\sum_{i=1}^r\frac{g_i}{(f_1\cdots\widehat{f_i}\cdots f_r)^N}$$
for some nonnegative integer $N$ and some $h,g_1,\ldots,g_r\in R$. 
In this case, we have
$$h\in \big((f_1^N,\ldots,f_r^N)\colon (f_1\cdots f_r)^{N-1}\big)$$
and the assertion in the lemma follows if we have the inclusion
\begin{equation}\label{inclusion_colon}
\big((f_1^N,\ldots,f_r^N)\colon (f_1\cdots f_r)^{N-1}\big)\subseteq (f_1,\ldots,f_r).
\end{equation}
This follows from the fact that $f_1,\ldots,f_r$ forms a regular sequence: indeed, this implies that the local ring homomorphism
$$\varphi\colon S={\mathbf C}[x_1,\ldots,x_r]_{(x_1,\ldots,x_r)}\to R,\quad\varphi(x_i)=f_i$$
is flat and thus the inclusion (\ref{inclusion_colon}) follows from the corresponding inclusion in $S$, with the $f_i$ replaced by the $x_i$.
\end{proof}

Due to the fact that the behavior of Hodge ideals under restriction and deformation is quite well understood, the criterion
in Proposition \ref{charact_equality} leads to similar behavior for the singularity level of the Hodge filtration on the local cohomology
of local complete intersections.

\begin{theorem}[Restriction theorem for the singularity level]\label{thm_restriction}
Let $X$ be a smooth, irreducible variety, $Z$ a local complete intersection subscheme of $X$ of pure codimension $r$, and $H$ a smooth hypersurface in $X$ such that $Z\vert_H=Z\cap H$ is a nonempty subscheme of $H$, also of pure codimension $r$. Then:
\begin{enumerate}
\item[i)] After possibly restricting to a neighborhood of $H$, we have $p (Z) \ge p( Z\vert_{H})$.
\item[ii)] If $H$ is general (for example a general element of a base-point free linear system), then 
we also have $p (Z)\leq  p( Z|_{H})$.
\end{enumerate}
\end{theorem}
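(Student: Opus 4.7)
The plan is to translate the equality $F_k\cH^r_Z(\shO_X)=O_k\cH^r_Z(\shO_X)$ into a condition on Hodge ideals of hypersurfaces via Proposition~\ref{charact_equality}, and then apply the restriction theorem for Hodge ideals. Since the singularity level is local and embedding-independent (Remark~\ref{indep_embedding}), and since both inequalities are trivial when $Z$ or $Z|_H$ fails to be reduced (Lemma~\ref{case_nonreduced}), I would work locally with $X$ affine and $\I_Z=(f_1,\ldots,f_r)$, and by replacing the $f_i$ by general linear combinations of generators I can arrange that both $f:=f_1\cdots f_r$ and $\tilde f:=\tilde f_1\cdots\tilde f_r$ (where $\tilde f_i:=f_i|_H$) define reduced divisors near $Z$ and $Z\cap H$ respectively. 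Under these hypotheses, Proposition~\ref{charact_equality} rephrases the desired equalities as
$$(\star)\qquad J_k(f_1,\ldots,f_r)\subseteq I_k(f)+(f_1^{k+1},\ldots,f_r^{k+1})\quad\text{near }Z,$$
and
$$(\star\star)\qquad J_k(\tilde f_1,\ldots,\tilde f_r)\subseteq I_k(\tilde f)+(\tilde f_1^{k+1},\ldots,\tilde f_r^{k+1})\quad\text{near }Z\cap H.$$
The key input from \cite[Corollary~3.4]{MP4} is the inclusion $I_k(\tilde f)\subseteq I_k(f)\cdot\shO_H$, valid for any smooth $H$ on which $\tilde f$ is reduced, together with the equality $I_k(\tilde f)=I_k(f)\cdot\shO_H$ for general $H$.

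For part~(ii), starting from $(\star)$ and restricting both sides to $H$ yields $J_k(\tilde f_1,\ldots,\tilde f_r)\subseteq I_k(f)\cdot\shO_H+(\tilde f_1^{k+1},\ldots,\tilde f_r^{k+1})$ near $Z\cap H$. For general $H$ we may substitute $I_k(f)\cdot\shO_H=I_k(\tilde f)$, which gives $(\star\star)$ directly, hence $p(Z|_H)\geq p(Z)$.

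For part~(i), starting from $(\star\star)$ and using only the inclusion $I_k(\tilde f)\subseteq I_k(f)\cdot\shO_H$ (no genericity required), we deduce $J_k(f_1,\ldots,f_r)\cdot\shO_H\subseteq(I_k(f)+(f_1^{k+1},\ldots,f_r^{k+1}))\cdot\shO_H$ near $Z\cap H$. Let $h$ be a local equation of $H$ and consider the coherent sheaf
$$Q:=J_k(f_1,\ldots,f_r)/\bigl(I_k(f)+(f_1^{k+1},\ldots,f_r^{k+1})\bigr)$$
on $X$; the previous inclusion is equivalent to $Q=hQ$ on a neighborhood of $Z\cap H$. Since $h$ belongs to the maximal ideal at every point of $Z\cap H$, Nakayama's lemma at each such stalk forces $Q_p=0$, and by coherence $Q$ vanishes on an open neighborhood of $Z\cap H$ in $X$. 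After restricting $X$ to such a neighborhood of $H$, $(\star)$ is valid near $Z$, yielding $p(Z)\geq p(Z|_H)$.

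The main technical point I expect is the Nakayama step in (i), which lifts an identity on $H$ to one in a neighborhood of $H$ in $X$; this is precisely enabled by the fact that the restriction inclusion for Hodge ideals holds with no general-position hypothesis on $H$. A secondary issue is arranging the simultaneous reducedness of $f$ and $\tilde f$ so that Proposition~\ref{charact_equality} applies to both $Z$ and $Z|_H$, handled by choosing the $f_i$ as sufficiently generic linear combinations of generators of $\I_Z$ and using the openness of the reducedness conditions near the respective loci.
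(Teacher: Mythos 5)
Your overall strategy is identical to the paper's: reduce to the local affine picture, translate the equality $F_k=O_k$ through Proposition~\ref{charact_equality} into the Hodge-ideal condition $(\star)$, and exploit the restriction theorem for Hodge ideals from \cite{MP4}, with a Nakayama argument to lift the conclusion from $H$ to a neighborhood. Part~(ii) is correct as you have written it.

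The gap is in your Nakayama step for part~(i). You write ``the previous inclusion is equivalent to $Q = hQ$,'' where
$$Q=J_k(f_1,\ldots,f_r)\big/\bigl(I_k(f)+(f_1^{k+1},\ldots,f_r^{k+1})\bigr),$$
but this equivalence is precisely the hard point, and one direction of it is false without additional argument. The hypothesis you actually obtain from restricting $(\star\star)$ and using $I_k(\tilde f)\subseteq I_k(f)\cdot\shO_H$ is
$$J_k(f_1,\ldots,f_r)\subseteq I_k(f)+(f_1^{k+1},\ldots,f_r^{k+1})+(h),$$
i.e.\ $M\subseteq N+(h)$ with $M=J_k(f)+(f_i^{k+1})$ and $N=I_k(f)+(f_i^{k+1})$. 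The Nakayama-ready condition $Q=hQ$ is instead $M\subseteq N+hM$. These are not equivalent in general: if $u\in J_k$ and $u=v+w+hQ'$ with $v\in I_k(f)$, $w\in(f_i^{k+1})$, then $hQ'\in M$, but one still has to show $Q'\in M$. This is exactly where the paper invokes the fact that $h,f_1,\ldots,f_r$ form a regular sequence at every point of $Z\vert_H$: via the flatness reduction in Lemma~\ref{lem_charact_equality}, it reduces to the case where the $f_i$ and $h$ are coordinates, in which $M$ is a monomial ideal in $x_1,\ldots,x_r$ and hence $h=x_{r+1}$ is a nonzerodivisor on $\shO_X/M$ near $Z\vert_H$. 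Only after this colon-ideal argument does one obtain $Q=hQ$ locally and may apply Nakayama. As written, your proposal silently passes over this step, which is the genuine content of the lifting argument; you would need to supply it to make part~(i) complete.
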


\begin{proof}
By taking an affine open cover of $X$, we may assume that $X={\rm Spec}(A)$ is affine and let $h\in A$ be an equation of $H$. 
We begin by proving i). We assume that $p(Z\vert_H)\geq 0$, since otherwise the inequality in i) is trivial.
In particular, it follows from Lemma \ref{case_nonreduced} 
that $Z\vert_H$ is reduced, hence after replacing $Z$ by a neighborhood of $H$, we have that $Z$ is reduced too. 

We choose general generators $f_1,\ldots,f_r$ for the ideal of $Z$ in $A$ such that if 
$g_i=f_i\vert_H$ and we put as before $f=f_1\cdots f_r$ and $g=g_1\cdots g_r$, then the divisors defined by $f$ and $g$ in $X$ and $H$, respectively,
are reduced. In this case, the Restriction Theorem for Hodge ideals 
\cite[Theorem A]{MP4} gives
\begin{equation}\label{eq1_restriction_thm}
I_q(g)\subseteq I_q(f)\cdot (A/h) \quad\text{for all}\quad q\geq 0.
\end{equation}
Let $q=p( Z\vert_{H})$. By Proposition~\ref{charact_equality}, we then have
\begin{equation}\label{eq2_restriction_thm}
J_q(g_1,\ldots,g_r)\subseteq I_q(g)+(g_1^{q+1},\ldots,g_r^{q+1}).
\end{equation}
Since $J_q(g_1,\ldots,g_r)=J_q(f_1,\ldots,f_r)\cdot (A/h)$, it follows from (\ref{eq2_restriction_thm}) and (\ref{eq1_restriction_thm}) that
\begin{equation}\label{eq3_restriction_thm}
J_q(f_1,\ldots,f_r)\subseteq I_q(f)+(f_1^{q+1},\ldots,f_r^{q+1})+(h).
\end{equation}

We claim that in some neighborhood of $Z\vert_H$, we have in fact
\begin{equation}\label{eq4_restriction_thm}
J_q(f_1,\ldots,f_r)\subseteq I_q(f)+(f_1^{q+1},\ldots,f_r^{q+1})+h\cdot \big(J_q(f_1,\ldots,f_r)+(f_1^{q+1},\ldots,f_r^{q+1})\big).
\end{equation}
Indeed, if $u\in J_q(f_1,\ldots,f_r)$, then it follows from (\ref{eq3_restriction_thm}) that we can write $u=v+w+hQ$,
where $v\in I_q(f)$, $w\in (f_1^{q+1},\ldots,f_r^{q+1})$, and $Q\in A$. Since $I_q(f)\subseteq J_q(f_1,\ldots,f_r)$ by (\ref{incl_in_J}), we conclude that
$hQ\in J_q(f_1,\ldots,f_r)+(f_1^{q+1},\ldots,f_r^{q+1})$. Moreover $h,f_1,\ldots,f_r$ form a regular sequence at every point of $Z\vert_H$, so we conclude that $Q\in J_q(f_1,\ldots,f_r)+(f_1^{q+1},\ldots,f_r^{q+1})$ in a neighborhood of $Z\vert_H$. (For example, we could argue as in the proof of Lemma~\ref{lem_charact_equality} to reduce this to the case when $f_1,\ldots,f_r,h$ are variables $x_1,\ldots,x_r,x_{r+1}$ in a polynomial ring.) 

The inclusion (\ref{eq4_restriction_thm}) gives by Nakayama's lemma 
$$J_k(f_1,\ldots,f_r)\subseteq I_k(f)+(f_1^{q+1},\ldots,f_r^{q+1})\quad\text{in an open neighborhood}\,\,V\,\,\text{of}\,\,Z\vert_H.$$
Proposition~\ref{charact_equality} then implies that if we consider the neighborhood $U=V\cup (X\smallsetminus Z)$ of $H$, then
$p(Z\cap U)\geq q$. This completes the proof of i).

The proof of ii) is easier: note first that we may assume that $Z$ is reduced, since otherwise the inequality to prove is trivial. Since $H$ is general,
it follows that $Z\vert_H$ is reduced too. Using the previous notation, the inequality thus follows 
from Proposition~\ref{charact_equality} and the fact that if $H$ is general, then $I_q(g)=I_q(f)\cdot (A/h)$; see again \cite[Theorem A]{MP4}.
\end{proof}

\begin{theorem}[Semicontinuity theorem for the singularity level]\label{thm_semicont}
Let $\pi\colon {\mathcal X}\to T$ be a smooth morphism of complex algebraic varieties, and for every $t\in T$ let ${\mathcal X}_t=\pi^{-1}(t)$.
Suppose that $f_1,\ldots,f_r\in\shO_{\mathcal X}({\mathcal X})$
define a closed subscheme ${\mathcal Z}$ of ${\mathcal X}$ such that for every $t\in T$, the restriction ${\mathcal Z}_t={\mathcal Z}\cap 
{\mathcal X}_t$ is a closed subscheme of ${\mathcal X}_t$ of pure codimension $r$. Let $s\colon T\to {\mathcal X}$ be a section of $\pi$ such that $s(T)\subseteq V(f_1,\ldots,f_r)$. 
For every $t_0\in T$ there is an open neighborhood $U$ of $t_0$ such that for all $t\in U$ we have 
$$p ({\mathcal Z}_{t}) \ge p ({\mathcal Z}_{t_0}),$$ 
where each $p ({\mathcal Z}_{t})$ is considered in a small neighborhood of $s(t)$.
\end{theorem}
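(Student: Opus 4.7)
The plan is to translate the inequality $p(\mathcal{Z}_t)\geq p(\mathcal{Z}_{t_0})$, via the criterion of Proposition~\ref{charact_equality}, into a semicontinuity statement about the Hodge ideals of the reduced hypersurface cut out by $f_t:=f_1|_{\mathcal{X}_t}\cdots f_r|_{\mathcal{X}_t}$, and then invoke the semicontinuity theorem for Hodge ideals in a smooth family proved in \cite{MP1}.

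First I would perform standard reductions. Shrinking $\mathcal{X}$ around $s(t_0)$ and $T$ around $t_0$, we may assume both are affine. If $p(\mathcal{Z}_{t_0})=-1$ there is nothing to prove, so set $p:=p(\mathcal{Z}_{t_0})\geq 0$. By Lemma~\ref{case_nonreduced}, $\mathcal{Z}_{t_0}$ is reduced near $s(t_0)$, and since being reduced is an open condition in the flat family $\pi$, after further shrinking $T$ we may assume every $\mathcal{Z}_t$ is reduced near $s(t)$. Replacing $f_1,\ldots,f_r$ by sufficiently general $\shO_T$-linear combinations (which does not change $\mathcal{Z}$), we may also arrange that $f_t$ defines a reduced divisor near $s(t)$ for every $t$ close to $t_0$; set $f:=f_1\cdots f_r\in \shO_\mathcal{X}(\mathcal{X})$.

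By Proposition~\ref{charact_equality} applied at $t=t_0$, the equality $p(\mathcal{Z}_{t_0})=p$ is equivalent to the inclusion
$$J_p\bigl(f_1|_{\mathcal{X}_{t_0}},\ldots,f_r|_{\mathcal{X}_{t_0}}\bigr) \subseteq I_p(f_{t_0}) + \bigl(f_1|_{\mathcal{X}_{t_0}}^{p+1},\ldots,f_r|_{\mathcal{X}_{t_0}}^{p+1}\bigr)$$
in a neighborhood of $s(t_0)$ inside $\mathcal{X}_{t_0}$. The $\shO_\mathcal{X}$-ideals $J_p(f_1,\ldots,f_r)$ and $(f_1^{p+1},\ldots,f_r^{p+1})$ restrict fiberwise to the analogous ideals on $\mathcal{X}_t$, so the only term in the inclusion whose dependence on $t$ is genuinely Hodge-theoretic is $I_p(f_t)$. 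The crucial step is to propagate the inclusion from $t_0$ to nearby $t$; for this I would invoke the semicontinuity theorem for Hodge ideals in a smooth family (cf.~\cite[Theorem~E]{MP1}) in the form: given a coherent ideal $\mathfrak{a}\subseteq\shO_\mathcal{X}$, the set
$$\bigl\{t\in T :\, \mathfrak{a}\cdot \shO_{\mathcal{X}_t,s(t)} \subseteq I_p(f_t) + \bigl(f_1|_{\mathcal{X}_t}^{p+1},\ldots,f_r|_{\mathcal{X}_t}^{p+1}\bigr)\bigr\}$$
is open in $T$. Taking $\mathfrak{a}=J_p(f_1,\ldots,f_r)$ and reapplying Proposition~\ref{charact_equality} at $t$ then yields $p(\mathcal{Z}_t)\geq p$ near $s(t)$ for all $t$ in a neighborhood of $t_0$.

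The main obstacle is extracting the precise semicontinuity statement above from the one proved in \cite{MP1}, which a priori controls only the colengths $\dim_\CC \shO_{\mathcal{X}_t,s(t)}/I_p(f_t)$ modulo powers of the maximal ideal $\mathfrak{m}_{s(t)}$. What we need is the variant that asserts openness of containment of the restriction of a fixed $\shO_\mathcal{X}$-ideal inside the sum of $I_p(f_t)$ with the restriction of another fixed $\shO_\mathcal{X}$-ideal. I expect this to follow by combining the colength semicontinuity for Hodge ideals with the upper semicontinuity of fiber dimensions of cokernels of morphisms of coherent sheaves in the flat family $\pi$, but this is the step that requires the most care.
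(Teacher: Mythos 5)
Your overall plan — translate the inequality into an ideal containment via Proposition~\ref{charact_equality} and argue that containment is open in $t$ — is reasonable as far as the reductions go, and the opening reductions (assume $p(\mathcal{Z}_{t_0})\ge 0$, reduce to the reduced case via Lemma~\ref{case_nonreduced}, replace the $f_i$ by general linear combinations so that $f=f_1\cdots f_r$ cuts out a reduced divisor) match the spirit of the paper. But the step you flag as ``requiring the most care'' is a genuine gap, and in fact it is the entire content of the theorem in disguise: via Proposition~\ref{charact_equality}, the condition
\[
J_p(f_1,\ldots,f_r)\cdot\shO_{\mathcal{X}_t,s(t)}\subseteq I_p(f_t)+\bigl(f_1|_{\mathcal{X}_t}^{p+1},\ldots,f_r|_{\mathcal{X}_t}^{p+1}\bigr)
\]
near $s(t)$ is \emph{equivalent} to $p(\mathcal{Z}_t)\ge p$ near $s(t)$, so the openness you are invoking is precisely the statement you set out to prove. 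The semicontinuity result available in the literature (\cite[Theorem~E]{MP4}) controls containments of the form $\mathfrak{m}_{s(t)}^q\subseteq I_p(D_t)$, i.e.\ colengths of the Hodge ideal modulo powers of the maximal ideal of the section; here, however, $J_p$ and $(f_i^{p+1})$ cut out subschemes of codimension $r<\dim\mathcal{X}_t$, so none of the relevant quotients are finite-dimensional, there is no $q\to\infty$ stabilization, and the ``cokernel fiber-dimension'' device has nothing to latch onto. I do not see a way to extract the needed openness from colength semicontinuity.

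The paper's proof avoids any semicontinuity theorem for Hodge ideals. It uses the already-established Theorem~\ref{thm_restriction} (Restriction theorem for the singularity level) in both directions: first, part~(i) is applied $\dim(T)$ times to lift the equality $F_q=O_q$ from the fiber $\mathcal{X}_{t_0}$ to the ambient family $\mathcal{X}$ on a neighborhood of $s(t_0)$; then part~(ii) restricts it back down to a \emph{general} nearby fiber $\mathcal{X}_t$. This only yields the inequality $p(\mathcal{Z}_t)\ge p(\mathcal{Z}_{t_0})$ for $t$ in a dense open $W\subseteq T$, which is why the paper needs two further steps: passing to a resolution of singularities of $T$ to reduce to the case $T$ smooth, and then a Noetherian-induction stratification of $T\smallsetminus W$ to propagate the inequality to an honest open neighborhood of $t_0$. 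If you want to pursue your route you should expect to have to re-derive something equivalent to Theorem~\ref{thm_restriction} and this stratification argument — at which point you are reproducing the paper's proof rather than shortcutting it.
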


\begin{proof}
\emph{Step 1}.  We show that if $T$ is smooth, then there is a nonempty open subset $W$ of $T$ such that for all $t\in W$, the equalities 
\begin{equation}\label{eqn:level1}
F_q\cH_{{\mathcal Z}_{t}}^r(\shO_{{\mathcal X}_{t}})=O_q\cH_{{\mathcal Z}_{t}}^r(\shO_{{\mathcal X}_{t}})
\,\,\,\,\,\,{\rm for ~all} \,\,\,\, q \le p ({\mathcal Z}_{t_0})
\end{equation}
hold in a neighborhood of $s(t)$. Note to begin with that after applying the first assertion in Theorem~\ref{thm_restriction} 
$\dim(T)$-times, we deduce from the hypothesis that
$$F_q\cH_{\mathcal Z}^r(\shO_{\mathcal X})=O_q\cH_{\mathcal Z}^r(\shO_{\mathcal X})
\,\,\,\,\,\,{\rm for ~all} \,\,\,\, q \le p ({\mathcal Z}_{t_0})$$
in some open neighborhood $V$ of $s(t_0)$. 
Using now the second assertion in Theorem~\ref{thm_restriction}, we can choose an open subset $T_0\subseteq T$
such that for every $t\in T_0$, we have $p({\mathcal Z}_t\cap V)\geq p({\mathcal Z}\cap V)$. This implies that if
we take $W=T_0\cap s^{-1}(V)$, then the equalities 
($\ref{eqn:level1}$) 
hold on $V\cap {\mathcal X}_t$ for all $t\in W$. This completes the proof of Step 1.

\noindent
\emph{Step 2}. We next show that the assertion in Step 1 holds for every (irreducible) variety $T$. 
Given such $T$, consider a resolution of singularities $T'\to T$ and the induced family $\pi'\colon {\mathcal X}'={\mathcal X}\times_TT'\to T'$,
as well as the induced section $s'\colon T'\to {\mathcal X}'$. If $t'_0\in T'$ lies over $t_0$, applying Step 1 we can find an open subset $W'\subseteq T'$
such that for all $t'\in W'$, the equalities ($\ref{eqn:level1}$)  hold
in a neighborhood of $s'(t')$. We can then simply take $W$ to be any open subset of $T$ contained in the image of $W'$.

\noindent
\emph{Step 3}. We now prove the general statement by induction on $\dim(T)$. If $\dim(T)=1$, then we are done: if $W$ is an open subset as in Step 2, then $U=W\cup\{t_0\}$ is an open neighborhood of $t_0$ that satisfies the assertion. Suppose now that ${\rm dim}(T)\geq 2$, and let $W\subseteq T$ be an open subset as in Step 2. If $t_0\in W$, then we are done by taking $U=W$. Suppose now that $t_0\not\in W$.
After possibly removing from $T$ the irreducible components of $Z=T\smallsetminus W$ that do not contain $t_0$, we may assume that the
 irreducible
components $Z_1,\ldots,Z_m$ of $Z$ satisfy $t_0\in Z_i$ for all $i$. Applying the inductive hypothesis for every morphism $\pi^{-1}(Z_i)\to Z_i$, 
we find open neighborhoods $W_i$ of $t_0$ in $Z_i$ such that for every $t\in W_i$, the equalities ($\ref{eqn:level1}$) 
hold in a neighborhood of 
$s(t)$. In this case $U=W\cup\left(Z\smallsetminus \bigcup_{i=1}^m(Z_i\smallsetminus W_i)\right)$ is an open neighborhood of $t_0$ that satisfies the assertion in the theorem.
\end{proof}

We next use our results on the behavior of $p(Z)$ to give an upper bound for this invariant when $Z$ is singular; this can be seen as a generalization of \cite[Theorem A]{MP1}. We begin with the following:

\begin{definition}
Suppose that $Z$ is a closed subscheme of a smooth variety $X$, defined by the ideal $\I_Z$ and let $x\in Z$. For a nonzero regular function $f$ defined in an open neighborhood of $x$,
we denote by ${\rm mult}_x(f)$ the multiplicity at $x$ of the hypersurface defined by $f$. We denote by $\beta_x(Z)$ the largest ${\rm mult}_x(f)$,
where $f$ is part of a minimal system of generators of $\I_{Z,x}\subseteq\shO_{X,x}$. It is straightforward to see that this only depends on the pair $(Z,x)$, and not on $X$. Note that $Z$ is smooth at $x$ if and only if $\beta_x(Z)=1$. 
\end{definition}

\begin{theorem}\label{thm_upper_bound}
If $Z$ is a reduced closed subscheme of $X$ which is a local complete intersection, and if $x\in Z$ is a singular point with 
$\beta_x(Z)=d$, then
$$p(Z)\leq \frac{\dim(Z)-d+1}{d}.$$
In particular, for every singular $Z$, we have $p(Z)\leq \frac{\dim(Z)-1}{2}$.
\end{theorem}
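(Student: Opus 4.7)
The plan is to prove the bound $p(Z)\le \frac{\dim(Z)-d+1}{d}$ by induction on $n=\dim X$, with the hypersurface case $r=1$ as the base and the Restriction Theorem \ref{thm_restriction}(ii) as the engine for the inductive step. For $r=1$, combining the formula $p(Z)=[\widetilde\alpha(Z)]-1$ recalled in Remark~\ref{rmk:BS} with the standard multiplicity bound $\widetilde\alpha(Z)\le n/d$ for the minimal exponent of a reduced hypersurface having a point of multiplicity $d$ (a theorem of Saito; see also \cite{MP2}) gives at once $p(Z)\le n/d-1=(\dim Z+1-d)/d$, which matches the claim.

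For the inductive step assume $r\ge 2$, and fix local minimal generators $f_1,\ldots,f_r$ of $\I_{Z,x}$ with $\mathrm{mult}_x(f_1)=d$. Take a general smooth hypersurface $H\ni x$, locally defined by some $h\in\shO_{X,x}$ whose initial form $\bar h\in\mathfrak{m}_x/\mathfrak{m}_x^2$ is a general linear form; then $Z|_H$ is a local complete intersection of pure codimension $r$ in $H$, and the images $\bar f_1,\ldots,\bar f_r\in\shO_{H,x}$ form a minimal system of generators of $\I_{Z|_H,x}$. For each $i$, the equality $\mathrm{mult}_x(\bar f_i)=\mathrm{mult}_x(f_i)$ holds precisely when the leading form of $f_i$ in $\mathrm{gr}_{\mathfrak{m}_x}\shO_{X,x}$ is not divisible by $\bar h$. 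Since the set of linear forms dividing a fixed nonzero homogeneous polynomial is a proper closed subset of $\PP(\mathfrak{m}_x/\mathfrak{m}_x^2)$, a general $\bar h$ achieves this simultaneously for all $i$, which gives $\beta_x(Z|_H)\ge d$ (in particular, $Z|_H$ is singular at $x$).

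To finish, Theorem~\ref{thm_restriction}(ii) yields $p(Z)\le p(Z|_H)$ in some neighborhood of $H$. If $Z|_H$ happens to be non-reduced, then $p(Z|_H)=-1$ by Lemma~\ref{case_nonreduced}, and the claimed bound for $p(Z)$ holds trivially, since $d\ge 2$ forces $(\dim Z-d+1)/d\ge-1$. Otherwise $Z|_H$ is a reduced LCI, still singular at $x$, so the inductive hypothesis applied in $H$ (of dimension $n-1$) gives
\[
p(Z|_H)\;\le\;\frac{\dim(Z|_H)-\beta_x(Z|_H)+1}{\beta_x(Z|_H)}\;\le\;\frac{\dim Z-d}{d},
\]
where the second inequality uses that $t\mapsto (\dim Z-t)/t$ is decreasing and $\beta_x(Z|_H)\ge d$. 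Combined with $p(Z)\le p(Z|_H)$ this establishes the main inequality (in fact with a slightly stronger numerator $\dim Z - d$). The ``in particular'' statement is then immediate: for any singular $Z$ pick any singular point $x$; since $d=\beta_x(Z)\ge 2$ and $(\dim Z-d+1)/d$ is decreasing in $d$, one has $p(Z)\le (\dim Z-1)/2$.

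The main obstacle is the tangent-cone analysis in the middle paragraph, namely verifying that a general hyperplane section cannot decrease $\beta_x$; once this is settled the rest is essentially bookkeeping around restriction and the hypersurface base case. A secondary subtlety is that $Z|_H$ can genuinely fail to be reduced (as one sees already for singular curves in $3$-space, e.g.\ $V(x^2-y^3,z)\subset\AAA^3$ cut by a general plane through the origin), which is why the non-reduced branch, handled cleanly via Lemma~\ref{case_nonreduced}, is an essential part of the argument rather than an avoidable nuisance.
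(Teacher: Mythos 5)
Your inductive step applies Theorem~\ref{thm_restriction}(ii) to a hyperplane $H$ passing through the singular point $x$, but that statement is only established for $H$ general, e.g.\ a general member of a base-point-free linear system --- and such $H$ avoid any prescribed closed point. When $H$ is forced to contain $x$ the cut typically makes the singularity worse, and only the reverse inequality (i), $p(Z)\ge p(Z|_H)$, is available. The ``slightly stronger'' bound $p(Z)\le (\dim Z - d)/d$ that your argument would produce is in fact false: take $Z = V(xy, z)\subseteq\AAA^3$, a reduced local complete intersection of codimension $2$ with an ordinary node at the origin, so $\dim Z = 1$ and $d = \beta_0(Z) = 2$. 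Since $Z$ has simple normal crossings it is Du Bois, hence $p(Z)\ge 0$ by Theorem~\ref{char_DuBois}, and the correct bound gives $p(Z) = 0$; yet $(\dim Z - d)/d = -1/2$ would force $p(Z) = -1$. Tracing through your argument: for any smooth $H\ni 0$ transverse to both branches (e.g.\ $H = V(x-y)$), the section $Z|_H$ is the length-two fat point $V(y^2, z)$, so $p(Z|_H) = -1$ by Lemma~\ref{case_nonreduced}, and the asserted $p(Z)\le p(Z|_H)$ simply fails. A truly general $H$ (not through $0$), for which the restriction theorem does apply, yields a smooth $Z|_H$ with $p(Z|_H)=\infty$, and the bound becomes vacuous.

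The paper avoids hyperplane sections through $x$ entirely: it builds the two-parameter family $F_i = s f_i + t x_i$ for $2\le i\le r$ and invokes the semicontinuity Theorem~\ref{thm_semicont}, centered along the constant section through $x$, to get $p(Z) \le p(\mathcal{Z}_{(s_0,t_0)})$ for general $(s_0,t_0)$ near $(1,0)$. For such parameters, $r-1$ of the defining equations have multiplicity $1$ at $x$, exhibiting the deformed scheme locally as a hypersurface of multiplicity $d$ in a smooth variety of dimension $\dim Z + 1$, where the minimal-exponent bound $\widetilde{\alpha}\le(\dim Z + 1)/d$ of \cite{MP2} applies. Your base case $r=1$ is exactly this endgame; the gap is that the reduction to it must proceed by a deformation and semicontinuity, not by a hyperplane section through $x$.
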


\begin{proof}
After possibly replacing $X$ by a suitable neighborhood of $x$, we may assume that $X$ is affine, and that we have a system of coordinates $x_1,\ldots,x_n$, centered at $x$, and defined globally on $X$. We may also assume that $Z$ is defined by a regular sequence $f_1,\ldots,f_r\in\shO_X(X)$
and ${\rm mult}_x(f_1)=d$. After a suitable change of coordinates, we may assume that $f_1,x_2,\ldots,x_r$ form a regular sequence in a neighborhood of $x$.
Let $T={\mathbf A}^2$, with coordinates $s,t$ and let
$F_2,\ldots,F_r$ be the regular functions on $X\times T$ given by $F_i=sf_i+tx_i$. 
Let ${\mathcal X}=X\times T$ and ${\mathcal Z}$ the closed subscheme of ${\mathcal X}$ defined by $f_1, F_2,\ldots,F_r$. 
We consider the open subset 
$T_0\subseteq T$ consisting of those $(s_0,t_0)\in T$ such that the fiber of ${\mathcal Z}$ over $(s_0,t_0)$ has codimension $r$ in $X$ in a neighborhood of $x$.
Note that $(1,0), (0,1)\in T_0$. We can find an open neighborhood ${\mathcal X}_0$ of $\{x\}\times T_0$ in $X\times T_0$ 
such that we can apply Theorem~\ref{thm_semicont} to the restriction $\pi\colon {\mathcal X}_0\to T_0$ of the projection and to 
${\mathcal Z}_0={\mathcal Z}\cap {\mathcal X}_0$. Note that for a general $(s_0,t_0)\in T_0$, the fiber ${\mathcal Z}_{(s_0,t_0)}$
is cut out in $X$ by one equation of multiplicity $d$ and $r-1$ equations of multiplicity 1.

It follows that in order to prove the first bound in the theorem, we may assume that ${\rm mult}_x(f_i)=1$ for every $i\geq 2$. In this case, after possibly replacing $Z$ by an open neighborhood of $x$, we have a closed embedding as a hypersurface $Z\hookrightarrow X'$ of multiplicity $d$ at $x$, where $X'$ is smooth.
In this case we have $p(Z)=[\widetilde{\alpha}(Z)]-1$, see Remark \ref{rmk:BS}, and the first bound in the theorem follows from the upper bound
$$\widetilde{\alpha}(Z)\leq\frac{\dim(X')}{d}=\frac{\dim(Z)+1}{d}$$
 for the minimal exponent (see \cite[Theorem~E]{MP2}). The second bound follows since $d\geq 2$.
\end{proof}

We deduce the following lower bound for the dimension of the singular locus of $Z$ in terms of $p(Z)$. 
The proof proceeds as in the case of hypersurfaces, which is the content of \cite[Lemma~2.1]{MOPW}.

\begin{corollary}\label{cor_upper_bound}
If $Z$ is a reduced closed subscheme of $X$ which is a local complete intersection of pure dimension 
and whose singular locus $Z_{\rm sing}$ is nonempty, then
$${\rm codim}_Z(Z_{\rm sing})\geq 2p(Z)+1.$$
\end{corollary}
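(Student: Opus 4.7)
The plan is to reduce to the case of an isolated (or at least zero-dimensional) singular locus by iteratively taking general hyperplane sections, using Theorem \ref{thm_restriction}(ii) to ensure the singularity level does not drop, and then invoking the pointwise bound of Theorem \ref{thm_upper_bound}. Set $s := \dim Z_{\rm sing}$. If $s = 0$, the claim is immediate: $Z$ is already singular at some point and Theorem \ref{thm_upper_bound} gives $p(Z) \le (\dim Z - 1)/2$, which rearranges to $2p(Z)+1 \le \dim Z = \mathrm{codim}_Z(Z_{\rm sing})$.

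Assume $s \ge 1$. First I will choose general smooth hypersurfaces $H_1, \ldots, H_s$ in $X$ (say, members of a fixed very ample linear system, chosen one at a time so that each $H_i$ is generic relative to all previously made choices). Set $X_i := H_1 \cap \cdots \cap H_i$ and $Z_i := Z \cap X_i$. For sufficiently general choices, standard Bertini arguments applied to a regular sequence $(f_1, \ldots, f_r)$ locally defining $Z$ will guarantee that $X_i$ is smooth of codimension $i$ in $X$, and that $Z_i$ is a reduced local complete intersection of pure codimension $r$ in $X_i$. Moreover, by the Jacobian criterion, the inclusion $(Z_i)_{\rm sing} \supseteq Z_{\rm sing} \cap X_i$ holds — restriction of a rank-deficient Jacobian stays rank-deficient — and for generic $H_j$'s the intersection $Z_{\rm sing} \cap X_i$ has dimension $s - i$. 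In particular $(Z_s)_{\rm sing}$ is nonempty.

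Next, I will apply Theorem \ref{thm_restriction}(ii) in sequence to the pairs $(Z_{i-1}, H_i \cap X_{i-1})$, each $H_i \cap X_{i-1}$ being a general smooth hypersurface inside the smooth ambient $X_{i-1}$. This chain of inequalities yields
\[
p(Z) \le p(Z_1) \le p(Z_2) \le \cdots \le p(Z_s).
\]
Finally, since $Z_s$ is a singular reduced local complete intersection of pure dimension $\dim Z - s$, Theorem \ref{thm_upper_bound} gives
\[
p(Z_s) \le \frac{\dim Z_s - 1}{2} = \frac{\dim Z - s - 1}{2},
\]
and combining with the previous chain produces $2p(Z) + 1 \le \dim Z - s = \mathrm{codim}_Z(Z_{\rm sing})$.

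The main obstacle is ensuring the genericity hypotheses at every step in the tower: one needs the Bertini-type properties (smoothness of $X_i$, reducedness and pure codimension $r$ of $Z_i$ as an lci, and the expected dimension drop in the singular locus) to hold simultaneously with the conclusion of Theorem \ref{thm_restriction}(ii). These are standard for regular sequences and base-point-free linear systems, but one should be attentive to the fact that the singular locus must remain nonempty after all $s$ cuts; this is exactly guaranteed because $\dim(Z_{\rm sing} \cap X_s) = 0$ rather than $-\infty$ for sufficiently generic transverse cuts, which in turn uses that $\dim Z_{\rm sing} = s$ is precisely the number of slices we take. If any of these Bertini properties fails for every choice (which it will not, under the stated hypotheses), one can replace the restriction step by an appeal to the semicontinuity theorem \ref{thm_semicont}, deforming the cutting hyperplanes in a one-parameter family.
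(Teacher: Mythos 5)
Your proof is correct and takes essentially the same route as the paper: cut $Z$ with $s = \dim Z_{\rm sing}$ general hyperplanes meeting $Z_{\rm sing}$, use Theorem~\ref{thm_restriction}(ii) to get $p(Z) \le p(Z_s)$, observe that $Z_s$ is still singular, and apply Theorem~\ref{thm_upper_bound} to $Z_s$. Your write-up is simply more detailed about the Bertini hypotheses that the paper treats as implicit.
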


\begin{proof}
We may assume that $Z$ is affine. If ${\rm dim}(Z_{\rm sing})=s$, then after successively cutting $Z$ with $s$ general hyperplanes that meet $Z_{\rm sing}$,
we obtain a smooth subvariety $Y$ of $X$ of codimension $s$, such that $Z\cap Y$ is a singular reduced local complete intersection.  On one hand, the second assertion in Theorem~\ref{thm_restriction} gives
$$p(Z\cap Y)\geq p(Z).$$
On the other hand, we deduce from Theorem~\ref{thm_upper_bound} that
$$p(Z\cap Y)\leq \frac{\dim(Z)-s-1}{2}.$$
The bound in the statement follows by combining these two inequalities.
\end{proof}

\section{Generation level and local cohomological dimension}

We now address one of  the main applications of this paper, namely a characterization of local cohomological dimension in terms 
of resolution of singularities. Before doing this, in the section below we establish the key technical tool, namely a criterion for the generation level of the Hodge filtration on the highest nontrivial local cohomology.

\subsection{The generation level of the Hodge filtration}
We fix as always a closed subscheme of a smooth, irreducible complex $n$-dimensional variety $X$, and a log resolution $f\colon Y\to X$ 
of $(X, Z)$ as at the beginning of \S\ref{scn:birational}. We denote by $j\colon U=X\smallsetminus Z\hookrightarrow X$ the inclusion.

Recall that a good filtration $F_\bullet \Mmod$ on a left $\Dmod_X$-module $\Mmod$  is said to be generated at level 
$\ell \in \ZZ$ if 
$$F_{\ell+k} \Mmod=F_k\Dmod_X\cdot F_{\ell} \Mmod \quad\text{for all}\quad k\geq 0.$$
Equivalently, this means that for every $m \ge \ell$ we have
$$F_{m + 1} \Mmod =F_1\Dmod_X\cdot F_m \Mmod.$$

\begin{lemma}\label{lem:gen}
The filtration $F_\bullet \Mmod$ is generated at level $k$ if and only if 
$$\cH^0{\rm Gr}^F_{i-n}{\rm DR}_X(\Mmod,F)=0 \,\,\,\,\,\,{\rm  for~all} \,\,\,\,  i>k.$$
\end{lemma}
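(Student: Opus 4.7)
The plan is to unwind the definition of the graded de Rham complex so that the vanishing of $\cH^0{\rm Gr}^F_{i-n}{\rm DR}_X(\Mmod,F)$ becomes a concrete statement about generators of $F_\bullet\Mmod$, from which the equivalence with being generated at level $k$ falls out immediately.

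First I would read off from the formula recorded in \S\ref{scn:Dmod} that the complex ${\rm Gr}^F_{i-n}{\rm DR}_X(\Mmod,F)$ has $\omega_X\otimes_{\shO_X}{\rm Gr}^F_i\Mmod$ in cohomological degree $0$ and $\Omega^{n-1}_X\otimes_{\shO_X}{\rm Gr}^F_{i-1}\Mmod$ in degree $-1$, so $\cH^0{\rm Gr}^F_{i-n}{\rm DR}_X(\Mmod,F)$ is just the cokernel of the last differential. Next I would compute this differential on a local chart with coordinates $x_1,\ldots,x_n$: the de Rham differential on a left $\Dmod_X$-module sends $\omega\otimes m$ to $d\omega\otimes m + \sum_{j=1}^n dx_j\wedge\omega\otimes\partial_j m$, and on passing to the graded piece the first summand drops out, while $\partial_j$ is replaced by its symbol $[\partial_j]\in{\rm Gr}^F_1\Dmod_X$. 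Evaluating on the local basis $\{dx_1\wedge\cdots\wedge\widehat{dx_j}\wedge\cdots\wedge dx_n\}$ of $\Omega^{n-1}_X$ then shows that the image of the last differential is $\omega_X\otimes\sum_{j=1}^n[\partial_j]{\rm Gr}^F_{i-1}\Mmod$, whence
$$\cH^0{\rm Gr}^F_{i-n}{\rm DR}_X(\Mmod,F)\simeq \omega_X\otimes_{\shO_X}\frac{{\rm Gr}^F_i\Mmod}{\sum_{j=1}^n[\partial_j]{\rm Gr}^F_{i-1}\Mmod}.$$

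Finally, this cokernel vanishes precisely when $F_i\Mmod=F_{i-1}\Mmod+\sum_j\partial_j F_{i-1}\Mmod$, i.e.\ when $F_i\Mmod=F_1\Dmod_X\cdot F_{i-1}\Mmod$, since locally $F_1\Dmod_X$ is generated as an $\shO_X$-module by $1$ and the $\partial_j$. Requiring this for every $i>k$ is exactly the equivalent reformulation of ``generated at level $k$'' recorded immediately above the lemma, so the stated equivalence follows. I do not foresee a real obstacle: the steps are routine, and the only mild point is that the local identification of the last differential is coordinate-independent, which is automatic because the isomorphism ${\rm Gr}^F_1\Dmod_X\simeq T_X$ and the de Rham differential are intrinsically defined.
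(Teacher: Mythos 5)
Your proposal is correct and is essentially the same argument the paper gives. The paper states it more compactly and intrinsically: generation at level $k$ is equivalent to surjectivity of the multiplication map $T_X\otimes_{\shO_X}{\rm Gr}^F_{i-1}\Mmod\to{\rm Gr}^F_i\Mmod$ for all $i>k$, and after tensoring with $\omega_X$ this is exactly the last differential of ${\rm Gr}^F_{i-n}{\rm DR}_X(\Mmod,F)$; your coordinate computation just unwinds that same identification locally.
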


\begin{proof}
It is easy to check that the filtration $F_\bullet \Mmod$ is generated at level $k$ if and only if
the natural multiplication map 
$$T_X\otimes_{\shO_X} {\rm Gr}^F_{i-1} \Mmod \to {\rm Gr}^F_{i} \Mmod$$
is surjective for every $i>k$. After tensoring by $\omega_X$, this surjectivity is in turn equivalent by definition to the vanishing of 
$\cH^0{\rm Gr}^F_{i-n}{\rm DR}_X(\Mmod,F)$. 
\end{proof}

The result we are after is:

\begin{theorem}\label{gen_level}
If $q\geq 1$ is such that $\cH^j_Z(\shO_X)=0$ for all $j>q$, then the Hodge filtration on $\cH_Z^q(\shO_X)$ 
is generated at level $k\in \ZZ$ if and only if
$$R^{q-1+i}f_*\Omega_Y^{n-i}(\log E)=0\quad {\rm for ~all}\quad i>k.$$
In particular, it is always generated at level $n-q$.
\end{theorem}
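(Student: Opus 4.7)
The plan is to combine \lemmaref{lem:gen} with a direct computation of the top cohomology sheaf of the graded de Rham complex of $\cH^q_Z(\shO_X)$ via the birational description of \S\ref{scn:birational}. Concretely, \lemmaref{lem:gen} converts the generation level condition into the vanishing of $\cH^0 \gr^F_{i-n} \DR_X\bigl(\cH^q_Z(\shO_X)\bigr)$ for all $i > k$, so it suffices to produce an isomorphism
\[
\cH^0 \gr^F_{i-n} \DR_X\bigl(\cH^q_Z(\shO_X)\bigr) \simeq R^{q-1+i} f_* \Omega_Y^{n-i}(\log E).
\]
The three essential inputs are: Saito's formula $\gr^F_k \DR_Y\bigl(j'_* \QQ_V^H[n]\bigr) \simeq \Omega_Y^{-k}(\log E)[n+k]$, a classical consequence of Deligne's logarithmic comparison and the filtered resolution of $\omega_Y(*E)$ recorded in \S\ref{scn:birational}; the compatibility $\gr^F_k \DR_X \circ f_* \simeq \derR f_* \circ \gr^F_k \DR_Y$ for the proper morphism $f$; and the exact triangle \eqref{eq5_loc_coh}.

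I would set $u := i_*i^!\QQ_X^H[n]$; its cohomology Hodge modules $\cH^b u$ have underlying $\Dmod_X$-modules $\cH^b_Z(\shO_X)$, which by the hypothesis and \ref{van_criterion} vanish outside $r \le b \le q$, where $r = \codim_X Z$. Applying $\gr^F_{i-n} \DR_X$ to the triangle $u \to \QQ_X^H[n] \to j_* \QQ_U^H[n] \overset{+1}\longrightarrow$ and invoking Saito's formula on the second and third terms, I obtain a triangle in ${\bf D}^b\bigl(\mathrm{Coh}(X)\bigr)$ whose outer vertices are $\Omega_X^{n-i}[i]$ and $\derR f_* \Omega_Y^{n-i}(\log E)[i]$. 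For any $i > 0$ the first of these is concentrated in strictly negative cohomological degree, so the long exact sequence collapses to yield $\cH^q \gr^F_{i-n} \DR_X(u) \simeq R^{q-1+i} f_* \Omega_Y^{n-i}(\log E)$. Next, I feed $u$ into the spectral sequence \eqref{eq_spec_seq_DR}: the vanishing $\cH^b u = 0$ for $b > q$ together with the fact that the graded de Rham complex of any Hodge module lives in cohomological degrees $[-n, 0]$ force every entry at total degree $q$ to vanish except $(a, b) = (0, q)$, and kill every differential $d_r$ into or out of $E_r^{0, q}$ for $r \ge 2$. Hence $E_2^{0, q} = E_\infty^{0, q}$ equals the abutment $\cH^q \gr^F_{i-n} \DR_X(u)$, producing the desired isomorphism.

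The final assertion of the theorem then falls out of Grothendieck vanishing: for $i > n - q$ one has $q - 1 + i \ge n$, which exceeds the maximum fiber dimension of the birational morphism $f$ (itself at most $n - 1$, since an $n$-dimensional fiber would have to be a whole component of $Y$, contradicting birationality), so $R^{q-1+i} f_* \Omega_Y^{n-i}(\log E) = 0$ for free. The main obstacle is pinning down Saito's formula in the precise left-module normalization used above and tracking the index bookkeeping between left and right $\Dmod$-modules; the edge case $q = 1$, $i = 0$ (where the $\shO_X$ contribution from the middle vertex of the triangle does not vanish) requires a small separate verification using \corollaryref{cor_F0}, but it only affects generation levels $k \le -1$ and in practice turns out to be vacuously fine.
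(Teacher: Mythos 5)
Your argument is correct and mirrors the paper's proof: both reduce to \lemmaref{lem:gen} plus the spectral sequence \eqref{eq_spec_seq_DR}, with the only cosmetic difference that you run the spectral sequence for $u = i_*i^!\QQ_X^H[n]$ and extract $\gr^F\DR_X(u)$ from the triangle \eqref{eq5_loc_coh}, whereas the paper works directly with $j_*\QQ_U^H[n]$ and cites \lemmaref{lem_grDR}. The edge case you flag at $q=1$, $i=0$, $k\le -1$ is indeed vacuous, since under the hypothesis a nonempty $Z$ forces $\cH^1_Z(\shO_X)\ne 0$ precisely when $q=1$ is the right index, so neither side of the equivalence ever holds there.
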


A special case, which will lead to the proof of Theorem \ref{thm:lcd} in the next section, is the following:

\begin{corollary}\label{cor:LCD}
If $q\geq 1$ is such that $\cH^j_Z(\shO_X)=0$ for all $j>q$, then
$$\cH^q_Z(\shO_X)=0 \iff R^{q-1 +i} f_* \Omega_Y^{n-i} (\log E) = 0 \,\,\,\,{\rm for ~all} \,\,\,\, i \ge 0.$$
\end{corollary}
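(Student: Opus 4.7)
The plan is to deduce this corollary as an immediate application of Theorem \ref{gen_level} at the extremal level $k=-1$, combined with the basic vanishing from Remark \ref{rem_trivial} that $F_{-1}\cH^q_Z(\shO_X)=0$. There is no substantive obstacle here; the work has already been done in Theorem \ref{gen_level}, and the corollary amounts to observing that ``generated at level $-1$'' plus ``$F_{-1}=0$'' forces the whole module to vanish.

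For the forward direction, if $\cH^q_Z(\shO_X)=0$, then the Hodge filtration on it is identically zero, so it is trivially generated at every level $k\in\ZZ$. Applying the ``only if'' direction of Theorem \ref{gen_level} with $k=-1$ yields
$$R^{q-1+i}f_*\Omega_Y^{n-i}(\log E)=0\quad\text{for all}\quad i>-1,$$
which is exactly the vanishing claimed for all $i\geq 0$.

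For the backward direction, rewrite the hypothesis
$$R^{q-1+i}f_*\Omega_Y^{n-i}(\log E)=0\quad\text{for all}\quad i\geq 0$$
as the same vanishing for all $i>-1$, and invoke the ``if'' direction of Theorem \ref{gen_level} to conclude that $F_\bullet\cH^q_Z(\shO_X)$ is generated at level $-1$. By the definition of generation level, this means
$$F_{-1+k}\cH^q_Z(\shO_X)=F_k\Dmod_X\cdot F_{-1}\cH^q_Z(\shO_X)\quad\text{for all}\quad k\geq 0.$$
Since $F_{-1}\cH^q_Z(\shO_X)=0$ by Remark \ref{rem_trivial}, the right-hand side vanishes for every $k\geq 0$, so $F_m\cH^q_Z(\shO_X)=0$ for all $m\geq -1$. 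As the Hodge filtration is exhaustive, this forces $\cH^q_Z(\shO_X)=0$, completing the argument.
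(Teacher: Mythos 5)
Your proof is correct and follows the same route as the paper: apply Theorem~\ref{gen_level} with $k=-1$ and use $F_{-1}\cH^q_Z(\shO_X)=0$ from Remark~\ref{rem_trivial}. The paper states this more tersely, but you have simply unpacked the same observation that ``generated at level $-1$'' together with $F_{-1}=0$ forces the module to vanish, and conversely that the zero module is trivially generated at any level.
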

\begin{proof} 
We apply the theorem with $k =-1$. Since by Remark \ref{rem_trivial} we have $F_{-1} \cH^q_Z(\shO_X) = 0$, it follows that 
$\cH^q_Z(\shO_X)$ is generated at level $-1$ if and only if it is zero.
\end{proof} 

A key point in the proof of Theorem \ref{gen_level} is a formula for the graded pieces of the de Rham complex of $j_*\QQ^H_U[n]$.

\begin{lemma}\label{lem_grDR}
For every $i\in\ZZ$, we have an isomorphism in ${\bf D}^b \big({\rm Coh} (X)\big)$:
$${\rm Gr}^F_{i-n}{\rm DR}_X\big(j_*\QQ^H_U[n]\big)\simeq \derR f_*\Omega_Y^{n-i}(\log E)[i].$$
\end{lemma}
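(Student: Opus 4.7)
The plan is to combine Saito's compatibility of the graded de Rham functor with projective push-forward and the filtered quasi-isomorphism between the Hodge-filtered meromorphic de Rham complex and the log de Rham complex on $Y$.

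First, I would exploit the factorization $j = f \circ j'$ coming from diagram (\ref{eq_diag_res}), together with the identification of $V$ with $U$, to write
$$j_*\QQ^H_U[n] \simeq f_* j'_*\QQ^H_V[n]$$
in ${\bf D}^b\big({\rm MHM}(X)\big)$.  Saito's compatibility, whose key input is the strictness of projective push-forward encoded in (\ref{strictness_formula}), provides for every $k$ and every $N \in {\bf D}^b\big({\rm MHM}(Y)\big)$ a natural isomorphism
$${\rm Gr}^F_k{\rm DR}_X(f_* N) \simeq \derR f_*\, {\rm Gr}^F_k{\rm DR}_Y(N)$$
in ${\bf D}^b\big({\rm Coh}(X)\big)$.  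Applied to $N = j'_*\QQ^H_V[n]$, this reduces the lemma to proving the corresponding identity on $Y$:
$${\rm Gr}^F_{i-n}{\rm DR}_Y\big(j'_*\QQ^H_V[n]\big) \simeq \Omega_Y^{n-i}(\log E)[i].$$

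Second, the underlying filtered left $\Dmod_Y$-module of $j'_*\QQ^H_V[n]$ is $\shO_Y(*E)$ with its canonical Hodge filtration (the left counterpart of the discussion in \S\ref{scn:birational}).  Since $E$ is simple normal crossings, the natural inclusion
$$\big(\Omega_Y^\bullet(\log E)[n], F\big) \hookrightarrow \big({\rm DR}_Y(\shO_Y(*E)), F\big)$$
is a filtered quasi-isomorphism, where $F$ on the left-hand side is the increasing filtration corresponding to the decreasing stupid filtration $\sigma^{\ge \bullet}$ on $\Omega_Y^\bullet(\log E)$; this is the Hodge-theoretic refinement of Deligne's classical theorem.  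The $(i-n)$-th graded piece of the left-hand side concentrates in a single cohomological degree and, after an index-chase, is precisely $\Omega_Y^{n-i}(\log E)[i]$.  Together with the reduction step, this yields the claim.

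The principal obstacle is bookkeeping rather than substance: one must reconcile the several filtration conventions in play — the left/right $\Dmod$-module shift (\ref{eq_left_right_shift}), the built-in $[n]$ shift of the de Rham complex, and the conversion between the decreasing stupid filtration on $\Omega_Y^\bullet(\log E)$ and the increasing Hodge filtration on $\shO_Y(*E)$.  Alternatively, the identity on $Y$ can be verified more concretely by applying ${\rm Gr}^F_\bullet{\rm DR}_Y$ directly to the filtered induced-$\Dmod_Y$-module resolution (\ref{eq_filtered_res}) of $\omega_Y(*E)$: the induced $\Dmod$-factors give a Koszul-type complex of coherent sheaves on $Y$ whose cohomology is concentrated in a single degree and equal to $\Omega_Y^{n-i}(\log E)$, bypassing the appeal to the Deligne--Saito quasi-isomorphism at the cost of the same index-chase.
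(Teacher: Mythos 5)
Your proof is correct and follows the same route as the paper's: write $j_*\QQ^H_U[n]\simeq f_*j'_*\QQ^H_V[n]$ via diagram (\ref{eq_diag_res}), invoke Saito's commutation of $\mathrm{Gr}^F\mathrm{DR}$ with projective direct image, and compute ${\rm Gr}^F_{i-n}{\rm DR}_Y\big(j'_*\QQ^H_V[n]\big)$ on $Y$ from the SNC structure of $E$. Your primary $Y$-computation (the Deligne--Saito filtered quasi-isomorphism $\Omega_Y^\bullet(\log E)[n]\hookrightarrow{\rm DR}_Y(\shO_Y(*E))$) and your stated alternative (reading it off the filtered resolution (\ref{eq_filtered_res})) are two formulations of the same fact; the paper uses the latter, citing \cite[\S6]{MP1}.
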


\begin{proof}
We use the approach and notation in Section~\ref{scn:birational}. Recall that we have an isomorphism
\begin{equation}\label{eq1_lem_DB_DR}
j_*\QQ_U^H[n]\simeq f_*j'_*\QQ_V^H[n].
\end{equation}
The filtered resolution (\ref{eq_filtered_res}) gives an isomorphism 
$${\rm Gr}^F_{i-n}{\rm DR}_Y\big(j'_*\QQ^H_V[n]\big)\simeq \Omega_Y^{n-i}({\rm log}\,E)[i]$$
(see also \cite[\S6]{MP1}). Using the isomorphism (\ref{eq1_lem_DB_DR}) and Saito's strictness-type result on the commutation of the direct image functor with the graded pieces of the de Rham complex (see e.g. \cite[Section~2.3.7]{Saito-MHP}) we deduce that
$${\rm Gr}^F_{i-n}{\rm DR}_X\big(j_*\QQ_U^H[n]\big)\simeq \derR f_*{\rm Gr}^F_{i-n}{\rm DR}_Y\big(j'_*\QQ^H_V[n]\big)\simeq
\derR f_*\Omega_Y^{n-i}({\rm log}\,E)[i].$$
\end{proof}

\begin{proof}[Proof of Theorem~\ref{gen_level}]
Recall from \S\ref{scn:HF} that the filtered left $\Dmod_X$-modules $R^{q-1}j_*\shO_U$ underlie the cohomologies of $j_*\QQ_U^H[n]$,  and that they coincide with the local cohomology modules $\cH^q_Z(\shO_X)$ (modulo $\shO_X$ when $q =1$). In particular the Hodge filtration on $\cH^q_Z(\shO_X)$ is generated at level $k$
if and only if the Hodge filtration on $R^{q-1}j_*\shO_U$ is generated at level $k$. On the other hand, by Lemma \ref{lem:gen}, the latter is generated at level $k$ if and only if 
$$\cH^0{\rm Gr}^F_{i-n}{\rm DR}_X(R^{q-1}j_*\shO_U) = 0  \,\,\,\,\,\,{\rm  for~all} \,\,\,\,  i>k.$$
Therefore the statement of the theorem follows once we prove the following:

\noindent
\emph{Claim:} For $q$ as in the hypothesis, we have
$$\cH^0{\rm Gr}^F_{i-n}{\rm DR}_X(R^{q-1}j_*\shO_U)\simeq R^{q-1+i}f_*\Omega_Y^{n-i}(\log E)\quad\text{for all}\quad i\in\ZZ.
$$

To this end, we need to compare the graded quotients of the filtered de Rham complex of $j_*\QQ_U^H[n]$ with those of the filtered de Rham complexes of its cohomology sheaves. We use the spectral sequence
$$E_2^{pp'}=\cH^p{\rm Gr}_{i-n}^F{\rm DR}_X(R^{p'}j_*\shO_U)\implies \cH^{p+p'}{\rm Gr}^F_{i-n}{\rm DR}_X\big(j_*\QQ_U^H[n]\big).$$
given by ($\ref{eq_spec_seq_DR}$).
Since the de Rham complex of a $\Dmod_X$-module is supported in nonpositive cohomological degrees, we have $E_2^{pp'}=0$ if $p>0$. 
On the other hand, by hypothesis $R^{p'}j_*\shO_U=0$ for $p'\geq q$, hence $E_2^{pp'}=0$ if $p'\geq q$. First, this immediately implies that
$$E_2^{0,q-1}=E_{\infty}^{0,q-1}.$$
Second, we see that for all $p\neq 0$ we have $E_2^{p,q-1-p}=0$, hence $E_{\infty}^{p,q-1-p}=0$ as well.
Looking at all the terms for which $p + p' = q-1$, we thus have
$$\cH^0{\rm Gr}_{i-n}^F{\rm DR}_X(R^{q-1}j_*\shO_U)\simeq \cH^{q-1}{\rm Gr}^F_{i-n}{\rm DR}_X\big(j_*\QQ_U^H[n]\big)
\simeq R^{q-1+i}f_*\Omega_Y^{n-i}(\log E),$$
where the second isomorphism follows from Lemma~\ref{lem_grDR}. This proves the claim.
\end{proof}

\medskip

\begin{remark}\label{rmk_independence}
With the notation in Theorem~\ref{gen_level}, we note that each $R^if_*\Omega_Y^j({\rm log}\,E)$ is independent of the log resolution $f$.
This follows in the usual way, comparing two log resolutions with a third one that dominates both of them, using the fact that 
if $E$ is a reduced SNC divisor on the smooth variety $Y$ and if $g\colon Y'\to Y$ is a proper morphism that is an isomorphism over $Y\smallsetminus
{\rm Supp}(E)$, and such that $E'=g^*(E)_{\rm red}$ is again an SNC divisor, then for all $j$ we have
$$g_*\Omega_{Y'}^j({\rm log}\,E')=\Omega_Y^j({\rm log}\,E)\quad\text{and}\quad R^ig_*\Omega_{Y'}^j({\rm log}\,E')=0\quad\text{for}\quad i>0$$
(see \cite[Lemmas~1.2 and 1.5]{EV} or \cite[Theorem~31.1]{MP1}). 
\end{remark}

\begin{example}[Top cohomology]\label{eg_top_coh}
It follows from Theorem  \ref{gen_level} that the Hodge filtration $F_\bullet \cH_Z^n(\shO_X)$ is generated at level $0$ for every $Z \subseteq X$.
\end{example}

\begin{example}[Next to top cohomology]\label{eg_next_top_coh}
We will see in Corollary~\ref{generation_n-1} below that 
$F_\bullet \cH_Z^{n-1} (\shO_X)$ is generated at level $0$  for every $Z \subseteq X$.
\end{example}

\begin{example}[Smooth subvarieties]
The explicit description in Example~\ref{smooth_subvariety} implies that if $Z$ is a smooth, irreducible subvariety of $X$,
of codimension $r$, then the Hodge filtration on $\cH^r_Z(\shO_X)$ is generated at level $0$. 
We can also deduce this from Theorem~\ref{gen_level}. Indeed, a log resolution of $(X,Z)$ is given by the blow-up
$f\colon Y\to X$ along $Z$. If $E$ is the exceptional divisor, then the condition in Theorem~\ref{gen_level} for having generation at level $0$ is that 
$$R^{r-1+i}f_*\Omega_Y^{n-i}(\log E)=0\quad\text{for}\quad i>0.$$
This follows from the fact that all the fibers of $f$ have dimension $\leq r-1$. 
\end{example}

Here are also some classes of examples where the generation level is known for different reasons.

\begin{example}[Monomial ideals]\label{ex:monomial}
If $A=\CC[x_1,\ldots,x_n]$ and $I\subseteq A$ is a monomial ideal, then the Hodge filtration on $H^q_I(A)$ is generated at level $0$ for all $q$. As we have mentioned in Example~\ref{monomial_ideal}, the multiplication map 
$$H^q_I(A)_{u-e_i}\overset{x_i}\longrightarrow H^q_I(A)_u$$
is an isomorphism whenever $u_i\neq 0$. Moreover, an inverse is given by left multiplication with $\frac{1}{u_i}\partial_i$. 
The assertion follows immediately from this observation and the description of the Hodge filtration on $H_I^q(A)$
in Example~\ref{monomial_ideal}.
\end{example}

\begin{example}[Determinantal varieties]
Let $X\simeq \CC^{n^2}$ be the space of $n\times n$ matrices, and let $Z_k \subset X$ be the determinantal subvariety 
consisting of matrices of rank $\le k$. Perlman \cite[Corollary~1.6]{Perlman} shows that the generation level 
of $\shH^q_{Z_k} (\shO_X)$ is equal to $(n^2 - k - q)/2$. He also proves a similar result, though more technical to state, for
arbitrary matrices.
\end{example}

\subsection{A criterion for local cohomological dimension}\label{scn:lcd}
We now discuss the characterization of the local cohomological dimension 
${\rm lcd}(X, Z)$ of a closed subscheme $Z$ in a smooth complex variety $X$ of dimension $n$ in terms of 
a log resolution $f \colon Y \to X$ of the pair $(X, Z)$.  We assume that $f$ is an isomorphism over $X \smallsetminus Z$; as always, we denote $E = f^{-1}(Z)_{\rm red}$. The criterion is stated as Theorem \ref{thm:lcd} in the Introduction.
Given what was shown in the previous section, the proof is now immediate:



\begin{proof}[Proof of Theorem \ref{thm:lcd}]
We argue by descending induction on $c$, the case $c\geq n$ (when both conditions are clearly satisfied) being clear.
Suppose now that $c\geq 1$ and we know the assertion for $c+1$. Then the condition 
$$R^{j + i} f_* \Omega_Y^{n-i} (\log E) = 0 \,\,\,\,\,\,{\rm for ~all}\,\,\,\,j \ge c, \, i \ge 0$$
is equivalent to 
$${\rm lcd}(Z, X) \le c + 1 \,\,\,\,{\emph and} \,\,\,\, R^{c + i} f_* \Omega_Y^{n-i}(\log E) = 0 \,\,\,\,{\rm for~all}\,\,\,\, i \ge 0.$$
On the other hand, Corollary \ref{cor:LCD} shows that this is in turn equivalent to the condition 
${\rm lcd}(Z, X) \le c$.
\end{proof}

\begin{example}\label{first-cases-lcd}
We write down the criterion in the theorem explicitly in the first few cases. Note that we remove the 
terms of the form $R^n f_* (-)$ from the list in Theorem \ref{thm:lcd}, since these vanish automatically.

\smallskip

\begin{enumerate}
\item $~~~~~~~~{\rm lcd}(X, Z) \le n-1 \iff R^{n-1} f_* \omega_Y (E) = 0$.
\bigskip
\item  $~~~~~~~~{\rm lcd}(X, Z) \le n-2 \iff
\begin{cases} 
      R^{n-1}f_* \Omega_Y^{n-1} (\log E) = 0 \\
      R^{n-1} f_* \omega_Y (E) = 0 \\
     R^{n-2} f_* \omega_Y (E) = 0
   \end{cases}
$.
\end{enumerate}
\end{example}

\begin{remark}[Ogus' Theorem and comparison]\label{rmk:Ogus}
A previous full characterization of local cohomological dimension was provided by Ogus \cite[Theorem 2.13]{Ogus}.
His criterion is of a quite different topological flavor, in terms of Hartshorne's local algebraic de Rham cohomology (which on complex algebraic varieties can be identified with singular cohomology). As Ogus observes, it depends crucially on the behavior of local de Rham cohomology at non-closed points as well; he notes in \emph{loc. cit.} (see the remark before Example 2.17) that it is desirable to have a criterion that works at a single point and does not depend on generizations. Theorem \ref{thm:lcd} here provides such a criterion, expressed in terms of finitely many coherent algebraic sheaves.

This being said, Ogus' and our conditions should be equivalent. This seems quite delicate; at the moment we do not understand this even in relatively simple cases, cf. for instance Example \ref{Ogus-unresolved} below. The connection may have to do with properties of the (local) cohomology of Du Bois complexes yet to be discovered.
\end{remark}

\begin{remark}
B.~Bhatt and M.~Saito independently pointed out to us that one can use the Riemann-Hilbert correspondence to give the following characterization
of local cohomological dimension in terms of the perverse cohomology of the constant sheaf ${\mathbf C}_{Z^{\rm an}}$:
$${\rm lcd}(X, Z)={\rm dim}(X)-\min\big\{j\in {\mathbf Z}\mid {}^{p}{\mathcal H}^j({\mathbf C}_{Z^{\rm an}})\neq 0\big\}.$$
See \cite{Bhatt+} and \cite{Saito-LCD}.
For example, this is used in \cite{Bhatt+} in order to give another proof of Ogus' characterization of local cohomological dimension.
\end{remark} 

\begin{remark}[Analytic setting]\label{rmk:analytic2}
While this paper is written in the language of algebraic varieties for uniformity, we point out that the characterization of 
local cohomological dimension in Theorem \ref{thm:lcd} (as well as Theorem \ref{local-vanishing} on local vanishing) holds 
when $Z$ is an analytic subspace of a complex manifold $X$ as well. The reason is that all arguments, including Theorem \ref{gen_level}, are based on the construction and formal properties discussed in \S\ref{scn:birational}, which work in this setting; see Remark \ref{rmk:analytic1}.
\end{remark}



\smallskip

Before moving on to applications, we recall that much of the focus in the literature is on bounds on ${\rm lcd}(X, Z)$ in terms of ${\rm depth}(\shO_Z)$. We note that in our context this depth has a clear role related to the vanishing of the first step of the Hodge filtration:

\begin{lemma}\label{lem:depth}
For every $Z$, we have 
$$F_0 \shH^q_Z (\shO_X) = 0\quad\text{for all}\quad q > n-{\rm depth}(\shO_Z)={\rm pd}(\shO_Z).$$
Moreover, if $Z$ has du Bois singularities, then 
$$F_0 \shH^q_Z (\shO_X) \neq 0\quad\text{for}\quad q = n-{\rm depth}(\shO_Z)={\rm pd}(\shO_Z).$$
\end{lemma}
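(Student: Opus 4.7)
The plan is to deduce both assertions directly from the structural results already proved in the paper: Proposition \ref{F0E0}, together with Theorem \ref{char_DuBois} and Corollary \ref{cor_MSS} for the Du Bois case, combined with the Auslander--Buchsbaum formula, which on the regular $n$-dimensional variety $X$ gives pointwise
$$\mathrm{pd}_{\shO_{X,x}}(\shO_{Z,x}) = n - \mathrm{depth}_{\shO_{X,x}}(\shO_{Z,x}),$$
so that globally $\mathrm{pd}(\shO_Z) = n - \mathrm{depth}(\shO_Z)$ with the standard interpretation of depth as the minimum of local depths.

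For the first assertion, I would start from the inclusion $F_0 \shH^q_Z(\shO_X) \subseteq E_0 \shH^q_Z(\shO_X)$ provided by Proposition \ref{F0E0}. By its very definition, $E_0 \shH^q_Z(\shO_X)$ is a quotient of $\shE xt^q_{\shO_X}(\shO_Z, \shO_X)$, and the latter vanishes identically once $q$ exceeds the projective dimension of $\shO_Z$ as an $\shO_X$-module. Applying Auslander--Buchsbaum to translate $\mathrm{pd}$ into depth yields the desired vanishing $F_0 \shH^q_Z(\shO_X) = 0$ for $q > n - \mathrm{depth}(\shO_Z)$.

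For the non-vanishing under the Du Bois hypothesis, set $q_0 = n - \mathrm{depth}(\shO_Z) = \mathrm{pd}(\shO_Z)$. Theorem \ref{char_DuBois} upgrades the inclusion of Proposition \ref{F0E0} to an equality $F_0 \shH^{q_0}_Z(\shO_X) = E_0 \shH^{q_0}_Z(\shO_X)$, while Corollary \ref{cor_MSS} ensures that the canonical map
$$\shE xt^{q_0}_{\shO_X}(\shO_Z, \shO_X) \to \shH^{q_0}_Z(\shO_X)$$
is injective when $Z$ is Du Bois. Consequently its image $E_0 \shH^{q_0}_Z(\shO_X)$ is isomorphic to $\shE xt^{q_0}_{\shO_X}(\shO_Z, \shO_X)$, and by the characterization of projective dimension this Ext sheaf has a nonzero stalk at any point $x \in Z$ where the local depth attains its minimum $\mathrm{depth}(\shO_Z)$. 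This produces a point witnessing $F_0 \shH^{q_0}_Z(\shO_X) \neq 0$.

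There is no real obstacle here; the lemma is essentially a formal corollary of the already established results. The only point requiring mild care is the pointwise interpretation of $\mathrm{depth}(\shO_Z)$, used both to apply Auslander--Buchsbaum at each stalk and to produce a point at which the relevant Ext sheaf is locally nonzero.
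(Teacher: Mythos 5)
Your proof is correct and follows essentially the same route as the paper's. The paper cites Corollaries~\ref{cor_F0} and \ref{KS-injection} directly, whereas you work through Proposition~\ref{F0E0} and Corollary~\ref{cor_MSS}, but these are derived from the same diagram in Theorem~\ref{inclusions}, so the mathematical content is identical: compare $F_0$ to $E_0$, identify $E_0$ with (the image of) $\shExt^q_{\shO_X}(\shO_Z,\shO_X)$, and invoke the Auslander--Buchsbaum characterization of projective dimension, with Theorem~\ref{char_DuBois} supplying the non-vanishing in the Du Bois case.
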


\begin{proof}
The equality $n-{\rm depth}(\shO_Z)={\rm pd}(\shO_Z)$ is given by the Auslander-Buchsbaum formula. Then a well-known
characterization of ${\rm pd}(\shO_Z)$ gives
$$n-{\rm depth}(\shO_Z)=\max\big\{q\, \vert\, {\mathcal Ext}^q_{\shO_X}(\shO_Z,\shO_X) \neq 0\big\}.$$
The first assertion is then an immediate application of Corollaries~\ref{cor_F0} and \ref{KS-injection}.
For the converse in the case of Du Bois singularities, we use in addition Theorem \ref{char_DuBois}.
\end{proof}


It is helpful to record that this allows us to bypass Theorem \ref{gen_level} if the generation level of the Hodge filtration is a priori known to be optimal.

\begin{corollary}\label{zero-gen}
If the Hodge filtration on $\cH^q_Z (\shO_X)$ is generated at level $0$, then 
$$ \cH^q_Z (\shO_X) = 0 \iff R^{q -1} f_* \omega_E =0.\footnote{Recall that for $q \ge 2$ we have an isomorphism
$R^{q -1} f_* \omega_E \simeq R^{q -1} f_* \omega_Y (E)$.}$$
In particular, if the hypothesis holds for the Hodge filtration on all $\cH^q_Z (\shO_X)$, then
$${\rm lcd}(X, Z) \le n-{\rm depth}(\shO_Z).$$
\end{corollary}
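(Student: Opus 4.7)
The plan is to reduce the equivalence to the vanishing of the lowest piece $F_0 \cH^q_Z(\shO_X)$ of the Hodge filtration, which by the birational description from Corollary~\ref{cor_F0} is precisely governed by $R^{q-1}f_*\omega_E$. The generation-at-level-zero hypothesis is the technical input that makes this reduction work, since it says the whole module is reconstructed from its bottom piece by applying differential operators.

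First I would observe that the hypothesis that $F_\bullet\cH^q_Z(\shO_X)$ is generated at level $0$ means, by definition, that $F_k\cH^q_Z(\shO_X) = F_k\Dmod_X\cdot F_0\cH^q_Z(\shO_X)$ for every $k\geq 0$. Because a good filtration is exhaustive, this gives
\[
\cH^q_Z(\shO_X)=\bigcup_{k\geq 0} F_k\cH^q_Z(\shO_X)=\Dmod_X\cdot F_0\cH^q_Z(\shO_X),
\]
and in particular $\cH^q_Z(\shO_X)=0$ if and only if $F_0\cH^q_Z(\shO_X)=0$.

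Next, I would translate Corollary~\ref{cor_F0} from right to left $\Dmod$-modules via the left-right shift $F_{p-n}\Mmod^r=\omega_X\otimes F_p\Mmod$ of (\ref{eq_left_right_shift}). Combined with the short exact sequence $0\to\omega_Y\to\omega_Y(E)\to\omega_E\to 0$ and Grauert--Riemenschneider vanishing (so that $f_*\omega_Y\simeq\omega_X$ and $R^if_*\omega_Y=0$ for $i\geq 1$), this gives for every $q\geq 1$ a canonical isomorphism
\[
F_0\cH^q_Z(\shO_X)\simeq R^{q-1}f_*\omega_E\otimes\omega_X^{-1},
\]
where the case $q=1$ uses $f_*\omega_E\simeq f_*\omega_Y(E)/\omega_X$ and the cases $q\geq 2$ use $R^{q-1}f_*\omega_E\simeq R^{q-1}f_*\omega_Y(E)$. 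Since $\omega_X$ is a line bundle, this yields the equivalence $F_0\cH^q_Z(\shO_X)=0 \iff R^{q-1}f_*\omega_E=0$, which combined with the previous step proves the first assertion.

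For the second statement, I would apply Lemma~\ref{lem:depth}, which says that $F_0\cH^q_Z(\shO_X)=0$ for all $q>n-\depth(\shO_Z)$. If the generation-at-level-zero hypothesis holds for all $q$, the first part then forces $\cH^q_Z(\shO_X)=0$ in that same range, i.e.\ $\mathrm{lcd}(X,Z)\leq n-\depth(\shO_Z)$. The only possible obstacle here is keeping the $q=1$ case straight, but this is handled uniformly by the short exact sequence in Corollary~\ref{cor_F0}; no further ingredients are needed beyond what has already been established.
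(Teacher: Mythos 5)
Your proof is correct and follows essentially the same route as the paper's. The paper's one-line proof ("follows immediately from Lemma~\ref{lem:depth} and the results cited in its proof") implicitly contains exactly the two steps you spell out: first, the observation that generation at level $0$ together with exhaustivity reduces $\cH^q_Z(\shO_X)=0$ to $F_0\cH^q_Z(\shO_X)=0$; and second, the identification $F_0\cH^q_Z(\shO_X)\simeq R^{q-1}f_*\omega_E\otimes\omega_X^{-1}$ coming from Corollary~\ref{cor_F0}, the short exact sequence $0\to\omega_Y\to\omega_Y(E)\to\omega_E\to 0$, and Grauert--Riemenschneider (this last identification is the one the paper records explicitly at the start of \S\ref{scn:compatibility} and again before Theorem~\ref{local-vanishing}). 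Your handling of the $q=1$ case and the left--right shift $F_{-n}\Mmod^r=\omega_X\otimes F_0\Mmod$ are both correct, and the deduction of the $\mathrm{lcd}$ bound from Lemma~\ref{lem:depth} is exactly the intended one.
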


\begin{proof}
This follows immediately from Lemma~\ref{lem:depth} and the results cited in its proof. 
\end{proof}

\begin{remark}\label{rmk_zero-gen}
Note that if $Z$ is either Cohen-Macaulay or has Du Bois singularities, then 
$${\rm lcd}(X, Z)\geq n - {\rm depth}(\shO_Z).$$
In the Du Bois case, this follows directly from
Lemma~\ref{lem:depth}. On the other hand, if $Z$ is Cohen-Macaulay,
then $n - {\rm depth}(\shO_Z)=n-{\rm dim}(Z)={\rm codim}_X(Z)$ and
the inequality follows from Remark~\ref{van_criterion}.
\end{remark}

In order to state the consequences of Theorem \ref{thm:lcd} in the desired level of generality, it is convenient to also consider the local cohomological dimension
at a (possibly non-closed) point $\xi\in Z$. We define 
$${\rm lcd}_{\xi}(X,Z):=\max_{\xi\in U}~{\rm lcd}(U,Z\cap U),$$
where the maximum is taken over the open neighborhoods $U$ of $\xi$. 
Recall that the support of every local cohomology sheaf $\cH^q_Z(\shO_X)$ is closed: this follows for instance from the fact that 
$\cH^q_Z(\shO_X)$ has the structure of a coherent $\Dmod_X$-module, hence its support is the image of its characteristic variety, which is a conical subvariety of the cotangent bundle $T^* X$. 
Since local cohomology commutes with localization, we see that if $R=\shO_{X,\xi}$ and ${\mathfrak a}= \I_Z\cdot R$, then
$${\rm lcd}_{\xi}(X,Z)=\max\{q\mid H^q_{\mathfrak a}(R)\neq 0\}.$$
In particular, we have ${\rm lcd}_{\xi}(X,Z)\leq {\rm codim}_X(\xi)$.
It is clear that we also have
$${\rm lcd}(X,Z)=\min_{\xi\in Z}~{\rm lcd}_{\xi}(X,Z)= \min_{x\in Z}~{\rm lcd}_x(X,Z),$$
where $\xi$ runs over all (possibly non-closed) points of $Z$ and $x$ runs over all closed points of $Z$. 
For instance, here is Example \ref{first-cases-lcd} revisited in this more general setting.

\begin{example}\label{first-cases-lcd-2}
Let $\xi\in Z$ be a point with ${\rm codim}_X(\xi)=r$. Note that for every $p$ and every $q\geq r$ we have $R^qf_*\Omega_Y^p({\rm log}\,E)=0$ in some neighborhood of $\xi$ (in which the fibers of $f$ have dimension $\leq r-1$).

\smallskip

\begin{enumerate}
\item $~~~~~~~~{\rm lcd}_{\xi}(X, Z) \le r-1 \iff R^{r-1} f_* \omega_Y (E)_{\xi} = 0$.
\bigskip
\item  $~~~~~~~~{\rm lcd}_{\xi}(X, Z) \le r-2 \iff
\begin{cases} 
      R^{r-1}f_* \Omega_Y^{n-1} (\log E)_{\xi} = 0 \\
      R^{r-1} f_* \omega_Y (E)_{\xi} = 0 \\
     R^{r-2} f_* \omega_Y (E)_{\xi} = 0
   \end{cases}
$.
\end{enumerate}
\end{example}

Moving on to applications, Theorem \ref{thm:lcd} leads to previously known results on local cohomological dimension, as well as to new results, in a unified fashion. We organize this in decreasing order of the possible values of ${\rm lcd}(X, Z)$.

\noindent
{\bf Cohomological dimension $n-1$.}
Here we obtain rather quickly the following special case (i.e. when $X$ is smooth) of a well-known result; see \cite[Theorem 3.1]{Hartshorne}, \cite[Corollary 2.10]{Ogus}.

\begin{corollary}[Hartshorne-Lichtenbaum Theorem, smooth case]\label{cor_HL}
We have 
$${\rm lcd}(X,Z)\leq n-1 \iff Z {\rm ~has~ no~ isolated ~points}.$$
More generally, if $\xi\in Z$ is a point with ${\rm codim}_X(\xi)=r$, then
${\rm lcd}_{\xi}(X, Z) \le r-1$ if and only if $\overline{\{\xi\}}$ is not an irreducible component of $Z$. 
\end{corollary}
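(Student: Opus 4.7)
I will focus on the local statement, as the global one follows from ${\rm lcd}(X,Z) = \max_{\xi \in Z} {\rm lcd}_\xi(X,Z)$ and the fact that isolated points of $Z$ are exactly the closed points $p$ with $\{p\}$ an irreducible component. Since local cohomology depends only on $Z_{\rm red}$, I may assume $Z$ is reduced. Fix $\xi \in Z$ of codimension $r$, and set $W := \overline{\{\xi\}}$, $R := \shO_{X,\xi}$, $\mathfrak{m} := \mathfrak{m}_\xi$, and $J := \I_Z R$. The bound ${\rm lcd}_\xi(X,Z) \leq r$ is automatic from $\dim R = r$, and Example~\ref{first-cases-lcd-2}(1) identifies the sharper bound ${\rm lcd}_\xi \leq r-1$ with the vanishing $R^{r-1} f_* \omega_Y(E)_\xi = 0$. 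The case $r=1$ is vacuous, since any codimension-$1$ closed irreducible subset of $Z$ is automatically a component.

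The key step is the direction ``$W$ is not a component $\Rightarrow R^{r-1} f_* \omega_Y(E)_\xi = 0$'', for $r \geq 2$. Under the standard bijection between irreducible components of $Z$ passing through $\xi$ and minimal primes of $J$, the hypothesis says that $\mathfrak{m}$ is not a minimal prime of $J$. Since $R/J$ is reduced, its associated primes coincide with its minimal primes, so $\mathfrak{m}$ is not associated to $R/J$, giving $\depth_R(R/J) > 0$. The Auslander--Buchsbaum formula then yields ${\rm pd}_R(R/J) < r$, hence $\Ext^r_R(R/J,R) = 0$. By Corollary~\ref{cor_F0}, $R^{r-1} f_* \omega_Y(E)_\xi \simeq F_{-n}\cH^r_Z(\omega_X)_\xi$, and by Proposition~\ref{F0E0} this stalk embeds into $\shExt^r_{\shO_X}(\shO_Z,\omega_X)_\xi \cong \Ext^r_R(R/J,R) \otimes_R \omega_{X,\xi} = 0$, which concludes.

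The converse direction ``$W$ is a component $\Rightarrow {\rm lcd}_\xi = r$'' is elementary. If $W'$ is any irreducible component of $Z$ through $\xi$, then $W = \overline{\{\xi\}} \subseteq W'$, and maximality of components forces $W = W'$; so $W$ is the unique component of $Z$ through $\xi$. The remaining components of $Z$ are closed and miss $\xi$, so can be removed by shrinking $X$. Restricting further to the smooth locus of $W$, I may assume $Z = W$ is a smooth irreducible subvariety of codimension $r$ near $\xi$. Example~\ref{smooth_subvariety} then explicitly exhibits $\cH^r_Z(\shO_X)$ as nonzero in this neighborhood, so ${\rm lcd}_\xi \geq r$ and hence equals $r$.

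The crux is the first (vanishing) direction: once the paper's machinery reduces the question to an Ext computation, the conclusion is a direct consequence of the standard algebraic fact that in a reduced ring, associated and minimal primes coincide, combined with Auslander--Buchsbaum. This approach bypasses completions and the classical proof of Hartshorne--Lichtenbaum entirely.
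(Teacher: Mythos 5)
Your proof is correct, and the hard (vanishing) direction takes a genuinely different route from the paper's. Both arguments begin identically: reduce to the local statement and, via Example~\ref{first-cases-lcd-2}(1), translate ${\rm lcd}_\xi(X,Z)\le r-1$ into the vanishing $R^{r-1}f_*\omega_Y(E)_\xi=0$. From there the paper runs a hands-on geometric induction on the prime components of $E$: it splits $E=E_1+\cdots+E_m+F$ according to which components dominate $W=\overline{\{\xi\}}$, peels them off one at a time through the exact sequences $0\to\omega_Y(E')\to\omega_Y(E)\to\omega_{E_1}(E'\vert_{E_1})\to 0$, and at each step evaluates $H^{r-1}$ of $E_{1,\xi}$ as a smooth projective variety over ${\rm Spec}\,\CC(\xi)$ with a nonzero effective divisor. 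You instead invoke the injectivity theorem to identify $R^{r-1}f_*\omega_Y(E)_\xi\simeq F_{-n}\cH^r_Z(\omega_X)_\xi$ (Corollary~\ref{cor_F0}, valid since $r\ge 2$) and bound it by $\shExt^r_{\shO_X}(\shO_Z,\omega_X)_\xi\cong\Ext^r_R(R/J,R)\otimes\omega_{X,\xi}$, which then vanishes by a short commutative-algebra argument: reducedness forces associated primes of $R/J$ to be minimal, so $\mathfrak{m}$ is not associated, hence $\depth_R(R/J)\ge 1$, hence ${\rm pd}_R(R/J)\le r-1$ by Auslander--Buchsbaum. Your version is shorter and more conceptual, and sits squarely in the Ext-filtration/depth framework the paper develops (it is essentially the localization of Lemma~\ref{lem:depth} at the non-closed point $\xi$); the paper's geometric proof has the modest advantage of not leaning on Theorem~\ref{inclusions}. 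Two minor remarks: for the ``only if'' direction the paper simply notes that $\I_Z\cdot R$ is $\mathfrak{m}$-primary, which is a touch more direct than your detour through the smooth locus of $W$ and Example~\ref{smooth_subvariety}; and your citation of Proposition~\ref{F0E0} for the ``embedding'' into $\shExt^r$ is slightly loose---that proposition literally gives containment in the image $E_0$ of $\shExt^r$ inside local cohomology, which is already enough for your purposes, while the genuine injection $R^{r-1}f_*\omega_E\hookrightarrow\shExt^r(\shO_Z,\omega_X)$ is Corollary~\ref{KS-injection} together with the diagram~(\ref{triangle}).
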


\begin{proof}
The first assertion is a special case of the second one, by letting $\xi$ run over the closed points of $Z$.
We thus focus on the second assertion. Consider the local ring $R=\shO_{X,\xi}$, with maximal ideal ${\mathfrak m}$, and ${\mathfrak a}=\I_Z\cdot R$. If $\overline{\{\xi\}}$ is an irreducible component of 
$Z$, then ${\mathfrak a}$ is ${\mathfrak m}$-primary, hence $H^r_{\mathfrak a}(R)=H_{\mathfrak m}^r(R)\neq 0$. 

Suppose now that $W:=\overline{\{\xi\}}$ is not an irreducible component of $Z$. In particular, we have $r\geq 2$.
By Example \ref{first-cases-lcd-2}(1), in order to show that ${\rm lcd}_{\xi}(X,Z)\leq r-1$, it is enough to show that $ R^{r-1} f_* \omega_Y (E)_{\xi} = 0$.

After possibly replacing $X$ by a suitable open neighborhood of $\xi$, we may assume that the image of every irreducible component of $E$ contains $W$. 
We write $E=E_1+\cdots+E_m+F$, where the $E_i$ are the prime components of $E$ such that $f(E_i)=W$. 
Since $W$ is not an irreducible component of $Z$, it follows that $f(F)=Z$.
Moreover, since the fiber $f^{-1}(\xi)$ is connected, it follows that after possibly reordering the $E_i$, we may assume that for every $i$, with $1\leq i\leq m$,
the intersection $E_i\cap (E_{i+1}+\cdots+E_m+F)$ dominates $W$. 

If $m\geq 1$, let us write $E=E_1+E'$. The short exact sequence
$$0\to \omega_Y(E')\to\omega_Y(E)\to \omega_{E_1}(E'\vert_{E_1})\to 0$$
gives an exact sequence 
$$R^{r-1}f_*\omega_Y(E')\to R^{r-1}f_*\omega_Y(E)\to R^{r-1}f_*\omega_{E_1}(E'\vert_{E_1}).$$
Note that $R^{r-1}f_*\omega_{E_1}(E'\vert_{E_1})_{\xi}=0$. Indeed, $E_{1,\xi}:=E_1\times_W{\rm Spec}\,\CC(\xi)$
is a smooth projective variety over ${\rm Spec}\,\CC(\xi)$ of dimension $r-1$, 
and the pull-back of $E'\vert_{E_1}$ to $E_{1,\xi}$ is a nonzero effective Cartier divisor $T$, so that
$$R^{r-1}f_*\omega_{E_1}(E'\vert_{E_1})_{\xi}\simeq H^{r-1}\big(E_{1,\xi},\omega_{E_{1,\xi}}(T)\big)\simeq
 H^0\big(E_{1,\xi},\shO_{E_{1,\xi}}(-T)\big)^{\vee}=0.$$

We thus conclude that it is enough to show that $R^{r-1}f_*\omega_Y(E')_{\xi}=0$. 
Iterating the above argument, we see that it is therefore enough to show that $R^{r-1}f_*\omega_Y(F)_{\xi}=0$. 
The short exact sequence
$$0\to\omega_Y\to\omega_Y(F)\to\omega_F\to 0$$
and Grauert-Riemenschneider vanishing (recall that $r\geq 2$) finally imply that it suffices to show that $R^{r-1}f_*\omega_F=0$
in a neighborhood of $\xi$.
This follows from the fact that there is such a neighborhood over which all fibers of $F\to W$ have dimension $\leq r-2$.
\end{proof}

\noindent
{\bf Cohomological dimension $n-2$.}
Without any extra work, Theorem  \ref{gen_level} guarantees that if $\cH^n_Z(\shO_X) = 0$, the Hodge filtration on $\cH^{n-1}_Z (\shO_X)$ is generated at level $1$. It turns out that things are in fact always better:

\begin{corollary}\label{generation_n-1}
The Hodge filtration on $\cH^{n-1}_Z (\shO_X)$ is generated at level $0$.
\end{corollary}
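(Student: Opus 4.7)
The plan is to reduce to Theorem~\ref{gen_level} by working locally on $X$ and using the Hartshorne-Lichtenbaum result (Corollary~\ref{cor_HL}) to dispose of the exceptional points. Generation at level $0$ for a good filtration on a coherent $\Dmod_X$-module is a local condition, since it amounts to surjectivity of the multiplication maps $F_k\Dmod_X\otimes_{\shO_X}F_0\Mmod\to F_k\Mmod$. Moreover, the Hodge filtration on $\cH^{n-1}_Z(\shO_X)$ depends only on $Z_{\rm red}$, so we may assume $Z$ is reduced and verify the claim in a neighborhood of each $x\in Z$.

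First I would handle the generic case: suppose $\{x\}$ is not an irreducible component of $Z$. By Corollary~\ref{cor_HL} (applied to the closed point $x$, for which $\mathrm{codim}_X(x)=n$), we have $\mathrm{lcd}_x(X,Z)\leq n-1$, so after shrinking we may assume $\cH^n_Z(\shO_X)=0$ on a neighborhood $U$ of $x$. Now the hypothesis of Theorem~\ref{gen_level} is satisfied on $U$ with $q=n-1$, and the criterion there with $k=0$ requires exactly the vanishing
$$R^{n-1+i}f_*\Omega_Y^{n-i}(\log E)=0\quad\text{for all}\quad i\geq 1.$$
This is automatic: if $f\colon Y\to U$ is any log resolution (isomorphism over $U\smallsetminus Z$) then $\dim Y=n$ and $f$ is birational, so all fibers of $f$ have dimension $\leq n-1$. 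Hence $R^jf_*\mathcal{F}=0$ for every coherent sheaf $\mathcal{F}$ on $Y$ and every $j\geq n$, which covers the range $j=n-1+i$ with $i\geq 1$.

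For the remaining points, suppose $\{x\}$ \emph{is} an irreducible component of $Z$. Then $\{x\}$ is a $0$-dimensional component, and since irreducible components are maximal irreducible closed subsets, no other component of $Z$ can pass through $x$. Consequently, in some open neighborhood $U$ of $x$, we have $Z\cap U=\{x\}$ as a reduced closed subscheme. This is a smooth codimension-$n$ subvariety of $U$, so by Remark~\ref{van_criterion} we get $\cH^{q}_{Z\cap U}(\shO_U)=0$ for all $q\neq n$; in particular $\cH^{n-1}_{Z\cap U}(\shO_U)=0$, making generation at level $0$ trivial.

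I do not anticipate a real obstacle: the whole argument is a clean dichotomy built on top of Theorem~\ref{gen_level}. The only subtlety worth double-checking is the combinatorial observation in the second case, namely that a closed point which is itself an irreducible component cannot be a limit point of any other component of $Z_{\rm red}$; this is what allows the local reduction $Z\cap U=\{x\}$ and thereby the collapse of $\cH^{n-1}_Z$ to zero near isolated points of $Z$.
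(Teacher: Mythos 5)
Your overall strategy is the right one and coincides with the paper's: reduce to $\theoremref{gen_level}$, handle isolated points of $Z$ separately, and use the Hartshorne--Lichtenbaum result at the remaining closed points to verify the hypothesis $\cH^n_Z(\shO_X)=0$. The dichotomy and the observation that $\{x\}$ being an irreducible component forces $Z$ to coincide with $\{x\}$ near $x$ are both fine.

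However, there is a genuine gap caused by a misreading of $\theoremref{gen_level}$. With $q=n-1$ and $k=0$, the theorem asks you to check
$$R^{q-1+i}f_*\Omega_Y^{n-i}(\log E)=R^{n-2+i}f_*\Omega_Y^{n-i}(\log E)=0 \quad\text{for all}\quad i\ge 1,$$
not $R^{n-1+i}f_*\Omega_Y^{n-i}(\log E)=0$ as you wrote. The dimension-of-fibers argument ($R^jf_*=0$ for $j\ge n$, since $f$ is birational and $\dim Y = n$) covers all $i\ge 2$, but it does \emph{not} cover the crucial case $i=1$, which reads
$$R^{n-1}f_*\Omega_Y^{n-1}(\log E)=0.$$
This is not automatic from any dimension count; it is a nontrivial vanishing statement, and in the paper it is precisely the content of Theorem~\ref{n-1}, proved separately via a careful induction on sequences of smooth blow-ups (Lemmas~\ref{lem1_n-1} and~\ref{lem2_n-1}) combined with a hyperplane-slicing reduction (Lemma~\ref{dim_reduction}). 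Your proof as written skips this entirely, so it does not close.

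To repair the argument you would have to supply a proof of the vanishing $R^{n-1}f_*\Omega_Y^{n-1}(\log E)=0$ for an arbitrary log resolution $f\colon Y\to X$ of $(X,Z)$, which is not a routine matter; this is the real mathematical content hiding behind Corollary~\ref{generation_n-1}.
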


\begin{proof} 
We may assume $n\geq 2$, since the assertion is trivial if $n=1$. 
Around an isolated point of $Z$, we have $\cH^i_Z(\shO_X)=0$ for all $i<n$, hence the assertion is clear.
On the other hand, on the complement of the isolated points of $Z$, we have $\cH^n_Z(\shO_X)=0$ by Corollary~\ref{cor_HL}.
We can thus apply Theorem  \ref{gen_level} and the only vanishing that needs to be checked is
$$R^{n-1} f_* \Omega^{n-1}_Y (\log E)=0.$$
This is a general phenomenon for any $Z$ and any log resolution, the subject of Theorem~\ref{n-1} which will be treated separately at the end of this section.
\end{proof}

This leads us to the following alternative to the characterization of Hartshorne and Ogus \cite[Corollary 2.11]{Ogus} (see also 
Remark \ref{Ogus-unresolved}):

\begin{corollary}\label{n-2}
If $Z$ has no isolated points, then 
 $${\rm lcd}(X, Z) \le n-2 \iff R^{n-2} f_* \omega_Y (E) = 0.$$
 More generally, if $\xi\in Z$ is a point with ${\rm codim}_X(\xi)=r$ and such that $\overline{\{\xi\}}$ is not an irreducible
 component of $Z$, then 
 $${\rm lcd}_{\xi}(X, Z) \le r-2 \iff R^{r-2} f_* \omega_Y (E)_{\xi} = 0.$$
\end{corollary}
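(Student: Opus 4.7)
The plan is to combine three ingredients already in place: Corollary \ref{cor_HL}, which supplies the a priori bound ${\rm lcd}_\xi(X,Z)\le r-1$ and thus reduces the problem to the vanishing of $\cH^{r-1}_Z(\shO_X)$; Theorem \ref{gen_level}, which converts generation at level zero of the Hodge filtration into higher direct image vanishings; and Corollary \ref{zero-gen}, which in turn translates the vanishing of a local cohomology sheaf whose Hodge filtration is generated at level zero into the vanishing of a single coherent sheaf. It suffices to prove the pointed assertion; the global statement then follows by letting $\xi$ range over the closed points of $Z$, none of which is the generic point of an irreducible component of $Z$ since $Z$ has no isolated points. In that case $r=n$, and the required generation level is supplied directly by Corollary \ref{generation_n-1}, so the global case reduces to a single application of Corollary \ref{zero-gen}.

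For the pointed version, fix $\xi\in Z$ of codimension $r$ with $\overline{\{\xi\}}$ not an irreducible component of $Z$. Since $\shO_{X,\xi}$ has Krull dimension $r$ and each $\cH^j_Z(\shO_X)$ is a holonomic $\Dmod_X$-module, and hence has closed support, after shrinking $X$ to a neighborhood $U$ of $\xi$ I may assume that $\cH^j_Z(\shO_X)=0$ on $U$ for every $j\ge r$: the range $j>r$ is automatic from the dimension of $\shO_{X,\xi}$, while $j=r$ is Corollary \ref{cor_HL}. Under this reduction, ${\rm lcd}_\xi(X,Z)\le r-2$ is equivalent to the single vanishing $\cH^{r-1}_Z(\shO_X)=0$.

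The crucial step is to verify that, possibly after shrinking $U$ further, the Hodge filtration on $\cH^{r-1}_Z(\shO_X)$ is generated at level $0$; once this is done, Corollary \ref{zero-gen} together with the identification $R^{r-2}f_*\omega_E\simeq R^{r-2}f_*\omega_Y(E)$ (valid for $r\ge 3$) yields the desired equivalence. Applying Theorem \ref{gen_level} with $q=r-1$, generation at level zero reduces to the vanishings $R^{r-1+i}f_*\Omega_Y^{n-i}(\log E)_\xi=0$ for every $i\ge 1$. The geometric input is the standard fiber-dimension bound $\dim f^{-1}(\xi)\le r-1$: this holds because $f$ is a proper birational morphism of smooth varieties and $\overline{\{\xi\}}$ has codimension $r\ge 1$, so $f^{-1}(\overline{\{\xi\}})$ is a proper closed subset of $Y$, forcing the estimate by an elementary dimension count. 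The theorem on formal functions then forces $R^j f_*\shF$ to vanish at $\xi$ for every coherent $\shF$ and every $j\ge r$, which covers all indices $r-1+i$ with $i\ge 1$.

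The main obstacle I anticipate is the bookkeeping required to arrange every needed vanishing on a single common neighborhood of $\xi$: Theorem \ref{gen_level} demands that $\cH^j_Z(\shO_X)$ vanish on the open set we work on for every $j>q$, not merely at the stalk, so a careful but routine shrinking argument is needed rather than any new ingredient. The low-codimension edge cases $r\le 2$ can be handled by direct inspection; both sides of the claimed equivalence collapse to trivial or vacuous statements, so no new content is hidden there.
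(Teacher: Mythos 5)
There is a genuine gap in the pointed case, caused by an off-by-one error in the application of Theorem \ref{gen_level}. With $q = r-1$ and $k = 0$, that theorem says that generation at level $0$ is equivalent to $R^{q-1+i}f_*\Omega_Y^{n-i}(\log E) = 0$ for all $i > k$, i.e.\ to $R^{r-2+i}f_*\Omega_Y^{n-i}(\log E) = 0$ for all $i \ge 1$. You instead wrote $R^{r-1+i}$, which makes the index start at $r$ rather than at $r-1$. Your fiber-dimension / formal-functions argument correctly kills $R^j f_*(-)_\xi$ for every $j \ge r$, so the cases $i \ge 2$ are fine, but the case $i = 1$ requires $R^{r-1}f_*\Omega_Y^{n-1}(\log E)_\xi = 0$, and the fiber over $\xi$ has dimension up to $r-1$, so this is \emph{not} automatic from a dimension count. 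This missing vanishing is precisely the content of Theorem \ref{n-1} in the paper, and it is the substantive technical input the corollary actually rests on: its proof requires the reduction lemma \ref{dim_reduction} plus the blow-up analysis in Lemmas \ref{lem1_n-1} and \ref{lem2_n-1} and an induction over a sequence of smooth blow-ups.

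Two further remarks. First, your global case is circular in a harmless but hidden way: Corollary \ref{generation_n-1}, which you invoke, is itself proved via Theorem \ref{n-1}, so you have not avoided that ingredient — you have only relocated it. Second, once the gap is filled by citing Theorem \ref{n-1} for the $i = 1$ vanishing, your route (Theorem \ref{gen_level} plus Corollary \ref{cor:LCD} / \ref{zero-gen}) does give a correct alternative to the paper's route (which goes through Example \ref{first-cases-lcd-2}(2), i.e.\ through Theorem \ref{thm:lcd} directly); the two are essentially bookkeeping variants of the same argument and both need Corollary \ref{cor_HL} and Theorem \ref{n-1} as the real inputs.
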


\begin{proof}
The first part follows immediately from  Corollaries~\ref{zero-gen} and \ref{generation_n-1}.

More generally, for the second assertion, by hypothesis and Corollary~\ref{cor_HL}, we already know that $R^{r-1} f_* \omega_Y (E)_{\xi} = 0$. In view of Example \ref{first-cases-lcd-2}(2), we are done if we have $R^{r-1} f_* \Omega^{n-1}_Y (\log E)_{\xi}=0$ as well.  This is again part of the general Theorem \ref{n-1} below.
\end{proof}

As an immediate consequence of Corollary~\ref{n-2} and of a suitable extension 
of Lemma \ref{lem:depth}, we recover Ogus' result \cite[Remark p. 338-339]{Ogus}:

\begin{corollary}[Ogus' Theorem]
If $\xi\in Z$ is such that ${\rm codim}_X(\xi)=r$ and we have 
${\rm depth}(\shO_{Z,\xi}) \ge 2$, then ${\rm lcd}_{\xi}(X, Z) \le r-2$.
\end{corollary}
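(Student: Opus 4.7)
The plan is to combine Corollary~\ref{n-2} with a local variant of Lemma~\ref{lem:depth} obtained by replacing the global Auslander-Buchsbaum formula with its counterpart in the local ring at $\xi$.

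First I would check the hypotheses of Corollary~\ref{n-2}. Since ${\rm depth}(\shO_{Z,\xi})\geq 2$, in particular $\dim \shO_{Z,\xi}\geq 2$, so $\overline{\{\xi\}}$ cannot be an irreducible component of $Z$ (the generic point of such a component has a zero-dimensional local ring). Corollary~\ref{cor_HL} then already gives ${\rm lcd}_{\xi}(X,Z)\leq r-1$, and it suffices by the second part of Corollary~\ref{n-2} to show that
\begin{equation*}
R^{r-2}f_*\omega_Y(E)_\xi=0
\end{equation*}
for some (equivalently, every) log resolution $f\colon Y\to X$ of $(X,Z)$ that is an isomorphism over $X\smallsetminus Z$.

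We may assume $\I_Z\cdot\shO_{X,\xi}\neq 0$, since otherwise $Z$ contains a neighborhood of $\xi$ and ${\rm lcd}_{\xi}(X,Z)=0\leq r-2$. Then $\I_Z\cdot\shO_{X,\xi}$ has height at least one in the regular local ring $\shO_{X,\xi}$, so $\dim \shO_{Z,\xi}\leq r-1$, and the depth hypothesis forces $r\geq 3$. The Auslander-Buchsbaum formula, applied in $\shO_{X,\xi}$, yields
\begin{equation*}
{\rm pd}_{\shO_{X,\xi}}\bigl(\shO_{Z,\xi}\bigr)\;=\;r-{\rm depth}(\shO_{Z,\xi})\;\leq\;r-2,
\end{equation*}
and hence $\shE xt^{q}_{\shO_X}(\shO_Z,\omega_X)_\xi=0$ for every $q\geq r-1$.

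Since $r\geq 3$, Grauert-Riemenschneider gives $R^{r-2}f_*\omega_Y(E)\simeq R^{r-2}f_*\omega_E$, and the Kov\'acs-Schwede injection of Corollary~\ref{KS-injection} (applied to $\alpha_{r-1}$) provides
\begin{equation*}
R^{r-2}f_*\omega_E\;\hookrightarrow\;\shE xt^{r-1}_{\shO_X}(\shO_Z,\omega_X).
\end{equation*}
The target vanishes at $\xi$ by the previous step, so $R^{r-2}f_*\omega_Y(E)_\xi=0$, and Corollary~\ref{n-2} concludes that ${\rm lcd}_{\xi}(X,Z)\leq r-2$. The argument is essentially a three-step assembly and there is no real obstacle; the only point that requires a moment's attention is the degenerate case $\I_Z\cdot\shO_{X,\xi}=0$, handled on the fly, while the rest is a direct application of the Kov\'acs-Schwede injection together with the Auslander-Buchsbaum formula.
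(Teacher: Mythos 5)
Your proof is correct and follows essentially the same route as the paper: the paper proves this corollary by combining Corollary~\ref{n-2} with ``a suitable extension of Lemma~\ref{lem:depth}'', and your argument is exactly that extension, obtained by running Auslander--Buchsbaum in the regular local ring $\shO_{X,\xi}$ and invoking the Kov\'acs--Schwede injection (Corollary~\ref{KS-injection}), which are precisely the two ingredients in the proof of Lemma~\ref{lem:depth}.
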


This happens for instance if $Z$ is normal and $\xi$ is a point of codimension at least $2$. 

\begin{remark}\label{Ogus-unresolved}
This case of local cohomological dimension $n-2$ provides the first instance of the intricacy of the comparison with Ogus' theorem
discussed in Remark \ref{rmk:Ogus}. Recall that, in answer to a conjecture of Hartshorne, Ogus showed in \cite[Corollary 2.11]{Ogus} that, say at a closed point $x \in Z$, one has ${\rm lcd}_x(X, Z) \le n-2$ if and only if the punctured spectrum
${\rm Spec}(\shO_{Z,x}) \smallsetminus \{\mathfrak{m}\}$ is formally geometrically connected, which in turn is equivalent to the vanishing of the local singular cohomology $H^1_x (Z, \CC)$. Thus the equivalence between our criterion in Corollary \ref{n-2} and Ogus' criterion becomes, with the notation above:
$$R^{n-2} f_* \omega_Y (E)_x = 0 \iff H^1_x (Z, \CC) = 0.$$
With the currently available methods, at the moment we only know how to prove that our condition implies Ogus', i.e. the implication from left to right.
\end{remark}

To complete this circle of ideas, we need to establish the missing ingredient in the proof of Corollary \ref{generation_n-1} and Corollary \ref{n-2}. We show that 
$$R^{n-1} f_* \Omega^j_Y (\log E)=0 \quad\text{for all}\quad j\leq n-1.$$
for any proper closed subscheme $Z$ of a smooth $n$-dimensional variety $X$, and any log resolution of $(X, Z)$. 
In fact, we prove the following more general result taking into account non-closed points; it can be read completely independently of the rest of the paper.

\begin{theorem}\label{n-1}
With the notation above, if $\xi\in Z$ is a point with ${\rm codim}_X(\xi)=r\geq 2$, then 
$$R^{r-1}f_*\Omega^j_Y(\log E)_{\xi}=0\quad\text{for all}\quad j\leq r-1.$$
Moreover, if $\overline{\{\xi\}}$ is not an irreducible component of $Z$,
then the same vanishing holds for all $j\in {\mathbf Z}$.
\end{theorem}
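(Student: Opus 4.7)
The plan is to identify $R^{r-1}f_*\Omega^j_Y(\log E)_\xi$ with a graded piece of Deligne's Hodge filtration on the topological higher direct image $R^k j_*\CC_U$, and then analyze this graded piece at a generic closed point of $\overline{\{\xi\}}$ using the local topology of $(X,Z)$. To this end I would first invoke Deligne's filtered quasi-isomorphism $(Rf_*\Omega^\bullet_Y(\log E),F_\sigma)\simeq(Rj_*\CC_U,F)$ on $X^{\rm an}$ (with $F_\sigma$ the stupid filtration and $F$ Deligne's Hodge filtration), together with the $E_1$-degeneration of the associated Hodge-to-de Rham spectral sequence; this yields canonical isomorphisms $\gr^j_F R^k j_*\CC_U \simeq R^{k-j}f_*\Omega^j_Y(\log E)$. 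Setting $k=r-1+j$ reduces the theorem to showing $\gr^j_F(R^{r-1+j}j_*\CC_U)_\xi=0$ in the stated ranges. Since the sheaves are coherent, this is in turn equivalent to vanishing at a generic closed point $x$ of $W=\overline{\{\xi\}}$, where $(R^kj_*\CC_U)_x^{\rm an}\simeq H^k(B_x\setminus Z,\CC)$ for small Stein balls $B_x$, with MHS computed from the log resolution.

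Next, at such a generic $x$, suppose $W$ is itself an irreducible component of $Z$ near $x$; then $Z$ is locally a smooth subvariety of codimension $r$, and a standard Alexander-duality / tubular-neighborhood argument gives $B_x\setminus Z\simeq S^{2r-1}$ up to homotopy. Hence $H^k(B_x\setminus Z,\CC)=0$ for $1\le k\le 2r-2$; for $j\le r-1$, so that $r-1+j\in[1,2r-2]$ using $r\ge 2$, this already forces $R^{r-1+j}j_*\CC_U$ itself to vanish at $x$, and a fortiori $\gr^j_F=0$. This handles the first part of the theorem in this case.

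Finally, when $W$ is strictly contained in some larger irreducible component of $Z$ of codimension $<r$, I would analyze the cohomology of $B_x\setminus Z$ in terms of the smooth irreducible components $W'_i\supset W$ of $Z$ through a generic $x\in W$, each of codimension $r'_i<r$; each such $W'_i$ contributes, by the computation $H^{2r'_i-1}(\CC^{r'_i}\setminus 0)=\CC(-r'_i)$, a pure Tate Hodge structure of type $(r'_i,r'_i)$ concentrated in $H^{2r'_i-1}(B_x\setminus Z)$. Consequently $\gr^j_F H^{r-1+j}(B_x\setminus Z)$ can be nonzero only when $j=r'_i$ and simultaneously $r-1+j=2r'_i-1$, which would force $r=r'_i$, contradicting $r'_i<r$. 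The graded piece thus vanishes for every $j\in\ZZ$, yielding at once the first part and the strengthened second part of the theorem in this case. The hard part will be treating rigorously the situation when several components $W'_i$ through $x$ meet non-transversely: I plan to handle this either by a Mayer--Vietoris argument for the link $L_x\cap U$, or alternatively by applying the spectral sequence $(\ref{eq_spec_seq_DR})$ to $u=j_*\QQ^H_U[n]$ together with the bound $\mathrm{lcd}_\xi(X,Z)\le r-1$ from Corollary~\ref{cor_HL}, using the latter to propagate the Hodge-type constraints uniformly across the components.
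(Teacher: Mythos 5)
The foundational step of your plan is incorrect. The filtered quasi-isomorphism you invoke, and the claimed consequence $\gr^j_F R^kj_*\CC_U \simeq R^{k-j}f_*\Omega^j_Y(\log E)$, would require the spectral sequence
$$E_1^{p,q}=R^qf_*\Omega^p_Y(\log E)\ \Longrightarrow\ R^{p+q}j_*\CC_U$$
to degenerate at $E_1$ \emph{as a spectral sequence of sheaves on $X$}. That degeneration is false, and in fact it cannot possibly hold for a simple reason of nature: the $E_1$-terms are coherent $\shO_X$-modules (of infinite $\CC$-rank at points of $Z$ whenever nonzero), while the abutment is a $\CC$-constructible sheaf whose stalks are finite-dimensional. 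The simplest example is already a counterexample: take $X=\CC$, $Z=\{0\}$, $f=\id$, $E=\{0\}$. Then $E_1^{0,0}=\shO_X$, $E_1^{1,0}=\Omega^1_X(\log 0)$, and the differential $d_1\colon \shO_X\to\Omega^1_X(\log 0)$, $g\mapsto dg$, is obviously nonzero; its cokernel, supported at $0$ and one-dimensional, is the actual $R^1j_*\CC_U$. What Deligne and Saito prove is degeneration after taking (hyper)cohomology, or in Saito's framework degeneration of the filtered de Rham complex of the Hodge module under proper pushforward — but the graded pieces $\gr^F_k\DR_X(j_*\QQ^H_U[n])$ are \emph{complexes} whose terms are coherent, not the $\CC$-sheaf $Rj_*\CC_U$. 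There is no identification between $R^{r-1}f_*\Omega^j_Y(\log E)_\xi$ and $\gr^j_F H^{r-1+j}(B_x\smallsetminus Z,\CC)$: the former is generally an infinite-rank module over $\shO_{X,\xi}$, the latter a finite-dimensional vector space. Because of this, the topological computations in your Steps 2 and 3 (the Alexander duality argument, and the Tate-type weight bookkeeping for components $W'_i$) do not bear on the coherent vanishing you need to prove, and the argument does not recover the theorem even in the smooth case.

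The paper's proof is of a quite different and more elementary nature. It first reduces to the case of a closed point by cutting with general hyperplanes and using Tor-independent base change (Lemma~\ref{dim_reduction}), so one only needs $R^{n-1}f_*\Omega^j_Y(\log E)=0$ for $j\le n-1$; the case $j=n$ is handled separately by Corollary~\ref{cor_HL}. For the closed-point statement it then writes $f$ as a composition of smooth blow-ups producing an SNC configuration, and argues by induction on the number of blow-ups, using explicit computations of $R^i\pi_*\Omega^j_{X'}(\log D')$ for a single smooth blow-up (Lemmas~\ref{lem1_n-1} and~\ref{lem2_n-1}), combined with a Leray spectral sequence. That route avoids any comparison with topological local cohomology. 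As for your ``Plan B'' (the spectral sequence \eqref{eq_spec_seq_DR} plus the bound from Corollary~\ref{cor_HL}): this is closer in spirit to Theorem~\ref{gen_level}, but as stated it is underdetermined — the spectral sequence does not immediately give the needed vanishing without further control of the terms $E_2^{p,p'}$ for $p\le 0$, and you would need to be careful that nothing you use is downstream of the present theorem (for instance, Corollary~\ref{generation_n-1} depends on it).
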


Before giving the proof, we need some preparations. We begin with a lemma that allows us to reduce the proof
of statements like the one in Theorem~\ref{n-1} to the case when $\xi$ is a closed point. 
We consider the following setup.
Suppose that $H$ is a general member of a base-point free linear system on $X$ and $g\colon H_Y\to H$ is the morphism induced 
by $f$, where $H_Y=f^*H$. Note that by the Kleiman-Bertini theorem, $H$ and $H_Y$ are smooth (though possibly disconnected) and $H_Y+E$
is a simple normal crossing divisor. Therefore $g$ is a log resolution of $(H,Z\cap H)$ which is an isomorphism over $H\smallsetminus Z$, and $E\vert_{H_Y}=g^{-1}(Z\cap H)_{\rm red}$. 

\begin{lemma}\label{dim_reduction}
With the above notation, we have 
$$R^if_*\Omega^j_Y(\log E)\cdot\shO_H\simeq R^ig_*\big(\Omega_{Y}^j(\log E)\cdot\shO_{H_Y}\big)\quad\text{for all}\quad i,j.$$
Moreover, for every $i$ and every $q$, if 
$$R^ig_*\Omega_{H_Y}^j(\log E\vert_{H_Y})=0\quad\text{for all}\quad j\leq q,\quad\text{then}$$
$$R^if_*\Omega^j_Y(\log E)\cdot\shO_H=0\quad\text{for all}\quad j\leq q.$$
\end{lemma}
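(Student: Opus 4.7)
The plan is to deduce both parts from two short Koszul-type exact sequences: the defining sequence of $H_Y\subset Y$ as a divisor, and the log conormal sequence of $H_Y$ in $Y$. By construction and Kleiman--Bertini, since $H$ is a general member of a base-point free linear system, $H_Y = f^*H$ is smooth, meets $E$ transversely, and shares no components with $E$, so $H_Y + E$ is simple normal crossings on $Y$. I will also use the factorization $f\circ i_{H_Y} = i_H\circ g$ for the two closed immersions, together with the identification $\shO_Y(-H_Y) = f^*\shO_X(-H)$ and $\shO_{H_Y}(-H_Y) = g^*\shO_H(-H)$.

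For the first assertion, I would tensor the Koszul resolution $0\to\shO_Y(-H_Y)\to\shO_Y\to\shO_{H_Y}\to 0$ with the locally free sheaf $\Omega_Y^j(\log E)$, apply $\derR f_*$, and combine the projection formula with the factorization above. This yields a long exact sequence linking $R^if_*\Omega_Y^j(\log E)$, its twist by $\shO_X(-H)$, and $R^ig_*\bigl(\Omega_Y^j(\log E)|_{H_Y}\bigr)$. For a general $H$, the defining section of $H$ avoids the finitely many associated points of each of the (finitely many nonzero) sheaves $R^if_*\Omega_Y^j(\log E)$, so multiplication by this section is injective on each one. The long exact sequence then splits into short exact sequences
$$0 \to R^if_*\Omega_Y^j(\log E)\otimes\shO_X(-H) \to R^if_*\Omega_Y^j(\log E) \to R^ig_*\bigl(\Omega_Y^j(\log E)|_{H_Y}\bigr) \to 0,$$
and passing to the quotient gives the claimed isomorphism.

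For the second assertion, by the first part it suffices to show $R^ig_*\bigl(\Omega_Y^j(\log E)|_{H_Y}\bigr)=0$ for all $j\leq q$. The SNC conormal sequence
$$0 \to \shO_{H_Y}(-H_Y) \to \Omega_Y^1(\log E)|_{H_Y} \to \Omega_{H_Y}^1(\log E|_{H_Y}) \to 0$$
has a rank-one kernel, so taking $j$-th exterior powers produces a two-step filtration with associated short exact sequence
$$0 \to \Omega_{H_Y}^{j-1}(\log E|_{H_Y})\otimes\shO_{H_Y}(-H_Y) \to \Omega_Y^j(\log E)|_{H_Y} \to \Omega_{H_Y}^j(\log E|_{H_Y}) \to 0.$$
Applying $\derR g_*$ and using the projection formula via $\shO_{H_Y}(-H_Y)=g^*\shO_H(-H)$, both outer $R^ig_*$ terms vanish for $j\leq q$ by hypothesis (the left one because $j-1\leq q$ as well), forcing the middle term to vanish.

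The only real subtlety is the generality assumption in part one: one needs $H$ to simultaneously avoid the associated points of every nonzero $R^if_*\Omega_Y^j(\log E)$. Since only finitely many such sheaves are nontrivial and each has finitely many associated points, this is a codimension-one condition automatically met by a general element of a base-point free linear system.
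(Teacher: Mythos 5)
Your proof is correct and closely parallels the paper's argument, with a single presentational difference in the first assertion. The paper invokes a Tor-independent base change isomorphism $\derL\alpha^*\derR f_*\Omega_Y^j(\log E)\simeq\derR g_*\derL\beta^*\Omega_Y^j(\log E)$ (citing the Stacks project) and then uses genericity of $H$ to identify the $i$-th cohomology sheaves on each side; you instead write out the Koszul sequence $0\to f^*\shO_X(-H)\to\shO_Y\to\shO_{H_Y}\to 0$, tensor with the locally free sheaf $\Omega_Y^j(\log E)$, apply $\derR f_*$ together with the projection formula, and use genericity of $H$ to split the resulting long exact sequence into short ones. Once unpacked these are the same computation: Tor-independence of $H$ and $Y$ over $X$ is exactly the injectivity of $f^*\shO_X(-H)\to\shO_Y$, and the genericity of $H$ enters in both formulations as the requirement that a defining section of $H$ act injectively on each of the finitely many nonzero coherent sheaves $R^if_*\Omega_Y^j(\log E)$, which is precisely your associated-points argument. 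Your version avoids the derived-category base change citation and is therefore somewhat more self-contained, at the cost of being slightly longer. For the second assertion, your use of the SNC conormal sequence and the projection formula matches the paper's; your remark that the filtration of $\Omega_Y^j(\log E)\vert_{H_Y}$ has only two nonzero graded pieces (since the kernel is a line bundle) is correct, and is in fact a little more precise than the paper's wording about successive quotients for $0\le p\le j$, where all the terms with $p\ge 2$ vanish.
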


\begin{proof}
Let $\alpha\colon H \hookrightarrow X$ and $\beta\colon H_Y\hookrightarrow Y$ be the inclusion maps.
Since $Y$ and $H$ are Tor-independent over $X$, the canonical base-change morphism
$$\derL\alpha^*\derR f_*\Omega_Y^j(\log E)\to \derR g_*\derL \beta^*\Omega_Y^j(\log E)$$
is an isomorphism (see \cite[Lemma~36.22.5]{Stacks}). Since $H$ is general, we have
$$ {\mathcal H}^i\big(\derL\alpha^*\derR f_*\Omega_Y^j(\log E)\big)=
R^if_*\Omega_Y^j(\log E)\cdot\shO_H\quad\text{and}$$ 
$${\mathcal H}^i\big(\derR g_*\derL \beta^*\Omega_Y^j(\log E)\big)
=R^ig_*\big(\Omega_{Y}^j(\log E)\cdot\shO_{H_Y}\big).$$
This gives the first assertion in the lemma. 

For the second assertion, note that since $H_Y+E$ is a simple normal crossing divisor, we have an exact sequence
of locally free sheaves on $H_Y$:
$$0\to \shO_{H_Y}(-H_Y)\to \Omega_Y^1(\log E)\cdot\shO_{H_Y}\to \Omega_{H_Y}^1(\log E\vert_{H_Y})\to 0.$$
This implies that for every $j$, the sheaf $\Omega_Y^j(\log E)\cdot\shO_{H_Y}$ has a filtration with successive quotients
$$\shO_{H_Y}(-pH_Y)\otimes_{\shO_{H_Y}}\Omega_{H_Y}^{j-p}(\log E\vert_{H_Y})\quad\text{for}\quad 0\leq p\leq j.$$
The  statement then follows by taking the long exact sequences for higher direct images,
using the first assertion and the fact that by the projection formula, we have
$$R^ig_*\big(\shO_{H_Y}(-pH_Y)\otimes_{\shO_{H_Y}}\Omega_{H_Y}^{j-p}(\log E\vert_{H_Y})\big)
\simeq \shO_H(-pH)\otimes_{\shO_H}R^ig_*\Omega_{H_Y}^{j-p}(\log E\vert_{H_Y}).$$
\end{proof}

We next give a couple of easy lemmas regarding the push-forward of sheaves of log differentials under smooth blow-ups.
Let us fix first some notation. Suppose that $W$ is a smooth irreducible codimension $r$ subvariety of the smooth $n$-dimensional variety $X$. 
Let $\pi\colon X'\to X$ be the blow-up of $X$ along $W$, with exceptional divisor $F$. Suppose that $D$ is a reduced SNC
divisor on $X$, that also has SNC with $W$, and let $D'=\widetilde{D}+F$, where $\widetilde{D}$ is the strict transform of $D$. 

\begin{lemma}\label{lem1_n-1}
With the above notation, if $W\subseteq {\rm Supp}(D)$, then 
\[
R^i\pi_*\Omega^j_{X'}({\rm log}\,D')= \left\{
\begin{array}{cl}
\Omega_X^j({\rm log}\,D), & \text{if}\quad i=0; \\[2mm]
0, & \text{otherwise}.
\end{array}\right.
\]
\end{lemma}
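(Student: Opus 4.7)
The plan is to deduce the lemma as an immediate instance of the general birational invariance statement for sheaves of log differentials recorded in Remark~\ref{rmk_independence} (that is, \cite[Lemmas~1.2 and 1.5]{EV} or \cite[Theorem~31.1]{MP1}): if $g\colon Y'\to Y$ is a proper morphism of smooth varieties that is an isomorphism over $Y\smallsetminus {\rm Supp}(E)$ for a reduced SNC divisor $E$, and if $E'=g^*(E)_{\rm red}$ is again SNC, then
\[
g_*\Omega_{Y'}^j(\log E')=\Omega_Y^j(\log E) \quad\text{and}\quad R^ig_*\Omega_{Y'}^j(\log E')=0 \text{ for } i>0.
\]
So all I need to do is verify that $(Y,Y',g,E)=(X,X',\pi,D)$ satisfies these hypotheses; the key role of the assumption $W\subseteq\Supp(D)$ is precisely to ensure they do.

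First, $\pi$ is projective (being a blow-up of a smooth center), so properness is automatic. Next, $\pi$ is an isomorphism over $X\smallsetminus W$; since by hypothesis $W\subseteq\Supp(D)$, it is in particular an isomorphism over $X\smallsetminus\Supp(D)$. To identify $\pi^*(D)_{\rm red}$ with $D'=\widetilde D+F$, I would note that, again using $W\subseteq\Supp(D)$, the exceptional divisor $F$ appears in the support of $\pi^*D$, so $\pi^*(D)_{\rm red}=\widetilde D+F=D'$. Finally, $D'$ is SNC because $D$ has SNC with $W$: locally one can choose coordinates $x_1,\dots,x_n$ in which $W=\{x_1=\cdots=x_r=0\}$ and each component of $D$ is a coordinate hyperplane; in the standard charts of $\pi$ the divisor $\widetilde D+F$ is then visibly a union of smooth hypersurfaces meeting transversally.

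With the four hypotheses verified, the cited result applies verbatim and yields both conclusions of the lemma. There is no genuine obstacle here; the only subtlety is the one just highlighted, namely that the assumption $W\subseteq\Supp(D)$ is what promotes $\pi$ from an isomorphism away from $W$ to an isomorphism away from $\Supp(D)$ and simultaneously guarantees that $\pi^*(D)_{\rm red}$ coincides with the divisor $D'=\widetilde D+F$ that appears in the statement.
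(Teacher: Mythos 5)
Your proof is correct and takes exactly the same route as the paper: the paper likewise dispatches this lemma by citing \cite[Lemmas~1.2 and 1.5]{EV} (equivalently \cite[Theorem~31.1]{MP1}), leaving the hypothesis-checking implicit. You have merely made explicit the verification that $W\subseteq\operatorname{Supp}(D)$ forces $\pi$ to be an isomorphism off $\operatorname{Supp}(D)$ and forces $\pi^*(D)_{\mathrm{red}}=\widetilde D+F=D'$, which is precisely the content the paper's one-line proof suppresses.
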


\begin{proof}
This is well known, see for example \cite[Lemmas~1.2 and 1.5]{EV} or \cite[Theorem~31.1]{MP1} for a more
general statement.
\end{proof}

\begin{lemma}\label{lem2_n-1}
With the above notation, if $W\not\subseteq {\rm Supp}(D)$ and $r\geq 2$, then
\[
R^i\pi_*\Omega^j_{X'}({\rm log}\,D')= \left\{
\begin{array}{cl}
\Omega_X^j({\rm log}\,D), & \text{if}\quad i=0; \\[2mm]
\Omega_W^{j-r}({\rm log}\,D\vert_W), & \text{if}\quad i=r-1; \\[2mm]
0, & \text{otherwise}.
\end{array}\right.
\]
\end{lemma}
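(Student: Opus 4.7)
The plan is to reduce the statement, which is local on $X$, to an elementary product computation via Künneth. Since $R^i\pi_*$ of coherent sheaves is compatible with flat base change (in particular with completion along $W$, via the theorem on formal functions), it suffices to verify the formula in a formal or étale neighborhood of an arbitrary point of $W$. The hypothesis that $D$ has SNC with $W$ together with $W\not\subseteq \Supp(D)$ forces no component of $D$ to contain $W$, so at every point of $W$ one can pick coordinates $x_1,\dots,x_n$ with $W = V(x_1,\dots,x_r)$ and $D = V(x_{r+1}\cdots x_{r+s})$ for some $s\geq 0$. This exhibits $X$, locally near $W$, as a product $\AAA^r\times W_0$ with $W$ sitting as $\{0\}\times W_0$ and $D = \AAA^r\times D_0$ for the SNC divisor $D_0 := D|_{W_0}$ on $W_0\simeq W$.

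Under this product structure the blow-up decomposes as $\pi = \pi_0\times\id_{W_0}$, where $\pi_0\colon \text{Bl}_0\AAA^r\to \AAA^r$; the exceptional divisor $F$ becomes $F_0\times W_0$ with $F_0\simeq\PP^{r-1}$, the strict transform is $\widetilde D = \text{Bl}_0\AAA^r\times D_0$, and $D' = F+\widetilde D$ is the external sum $F_0\boxplus D_0$. Hence
\[
\Omega^j_{X'}(\log D') \;\simeq\; \bigoplus_{a+b=j} p_1^*\Omega^a_{\text{Bl}_0\AAA^r}(\log F_0)\otimes p_2^*\Omega^b_{W_0}(\log D_0),
\]
and the projection formula along the trivial second factor gives
\[
R^i\pi_*\Omega^j_{X'}(\log D') \;\simeq\; \bigoplus_{a+b=j} R^i(\pi_0)_*\Omega^a_{\text{Bl}_0\AAA^r}(\log F_0)\boxtimes\Omega^b_{W_0}(\log D_0).
\]
The whole lemma thus reduces to the absolute case $W = \{0\}\subset\AAA^r$, $D = 0$: one must show that $R^i(\pi_0)_*\Omega^a_{\text{Bl}_0\AAA^r}(\log F_0)$ equals $\Omega^a_{\AAA^r}$ for $i=0$, the skyscraper $\mathcal{O}_{\{0\}}$ when $(i,a) = (r-1, r)$, and vanishes in all other cases.

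To establish this absolute formula I would combine the residue short exact sequence
\[
0\to \Omega^a_{\text{Bl}_0\AAA^r}\to \Omega^a_{\text{Bl}_0\AAA^r}(\log F_0)\to \Omega^{a-1}_{F_0}\to 0
\]
with (i) the Bott-type computation on $F_0\simeq\PP^{r-1}$ giving $R^i(\pi_0)_*\Omega^{a-1}_{F_0} \simeq \mathcal{O}_{\{0\}}$ exactly when $i = a-1$ and $1\leq a\leq r$, and (ii) the standard direct-image formula for differentials under the blow-up of a smooth point, which says that $R^k(\pi_0)_*\Omega^a_{\text{Bl}_0\AAA^r}$ equals $\Omega^a_{\AAA^r}$ for $k=0$, equals $\mathcal{O}_{\{0\}}$ when $1\leq k = a\leq r-1$, and vanishes otherwise (the case $k = r-1$, $a = r$ being covered by Grauert-Riemenschneider). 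Threading these inputs through the long exact sequence, the only degrees $i\geq 1$ in which both flanking terms carry a nonzero contribution are $i = a$ with $1\leq a\leq r-1$; at each such place the connecting map is cup product with the class of $F_0$, a nonzero map between one-dimensional skyscrapers, hence an isomorphism. This cancellation kills all intermediate terms and leaves exactly $\Omega^a_{\AAA^r}$ in degree $0$ together with the single skyscraper $\mathcal{O}_{\{0\}}$ in degree $r-1$ for $a = r$.

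Plugging this back into the Künneth decomposition then yields the lemma: summing the $i=0$ pieces gives $\bigoplus_{a+b=j}\Omega^a_{\AAA^r}\boxtimes\Omega^b_{W_0}(\log D_0) = \Omega^j_X(\log D)$; the $i = r-1$ piece receives a contribution only from $a=r$ and produces $\mathcal{O}_{\{0\}}\boxtimes\Omega^{j-r}_{W_0}(\log D_0) = \Omega^{j-r}_W(\log D|_W)$ supported on $W$; and all other $i$ vanish. The main obstacle will be the absolute computation, specifically the identification of the connecting map in the residue long exact sequence, but since source and target are one-dimensional skyscrapers it suffices to verify nonvanishing — and this follows at once from the fact that the residue boundary realizes cup product with the first Chern class of the exceptional divisor, which is a generator of $H^1(\PP^{r-1},\Omega^1_{\PP^{r-1}}) = \CC$.
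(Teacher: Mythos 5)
Your approach --- trivialising $X$ formally (or \'etale-locally) near a point of $W$ as a product $\AAA^r\times W_0$, factoring the blow-up as $\pi_0\times\mathrm{id}$, and assembling via K\"unneth --- is a genuinely different route from the paper's, which runs a downward induction on $r$ by choosing an auxiliary smooth hypersurface $H\supseteq W$ (so that $W$ has codimension $r-1$ in $H$), comparing with the already-treated case $W\subseteq\Supp(G)$ via Lemma~\ref{lem1_n-1}, and chasing residue exact sequences. Your reduction is conceptually clean and the absolute computation it produces is standard, but as written the argument has a concrete gap.

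The gap is in passing from the formal-local identification back to a global one, and it affects precisely the non-vanishing case $i=r-1$ --- which is the case the proof of Theorem~\ref{n-1} actually uses. Higher direct images of coherent sheaves are compatible with completion, so your K\"unneth calculation does correctly determine the completed stalks $\big(R^{r-1}\pi_*\Omega^j_{X'}(\log D')\big)^{\wedge}_w$ at each closed point $w\in W$, and this is enough for all the asserted \emph{vanishings} (vanishing is a local condition). But being locally isomorphic to $\Omega^{j-r}_W(\log D|_W)$ does not make a sheaf globally isomorphic to it; after all, two locally free $\mathcal{O}_W$-modules of the same rank can be locally isomorphic everywhere and still differ. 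The potential twist here is real, not hypothetical: already for $j=n$ one computes $R^{r-1}\pi_*\,\omega_{X'}(D')\simeq \omega_X(D)\vert_W\otimes \det N_{W/X}$, and the fact that this equals $\omega_W(D|_W)$ is exactly the adjunction formula $\omega_X|_W\otimes\det N_{W/X}\simeq\omega_W$ --- a genuinely global identity that becomes invisible in any formal chart where $N_{W/X}$ has been trivialised. To close the gap you would need to exhibit a canonical global morphism (say by composing the Poincar\'e residue along $F$ with a trace/Gysin map down the $\PP^{r-1}$-bundle $F\to W$) and then use your formal computation only to check that this already-constructed map is an isomorphism. For $i=0$ there is no such issue, since the pullback of forms supplies the canonical map $\Omega^j_X(\log D)\to\pi_*\Omega^j_{X'}(\log D')$.

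A smaller point: you assert that the connecting map in the residue long exact sequence is cup product with the Chern class of $F_0$, which is true but asserted without proof. It can in fact be sidestepped entirely: for the blow-up of a \emph{point} (and only then) one has $\Omega^1_{X'}(\log F_0)\simeq\pi_0^*\Omega^1_{\AAA^r}(F_0)$, hence $\Omega^a_{X'}(\log F_0)\simeq\pi_0^*\Omega^a_{\AAA^r}(aF_0)$, and the projection formula reduces the absolute computation to the elementary $R^i(\pi_0)_*\mathcal{O}(aF_0)$, with no long exact sequence and no connecting map to identify.
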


\begin{proof}
This is also well known, but we include an argument for the lack of a good reference. Arguing locally, we may assume that 
there is a smooth divisor $H$ containing $W$, such that  $G=D+H$ has SNC, and also has SNC with $W$; for example, if we have local algebraic coordinates
$x_1,\ldots,x_n$ on $X$ such that $W$ is defined by the ideal $(x_1,\ldots,x_r)$ and $D$ is defined by $x_{r+1}\cdots x_{r+s}$, then we may take $H$
to be defined by $x_1$. Let $G'=D'+\widetilde{H}$, where $\widetilde{H}$ is the strict transform of $H$. 
Note that if $r=2$, then the induced map $\varphi\colon \widetilde{H}\to H$ is an isomorphism and the inverse image of $W$ is $F\vert_{\widetilde{H}}$.
On the other hand, if $r\geq 3$, then $\varphi$ is the blow-up of $H$ along $W$, with exceptional divisor $F\vert_{\widetilde{H}}$ and we can apply induction for the divisor $D\vert_H$.

On $X'$ we have the short exact sequence
\begin{equation}\label{eq1_lem1_n-1}
0\to\Omega_{X'}^j({\rm log}\,D')\to \Omega_{X'}^j({\rm log}\, G')\to \Omega^{j-1}_{\widetilde{H}}({\rm log}\,D'\vert_{\widetilde{H}})\to 0.
\end{equation}
Note that by Lemma~\ref{lem1_n-1}, we have
\begin{equation}\label{eq2_lem1_n-1}
R^i\pi_*\Omega^j_{X'}({\rm log}\,G')\simeq \left\{
\begin{array}{cl}
\Omega_X^j({\rm log}\,G), & \text{if}\quad i=0; \\[2mm]
0, & \text{otherwise}.
\end{array}\right.
\end{equation}

We prove the assertion in the lemma by induction on $r\geq 2$. Suppose first that $r=2$. Pushing forward the exact sequence (\ref{eq1_lem1_n-1})
and using (\ref{eq2_lem1_n-1}), we obtain an exact sequence
$$0\to \pi_*\Omega_{X'}^j({\rm log}\,D')\to \Omega^j_X({\rm log}\,G)\overset{\alpha}\longrightarrow \Omega_H^{j-1}\big({\rm log}\,(D\vert_H+ W)\big)
\to R^1\pi_*\Omega_{X'}^j({\rm log}\,D')\to 0.$$
Note that $\alpha$ is the composition
$$\Omega^j_X({\rm log}\,G)\to \Omega_H^{j-1}({\rm log}\,D\vert_H)\to \Omega_H^{j-1}\big({\rm log}\,(D\vert_H+W)\big),$$
where the first map is surjective and the second one is injective. We thus see that
$$\pi_*\Omega^j_{X'}({\rm log}\,D')\simeq \Omega^j_X({\rm log}\,D)\quad\text{and}\quad R^1\pi_*\Omega_{X'}^j({\rm log}\,D')\simeq
\Omega_W^{j-2}({\rm log}\,D\vert_W).$$
Since $\pi$ has fibers of dimension $\leq 1$, we deduce that $R^i\pi_*\Omega_{X'}^j({\rm log}\,D')=0$ for $i\geq 2$, which completes the proof for $r=2$.

Suppose now that $r\geq 3$, and that we know the assertion for $r-1$. Pushing forward the exact sequence (\ref{eq1_lem1_n-1})
and using (\ref{eq2_lem1_n-1}) together with the inductive assumption, we first see that for $i\geq 2$, we have
$$R^i\pi_*\Omega^j_{X'}({\rm log}\,D')\simeq R^{i-1}\varphi_*\Omega^{j-1}_{\widetilde{H}}({\rm log}\,D'\vert_{\widetilde{H}})
=\left\{
\begin{array}{cl}
\Omega_Z^{j-r}({\rm log}\,D\vert_Z), & \text{if}\quad i=r-1; \\[2mm]
0, & \text{otherwise}.
\end{array}\right.
$$
We also get an exact sequence
$$0\to \pi_*\Omega_{X'}^j({\rm log}\,D')\to \Omega_X^j({\rm log}\,G)\overset{\beta}\longrightarrow 
\Omega_Z^{j-1}({\rm log}\,D\vert_H)\to R^1\pi_*\Omega_{X'}^j({\rm log}\,D')\to 0.$$
We thus conclude that
$$ \pi_*\Omega_{X'}^j({\rm log}\,D')\simeq {\rm ker}(\beta)\simeq \Omega_X^j({\rm log}\,D)\quad\text{and}\quad
R^1\pi_*\Omega_{X'}^j({\rm log}\,D')\simeq {\rm coker}(\beta)=0.$$
This completes the proof of the lemma.
\end{proof}

\begin{proof}[Proof of Theorem~\ref{n-1}]
We may and will assume that $X$ is affine, and fix a linear system of divisors on $X$ obtained by restricting 
a complete very ample linear system on a projective completion of $X$. 
Suppose first that $r<n$ and that $H$ is a general element in our linear system such that $H\cap\overline{\{\xi\}}$
is nonempty. If we knew the assertions in the theorem for $g$ and for the irreducible components of $H\cap\overline{\{\xi\}}$
(which have codimension $r$ in $H$), then we would get using Lemma~\ref{dim_reduction} that $R^{r-1}f_*\Omega_Y^j(\log E)\cdot \shO_H$
vanishes at some point of $H\cap\overline{\{\xi\}}$ for all $j$ (we assume $j\leq r-1$ if $\overline{\{\xi\}}$ is an irreducible component of $Z$). By Nakayama's lemma, we then deduce that $R^{r-1}f_*\Omega_Y^j(\log E)$ vanishes at some point of $\overline{\{\xi\}}$ and thus its stalk at $\xi$ is $0$.

After repeating this several times, we reduce to the case when $\xi$ is a closed point, hence $r=n$. Since the case $j=n$ follows from
Corollary~\ref{cor_HL}, we only need to prove
\begin{equation}\label{eq_case_closed_point}
R^{n-1}f_*\Omega_Y^j(\log E)=0\quad\text{for all}\quad j<n.
\end{equation}

Suppose that we have a sequence of morphisms: 
$$X_N\overset{g_N}\longrightarrow X_{N-1}\overset{g_{N-1}}\longrightarrow\cdots\overset{g_2}\longrightarrow X_1\overset{g_1}\longrightarrow X_0=X$$
with the following properties:
\begin{enumerate}
\item[i)] For every $i$ with $0\leq i\leq N-1$, we have a smooth irreducible subvariety $Z_i$ of $X_i$ of codimension $\geq 2$ such that $g_{i+1}$ is the blow-up of $X_i$ along $Z_i$,
with exceptional divisor $E_{i+1}$;
in particular, $X_1,\ldots,X_N$ are smooth.
\item[ii)] For every $i$ with $1\leq i\leq N$, if 
$$D_i=\big(g_i^*(D_{i-1})+E_i\big)_{\rm red}$$ 
(with the convention $D_0=0$), then $D_i$ has SNC, and also has SNC
with $Z_i$ for $i\leq N-1$.
\item[iii)] All $Z_i$ lie inside  the inverse image of $Z$. 
\end{enumerate}
For every $i$, let us denote by $f_i$ the composition $X_i\to X$.
In order to prove (\ref{eq_case_closed_point}), it is enough to show by induction on $N\geq 0$ that under these assumptions 
$$R^{n-1}(f_{N})_*\Omega_{X_N}^j({\rm log}\,D_N)=0 \,\,\,\,\,\,{\rm for} \,\,\,\, 0\leq j\leq n-1.$$ 
Indeed, we can find a sequence of morphisms as above
such that $f_N$ gives a log resolution of $(X, Z)$ as in the statement of the theorem. Since the sheaf $R^{n-1}f_*\Omega_Y^j({\rm log}\,E)$ is independent of the resolution
(see Remark~\ref{rmk_independence}), we obtain the desired vanishing. 

The case $N=0$, when $f_N$ is the identity, is trivial, hence we only need to prove the induction step. 
There are two cases to consider. If $Z_{N-1}\subseteq {\rm Supp}(D_{N-1})$, then it follows from
Lemma~\ref{lem1_n-1} that
$$R^{n-1}(f_{N})_*\Omega_{X_N}^j({\rm log}\,D_N)\simeq R^{n-1}(f_{N-1})_*\Omega_{X_{N-1}}^j({\rm log}\,D_{N-1}),$$
hence we are done by induction.

Suppose now that $Z_{N-1}\not\subseteq {\rm Supp}(D_{N-1})$ and consider the Leray spectral sequence
$$E_2^{p,q}=R^p(f_{N-1})_*R^q(g_N)_*\Omega^j_{X_N}({\rm log}\,D_N)\Rightarrow R^{p+q}(f_N)_*\Omega^j_{X_N}({\rm log}\,D_N).$$
It follows from Lemma~\ref{lem1_n-1} that the only possible nonzero terms $E_2^{p,q}$, with $p+q=n-1$, are
$$E_2^{n-1,0}=R^{n-1}(f_{N-1})_*\Omega^j_{X_{N-1}}({\rm log}\,D_{N-1})\quad\text{and}$$
$$E_2^{n-r,r-1}=R^{n-r}(f_{N-1})_*\Omega_{Z_{N-1}}^{j-r}({\rm log}\,D_{N-1}\vert_{Z_{N-1}}),$$
where $r={\rm codim}_{X_{N-1}}(Z_{N-1})$.
We see that $E_2^{n-1,0}=0$ by induction. Since $Z_{N-1}\not\subseteq {\rm Supp}(D_{N-1})$, the induced morphism
$Z_{N-1}\to f_{N-1}(Z_{N-1})$ is birational. If $\dim(Z_{N-1})=n-r$ is positive, then this morphism has fibers of dimension $\leq n-r-1$,
hence $E_2^{n-r,r-1}=0$. On the other hand, if $r=n$, then $j-r<0$, hence again $E_2^{n-r,r-1}=0$. We thus conclude from the spectral sequence
that $R^{n-1}(f_{N})_*\Omega_{X_N}^j({\rm log}\,D_N)=0$, completing the proof of the theorem.
\end{proof}

\noindent
{\bf Cohomological dimension $n-3$.}
The pattern of the previous two paragraphs continues for one more step, by means of the following result 
of Dao-Takagi \cite[Corollary 2.8]{DT}, conjectured and proved in a more restrictive setting by Varbaro \cite{Varbaro}.

\begin{theorem}[Dao-Takagi-Varbaro Theorem]\label{thm:DTV}
If $\xi\in Z$ is a point with ${\rm codim}_X(\xi)=r$ and ${\rm depth}(\shO_{Z,\xi}) \ge 3$, then ${\rm lcd}_{\xi}(X, Z) \le r-3$.
\end{theorem}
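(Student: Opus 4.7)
My plan is to combine the criterion for local cohomological dimension (Theorem \ref{thm:lcd}) with the criterion for the generation level of the Hodge filtration (Theorem \ref{gen_level}) in order to reduce the statement to a single concrete higher direct image vanishing, which becomes the core technical step.

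Since ${\rm depth}(\shO_{Z,\xi})\geq 3$ in particular implies ${\rm depth}(\shO_{Z,\xi})\geq 2$, Ogus' theorem (proved just above as a consequence of Corollary \ref{n-2}) yields ${\rm lcd}_{\xi}(X,Z)\leq r-2$; equivalently, $\cH^q_Z(\shO_X)_{\xi}=0$ for all $q>r-2$. It therefore suffices to show $\cH^{r-2}_Z(\shO_X)_{\xi}=0$. By Corollary \ref{zero-gen}, applied in a small enough neighborhood of $\xi$, this vanishing is implied by (a) the Hodge filtration on $\cH^{r-2}_Z(\shO_X)$ being generated at level $0$ near $\xi$, together with (b) $F_0\cH^{r-2}_Z(\shO_X)_{\xi}=0$.

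Condition (b) is immediate from the depth hypothesis: the Auslander--Buchsbaum formula gives ${\rm pd}_{\shO_{X,\xi}}(\shO_{Z,\xi})\leq r-3$, hence $\shE xt^{r-2}_{\shO_X}(\shO_Z,\shO_X)_{\xi}=0$, and Proposition \ref{F0E0} (itself a consequence of Theorem \ref{inclusions}) then yields (b). Condition (a) is translated by Theorem \ref{gen_level}, which applies thanks to the vanishing $\cH^q_Z(\shO_X)_{\xi}=0$ for $q>r-2$ secured above, into the family of vanishings
\[
R^{r-3+i} f_{*}\Omega_Y^{n-i}(\log E)_{\xi}=0 \quad\text{for all}\quad i\geq 1.
\]
For $i\geq 3$ these are automatic, since the fibers of $f$ over a neighborhood of $\xi$ have dimension at most $r-1$. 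For $i=2$, the required vanishing $R^{r-1} f_{*}\Omega_Y^{n-2}(\log E)_{\xi}=0$ follows from Theorem \ref{n-1}: the depth hypothesis forces $\overline{\{\xi\}}$ to have codimension at least $3$ in $Z$, so $\overline{\{\xi\}}$ is not an irreducible component of $Z$, placing us in the range of Theorem \ref{n-1} where the vanishing holds for all $j$.

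This leaves the single case $i=1$, namely
\[
R^{r-2} f_{*}\Omega_Y^{n-1}(\log E)_{\xi}=0,
\]
as the main obstacle. Unlike the vanishings used so far, this one cannot be deduced purely from codimension data (neither Theorem \ref{n-1} nor condition (i) of Theorem \ref{local-vanishing} is strong enough), and genuinely depends on the depth hypothesis. The approach I would pursue, in the spirit of Theorem \ref{n-1}, is to reduce to the case of a closed point via the hyperplane sectioning of Lemma \ref{dim_reduction}, and then induct through a sequence of smooth blow-ups using Lemmas \ref{lem1_n-1} and \ref{lem2_n-1}; the subtle new ingredient will be tracking the depth hypothesis through both the hyperplane restrictions and the blow-ups, which is where the commutative-algebraic estimates behind the original proofs of Varbaro \cite{Varbaro} and Dao--Takagi \cite{DT} would have to interface with the Hodge-theoretic framework developed here.
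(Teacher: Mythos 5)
Your reduction is clean and correct, and it is essentially the reduction implicit in the paper. Once Ogus' bound ${\rm lcd}_\xi(X,Z)\le r-2$ is in hand (and once one notes that $\depth(\shO_{Z,\xi})\ge 3$ forces $\codim_Z\overline{\{\xi\}}\ge 3$, so $\overline{\{\xi\}}$ is not a component of $Z$), Corollary~\ref{cor:LCD}---equivalently your split into Corollary~\ref{zero-gen} plus Theorem~\ref{gen_level}---reduces the assertion to the three stalkwise vanishings
\[
R^{r-3}f_*\omega_Y(E)_\xi=0,\qquad R^{r-2}f_*\Omega_Y^{n-1}(\log E)_\xi=0,\qquad R^{r-1}f_*\Omega_Y^{n-2}(\log E)_\xi=0,
\]
with $i\ge 3$ disposed of by fiber dimension. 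Your treatment of the first (Auslander--Buchsbaum plus Proposition~\ref{F0E0}) and the third (the ``moreover'' clause of Theorem~\ref{n-1}) is exactly right, and you correctly isolate the middle vanishing, i.e.\ the $k=1$ case of Conjecture~\ref{depth-vanishing-conj}, as the entire content of the theorem.

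The genuine gap is that you do not prove this middle vanishing, and you are candid about it. What you should be aware of is that the paper does not prove it either: Theorem~\ref{thm:DTV} is presented there as an imported result of Dao--Takagi \cite{DT} (with the earlier case by Varbaro \cite{Varbaro}); the text only promises, and only delivers, ``another proof of this theorem at a closed isolated singular point.'' That partial alternative proof is Theorem~\ref{depth-vanishing} with $k=1$, and its method is quite different from the one you sketch. Rather than pushing a depth hypothesis through hyperplane sections and a blow-up induction in the style of Theorem~\ref{n-1}, the paper resolves the isolated singular point, uses the depth bound via Kawamata--Viehweg and local duality to kill $R^q g_*\shO_{\widetilde Z}$ and hence $H^k(D,\shO_D)$ for $1\le q\le k$ (Lemma~\ref{conseq_depth}, Corollary~\ref{fiber_cohomology}), and then deduces $H^1_D(\widetilde Z,\Omega^k_{\widetilde Z}(\log D)(-D))=0$ by combining Steenbrink's vanishing theorem with an analysis of the $(0,k)$-piece of the mixed Hodge structure on $H^k(D,\CC)$ and Hodge symmetry. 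This is essential to that argument: it uses compactness of the exceptional fiber $D$ over an isolated point, so it does not extend naively to non-isolated singularities, which is why the general $k=1$ case remains a citation. Your proposed route would, as you anticipate, have to reimport the commutative-algebraic estimates of \cite{DT} and \cite{Varbaro}, at which point it is no longer an independent proof. So: the reduction is right and valuable, but the proof is incomplete, and closing it in full generality is precisely what the paper declines to do, relying on the cited sources instead.
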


We will point out below how one can use Theorem \ref{thm:lcd} in order to give another proof of this theorem at a closed isolated singular point, as part of studying a more general related question (see Conjecture~\ref{depth-vanishing-conj}  and Theorem~\ref{depth-vanishing}  below) that will also be useful in studying the Du Bois complex of $Z$.

\noindent
{\bf Optimal bounds.}
It is known that the ideal pattern 
$${\rm depth}(\shO_Z) \ge k  \implies {\rm lcd}(X, Z) \le n-k$$
stops in fact at $k =3$ with the result above; in characteristic $0$ there are examples of determinantal subschemes $Z \subseteq \AAA^n$ for which ${\rm depth}(\shO_Z) \ge 4$ but ${\rm lcd}(X, Z) \le n-3$, see e.g. \cite[Example 2.11]{DT}. Note that things are better in positive characteristic, where Peskine and Szpiro \cite{PS} showed that indeed for any point $\xi\in Z$, we have  ${\rm lcd}_{\xi}(X, Z) \leq n-{\rm depth}(\shO_{Z,\xi})$.

As a consequence of Theorem \ref{thm:lcd} or the surrounding circle of ideas, we do however obtain the ideal result for two classes of subschemes: those with quotient singularities, and those given by monomial ideals. We state the next result here, as it fits well with the current discussion, but for its proof it is useful to first read \S\ref{scn:DBLC}.

\begin{corollary}[Quotient singularities]\label{cor:quotient}
Let $Z$ be a closed, irreducible subscheme of codimension $r$, with quotient singularities, in the smooth, irreducible $n$-dimensional variety $X$.
Then 
$${\rm lcd}(X, Z) = n - {\rm depth}(\shO_Z) = r.$$
\end{corollary}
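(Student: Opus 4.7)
The equality $n - \mathrm{depth}(\shO_Z) = r$ is immediate from the Cohen-Macaulayness of quotient singularities: locally $Z \simeq V/G$ with $V$ smooth and $G$ a finite group, and since the trace map realizes $\shO_Z = (p_*\shO_V)^G$ as an $\shO_Z$-direct summand of the Cohen-Macaulay sheaf $p_*\shO_V$, one has $\mathrm{depth}(\shO_Z) = \dim Z = n - r$. The lower bound $\mathrm{lcd}(X,Z) \geq r$ is automatic from Remark~\ref{van_criterion}, since $\cH^r_Z(\shO_X) \neq 0$ by codimension.

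For the upper bound $\mathrm{lcd}(X,Z) \leq r$, the plan is to apply Corollary~\ref{zero-gen}. I would aim to establish two things for every $q$: first, that the Hodge filtration on $\cH^q_Z(\shO_X)$ is generated at level $0$, and second, that $F_0\cH^q_Z(\shO_X) = 0$ for $q > r$. The latter is an immediate consequence of Lemma~\ref{lem:depth} combined with the depth computation above. Together, these two facts force $\cH^q_Z(\shO_X) = 0$ for $q > r$, which gives exactly $\mathrm{lcd}(X,Z) \leq r$.

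The nontrivial step is the generation at level $0$, which is where the quotient-singularity hypothesis is genuinely used. By Theorem~\ref{gen_level}, this is equivalent to the sharp vanishing
$$R^{q-1+i} f_* \Omega_Y^{n-i}(\log E) = 0 \quad \text{for all } i \geq 1$$
on any log resolution $f\colon Y \to X$ of $(X,Z)$ that is an isomorphism over $X \smallsetminus Z$. For quotient singularities, such vanishing should be available as the essentially local content of the classical Steenbrink--Navarro-Aznar theorem asserting that the shifted graded pieces of the Du Bois complex satisfy $\underline{\Omega}_Z^p \simeq \Omega_Z^{[p]}$ (concentrated in cohomological degree $0$); via the birational description of the filtered de Rham complex in \S\ref{scn:birational}, and in particular Lemma~\ref{lem_grDR}, this identification translates directly to the required higher-direct-image vanishing.

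The main obstacle is precisely this passage from the Du Bois complex calculation for quotient singularities to the log de Rham vanishings on the resolution, which must be carried out through the formalism of mixed Hodge modules: one needs to know that the Du Bois complex of $Z$ computes, up to shift, the graded pieces of the filtered de Rham complex of the Hodge module $j_*\QQ_U^H[n]$, and then transfer the sharp concentration-in-degree-$0$ statement to the required vanishing. Once this identification is in place, Corollary~\ref{zero-gen} closes the argument and yields the asserted equalities.
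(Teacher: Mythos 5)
The statement $n - \mathrm{depth}(\shO_Z) = r$ and the lower bound $\mathrm{lcd}(X,Z) \geq r$ are handled correctly, and the reduction of the upper bound to a local-vanishing statement on a log resolution is the right shape of argument. The genuine gap is in the last step, where you claim that the identification $\underline{\Omega}_Z^p \simeq \Omega_Z^{[p]}$ for quotient singularities ``translates directly'' into the required vanishing of $R^{q-1+i} f_* \Omega_Y^{n-i}(\log E)$.

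It does not. The passage from the Du Bois complex to these higher direct images goes through Grothendieck duality (as in Lemma~\ref{forms-DuBois}), which produces $R^q f_* \Omega^p_Y(\log E) \simeq \shE xt^{q+1}_{\shO_X}(\underline{\Omega}^{n-p}_Z,\omega_X)$ for $q\geq 1$. Knowing that $\underline{\Omega}_Z^i$ is concentrated in degree $0$ and equals $\Omega_Z^{[i]}$ tells you nothing about the vanishing of $\shE xt^m_{\shO_X}(\Omega_Z^{[i]},\omega_X)$ in the needed range $m \geq r+1$. For that you need the separate, nontrivial fact that $\Omega_Z^{[i]}$ has \emph{maximal} depth $n-r$ (equivalently, projective dimension $r$) as an $\shO_X$-module. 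This is where the quotient-singularity hypothesis is actually used: \'{e}tale locally $\Omega_Z^{[i]} \simeq (\pi_*\Omega_Y^i)^G$ is a direct summand of a maximal Cohen-Macaulay sheaf by \cite[Lemma 1.8]{Steenbrink2}, hence itself Cohen-Macaulay. Without this input your argument would prove $\mathrm{lcd}(X,Z) = r$ for every $Z$ with $\underline{\Omega}_Z^p\simeq\Omega_Z^{[p]}$ --- in particular for all toroidal $Z$, where that conclusion is open precisely because the depth of $\Omega_Z^{[i]}$ can drop.

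A smaller point: your route through Corollary~\ref{zero-gen} (generation at level $0$ for every $q$, plus $F_0=0$ for $q>r$) is more awkward than necessary. Theorem~\ref{gen_level} only applies to a $q$ that is already known to be the top nonvanishing index, so establishing generation at level $0$ for \emph{all} $q$ this way requires re-running the descending induction inside the proof of Theorem~\ref{thm:lcd}. It is cleaner to invoke Corollary~\ref{cor:lcd} directly, which packages exactly this induction and reduces the whole upper bound to the single family of Ext vanishings $\shE xt^{j+i+1}_{\shO_X}(\underline{\Omega}^i_Z,\omega_X)=0$ for $j\geq r$, $i\geq 0$ --- and that is precisely what the depth computation above delivers.
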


\begin{proof}
First, since quotient singularities are rational, hence Cohen-Macaulay, the fact that ${\rm lcd}(X, Z) \geq n - {\rm depth}(\shO_Z) = r$
follows from Remark~\ref{rmk_zero-gen}. Second, the graded pieces of the
 Du Bois complex of quotient singularities have a very simple form. Concretely, by \cite[Section 5]{DuBois} we have quasi-isomorphisms 
$$ \underline{\Omega}^i_{Z} \simeq \Omega_Z^{[i]}$$
for each $i \ge 0$, where the right hand side is the reflexive hull of $\Omega_Z^i$. 
It follows from Corollary~\ref{cor:lcd} below
that in order to show that ${\rm lcd}(X,Z)\leq r$, 
it suffices to show
\begin{equation}\label{eq_cor:quotient}
\shE xt^{j+i +1}_{\shO_X} \big(\Omega_Z^{[i]}, \omega_X\big) = 0\quad\text{for all}\quad j \ge r, i \ge 0.
\end{equation} 
But for quotient singularities we have 
$${\rm depth} \big(\Omega_Z^{[i]} \big) = \dim Z = n-r,$$
which indeed implies (\ref{eq_cor:quotient}).
In order to see this, arguing \'{e}tale locally, we may assume  that we have a morphism $\pi\colon Y \to Z = Y/G$, where $Y$ is smooth and $G$ is a group acting without quasi-reflections. In this case we have 
$$\Omega_Z^{[i]}  \simeq (\pi_* \Omega_Y^i)^G$$
by \cite[Lemma 1.8]{Steenbrink2}. This in turn is a direct summand of $\pi_* \Omega_Y^i$, since we are in characteristic zero, 
and therefore has maximal depth $n-r$. 
\end{proof}

Corollary~\ref{cor:quotient} says that the only non-trivial local cohomology sheaf is $\cH^r_Z (\shO_X)$, 
even though $Z$ is not necessarily a local complete intersection. Due to results of Ogus, this in turn implies that subvarieties in $\PP^n$ with quotient singularities share other nice features of local complete intersections (cf. \cite[Corollary 4.8]{Ogus}) regarding the global cohomological dimension ${\rm cd}(\cdot)$ of the complement, or of Barth-Lefschetz-type.

\begin{corollary}\label{like-LCI}
Let $Z\subset \PP^n$ be a closed subscheme of codimension $r$, with quotient singularities. Then 
\begin{enumerate}
\item ${\rm cd}(\PP^n \smallsetminus Z) < 2r -1$.
\item The restriction maps $H^i (\PP^n, \CC) \to H^i (Z, \CC)$ are isomorphisms for $i \le n - 2r$.
\end{enumerate}
\end{corollary}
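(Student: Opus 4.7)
Plan:

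The plan is to derive both parts from Corollary \ref{cor:quotient} combined with classical work of Ogus \cite{Ogus}. Applying Corollary \ref{cor:quotient} with $X = \PP^n$ yields ${\rm lcd}(\PP^n, Z) = r$. Since the codimension of $Z$ equals $r$ as well, the local cohomology sheaves $\cH^q_Z(\shO_{\PP^n})$ vanish in every degree except $q = r$ --- precisely the vanishing pattern enjoyed by local complete intersections of codimension $r$. From the perspective of local cohomology, $Z$ is therefore indistinguishable from a local complete intersection, so Barth--Lefschetz-type results that hold in the LCI case should go through for $Z$ as well.

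For part (1), I would invoke Ogus \cite[Corollary 4.8]{Ogus}: when the local cohomology of $\shO_{\PP^n}$ along $Z$ is concentrated in the single degree $q = r$, one obtains ${\rm cd}(\PP^n \smallsetminus Z) \le r - 1 < 2r - 1$. The argument hinges on the Leray spectral sequence for the open immersion $j \colon \PP^n \smallsetminus Z \hookrightarrow \PP^n$, the identification $R^i j_* (\shF|_U) \simeq \cH^{i+1}_Z(\shF)$ for $i \ge 1$ and quasi-coherent $\shF$, together with Serre's vanishing theorem on $\PP^n$. For part (2), I would again appeal to \cite[Corollary 4.8]{Ogus}: once the cohomological-dimension bound from (1) is in hand, the restriction maps $H^i(\PP^n, \CC) \to H^i(Z, \CC)$ are forced to be isomorphisms in the range $i \le n - 2r$ via a Hodge-theoretic comparison of $H^\bullet(\PP^n, \CC)$, $H^\bullet(Z, \CC)$, and $H^\bullet(\PP^n \smallsetminus Z, \CC)$ in the stable range.

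The main obstacle is not in the proof of this corollary itself but in its key input: establishing the sharp identity ${\rm lcd}(\PP^n, Z) = r$ for subvarieties with quotient singularities, which is precisely Corollary \ref{cor:quotient}. Once that identity is available, the Barth--Lefschetz-type consequences here are formal applications of Ogus' framework, and the deduction of Corollary \ref{like-LCI} is essentially immediate.
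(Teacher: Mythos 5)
Your overall route agrees with the paper's: apply Corollary~\ref{cor:quotient} to get ${\rm lcd}(\PP^n,Z)=r$, note that $Z$ is Cohen--Macaulay, and feed this into Ogus' results (the paper cites \cite[Theorem~4.7]{Ogus} rather than Corollary~4.8, and adds a reference to \cite[Theorem~IV.1.1]{Hartshorne2} for the comparison between algebraic de Rham and singular cohomology). However, there is a genuine error in your part~(1).

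You claim that knowing the local cohomology $\cH^q_Z(\shO_{\PP^n})$ is concentrated in degree $q=r$ forces ${\rm cd}(\PP^n\smallsetminus Z)\le r-1$. That is the bound one gets for a \emph{set-theoretic complete intersection} (where the complement is covered by $r$ affines), but it does not follow from ${\rm lcd}(\PP^n,Z)=r$ alone. What Ogus proves for a Cohen--Macaulay subscheme with ${\rm lcd}=r$ is only ${\rm cd}(\PP^n\smallsetminus Z)\le 2r-2$, i.e.\ exactly the inequality ${\rm cd}<2r-1$ in the statement, and this weaker bound is sharp. For a concrete counterexample to your stronger claim, take $Z=S\subset\PP^4$ a smooth elliptic scroll of degree~$5$ (a $\PP^1$-bundle over an elliptic curve), so $r=2$, $S$ is smooth (hence has quotient singularities) and ${\rm lcd}(\PP^4,S)=2$. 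Since $H^1(S,\CC)\ne 0=H^1(\PP^4,\CC)$, Hartshorne's Lefschetz-type theorem forces ${\rm cd}(\PP^4\smallsetminus S)\ge 2>r-1$; in fact ${\rm cd}=2=2r-2$, meeting the paper's bound with equality. The same example shows that your claimed bound would yield a Barth--Lefschetz range $i\le n-r-1$ that is actually false here ($n-r-1=1$ but $H^1$ does not restrict isomorphically), whereas the stated range $i\le n-2r=0$ is correct. So the logic of part~(2) is fine given the correct ${\rm cd}$ bound, but that bound must come from the weaker Ogus estimate, not the set-theoretic complete-intersection one.
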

\begin{proof}
This holds more generally for any $Z$ which is Cohen-Macaulay and satisfies ${\rm lcd}(\PP^n, Z) = r$, as an immediate consequence of \cite[Theorem 4.7]{Ogus}. The second part is stated for algebraic de Rham cohomology in \emph{loc. cit.}; over $\CC$ this is identified with singular cohomology by \cite[Theorem IV.1.1]{Hartshorne2}.
\end{proof}

\begin{remark}[Toroidal singularities]
If $Z$ has toroidal singularities, it is also the case that $ \underline{\Omega}^i_{Z} \simeq \Omega_Z^{[i]}$
for all $i$; see \cite[Chapter~V.4]{GNPP}. However it is known that the depth of $\Omega_Z^{[i]}$
is not always maximal, and therefore the argument above does not go through.\footnote{It is known that the depth is maximal for all $i$ in the case of simplicial toric varieties, but these are exactly the toric varieties that have quotient singularities.} It would be interesting to have a complete answer for the local cohomological dimension of such singularities.
\end{remark}

We next consider the case of monomial ideals, for which we recover a result of Lyubeznik (see \cite[Teorem~1(iv)]{Lyubeznik0}).

\begin{corollary}[Monomial ideals]\label{cor:monomial}
If $I \subseteq A = \CC [X_1, \ldots, X_n]$ is a radical monomial ideal (i.e. $A/I$ is a Stanley-Reisner
ring), then 
$${\rm lcd}(A, I) = n - {\rm depth}(A/I) = {\rm pd} (A/I).$$
\end{corollary}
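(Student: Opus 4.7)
The plan is to combine the generation-level result from Example~\ref{ex:monomial} with the depth-based framework developed in Lemma~\ref{lem:depth} and Corollary~\ref{zero-gen}. The second equality in the statement, $n-{\rm depth}(A/I)={\rm pd}(A/I)$, is immediate from the Auslander-Buchsbaum formula, so the whole task reduces to proving the first equality.

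For the upper bound ${\rm lcd}(A,I)\leq n-{\rm depth}(A/I)$, I would first invoke Example~\ref{ex:monomial}, where it was shown that for \emph{any} monomial ideal $I\subseteq A$ the Hodge filtration on $H^q_I(A)$ is generated at level $0$ for every $q$. This is precisely the hypothesis of Corollary~\ref{zero-gen}, which then yields ${\rm lcd}(X,Z)\leq n-{\rm depth}(\shO_Z)$ directly, with $X=\AAA^n$ and $Z=V(I)$.

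For the matching lower bound ${\rm lcd}(A,I)\geq n-{\rm depth}(A/I)$, the natural route is through Remark~\ref{rmk_zero-gen} (or equivalently Lemma~\ref{lem:depth}): it suffices to show that the Stanley--Reisner scheme $Z=V(I)$ has Du Bois singularities, after which the inequality is automatic. Since $I$ is a radical monomial ideal, $Z$ is set-theoretically a union of coordinate linear subspaces of $\AAA^n$, which is a well-known class of Du Bois singularities (the scheme admits an obvious resolution by the disjoint union of its components together with iterated intersections, and the resulting \v{C}ech-type augmentation gives a quasi-isomorphism $\shO_Z\to \derR f_*\shO_E$, the criterion in Theorem~\ref{Steenbrink} used in the proof of Theorem~\ref{char_DuBois}).

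The main obstacle I anticipate is not in the argument itself but in making the Du Bois property of Stanley--Reisner schemes sufficiently transparent: the cleanest approach is probably to exhibit a log resolution built from successive blow-ups along coordinate subspaces and verify the Steenbrink/Schwede criterion component by component, though one can also simply cite the standard fact that unions of smooth subvarieties meeting in simple normal crossing fashion are Du Bois. Combining the two inequalities then gives ${\rm lcd}(A,I)=n-{\rm depth}(A/I)$, which together with Auslander-Buchsbaum yields the full statement.
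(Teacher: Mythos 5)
Your proposal follows essentially the same route as the paper: use Example~\ref{ex:monomial} to get generation at level $0$, then apply Corollary~\ref{zero-gen} for the upper bound and Remark~\ref{rmk_zero-gen} (via Du Bois-ness) for the lower bound, with Auslander--Buchsbaum handling the second equality. The only real divergence is in how the Du Bois property of the Stanley--Reisner scheme is justified: the paper cites Schwede's theorem that $F$-injective (here, $F$-pure) type singularities are Du Bois, which is clean and requires no further work. Your proposed direct geometric justification is less precise than you suggest: a union of coordinate subspaces is \emph{not} in general a ``simple normal crossing'' configuration (three coordinate lines through the origin in $\AAA^3$ already fail this), and the disjoint union of components together with iterated intersections furnishes a cubical hyperresolution, not a log resolution $f\colon Y\to X$ with $E=f^{-1}(Z)_{\rm red}$ as required by Theorem~\ref{Steenbrink}; verifying $\shO_Z\overset{\sim}{\to}\derR f_*\shO_E$ for an actual log resolution of a coordinate arrangement takes genuine bookkeeping. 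The Du Bois-ness of Stanley--Reisner schemes is true and well known, but if you want to avoid the positive-characteristic shortcut you would need to invoke the precise result for ``normal crossings'' arrangements in the sense of local analytic isomorphism to unions of coordinate subspaces (or run the hyperresolution argument in Du Bois's original formalism rather than through Theorem~\ref{Steenbrink}).
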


\begin{proof}
According to Example \ref{ex:monomial}, the Hodge filtration on each $H^q_I (A)$ is generated at level $0$. 
It suffices then to apply Corollary \ref{zero-gen} and Remark~\ref{rmk_zero-gen}. 
Note that in this case $A/I$ has Du Bois singularities by \cite[Theorem~6.1]{Schwede2}, 
as Stanley-Reisner rings have $F$-injective (even $F$-pure) type. 
\end{proof}

\begin{remark}
Unlike in the case of quotient singularities, Stanley-Reisner rings are not necessarily Cohen-Macaulay, and therefore 
$ {\rm pd} (A/I)$ does not coincide in general with ${\rm codim}(I)$.
\end{remark}

\section{The Du Bois complex and reflexive differentials}\label{ch:DuBois}

Building on Deligne's work constructing mixed Hodge structures on the cohomology of algebraic varieties, Du Bois introduced 
in \cite{DuBois} a version of the de Rham complex for singular varieties. Given a (reduced) complex algebraic variety $Z$,
the \emph{Du Bois complex} of $Z$ is an object $\underline{\Omega}_Z^{\bullet}$ in the derived category of filtered complexes on $Z$. 
Its (shifted) graded pieces $\underline{\Omega}_Z^p:={\rm Gr}^p_F\underline{\Omega}_Z^{\bullet}[p]$ are objects in the derived category of coherent sheaves on $Z$.
There are canonical morphisms $\Omega_Z^p\to \underline{\Omega}_Z^p$ that are isomorphisms over the smooth locus of $Z$.  The condition that this is an isomorphism everywhere on $Z$ for $p =0$ defines Du Bois singularities, an important class of singularities (see e.g. \cite{KS2} for a survey) that 
has already made an appearance in this paper. The complexes $\underline{\Omega}_X^p$ enjoy, in the proper setting, several important properties of the De Rham complex of smooth varieties.
For an introduction to this circle of ideas, see \cite[Chapter~7.3]{PetersSteenbrink} and \cite{Steenbrink}.

\subsection{The Du Bois complex and local cohomological dimension}\label{scn:DBLC}
This is a preliminary section, in which we record some basic facts involving the Du Bois complex of $Z$, and in particular 
reinterpret the vanishing conditions in Theorem \ref{thm:lcd} in these terms. A key ingredient is a description, due to Steenbrink, for the graded pieces of the Du Bois complex of $Z$ via a log resolution of an ambient variety. Suppose that $X$ is an irreducible complex algebraic variety and
$Z$ is a reduced closed subscheme of $X$ such that $X\smallsetminus Z$ is smooth (we will shortly assume that $X$ is smooth, but we want to state the first result in this slightly more general setting).
Let $f \colon Y \to X$ be a proper map that is an isomorphism over $X \smallsetminus Z$, with $Y$ smooth, and such that  
$E = f^{-1} (Z)_{{\rm red}}$ is an SNC divisor.

\begin{theorem}[{\cite[Proposition 3.3]{Steenbrink}}]\label{Steenbrink}
With the above notation, for each $p$ we have an isomorphism in ${\bf D}^b \big({\rm Coh} (X)\big)$:
$$\derR f_* \big(\Omega^{n-p}_Y (\log E) (-E)\big) \simeq \underline{\Omega}^{n-p}_{X, Z}.$$
\end{theorem}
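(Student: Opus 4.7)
The strategy is to combine cubical cohomological descent for the Du Bois complex with a local short exact sequence on $Y$ coming from the SNC structure of $E$.

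\emph{First}, I would interpret $\underline{\Omega}^{n-p}_{X,Z}$ as the relative Du Bois complex of the pair $(X,Z)$, fitting into the distinguished triangle
\[
\underline{\Omega}^{n-p}_{X,Z} \longrightarrow \underline{\Omega}^{n-p}_X \longrightarrow \underline{\Omega}^{n-p}_Z \xrightarrow{+1}.
\]
Since $X\smallsetminus Z$ is smooth and $f$ is an isomorphism there, both sides of the claimed formula restrict to $\Omega^{n-p}_{X\smallsetminus Z}$ away from $Z$ and the question is local near $Z$.

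\emph{Next}, the closed immersion $Z\hookrightarrow X$ together with $f\colon Y\to X$ gives a Cartesian square with fourth vertex $E=f^{-1}(Z)_{\mathrm{red}}$; because $f$ is proper and an isomorphism over $X\smallsetminus Z$, this square is the basic building block of a cubical hyperresolution of $X$ in the sense of Guill\'en--Navarro--Aznar--Puerta--Pascual, and computes $\underline{\Omega}^{n-p}_X$ via cohomological descent. Extending the square to a full hyperresolution by taking an independent hyperresolution of $Z$ (which in turn computes $\underline{\Omega}^{n-p}_Z$) and forming the homotopy fibre, the pieces coming from the resolution of $Z$ cancel, leaving an identification
\[
\underline{\Omega}^{n-p}_{X,Z} \;\simeq\; \mathrm{Cone}\bigl(\derR f_*\,\Omega^{n-p}_Y \longrightarrow \derR f_*\,\underline{\Omega}^{n-p}_E\bigr)[-1],
\]
where $\underline{\Omega}^{n-p}_E$ is the Du Bois complex of the SNC divisor $E$.

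\emph{Finally}, the remaining link is a short exact sequence of complexes on $Y$
\[
0 \longrightarrow \Omega^{n-p}_Y(\log E)(-E) \longrightarrow \Omega^{n-p}_Y \longrightarrow \underline{\Omega}^{n-p}_E \longrightarrow 0,
\]
which I would verify by a local coordinate computation: in coordinates $(x_1,\ldots,x_n)$ with $E=(x_1\cdots x_r=0)$, the subsheaf on the left is generated by the forms $\bigl(\prod_{i\in [r]\setminus I} x_i\bigr)\,dx_I\wedge dx_J$, and the quotient matches term by term the standard Koszul-type complex $\bigoplus_{k\ge 0}\bigoplus_{|I|=k+1}\Omega^{n-p-k}_{E_I}[-k]$ that represents $\underline{\Omega}^{n-p}_E$ for an SNC divisor. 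Applying $\derR f_*$ to this short exact sequence and comparing the resulting triangle with the one from the previous step produces the claimed quasi-isomorphism.

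The main obstacle lies in the second step: one must justify that the homotopy-fibre calculation is independent of the auxiliary hyperresolution of $Z$ and that the cancellation is canonical and compatible with the filtrations. This is essentially the bookkeeping carried out by Steenbrink in \cite[Proposition~3.3]{Steenbrink} via the mixed-cone formalism, and I would follow that path; the SNC identification of $\underline{\Omega}^{n-p}_E$ in the last step, while elementary, is also a potential source of sign and indexing subtleties that must be handled carefully.
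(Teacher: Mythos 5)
The paper does not prove this statement; it cites \cite[Proposition~3.3]{Steenbrink} verbatim, so there is no ``paper's own proof'' to compare against. Your proposal reconstructs Steenbrink's actual argument: use descent for the Du Bois complex along the abstract blow-up square $(E\subset Y)\to(Z\subset X)$ to obtain the Mayer--Vietoris triangle $\underline{\Omega}^{n-p}_X\to\derR f_*\Omega^{n-p}_Y\oplus\derR i_*\underline{\Omega}^{n-p}_Z\to \derR a_*\underline{\Omega}^{n-p}_E\xrightarrow{+1}$, cancel the $\underline{\Omega}^{n-p}_Z$ term against the defining triangle of $\underline{\Omega}^{n-p}_{X,Z}$, and then identify $\mathrm{Cone}(\Omega^{n-p}_Y\to\underline{\Omega}^{n-p}_E)[-1]$ with $\Omega^{n-p}_Y(\log E)(-E)$ on $Y$. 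This is the right strategy and the right decomposition.

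Two local-but-genuine imprecisions in your write-up. First, the complex you display as representing $\underline{\Omega}^{n-p}_E$ has the wrong indexing: the standard descent complex for an SNC divisor $E=\bigcup E_i$ is $\bigoplus_{|I|=1}\Omega^{n-p}_{E_I}\to\bigoplus_{|I|=2}\Omega^{n-p}_{E_I}\to\cdots$, with the form degree $n-p$ held \emph{constant} across the intersections $E_I$ while the cohomological degree increases; your expression $\bigoplus_{k\ge0}\bigoplus_{|I|=k+1}\Omega^{n-p-k}_{E_I}[-k]$ with \emph{decreasing} form degree is not the descent complex (it looks like a conflation with the residue filtration on $\Omega^{\bullet}_Y(\log E)$). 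Second, your ``short exact sequence of complexes'' is not literally such a sequence, because the two left-hand terms are sheaves concentrated in degree $0$ and $\underline{\Omega}^{n-p}_E$ is a complex. What is true, and what your local computation should be aimed at, is that the restriction maps from $\Omega^{n-p}_Y$ to the $\Omega^{n-p}_{E_I}$ induce a quasi-isomorphism from the quotient sheaf $\Omega^{n-p}_Y/\Omega^{n-p}_Y(\log E)(-E)$ to the descent complex (the latter is concentrated in degree $0$ for $E$ SNC because the restriction maps between intersections are surjective). With these two adjustments your sketch is a faithful account of Steenbrink's proof.
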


Here $\underline{\Omega}^{n-p}_{X, Z}$ is the $(n-p)$-th du Bois complex of the pair $(X, Z)$,  which sits in an exact triangle 
\begin{equation}\label{eqn2}
\underline{\Omega}^{n-p}_{X, Z} \to \underline{\Omega}^{n-p}_{X} \to \underline{\Omega}^{n-p}_{Z} \overset{+1}{\longrightarrow}
\end{equation}
where the other two terms are the usual du Bois complexes.

From now on, we return to our usual assumption that the ambient variety $X$ is smooth. 
In this case we of course have
$\underline{\Omega}^{n-p}_{X} \simeq \Omega^{n-p}_{X}$.

We relate $R^q f_* \Omega^p_Y (\log E)$ to the Du Bois complex as follows: by Grothendieck  duality
\begin{equation}\label{GD}
\derR f_* \Omega^p_Y (\log E) \simeq \derR f_*\big(\derR \shH om (\Omega^{n-p}_Y (\log E) (-E), \omega_Y)\big) \simeq
\end{equation}
$$\simeq \derR \shH om \big(\derR f_* \big(\Omega^{n-p}_Y (\log E) (-E)\big), \omega_X\big),$$
and therefore for each $q$ we have
\begin{equation}\label{eqn1}
R^q f_* \Omega^p_Y (\log E) \simeq \shE xt^q_{\shO_X} \big(\derR f_* \big(\Omega^{n-p}_Y (\log E) (-E)\big), \omega_X\big).
\end{equation}
As a consequence, we have:

\begin{lemma}\label{forms-DuBois}
For every $q \ge 1$ we have 
$$R^q f_* \Omega^p_Y (\log E) \simeq \shE xt^{q+1}_{\shO_X} (\underline{\Omega}^{n-p}_{Z}, \omega_X).$$
\end{lemma}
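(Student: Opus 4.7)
The plan is to combine the three ingredients already set up just above the statement: the Grothendieck-duality identity \eqref{eqn1}, Steenbrink's comparison (Theorem \ref{Steenbrink}), and the distinguished triangle \eqref{eqn2} relating $\underline{\Omega}^{n-p}_{X,Z}$, $\underline{\Omega}^{n-p}_X$, and $\underline{\Omega}^{n-p}_Z$. First I would rewrite \eqref{eqn1} using Theorem \ref{Steenbrink} to get
\[
R^qf_*\Omega^p_Y(\log E)\simeq \shE xt^q_{\shO_X}\bigl(\underline{\Omega}^{n-p}_{X,Z},\omega_X\bigr)
\]
for every $q$. So the problem reduces to identifying $\shE xt^q(\underline{\Omega}^{n-p}_{X,Z},\omega_X)$ with $\shE xt^{q+1}(\underline{\Omega}^{n-p}_Z,\omega_X)$ for $q\ge 1$.

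Next I would apply $\derR\shHom(-,\omega_X)$ to the exact triangle \eqref{eqn2}, and use that since $X$ is smooth we have $\underline{\Omega}^{n-p}_X\simeq\Omega^{n-p}_X$, a locally free sheaf. Consequently $\derR\shHom(\Omega^{n-p}_X,\omega_X)\simeq \Omega^p_X$ is concentrated in degree $0$, so $\shE xt^i(\Omega^{n-p}_X,\omega_X)=0$ for all $i\ge 1$. The resulting long exact sequence of $\shE xt$-sheaves then immediately yields isomorphisms
\[
\shE xt^q_{\shO_X}\bigl(\underline{\Omega}^{n-p}_{X,Z},\omega_X\bigr)\simeq \shE xt^{q+1}_{\shO_X}\bigl(\underline{\Omega}^{n-p}_Z,\omega_X\bigr)\quad\text{for all }q\ge 1,
\]
since both the term $\shE xt^q(\Omega^{n-p}_X,\omega_X)$ and the term $\shE xt^{q+1}(\Omega^{n-p}_X,\omega_X)$ in the long exact sequence vanish. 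Combining with the first displayed isomorphism gives the lemma.

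There is no real obstacle here: the only thing worth double-checking is the degree bookkeeping in the long exact sequence, to make sure that the shift by one in the Ext index comes out correctly and that the hypothesis $q\ge 1$ is exactly what is needed so that both neighboring Ext-sheaves of $\Omega^{n-p}_X$ vanish (for $q=0$ one would only get a left-exact sequence, explaining the restriction in the statement).
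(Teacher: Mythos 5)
Your proof is correct and is essentially the paper's own argument, only spelled out in more detail: the paper's one-line proof records exactly that both sides are isomorphic to $\shE xt^q_{\shO_X}(\underline{\Omega}^{n-p}_{X,Z},\omega_X)$, via the duality identity, Steenbrink's theorem, and the exact triangle, and you supply the degree bookkeeping (which is correct, including the observation that $q\ge 1$ is needed so that \emph{both} $\shE xt^q(\Omega^{n-p}_X,\omega_X)$ and $\shE xt^{q+1}(\Omega^{n-p}_X,\omega_X)$ vanish).
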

\begin{proof}
By ($\ref{eqn1}$), Theorem \ref{Steenbrink} and ($\ref{eqn2}$), both sides are isomorphic to 
the sheaf $\shE xt^q_{\shO_X} (\underline{\Omega}^{n-p}_{X, Z}, \omega_X)$.
\end{proof}

The above lemma leads to a useful equivalent formulation of Theorem \ref{thm:lcd}:

\begin{corollary}\label{cor:lcd}
For every positive integer $c$, the following are equivalent:
\begin{enumerate}
\item ${\rm lcd}(X, Z) \le c$.
\item $\shE xt^{j+i +1}_{\shO_X}(\underline{\Omega}^i_{Z}, \omega_X) = 0$ for all  $j \ge c$ and $i \ge 0$.
\end{enumerate}
\end{corollary}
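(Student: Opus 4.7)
The plan is to combine \theoremref{thm:lcd} with \lemmaref{forms-DuBois}, since these together already cover essentially all the content needed. First I would fix a log resolution $f \colon Y \to X$ of $(X,Z)$ that is an isomorphism over $X \smallsetminus Z$, and set $E = f^{-1}(Z)_{\rm red}$. By \theoremref{thm:lcd}, condition (1) is equivalent to
\[
R^{j+i} f_* \Omega_Y^{n-i}(\log E) = 0 \quad \text{for all } j \ge c,\, i \ge 0.
\]

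The next step is to translate this sheaf-theoretic vanishing into Ext-vanishing involving the Du Bois complex. For this, I would apply \lemmaref{forms-DuBois} with $q = j+i$ and $p = n-i$: the hypothesis $q \ge 1$ of that lemma is automatic from $j \ge c \ge 1$, so we obtain a canonical isomorphism
\[
R^{j+i} f_* \Omega_Y^{n-i}(\log E) \simeq \shE xt^{j+i+1}_{\shO_X}(\underline{\Omega}^i_Z, \omega_X)
\]
for all $j \ge c$ and $i \ge 0$. Substituting this into the vanishing provided by \theoremref{thm:lcd} immediately yields the equivalence with condition (2).

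Since the two tools are already in place, I do not anticipate any real obstacle; the only minor point to verify is the range condition $q \ge 1$ in \lemmaref{forms-DuBois}, which is why the hypothesis $c \ge 1$ is imposed in the statement of the corollary (for $c = 0$ one would need to be careful with the $j = i = 0$ term, but this case does not arise here). The whole argument fits in a couple of lines once the preceding theorem and lemma are invoked.
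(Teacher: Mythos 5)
Your proof is correct and takes exactly the intended approach: the paper presents the corollary as an immediate consequence of combining Theorem~\ref{thm:lcd} with Lemma~\ref{forms-DuBois}, which is precisely what you do, including the observation that $c\ge 1$ guarantees $q=j+i\ge 1$ so the lemma applies.
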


\smallskip

We end this section with the discussion of the example pointed out in Remark~\ref{rmk_Ma}.

\begin{example}\label{eg_Ma}
Consider the surjection 
$$R=\CC[x_1,\ldots,x_4]\to S=\CC[s^4,s^3t,st^3,t^4],$$
with kernel $I$,
and the corresponding embedding $Z={\rm Spec}(S)\hookrightarrow X={\rm Spec}(R)$. 
Note that $Z$ is an integral variety, with a unique singular point $0$, defined by the ideal ${\mathfrak m}_0$.
The normalization morphism
$\pi\colon \widetilde{Z}\to Z$ corresponds to the inclusion $S\hookrightarrow \widetilde{S}=\CC[s^4,s^3t,s^2t^2,st^3,t^4]$. Note that 
$\widetilde{Z}$ is a toric variety, hence it is Du Bois. The morphism 
$\pi$ is an isomorphism over $Z\smallsetminus\{0\}$ and $\pi^{-1}(0)$ consists of one point. It thus follows from \cite[Proposition~3.9]{DuBois}
that $\underline{\Omega}_Z^p\simeq\pi_*\underline{\Omega}_{\widetilde{Z}}^p$ for all $p$. In particular, we have
$\underline{\Omega}_Z^0\simeq\pi_*\shO_{\widetilde{Z}}$, hence $Z$ is not Du Bois. 

On the other hand, we have $F_0H^q_I(R)=E_0H^q_I(R)$ for all $q$. This is clear for $q\neq 2$ since in this case
$H^q_I(R)=0$ (for $q=3$ one can use \cite[Corollary 2.11]{Ogus}). Using Lemma~\ref{forms-DuBois} and graded local duality,
we can identify the graded Matlis dual of the morphism
\begin{equation}\label{eq_eg_Ma}
F_0H_I^2(R)\hookrightarrow E_0H^2_I(R)
\end{equation}
with the morphism
$$H^2_{{\mathfrak m}_0}(S)\to H^2_{{\mathfrak m}_0}(\widetilde{S}).$$
This is an isomorphism, as can be seen from the long exact sequence of local cohomology associated to
$$0\to S\to \widetilde{S}\to {\mathbf C}\to 0.$$
Therefore (\ref{eq_eg_Ma}) is an isomorphism.
\end{example}

\subsection{Higher Du Bois singularities for local complete intersections}
The Hodge filtration on local cohomology allows us to state and prove a generalization of the vanishing theorem for cohomologies of the Du Bois complex of a hypersurface \cite[Theorem 1.1]{MOPW}, and of its converse \cite[Theorem 1]{Saito_et_al}, to the case of local complete intersections of arbitrary codimension.

Before proving the main result, Theorem \ref{thm-DB-main}, we make some preparations. 
We begin with a description of the graded pieces of the Du Bois complex in terms of the de Rham complex 
of the local cohomology Hodge module (cf. \cite[Lemma~2.1]{MOPW} for the case of hypersurfaces). This is all we use here, but a stronger result, at the level of the Du Bois complex, was proved by Saito in \cite[Theorem~0.2]{Saito-HC}.

\begin{proposition}\label{lem_DB_DR}
If $X$ is a smooth, irreducible $n$-dimensional complex algebraic variety and $i\colon Z\hookrightarrow X$ is the inclusion map of a closed reduced subscheme $Z$,
then for every integer $p$ we have an isomorphism in ${\bf D}^b \big({\rm Coh} (X)\big)$
$$\underline{\Omega}_Z^p\simeq \derR \shH om_{\shO_X}\big({\rm Gr}^F_{p-n}{\rm DR}_Xi_*i^!\QQ^H_X[n],\omega_X\big)[p].$$
In particular, if $Z$ is a local complete intersection of pure codimension $r$, then 
$$\underline{\Omega}_Z^p\simeq \derR \shH om_{\shO_X}\big({\rm Gr}^F_{p-n}{\rm DR}_X\shH_Z^r(\shO_X),\omega_X\big)[p+r].$$
\end{proposition}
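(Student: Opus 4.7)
The plan is to apply the exact functor ${\rm Gr}^F_{p-n}{\rm DR}_X(-)$ to the fundamental exact triangle
\[
i_*i^!\QQ^H_X[n]\longrightarrow \QQ^H_X[n]\longrightarrow j_*\QQ^H_U[n]\overset{+1}\longrightarrow
\]
of \S\ref{scn:HF} (here $j\colon U=X\smallsetminus Z\hookrightarrow X$), and then to dualize the resulting triangle against $\omega_X$. For the middle term, $\shO_X$ carries only the trivial filtration (${\rm Gr}^F_0\shO_X=\shO_X$), so the graded de Rham complex collapses to ${\rm Gr}^F_{p-n}{\rm DR}_X\big(\QQ^H_X[n]\big)\simeq\Omega_X^{n-p}[p]$. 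For the right-hand term, Lemma~\ref{lem_grDR} identifies ${\rm Gr}^F_{p-n}{\rm DR}_X\big(j_*\QQ^H_U[n]\big)$ with $\derR f_*\Omega_Y^{n-p}(\log E)[p]$, where $f\colon Y\to X$ is a log resolution of $(X,Z)$ that is an isomorphism over $U$, and $E=f^{-1}(Z)_{\rm red}$.

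Now I would apply $\derR\shH om_{\shO_X}(-,\omega_X)$. On the middle term we simply get $\Omega_X^p[-p]$. On the right-hand term, Grothendieck duality for $f$ together with the perfect pairing $\Omega_Y^p(\log E)\otimes\Omega_Y^{n-p}(\log E)\to \omega_Y(E)$ (which gives $\shH om_{\shO_Y}\big(\Omega_Y^{n-p}(\log E),\omega_Y\big)\simeq \Omega_Y^p(\log E)(-E)$) yields
\[
\derR\shH om_{\shO_X}\big(\derR f_*\Omega_Y^{n-p}(\log E)[p],\omega_X\big)\simeq \derR f_*\big(\Omega_Y^p(\log E)(-E)\big)[-p].
\]
Invoking Steenbrink's theorem (Theorem~\ref{Steenbrink}) identifies this with $\underline{\Omega}_{X,Z}^p[-p]$. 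The dualized triangle therefore reads
\[
\underline{\Omega}_{X,Z}^p[-p]\longrightarrow \Omega_X^p[-p]\longrightarrow \derR\shH om_{\shO_X}\big({\rm Gr}^F_{p-n}{\rm DR}_X\,i_*i^!\QQ^H_X[n],\omega_X\big)\overset{+1}\longrightarrow,
\]
and comparing with the defining triangle $\underline{\Omega}_{X,Z}^p\to\Omega_X^p\to\underline{\Omega}_Z^p\overset{+1}\to$ (using $\underline{\Omega}_X^p\simeq\Omega_X^p$ on a smooth $X$), I obtain the first isomorphism after shifting by $[p]$.

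For the second statement, when $Z$ is a local complete intersection of pure codimension $r$, Remark~\ref{van_criterion} gives $\cH^q_Z(\shO_X)=0$ for all $q\neq r$. Since the underlying $\Dmod_X$-modules of the cohomology Hodge modules of $i_*i^!\QQ^H_X[n]$ are precisely the $\cH^q_Z(\shO_X)$, the object $i_*i^!\QQ^H_X[n]$ in ${\bf D}^b({\rm MHM}(X))$ is quasi-isomorphic to $\shH^r_Z(\shO_X)[-r]$ (as a complex of filtered $\Dmod_X$-modules, via strictness). The second formula follows by substituting this into the first and absorbing the shift $[-r]$ under $\derR\shH om$.

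The main technical point is ensuring that the duality computation in paragraph two is carried out correctly with all filtration shifts, and that the quasi-isomorphism $i_*i^!\QQ^H_X[n]\simeq \shH^r_Z(\shO_X)[-r]$ really is compatible with the Hodge filtration at the level needed to commute with ${\rm Gr}^F{\rm DR}_X$; this should follow from the degeneration of the spectral sequence ($\ref{eq_spec_seq_DR}$) given the concentration of cohomology in a single degree.
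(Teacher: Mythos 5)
Your proposal is correct and follows essentially the same route as the paper: apply ${\rm Gr}^F_{p-n}{\rm DR}_X(-)$ to the exact triangle $i_*i^!\QQ^H_X[n]\to \QQ^H_X[n]\to j_*\QQ^H_U[n]\overset{+1}\to$, identify the graded de Rham complex of the open pushforward using Lemma~\ref{lem_grDR}, dualize via Grothendieck duality (which is exactly the content of the isomorphisms recorded in (\ref{GD})), and invoke Theorem~\ref{Steenbrink} together with the defining triangle for $\underline{\Omega}_Z^p$. The reduction of the second statement to the first via concentration of $\cH^\bullet_Z(\shO_X)$ in degree $r$ is also the paper's intent (stated there simply as an immediate consequence), and your remark about the degeneration of the spectral sequence (\ref{eq_spec_seq_DR}) is the right justification for why the shift passes through ${\rm Gr}^F{\rm DR}_X$.
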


\begin{proof}
Consider a log resolution
$f\colon Y\to X$ of $(X,Z)$ as in Section~\ref{scn:birational}.
Lemma~\ref{lem_grDR} gives an isomorphism
$${\rm Gr}^F_{p-n}{\rm DR}_X\big(j_*\QQ_U^H[n]\big)\simeq 
\derR f_*\Omega_Y^{n-p}({\rm log}\,E)[p].$$
We similarly have an isomorphism 
$${\rm Gr}^F_{p-n}{\rm DR}_X\big(\QQ^H_X[n]\big)\simeq\Omega_X^{n-p}[p].$$
Applying $ \derR \shH om_{\shO_X}(\cdot,\omega_X)$ and the isomorphisms (\ref{GD}), we obtain the commutative diagram
$$
\begin{tikzcd}
\derR \shH om_{\shO_X}\big({\rm Gr}^F_{p-n}{\rm DR}_X(j_*\QQ^H_U[n]), \omega_X\big)\dar \rar & \derR \shH om_{\shO_X}\big({\rm Gr}^F_{p-n}{\rm DR}_X\QQ_X^H[n],
\omega_X\big)\dar\\
\derR f_*\big(\Omega_Y^p({\rm log}\,E)(-E)\big)[-p] \rar & \Omega_X^p[-p],
\end{tikzcd}
$$
in which the vertical maps are isomorphisms. On the other hand, the exact triangle (\ref{eq5_loc_coh}) in the derived category
of Hodge modules induces the exact triangle
$${\rm Gr}^F_{p-n}{\rm DR}_X\big(i_*i^!\QQ_X^H[n]\big)\longrightarrow {\rm Gr}^F_{p-n}{\rm DR}_X\QQ_X^H[n]\longrightarrow 
{\rm Gr}^F_{p-n}{\rm DR}_X\big(j_*\QQ_U^H[n]\big)\overset{+1}\longrightarrow.$$
Applying $\derR\shH om_{\shO_X}(\, \cdot \,, \omega_X)$ and using the above commutative diagram, as well as Theorem~\ref{Steenbrink}, we obtain
the first assertion in the proposition. The second one is an immediate consequence.
\end{proof}

Assume now that $Z$ is a local complete intersection of pure codimension $r$. We recall from the beginning of \S\ref{section_lci} that we have
$$F_k \shH^r_Z(\shO_X) \subseteq E_k \shH^r_Z(\shO_X) \,\,\,\,\,\,{\rm  for~all}\,\,\,\, k.$$ 
It follows that for every $p\geq 0$, if
we denote by ${\rm Gr}^E_{p-n}{\rm DR}_X\shH^r_Z(\shO_X)$
the corresponding graded piece of the de Rham complex of $\shH^r_Z(\shO_X)$ with respect to the filtration $E_{\bullet}$, we have a morphism
of complexes
$$\varphi_p\colon {\rm Gr}^F_{p-n}{\rm DR}_X\shH^r_Z(\shO_X)\to {\rm Gr}^E_{p-n}{\rm DR}_X\shH^r_Z(\shO_X).$$
By definition, if $p(Z)\geq p$, then $\varphi_p$ is an isomorphism. If we only know that $p(Z)\geq p-1$, then
$\varphi_p$ is an injective morphism of complexes, whose cokernel is concentrated in cohomological degree $0$. 

Applying $\derR \shH om_{\shO_X}(\,\cdot\, ,\omega_X)$ to $\varphi_p$ and taking $\cH^{i+p+r}(-)$, we obtain
$$\psi^i_p\colon {\mathcal Ext}^{i+p+r}_{\shO_X}\big({\rm Gr}^E_{p-n}{\rm DR}_X\shH^r_Z(\shO_X),\omega_X\big)\to 
 {\mathcal Ext}^{i+p+r}_{\shO_X}\big({\rm Gr}^F_{p-n}{\rm DR}_X\shH^r_Z(\shO_X),\omega_X\big)\simeq \cH^i(\underline{\Omega}^p_Z),$$
 where the isomorphism on the right is provided by Proposition~\ref{lem_DB_DR}.

Using the description of ${\rm Gr}^E_{\bullet}\cH^r_Z(\shO_X)$ in Lemma~\ref{descriptionE}, it is easy to describe the domain of
$\psi^i_p$. Indeed, note first that $\shE^{\bullet}:={\rm Gr}^E_{p-n}{\rm DR}_X\shH^r_Z(\shO_X)$ is the complex
$$
0\to \Omega_X^{n-p}\otimes\omega_Z\otimes\omega_X^{-1}\to \Omega_X^{n-p+1}\otimes\shN_{Z/X}\otimes\omega_Z\otimes\omega_X^{-1}\to
\cdots\to \Omega_X^n\otimes {\rm Sym}^p(\shN_{Z/X})\otimes\omega_Z\otimes\omega_X^{-1}\to 0
$$
placed in cohomological degrees $-p,\ldots,0$. Recall now that if $\shF$ is a locally free $\shO_Z$-module, then 
\begin{equation}\label{eq_Ext}
{\mathcal Ext}_{\shO_X}^\ell(\shF,\omega_X)
=0\quad\text{for}\quad \ell\neq r\quad \text{and}\quad
{\mathcal Ext}^r_{\shO_X}(\shF,\omega_X)\simeq \shF^{\vee}\otimes_{\shO_Z}\omega_Z.
\end{equation}

Let us consider the hypercohomology spectral sequence
$$E_1^{k,\ell}={\mathcal Ext}_{\shO_X}^\ell(\shE^{-k},\omega_X)\Rightarrow {\mathcal Ext}^{k + \ell}_{\shO_X}(\shE^{\bullet},\omega_X).$$
Note that by (\ref{eq_Ext}), we have $E_1^{k,\ell}=0$ unless $\ell=r$ and $0\leq k\leq p$; moreover, if $0\leq k\leq p$, then
$$E_1^{k,r}\simeq \Omega_X^k\otimes_{\shO_X}{\rm Sym}^{p-k}(\shN_{Z/X})^{\vee}.$$
The spectral sequence then implies that for every integer $i$ we have
\begin{equation}\label{eq_Ext2}
{\mathcal Ext}_{\shO_X}^{i+p+r}(\shE^{\bullet},\omega_X)=E_2^{i+p,r}=\cH^i({\mathcal C}_p^{\bullet}),
\end{equation}
where ${\mathcal C}_p^{\bullet}$ is the complex 
$$0\to {\rm Sym}^p(\shN_{Z/X})^{\vee}\to \Omega_X^{1}\otimes_{\shO_X}{\rm Sym}^{p-1}(\shN_{Z/X})^{\vee}\to\cdots\to 
\Omega_X^{p-1}\otimes_{\shO_X}\shN_{Z/X}^{\vee}\to \Omega_X^p\otimes_{\shO_X}\shO_Z\to 0,$$
placed in cohomological degrees $-p,\ldots,0$. Inspection of the maps in this complex shows that, if $p\leq n-r$, then ${\mathcal C}_p^{\bullet}$
is obtained by truncating the generalized Eagon-Northcott complex ${\mathcal D}_{n-r-p}$ associated to the canonical morphism 
$$g\colon {\mathcal T}_X\vert_Z\to \shN_{Z/X},$$ 
keeping the first $p+1$ terms (and suitably translating). For basic facts about 
generalized Eagon-Northcott complexes, we refer to \cite[Chapter~2.C]{Bruns}.

The exact sequence
$$\shN_{Z/X}^{\vee}\to \Omega^1_X\vert_Z\to \Omega^1_Z\to 0$$
implies that we have a canonical isomorphism
\begin{equation}\label{eq_Ext3}
\cH^0({\mathcal C}_p^{\bullet})\simeq \Omega_Z^p.
\end{equation}
Via the isomorphisms (\ref{eq_Ext2}) and (\ref{eq_Ext3}), the morphism $\psi^0_p$ gets identified with the morphism
$\Omega_Z^p\to\cH^0(\underline{\Omega}_Z^p)$ induced by the canonical morphism $\Omega_Z^p\to 
\underline{\Omega}_Z^p$ (this follows from the fact that this holds on the smooth locus of $Z$, which is straightforward to check, and the fact that the sheaf
$\cH^0(\underline{\Omega}_Z^p)$ is torsion-free, which follows for example from the description in \cite[Theorem~7.12]{HJ}).

Finally, we recall the fact that generalized Eagon-Northcott complexes exhibit depth-sensitivity. This implies that if 
${\rm depth}(I_r(g)\big)\geq k$, for some $1\leq k\leq p$, then $\cH^i({\mathcal C}^{\bullet})=0$ for $-p\leq i\leq -p+k-1$.
Note that in our case, the ideal $I_r(g)$ defines the singular locus $Z_{\rm sing}$ of $Z$ and since $Z$ is Cohen-Macaulay,
the condition is equivalent to ${\rm codim}_Z(Z_{\rm sing})\geq k$. The depth sensitivity of generalized Eagon-Northcott complexes
is a consequence of the behavior in the case of generic matrices (see \cite[Theorem~2.16]{Bruns}) and of general properties
of perfect modules (see \cite[Proposition~2.11.2]{Kustin}). In fact, the depth sensitivity of a more general class of complexes is proved in 
\cite[Theorem~8.4]{Kustin}.

After this preparation, we can now prove our result relating the singularity level of the Hodge filtration $p(Z)$ and higher $p$-Du Bois singularities. 

\begin{proof}[Proof of Theorem~\ref{thm-DB-main}]
We follow the approach in \cite{MOPW} and \cite{Saito_et_al}, which treat the two implications in the theorem for hypersurfaces. 
We may and will assume that $Z$ is singular, since otherwise the equivalence is trivial. In this case, it follows from
Theorem~\ref{thm_upper_bound} that $p(Z)\leq\frac{\dim(Z)-1}{2}$, and it is easy to see that we may assume that $p\leq\dim(Z)=n-r$.

Suppose first that $p(Z)\geq p$. 
In this case the morphism $\varphi_p$ is an isomorphism, hence so are the morphisms $\psi_p^i$. Since the complex ${\mathcal C}_p^{\bullet}$ is supported
in nonpositive cohomological degrees, we have $\cH^i({\mathcal C}_p^{\bullet})=0$ for $i>0$. It thus follows from
(\ref{eq_Ext2}) that
$$\cH^i(\underline{\Omega}_Z^p)\simeq {\mathcal Ext}^{i+p+r}_{\shO_X}({\mathcal E}^{\bullet},\omega_X)\simeq \cH^i({\mathcal C}_p^{\bullet})=0
\quad\text{for}\quad i>0.$$
We also see that $\psi_p^0$ induces an isomorphism
$$\Omega_Z^p\simeq \cH^0({\mathcal C}^{\bullet})\simeq {\mathcal Ext}^{p+r}_{\shO_X}({\mathcal E}^{\bullet},\omega_X)\to 
\cH^0(\underline{\Omega}_Z^p).$$
This implies that the canonical morphism $\Omega_Z^p\to \underline{\Omega}_Z^p$ is an isomorphism. Since the same argument applies if we replace
$p$ by any $k$, with $0\leq k\leq p$, we conclude that $Z$ has at most higher $p$-Du Bois singularities. 

Conversely, suppose now that $Z$ has at most higher $p$-Du Bois singularities. Arguing by induction on $p$, it follows that we may assume that 
$p(Z)\geq p-1$. In this case the morphism $\varphi_p$ is injective and its cokernel is 
$$M=\omega_X\otimes_{\shO_X}\big(E_p\cH_Z^r(\shO_X)/F_p\cH_Z^r(\shO_X)\big),$$
placed in cohomological degree $0$. We need to show that $M=0$.
Since the functor $\derR \shH om_{\shO_X}\big(-,\omega_X[n]\big)$ is a duality, it is enough to show that 
$\derR \shH om_{\shO_X}\big(M,\omega_X)=0$, that is, we have
$${\mathcal Ext}^{i}_{\shO_X}(M,\omega_X)=0\quad\text{for all}\quad i\in\ZZ.$$

Let us put
$$\shE^{\bullet}:={\rm Gr}^E_{p-n}{\rm DR}_X\shH^r_Z(\shO_X)\quad \text{and}\quad 
\shF^{\bullet}:={\rm Gr}^F_{p-n}{\rm DR}_X\shH^r_Z(\shO_X).$$
The exact triangle 
$$\derR \shH om_{\shO_X}(M,\omega_X)\longrightarrow \derR \shH om_{\shO_X}(\shE^{\bullet},\omega_X)\longrightarrow  \derR \shH om_{\shO_X}(\shF^{\bullet},\omega_X)\overset{+1}\longrightarrow$$
induces a long exact cohomology sequence
$$\ldots\to {\mathcal Ext}^{i-1}_{\shO_X}(\shF^{\bullet},\omega_X)\to {\mathcal Ext}^{i}_{\shO_X}(M,\omega_X)\to {\mathcal Ext}^{i}_{\shO_X}(\shE^{\bullet},\omega_X)
\to {\mathcal Ext}^{i}_{\shO_X}(\shF^{\bullet},\omega_X)\to\ldots.$$
As we have seen, the fact that the morphism $\Omega_Z^p\to \underline{\Omega}_Z^p$ is an isomorphism translates as saying
that ${\mathcal Ext}^{i}_{\shO_X}(\shF^{\bullet},\omega_X)=0$ for all $i\neq p+r$ and the morphism
$${\mathcal Ext}^{p+r}_{\shO_X}(\shE^{\bullet},\omega_X)\to {\mathcal Ext}^{p+r}_{\shO_X}(\shF^{\bullet},\omega_X)$$
is an isomorphism. It follows from the long exact sequence that
${\mathcal Ext}_{\shO_X}^{p+r}(M,\omega_X)=0$ and
$${\mathcal Ext}_{\shO_X}^{i}(M,\omega_X)\simeq {\mathcal Ext}_{\shO_X}^{i}(\shE^{\bullet},\omega_X)\quad\text{for all}\quad i\neq p+r.$$
Therefore thanks to (\ref{eq_Ext2}) we are done if we show that $\cH^i({\mathcal C}_p^{\bullet})=0$ for $i\neq 0$.
This is trivial if $p=0$. On the other hand, if $p\geq 1$, recalling that we are assuming $p(Z)\geq p-1$ by induction,
Corollary~\ref{cor_upper_bound} gives
$${\rm codim}_Z(Z_{\rm sing})\geq 2p(Z)+1\geq 2p-1\geq p.$$
The depth-sensitivity of the generalized Eagon-Northcott complexes thus implies that $\cH^i({\mathcal C}_p^{\bullet})=0$ for $i\neq 0$,
which completes the proof of the theorem.
\end{proof}

We also note the following corollary of the proof of Theorem \ref{thm-DB-main}:

\begin{corollary}
Suppose  that $Z$ has higher $p$-du Bois singularities, with $p \ge 1$. Then $Z$ is normal and $\Omega_Z^p$ is reflexive, i.e. $\Omega_Z^p\simeq (\Omega_Z^p)^{\vee\vee}$.
\end{corollary}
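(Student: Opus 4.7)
The plan is to translate the higher $p$-du Bois hypothesis into the bound $p(Z)\geq p\geq 1$ via Theorem~\ref{thm-DB-main}, and then to derive both normality and reflexivity from the resulting codimension bound on $Z_{\rm sing}$ together with a concrete locally free co-resolution of $\Omega_Z^p$. First I would invoke Corollary~\ref{cor_upper_bound}: under $p(Z)\geq p\geq 1$ it gives $\codim_Z(Z_{\rm sing})\geq 2p(Z)+1\geq 3$. Since $Z$ is a local complete intersection, hence Cohen-Macaulay, this exhibits $Z$ as regular in codimension one, and Serre's criterion yields normality.

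For the reflexivity assertion, the key object is the complex $\mathcal{C}_p^{\bullet}$ from the proof of Theorem~\ref{thm-DB-main}, namely
$$0\to \Sym^p(\shN_{Z/X})^{\vee}\to \Omega_X^1\otimes_{\shO_X}\Sym^{p-1}(\shN_{Z/X})^{\vee}\to\cdots\to \Omega_X^p\otimes_{\shO_X}\shO_Z\to 0$$
placed in cohomological degrees $-p,\ldots,0$. Each term is a locally free $\shO_Z$-module, because $Z$ is a local complete intersection. Applying to this the depth-sensitivity statement for generalized Eagon-Northcott complexes used in the proof of Theorem~\ref{thm-DB-main}, the improved bound $\codim_Z(Z_{\rm sing})\geq 2p+1$ kills $\mathcal{H}^i(\mathcal{C}_p^{\bullet})$ for every $i<0$; together with the identification $\mathcal{H}^0(\mathcal{C}_p^{\bullet})\simeq \Omega_Z^p$ recorded in that proof, this packages $\mathcal{C}_p^{\bullet}$ into an honest locally free co-resolution
$$0\to \Sym^p(\shN_{Z/X})^{\vee}\to\cdots\to \Omega_X^p\otimes_{\shO_X}\shO_Z\to \Omega_Z^p\to 0.$$

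From here I would pass to Serre's condition $S_2$, which suffices for reflexivity on the normal variety $Z$. A standard depth chase through the short exact sequences produced by the above resolution yields
$$\depth_{\xi}\Omega_Z^p\geq \depth_{\xi}\shO_Z-p=\dim \shO_{Z,\xi}-p$$
for every $\xi\in Z$, the last equality using that $\shO_Z$ is Cohen-Macaulay. If $\xi$ lies in the smooth locus there is nothing to check, whereas at $\xi\in Z_{\rm sing}$ the containment $\overline{\{\xi\}}\subseteq Z_{\rm sing}$ forces $\dim\shO_{Z,\xi}\geq 2p+1$, and hence $\depth_{\xi}\Omega_Z^p\geq p+1\geq 2$. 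In every case $\depth_{\xi}\Omega_Z^p\geq \min(2,\dim \shO_{Z,\xi})$, so $\Omega_Z^p$ satisfies $S_2$ and is therefore reflexive. The main obstacle in carrying out this plan is ensuring that the Eagon-Northcott depth-sensitivity really supplies the full acyclicity needed to turn $\mathcal{C}_p^{\bullet}$ into a genuine co-resolution; this in turn rests on the sharp codimension bound $2p+1$ from Corollary~\ref{cor_upper_bound}, which itself is built from Theorem~\ref{thm_upper_bound} via the semicontinuity result of Theorem~\ref{thm_semicont}.
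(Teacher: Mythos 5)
Your proof is correct and follows essentially the same route as the paper's: deduce $\codim_Z(Z_{\rm sing})\geq 2p+1$ from Corollary~\ref{cor_upper_bound}, use that bound both for normality via Serre's criterion and to turn $\mathcal{C}_p^\bullet$ into a locally free resolution of $\Omega_Z^p$ of length $p$, and run a depth chase through that resolution to get the needed bound $\geq 2$. The only cosmetic difference is that you verify Serre's $S_2$ condition stalk by stalk at points of $Z_{\rm sing}$, whereas the paper works directly with $\depth(J,\Omega_Z^p)\geq 2$ for $J$ the ideal of $Z_{\rm sing}$; the two formulations are equivalent and yield the same computation $\geq (2p+1)-p=p+1\geq 2$.
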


\begin{proof}
Since ${\rm codim}_Z(Z_{\rm sing})\geq 2p+1\geq 3$
by Corollary~\ref{cor_upper_bound} and $Z$ is Cohen-Macaulay, it follows that $Z$ is normal. 
In order to see that $\Omega_Z^p$ is reflexive, it is enough then to show that ${\rm depth}(J,\Omega_Z^p)\geq 2$,
where $J$ is the ideal defining $Z_{\rm sing}$ in $Z$. Using again the fact that ${\rm codim}_Z(Z_{\rm sing})\geq 2p+1\geq p$,
it follows that the complex ${\mathcal C}_p^\bullet$ used in the proof of Theorem \ref{thm-DB-main} gives a locally free resolution of $\Omega_Z^p$. The well-known behavior of depth 
in short exact sequences together with the fact that ${\rm depth}(J,{\mathcal C}_p^j)={\rm depth}(J,\shO_Z)\geq 2p+1$ for all $j$ implies
$${\rm depth}(J,\Omega_Z^p)\geq (2p+1)-p=p+1\geq 2.$$
\end{proof}

We next give a refinement of the vanishing statement for the higher cohomology of the graded pieces of the Du Bois complex in Theorem~\ref{thm-DB-main},
in terms of the dimension of the locus where $p(Z)$ is small.

\begin{theorem}\label{thm_van_DB}
If for some $p  \ge 0$ we have $F_p \shH^r_Z \shO_X = E_p \shH^r_Z \shO_X$ away from a closed subset $W\subseteq Z$ of dimension $s$ (with the convention that $s = -\infty$ if $W$ is empty), then 
$$\shH^i(\underline{\Omega}_Z^p) = 0 \,\,\,\,\,\,{\rm for ~all} \,\,\,\,\,\,  0< i < \dim Z - p - s  -1 .$$
\end{theorem}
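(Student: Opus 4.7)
The plan is to extend the argument of Theorem~\ref{thm-DB-main} by comparing the graded pieces of ${\rm DR}_X \cH^r_Z(\shO_X)$ under the Hodge and Ext filtrations, while tracking the support of their discrepancy.

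The first step is a \emph{local} version of Lemma~\ref{preservation}: if $F_p \cH^r_Z(\shO_X) = E_p \cH^r_Z(\shO_X)$ on an open $V \subseteq X$, then the same equality holds on $V$ for every $0 \leq k \leq p$. The proof of Lemma~\ref{preservation} is already local in nature: Lemma~\ref{descriptionO} combined with Proposition~\ref{E=O-codim} gives $\I_Z \cdot E_p = E_{p-1}$ locally near any point of $Z$, while Proposition~\ref{inclusion_filtration} yields $\I_Z \cdot F_p \subseteq F_{p-1}$. Combined with the global inclusion $F_{p-1} \subseteq E_{p-1}$, these force $F_{p-1} = E_{p-1}$ on $V$, and the argument iterates downward. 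Applied to $V = X \smallsetminus W$, the hypothesis of the theorem therefore propagates to every $k \leq p$.

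Next, set
$$\shF^\bullet := {\rm Gr}^F_{p-n} {\rm DR}_X \cH^r_Z(\shO_X) \quad\text{and}\quad \shE^\bullet := {\rm Gr}^E_{p-n} {\rm DR}_X \cH^r_Z(\shO_X),$$
and consider the morphism of complexes $\varphi_p \colon \shF^\bullet \to \shE^\bullet$ induced by $F_k \subseteq E_k$. By the first step, $\varphi_p$ is a quasi-isomorphism over $V$, so the cone $C := \Cone(\varphi_p)$ has every cohomology sheaf supported on $W$; moreover, since both $\shF^\bullet$ and $\shE^\bullet$ live in nonpositive cohomological degrees, so does $C$.

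Applying $\derR \shH om_{\shO_X}(-, \omega_X)$ to the triangle $\shF^\bullet \to \shE^\bullet \to C \overset{+1}{\to}$, I would invoke Proposition~\ref{lem_DB_DR} to identify $\shH^i(\underline{\Omega}_Z^p) \simeq \shE xt^{i+p+r}_{\shO_X}(\shF^\bullet, \omega_X)$. The computation in the proof of Theorem~\ref{thm-DB-main} then gives $\shE xt^j_{\shO_X}(\shE^\bullet, \omega_X) \simeq \shH^{j-p-r}(\mathcal{C}_p^\bullet) = 0$ for every $j > p+r$, since $\mathcal{C}_p^\bullet$ is concentrated in nonpositive cohomological degrees. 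The key new input is $\shE xt^j_{\shO_X}(C, \omega_X) = 0$ for $j < n-s$, which follows from Lemma~\ref{lem_BS} applied termwise (each $\mathcal{H}^k(C)$ is supported on $W$, which has codimension at least $n-s$ in $X$) together with the standard spectral sequence $E_2^{a,b} = \shE xt^a_{\shO_X}(\mathcal{H}^{-b}(C), \omega_X) \Rightarrow \shE xt^{a+b}_{\shO_X}(C, \omega_X)$, combined with the vanishing $\mathcal{H}^k(C) = 0$ for $k > 0$. The relevant segment of the long exact sequence
$$\shE xt^j_{\shO_X}(\shE^\bullet, \omega_X) \to \shE xt^j_{\shO_X}(\shF^\bullet, \omega_X) \to \shE xt^{j+1}_{\shO_X}(C, \omega_X)$$
then yields the vanishing of $\shE xt^j_{\shO_X}(\shF^\bullet, \omega_X)$ whenever $p+r < j < n-s-1$. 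Using $r + \dim Z = n$ and setting $j = i+p+r$, this is exactly $\shH^i(\underline{\Omega}_Z^p) = 0$ in the range $0 < i < \dim Z - p - s - 1$.

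The main obstacle is the first step: without the full propagation $F_k = E_k$ on $V$ for all $k \leq p$, the cone $C$ could have support strictly larger than $W$, weakening the crucial bound from Lemma~\ref{lem_BS}. Once this local inheritance is in hand, the remaining analysis is a direct extension of the technique of Theorem~\ref{thm-DB-main}, refined by the new support constraint on $C$.
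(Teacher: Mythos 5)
Your proof is correct and follows essentially the same path as the paper's: propagate $F_k = E_k$ downward away from $W$ (the proof of Lemma~\ref{preservation} is already local), pass to the comparison morphism $\varphi_p$ between the two graded de Rham complexes, apply $\derR\shH om_{\shO_X}(-,\omega_X)$, and bound the $\shE xt$ groups of the discrepancy via Lemma~\ref{lem_BS}. The only technical variation is that the paper splits $\varphi_p$ through its kernel, image, and cokernel---whose \emph{terms}, not merely cohomology sheaves, are supported on $W$, so a termwise stupid-truncation argument applies---while you work with the cone, whose terms are not supported on $W$, so you must instead run the hyperext spectral sequence on its cohomology sheaves; both routes yield the stated range of vanishing.
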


\begin{proof}
We keep the setup used in the proof of Theorem~\ref{thm-DB-main}. Note that by assumption, the morphism 
$$\varphi_p\colon \shF^{\bullet}={\rm Gr}^F_{p-n}{\rm DR}_X\shH^r_Z(\shO_X)\to \shE^{\bullet}={\rm Gr}^E_{p-n}{\rm DR}_X\shH^r_Z(\shO_X)$$
is an isomorphism away from $W$.
Recall that by (\ref{eq_Ext2}), we have
\begin{equation}\label{eq1_thm_van_DB}
{\mathcal Ext}^{j+p+r}_{\shO_X}(\shE^{\bullet},\omega_X)=0\quad\text{for all}\quad j>0.
\end{equation}

On the other hand, if $\shG^{\bullet}$ is a complex on $X$ concentrated in degrees $\leq 0$ such that ${\rm Supp}(\shG^q)\subseteq W$ for all $q$, then 
$\shE xt_{\shO_X}^m(\shG^{\bullet},\omega_X)=0$ if $m<n-s$. This can be proved inductively by considering the long exact sequences of 
$\shE xt$ sheaves associated to the short exact sequences of complexes
$$0\to \sigma^{\leq j-1}(\shG^{\bullet})\to \sigma^{\leq j}(\shG^{\bullet})\to \shG^j[-j]\to 0$$
and the fact that $\shExt_{\shO_X}^m(\shG^j,\omega_X)=0$ for $m<n-s$ (see Lemma~\ref{lem_BS}). Here we denote by
$ \sigma^{\leq j}(\shG^{\bullet})$ the ``stupid" truncation of $\shG^{\bullet}$ consisting of the terms placed in cohomological degrees $\leq j$. 

This applies in particular to the complexes ${\rm ker}(\varphi)$ and ${\rm coker}(\varphi)$. Since $i+p+r+1<n-s$, we conclude that
\begin{equation}\label{eq2_thm_van_DB}
\shE xt^{i+p+r+1}_{\shO_X}\big({\rm coker}(\varphi), \omega_X\big)=0=\shE xt^{i+p+r}_{\shO_X}\big({\rm ker}(\varphi), \omega_X\big).
\end{equation}
The exact sequences
$$\shE xt^{i+p+r}_{\shO_X}(\shE^{\bullet},\omega_X)\to \shE xt^{i+p+r}_{\shO_X}\big({\rm im}(\varphi),\omega_X\big)\to
 \shE xt^{i+p+r+1}_{\shO_X}\big({\rm coker}(\varphi),\omega_X\big)=0$$
 and 
 $$ \shE xt^{i+p+r}_{\shO_X}\big({\rm im}(\varphi),\omega_X\big)\to
 \shE xt_{\shO_X}^{i+p+r}(\shF^{\bullet},\omega_X)\to 
 \shE xt^{i+p+r}_{\shO_X}\big({\rm ker}(\varphi), \omega_X)=0,$$
 together with the vanishing in (\ref{eq1_thm_van_DB}) imply
 $$\cH^i(\underline{\Omega}_Z^p)\simeq \shE xt_{\shO_X}^{i+p+r}(\shF^{\bullet},\omega_X\big)=0,$$
 where the first isomorphism follows from Proposition~\ref{lem_DB_DR}.
 This completes the proof of the theorem.
\end{proof}

We conclude that there is a range of automatic vanishing in terms of the dimension of the singular locus of $Z$; when $Z$
is a hypersurface, this is \cite[Corollary 3.5]{MOPW}.

\begin{corollary}
If the singular locus of the local complete intersection $Z$ has dimension $s$, then for all $p \ge 0$ we have
$$\cH^i(\underline{\Omega}_Z^p) =0 \,\,\,\,\,\,{\rm for} \,\,\,\,\,\,1\leq i < \dim Z - s - p -1.$$
\end{corollary}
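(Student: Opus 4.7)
The plan is to apply Theorem~\ref{thm_van_DB} with the specific choice $W = Z_{\rm sing}$. By hypothesis $\dim W = s$, so the dimensional requirement on $W$ in that theorem is automatically met. All that remains is to verify that the equality
\[
F_p \shH^r_Z(\shO_X) = E_p \shH^r_Z(\shO_X)
\]
holds on the open set $X \smallsetminus Z_{\rm sing}$.

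For this, note that on $X \smallsetminus Z_{\rm sing}$ the subscheme $Z$ restricts to a smooth codimension-$r$ subvariety (with $r = \dim X - \dim Z$, using the implicit pure-dimensionality of the local complete intersection $Z$). Example~\ref{smooth_subvariety2} then supplies the even stronger statement that $F_k \shH^r_Z(\shO_X) = O_k \shH^r_Z(\shO_X) = E_k \shH^r_Z(\shO_X)$ for every $k \ge 0$ on this open set; specialising to $k = p$ gives precisely what is needed.

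With the hypothesis of Theorem~\ref{thm_van_DB} verified, that theorem immediately yields
\[
\shH^i(\underline{\Omega}_Z^p) = 0 \quad \text{for all} \quad 0 < i < \dim Z - s - p - 1,
\]
which is the statement of the corollary. There is essentially no obstacle here: the whole content of the argument is the observation that for the canonical choice $W = Z_{\rm sing}$ the filtration hypothesis of Theorem~\ref{thm_van_DB} is automatic, because on the smooth locus of $Z$ the Hodge, order, and Ext filtrations on $\shH^r_Z(\shO_X)$ all agree. All the actual work has already been carried out in the proofs of Theorem~\ref{thm_van_DB} (and, upstream, in the analysis of $p(Z)$ for smooth local complete intersections).
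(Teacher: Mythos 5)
Your proposal is correct and is precisely the intended argument: take $W = Z_{\rm sing}$ in Theorem~\ref{thm_van_DB}, and verify the filtration hypothesis on the complement by Example~\ref{smooth_subvariety2} (which is a local statement, so applies equally well to the smooth locus of the pure-codimension-$r$ local complete intersection even though the smooth locus need not be irreducible). The paper leaves this deduction implicit, and your write-up supplies exactly what is missing.
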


\begin{question}
\emph{Does this result continue to hold when $Z$ is an arbitrary (or at least Cohen-Macaulay) closed subscheme whose singular locus has dimension $s$}?
\end{question}

\subsection{Depth and local vanishing}
We return to the general setting of a reduced closed subscheme $Z$ of a smooth variety $X$.
We have seen in Theorem~\ref{n-1} that if $n\geq 2$, then 
$R^{n-1} f_* \Omega^j_Y (\log E) = 0$ for all $j<n$, and related this to the condition ${\rm lcd}(X, Z) \le n-2$. For lower values of  ${\rm lcd}(X, Z)$ it is also important to understand the
vanishing of $R^{n-2}f_*\Omega_Y^j(\log E)$.  We make the following conjecture based on the depth of $\shO_Z$
at its closed points.

\begin{conjecture}\label{depth-vanishing-conj}
If ${\rm depth} (\shO_Z) \ge k + 2$, then $R^{n-2} f_* \Omega_Y^{n-k} (\log E) = 0$.
\end{conjecture}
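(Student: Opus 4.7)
The plan is to translate the desired vanishing into an Ext-vanishing for the Du Bois complex, and then reduce it via a hypercohomology spectral sequence to concrete depth statements. Throughout we assume $n \geq 3$; the case $n \leq 2$ is either vacuous or follows from the earlier results. By \lemmaref{forms-DuBois}, applied with $q=n-2\geq 1$, we have a canonical isomorphism
\[
R^{n-2}f_*\Omega_Y^{n-k}(\log E)\;\simeq\;\shE xt^{n-1}_{\shO_X}\bigl(\underline{\Omega}^k_Z,\omega_X\bigr),
\]
so it suffices to show this $\shE xt$-sheaf vanishes whenever $\depth(\shO_Z)\geq k+2$.

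The next step is the local-to-global hypercohomology spectral sequence
\[
E_2^{p,q}\;=\;\shE xt^p_{\shO_X}\bigl(\cH^{-q}(\underline{\Omega}^k_Z),\omega_X\bigr)\;\Longrightarrow\;\shE xt^{p+q}_{\shO_X}\bigl(\underline{\Omega}^k_Z,\omega_X\bigr).
\]
Because $\underline{\Omega}^k_Z$ has nonzero cohomology only in degrees $\geq 0$, and $\shE xt^{p}_{\shO_X}(-,\omega_X)=0$ for $p>n$ on the smooth $n$-fold $X$, the term $\shE xt^{n-1}_{\shO_X}(\underline{\Omega}^k_Z,\omega_X)$ can receive contributions only from $\shE xt^{n-1}(\cH^0(\underline{\Omega}^k_Z),\omega_X)$ and $\shE xt^{n}(\cH^1(\underline{\Omega}^k_Z),\omega_X)$. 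By local duality on $X$, the required vanishing reduces to two statements: (a) $\depth\bigl(\cH^0(\underline{\Omega}^k_Z)\bigr)\geq 2$ pointwise on $Z$, and (b) $\cH^1(\underline{\Omega}^k_Z)$ has no zero-dimensional associated primes. By a theorem of Huber--J\"order \cite{HJ}, $\cH^0(\underline{\Omega}^k_Z)$ is the sheaf of $h$-differentials of degree $k$, while the higher cohomology sheaves $\cH^i(\underline{\Omega}^k_Z)$ for $i\geq 1$ are supported in the non-$h$-smooth locus of $Z$. Under the depth hypothesis, one expects a Kebekus--Schnell-style comparison $\cH^0(\underline{\Omega}^k_Z)\simeq \Omega_Z^{[k]}$ generalizing \theoremref{thm:hisolated} beyond isolated singularities; granting such a comparison, (a) reduces to a depth bound on the reflexive differentials $\Omega_Z^{[k]}$, which should follow from a Koszul-type analysis of the conormal sequence $0\to\shN^\vee_{Z/X}\to\Omega_X^1|_Z\to\Omega_Z^1\to 0$ combined with $\depth(\shO_Z)\geq k+2$, at least in the local complete intersection setting. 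For (b), in the local complete intersection case \theoremref{thm_van_DB} and \corollaryref{cor_upper_bound}, applied after using the depth hypothesis to deduce $p(Z)\geq k$ locally, provide sufficient control on the support of $\cH^1$ to exclude zero-dimensional components.

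The main obstacle is step (a), namely securing the $h$-differentials/reflexive differentials comparison beyond the isolated singularity setting of \theoremref{thm:hisolated}, and then establishing the required depth bound on $\Omega_Z^{[k]}$ uniformly in $k$ and the singularities of $Z$. An alternative strategy, less reliant on differential comparison results, is induction on $k$, using the base cases $k=0$ (Ogus' theorem, equivalent via \corollaryref{n-2} to the $F_0$-vanishing in \lemmaref{lem:depth}) and $k=1$ (\theoremref{thm:DTV} of Dao--Takagi--Varbaro, which yields the full lcd bound). The inductive step would combine \lemmaref{dim_reduction} with the restriction theorem \theoremref{thm_restriction}; the technical subtlety here is that general hyperplanes in a base-point free linear system never pass through a prescribed point, so a Nakayama argument at a chosen stalk of $R^{n-2}f_*\Omega_Y^{n-k}(\log E)$ must be coupled with a limiting argument through a pencil of hyperplanes degenerating to one containing the given point.
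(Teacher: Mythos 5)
This statement is labeled as a \emph{Conjecture} in the paper; the authors do not prove it in full generality. What they establish is: the cases $k = 0, 1$ (Remark~\ref{rmk:k1}, via Corollary~\ref{KS-injection} and Theorem~\ref{thm:DTV}), and the case of isolated singularities (Theorem~\ref{depth-vanishing}). Your opening reduction is correct and in fact matches Lemma~\ref{lem:depth2}: applying Lemma~\ref{forms-DuBois} with $q = n-2$ identifies $R^{n-2}f_*\Omega_Y^{n-k}(\log E)$ with $\shE xt^{n-1}_{\shO_X}(\underline{\Omega}^k_Z, \omega_X)$, the hypercohomology spectral sequence has only the two contributing $E_2$-terms you name, and the vanishing of $E_2^{n-1,0}$ and $E_2^{n,-1}$ does indeed suffice. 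That part is sound.

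The two sufficient conditions you isolate, however, are precisely the points where the argument breaks down, and you are candid about this. For (a), the comparison $\cH^0(\underline{\Omega}^k_Z)\simeq\Omega_Z^{[k]}$ is established only for rational singularities (Kebekus--Schnell) or, as in Theorem~\ref{thm:hisolated}, for isolated ones; the paper leaves the general case open. Even if one had it, the inference $\depth(\shO_Z)\ge k+2 \Rightarrow \depth(\Omega_Z^{[k]})\ge 2$ is not automatic: the depth hypothesis gives you $S_2$, but not $R_1$, so $Z$ need not be normal and reflexive sheaves need not have depth $\ge 2$. For (b), your citations (Theorem~\ref{thm_van_DB}, Corollary~\ref{cor_upper_bound}, Theorem~\ref{thm_restriction}) all live in the local complete intersection setting, while the conjecture is stated for arbitrary reduced $Z$; and even for LCI, deducing a bound on $p(Z)$, or on the locus where $p(Z)<k$, from the depth hypothesis alone is itself unproved --- the two invariants are not known to control one another in that direction.

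It is also worth noting that the paper's proof in the isolated case takes a genuinely different route from yours. Rather than splitting $\shE xt^{n-1}(\underline{\Omega}^k_Z,\omega_X)$ into $\cH^0$ and $\cH^1$ contributions, it reduces to $H^1_{\mathfrak{m}_P}(\underline{\Omega}^k_Z)=0$ (using the full equivalence $\depth(\underline{\Omega}^k_Z)\ge 2 \iff$ vanishing in Lemma~\ref{lem:depth2}), passes via Steenbrink's theorem to $H^1_D(\widetilde{Z},\Omega^k_{\widetilde{Z}}(\log D)(-D))$ on a resolution of $Z$ itself, and concludes using Kawamata--Viehweg vanishing, Steenbrink's vanishing theorem, and a mixed-Hodge-structure argument on $H^k(D)$ following van Straten--Steenbrink. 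That mechanism is what forces the isolated singularities hypothesis (so that the exceptional divisor $D$ is compact), and it entirely sidesteps the $h$-differentials comparison. Your strategy, if its gaps could be filled, would be more uniform; but those gaps coincide with problems the paper itself leaves open, so the proposal should be read as a plausible plan of attack rather than a proof.
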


\begin{remark}\label{rmk:k1}
Conjecture \ref{depth-vanishing-conj} holds for $k = 0, 1$.
Indeed, by Corollary \ref{KS-injection} we have an inclusion 
$$R^{n-2} f_* \omega_Y (E) \hookrightarrow \shE xt^{n-1}_{\shO_X} (\shO_Z, \omega_X),$$
which gives the assertion for $k =0$. For $k =1$, it is a consequence of Theorem \ref{thm:DTV} and Theorem \ref{thm:lcd}.
\end{remark}

The main result of this section is a proof of Conjecture~\ref{depth-vanishing-conj}
when $Z$ has isolated singularities. Reversing the use of Theorem \ref{thm:lcd} in the above remark, 
this gives an alternative proof of Theorem \ref{thm:DTV} at closed isolated singular points.  Further consequences are explained in \S\ref{scn:hdiff}.

\begin{theorem}\label{depth-vanishing}
If ${\rm depth} (\shO_Z) \ge k + 2$ and $Z$ has isolated singularities, then 
$$R^{n-2} f_* \Omega_Y^{n-k} (\log E) = 0.$$
\end{theorem}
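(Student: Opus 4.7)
The approach is to translate the desired vanishing into a statement about the Hodge filtration on the local cohomology sheaf $\cH^{n-k-1}_Z(\shO_X)$ and exploit the isolated-singularity hypothesis to reduce it to a computation at each singular point via Kashiwara's equivalence. Throughout, we may assume $n \ge k+3$: otherwise $\operatorname{depth}(\shO_Z) \ge k+2 > n-1 \ge \dim Z$ forces the hypothesis to be vacuous.

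First, by the formula extracted in the proof of Theorem~\ref{gen_level} (or, equivalently, by Lemma~\ref{forms-DuBois}), there is an isomorphism
\[
R^{n-2} f_* \Omega_Y^{n-k}(\log E) \simeq \cH^0\operatorname{Gr}^F_{k-n}\operatorname{DR}_X\bigl(\cH^{n-k-1}_Z(\shO_X)\bigr).
\]
Under our hypotheses, the depth assumption forces $\dim Z \ge k+2$; since $Z$ has isolated singularities, $Z$ is generically smooth of codimension $r = n-\dim Z$, and on the smooth locus $Z \setminus \Sigma$ only $\cH^r_Z(\shO_X)$ is nonzero. Because $r < n-k-1$, the Hodge module $\cH^{n-k-1}_Z(\shO_X)$ has support contained in the finite set $\Sigma = Z_{\rm sing}$. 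By Kashiwara's equivalence for mixed Hodge modules, one can write $\cH^{n-k-1}_Z(\shO_X) \simeq \bigoplus_{x \in \Sigma} i_{x+}V_x$ for mixed Hodge structures $V_x$; the explicit formula from Remark~\ref{comparison_two_embeddings} for the Hodge filtration on $i_{x+}V_x$ then identifies $\cH^0\operatorname{Gr}^F_{k-n}\operatorname{DR}_X(i_{x+}V_x)$ with $\omega_X|_x \otimes \operatorname{Gr}^F_k V_x$. Hence the theorem reduces to showing $\operatorname{Gr}^F_k V_x = 0$ for every $x \in \Sigma$.

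Two pieces of input are immediate. On the one hand, Lemma~\ref{lem:depth} applied under $\operatorname{depth}(\shO_Z) \ge k+2$ gives $F_0 \cH^{n-k-1}_Z(\shO_X) = 0$, which combined with Remark~\ref{rem_trivial} and the pushforward formula forces $F_p V_x = 0$ for all $p \le 0$. On the other hand, Theorem~\ref{gen_level} provides the generation of the Hodge filtration at level $n-q = k+1$, and Theorem~\ref{n-1} (which gives $R^{n-1}f_*\Omega_Y^{n-k-1}(\log E) = 0$ in the codimension range we are in) improves this bound to generation at level $k$, i.e.\ $F_k V_x = V_x$. So the Hodge filtration on $V_x$ is concentrated in the range $[1,k]$.

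\textbf{Main obstacle.}  The remaining step is to close the gap from $F_0 V_x = 0$ to $\operatorname{Gr}^F_k V_x = 0$ (equivalently, $F_{k-1} V_x = V_x$). This is the crux of the argument and is where the isolated-singularity hypothesis must be exploited beyond merely locating the support. I would approach this by iterating the Kovács--Schwede type injectivity used to prove Lemma~\ref{lem:depth}, applied to the non-reduced thickenings $\shO_X/\I_Z^{j+1}$ for $j = 0,1,\ldots,k-1$: on the Ext side, the depth hypothesis $\operatorname{depth}(\shO_Z) \ge k+2$ together with the finite support of the cones between successive thickenings controls the higher $\shExt^q_{\shO_X}(\shO_X/\I_Z^{j+1},\omega_X)$ in the relevant degree, and via the comparison with the Ext filtration of \S\ref{scn:order} (in particular Proposition~\ref{F0E0} and Remark~\ref{rmk:injective}) this should produce vanishings $F_j \cH^{n-k-1}_Z(\shO_X) \subseteq E_j \cH^{n-k-1}_Z(\shO_X) = 0$ for each $j \le k-1$, translating by the pushforward formula into $F_{k-1} V_x = V_x$. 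Once this is established, $\operatorname{Gr}^F_k V_x = 0$ follows, and the theorem is proved.
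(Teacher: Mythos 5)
Your very first reduction does not hold. The isomorphism
\[
R^{n-2} f_* \Omega_Y^{n-k}(\log E) \simeq \cH^0\operatorname{Gr}^F_{k-n}\operatorname{DR}_X\bigl(\cH^{n-k-1}_Z(\shO_X)\bigr)
\]
is exactly the Claim in the proof of Theorem~\ref{gen_level} applied with $q=n-k-1$ and $i=k$, and that Claim is established under the hypothesis that $\cH^j_Z(\shO_X)=0$ for all $j>q=n-k-1$. Nothing in the hypotheses of Theorem~\ref{depth-vanishing} gives you that vanishing: the paper emphasizes (see the discussion around \cite[Example 2.11]{DT} and Theorem~\ref{thm:DTV}) that the implication $\operatorname{depth}(\shO_Z)\ge k+2 \Rightarrow {\rm lcd}(X,Z)\le n-k-2$ fails already for $k=2$. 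What is true is that the higher $\cH^j_Z(\shO_X)$ are supported at $\Sigma$, but that is not the same as being zero, and the spectral sequence~(\ref{eq_spec_seq_DR}) contributing to $\cH^{n-2-k}\operatorname{Gr}^F_{k-n}\operatorname{DR}_X(j_*\QQ^H_U[n])$ then picks up terms from $\cH^{j}_Z(\shO_X)$ with $j\ge n-k-1$, not just $j=n-k-1$. The same objection applies to your appeal to Theorem~\ref{gen_level} to get generation at level $n-q=k+1$: that statement also requires $\cH^j_Z(\shO_X)=0$ for $j>q$.

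Separately, the step you flag as the ``Main obstacle'' is indeed a genuine gap: the claim that iterating a Kov\'acs--Schwede type injectivity over the thickenings $\shO_X/\I_Z^{j+1}$ yields $F_j\cH^{n-k-1}_Z(\shO_X)\subseteq E_j\cH^{n-k-1}_Z(\shO_X)=0$ for $1\le j\le k-1$ is not established anywhere in the paper, and in fact the inclusion $F_k\subseteq E_k$ for $k\ge 1$ and $q>r$ is explicitly posed as an open problem (Question~\ref{question2}). Nor is it clear how one would control $\shE xt^{n-k-1}_{\shO_X}(\shO_X/\I_Z^{j+1},\omega_X)$ for $j\ge 1$ from the depth of $\shO_Z$ alone, since the thickenings are not reduced and their depths can drop.

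For the record, the paper's proof is along entirely different lines. Via Lemma~\ref{lem:depth2} the statement is reduced to $\operatorname{depth}(\underline{\Omega}^k_Z)\ge 2$, hence (after reducing to $Z$ affine, normal, with a single singular point $P$) to $H^1_D\bigl(\widetilde{Z},\Omega^k_{\widetilde{Z}}(\log D)(-D)\bigr)=0$ for a resolution $g\colon\widetilde{Z}\to Z$ with exceptional SNC divisor $D=g^{-1}(P)_{\rm red}$. The depth hypothesis is used via Kawamata--Viehweg vanishing to show $R^q g_*\shO_{\widetilde Z}=0$ for $1\le q\le k$, hence $H^i(D,\shO_D)=0$ for $1\le i\le k$. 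The final step combines Steenbrink's vanishing theorem for $g$, a van~Straten--Steenbrink type argument, and Hodge symmetry on the compact SNC divisor $D$ to show ${\rm Gr}^W_k H^k(D,\CC)^{k,0}=0$, which forces the required local cohomology to vanish. None of this passes through the Hodge filtration on $\cH^{q}_Z(\shO_X)$ or Kashiwara's equivalence.
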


We first  need some preparations, starting with a lemma that will also be used in the next section. We recall that the theory of depth admits an extension to 
complexes. If $(R,{\mathfrak m})$ is the local ring of $X$ at a closed point $x\in X$ and
$M$ is an element of the bounded derived category of $R$-modules, we put
$${\rm depth}(M):=n-\max\{i\mid {\rm Ext}_R^i(M,R)\neq 0\big\}=\min\big\{i\mid H_{\mathfrak m}^i(M)\neq 0\}$$
(with the convention that this is $\infty$ if $M=0$).
For a proof of the above equality, as well as for other properties of the depth of complexes, see \cite{Iyengar}. 
If $M$ is an element of the bounded derived category of coherent sheaves on $X$, we put
$${\rm depth}(M)=\min_{x\in X}{\rm depth}(M_x),$$
where the minimum is over all closed points of $X$.

\begin{lemma}\label{lem:depth2}
If $Z$ is reduced and $\dim Z \ge 2$, then for any $k \ge 0$ we have an equivalence
$$R^{n-2} f_* \Omega_Y^{n-k} (\log E) = 0 \iff \depth (\underline{\Omega}^k_{Z}) \ge 2,$$
and either condition implies
$$\depth(\cH^0 \underline{\Omega}^k_{Z}) \ge 2.$$ 
\end{lemma}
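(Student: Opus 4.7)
The plan is to pass through Grothendieck duality on the smooth ambient variety $X$ using Lemma~\ref{forms-DuBois}, which for $q \geq 1$ gives
$$R^q f_* \Omega^{n-k}_Y (\log E) \simeq \shE xt^{q+1}_{\shO_X} (\underline{\Omega}^k_Z, \omega_X).$$
Taking $q = n-2$ and $q = n-1$, the two quantities $R^{n-2} f_* \Omega^{n-k}_Y(\log E)$ and $R^{n-1} f_* \Omega^{n-k}_Y(\log E)$ become $\shE xt^{n-1}(\underline{\Omega}^k_Z, \omega_X)$ and $\shE xt^{n}(\underline{\Omega}^k_Z, \omega_X)$ respectively. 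By local duality on the smooth $n$-dimensional variety $X$, the condition $\depth(\underline{\Omega}^k_Z) \geq 2$ is equivalent to the simultaneous vanishing of precisely these two Ext sheaves.

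Consequently the asserted equivalence reduces to showing that $R^{n-1} f_* \Omega^{n-k}_Y(\log E)$ vanishes automatically under the hypothesis $\dim Z \geq 2$. When $k \geq 1$ this is immediate from the first assertion of Theorem~\ref{n-1} applied at closed points (where $r = n$), since $n-k \leq n-1$. When $k = 0$ it amounts to $R^{n-1} f_* \omega_Y(E) = 0$, which by Example~\ref{first-cases-lcd}(1) is equivalent to ${\rm lcd}(X, Z) \leq n-1$; the latter follows from Corollary~\ref{cor_HL} since $\dim Z \geq 2$ forces $Z$ to have no isolated points.

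For the final implication, I would use the hypercohomology spectral sequence arising from the truncation filtration of $\underline{\Omega}^k_Z$ (which is concentrated in cohomological degrees $\geq 0$):
$$E_2^{p,q} = \shE xt^p_{\shO_X}\big(\cH^{-q}(\underline{\Omega}^k_Z), \omega_X\big) \Longrightarrow \shE xt^{p+q}_{\shO_X}(\underline{\Omega}^k_Z, \omega_X).$$
The $E_2$-page vanishes for $q > 0$ (by the support range of the complex) and for $p > n$ (since $X$ is smooth of dimension $n$). A brief verification then shows that at the positions $(n-1, 0)$ and $(n, 0)$ no nonzero differentials enter or leave for any $r \geq 2$, so these entries survive to $E_\infty$. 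This gives an inclusion $\shE xt^{n-1}(\cH^0 \underline{\Omega}^k_Z, \omega_X) \hookrightarrow \shE xt^{n-1}(\underline{\Omega}^k_Z, \omega_X)$ together with an identification $\shE xt^{n}(\cH^0 \underline{\Omega}^k_Z, \omega_X) = \shE xt^{n}(\underline{\Omega}^k_Z, \omega_X)$. The depth condition on the complex therefore transfers to $\cH^0 \underline{\Omega}^k_Z$ via local duality.

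The main technical point is the last step, where one has to handle the subtlety of the depth of a complex versus that of its zeroth cohomology; but once the two vanishing ranges on the $E_2$-page are in place, the remaining bookkeeping is elementary.
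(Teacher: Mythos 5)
Your proof follows the same overall route as the paper's: pass through Lemma~\ref{forms-DuBois} and local duality on the smooth ambient $X$ for the equivalence, then use the hyperext spectral sequence on the cohomological degrees of $\underline{\Omega}^k_Z$ for the final implication. You do, however, make one useful point explicit that the paper's proof glosses over. The paper asserts that the vanishing of $\shE xt^{n-1}(\underline{\Omega}^k_Z, \omega_X)$ alone is equivalent to $\depth(\underline{\Omega}^k_Z) \ge 2$, but local duality identifies $\depth \ge 2$ with the simultaneous vanishing of $\shE xt^{n-1}$ and $\shE xt^{n}$, so the automatic vanishing of $\shE xt^n(\underline{\Omega}^k_Z, \omega_X)$ is being silently assumed. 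You supply this: by Lemma~\ref{forms-DuBois} again, $\shE xt^n(\underline{\Omega}^k_Z, \omega_X) \simeq R^{n-1} f_* \Omega^{n-k}_Y(\log E)$, and this vanishes for $k \ge 1$ by Theorem~\ref{n-1} at closed points (with the non-closed points handled by the trivial fiber-dimension bound), while for $k = 0$ it follows from the Hartshorne-Lichtenbaum criterion once $\dim Z \ge 2$ excludes isolated points. So in this respect your write-up is actually slightly more complete than the one in the paper.

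One small imprecision in the last step: you describe $E_\infty^{n-1,0} = \shE xt^{n-1}(\cH^0\underline{\Omega}^k_Z, \omega_X)$ as sitting inside the abutment $\shE xt^{n-1}(\underline{\Omega}^k_Z, \omega_X)$, but in this decreasing filtration we have $F^{n-1}H^{n-1} = H^{n-1}$, so $E_\infty^{n-1,0} = F^{n-1}/F^n$ is a \emph{quotient} of the abutment, not a subobject (there can be a nonzero contribution from $E_\infty^{n,-1}$). The direction does not matter for the argument --- any subquotient of a vanishing group vanishes --- so your conclusion stands; the identification at position $(n,0)$ is correct as stated. As a final remark, the passage from $\dim Z\ge 2$ to ``$Z$ has no isolated points'' silently assumes $Z$ has no zero-dimensional irreducible components, a reading that is also tacit in the paper's statement.
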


\begin{proof}
Since $n$ must be at least $3$, by Lemma~\ref{forms-DuBois} we have 
$$R^{n-2} f_* \Omega_Y^{n-k} (\log E)  \simeq \shE xt^{n-1} (\underline{\Omega}^k_{Z}, \omega_X),$$
and the vanishing of the latter is equivalent to  $\depth(\underline{\Omega}^k_{Z}) \ge 2$.
For the last statement, we consider the spectral sequence
$$E^{i,j}_2 = \shE xt^i _{\shO_X}(\shH^{-j} \underline{\Omega}^k_{Z}, \omega_X) \implies \shE xt_{\shO_X}^{i + j} (\underline{\Omega}^k_{Z}, \omega_X).$$
Note that $E^{i,j}_2=0$ if $i>n$ or $j>0$; in the latter case, this follows from the
 well known fact that $\shH^\ell \underline{\Omega}^k_{Z} = 0$ for $\ell < 0$. The term
$E^{n-1,0}_2 = \shE xt_{\shO_X}^{n-1} (\shH^ 0 \underline{\Omega}^k_{Z}, \omega_X)$ contributes to computing 
$\shE xt_{\shO_X}^{n-1} (\underline{\Omega}^k_{Z}, \omega_X)$, and since the differentials at each level are
$$E^{n-1-r , r-1}_r \to  E^{n-1,0}_r \to E^{n-1 + r, 1-r}_r,$$
it follows that $E^{n-1,0}_2 = E^{n-1,0}_{\infty}$. Therefore 
$$ \shE xt^{n-1}_{\shO_X} (\shH^ 0 \underline{\Omega}^k_{Z}, \omega_X) =0,$$
which is equivalent to $ \depth(\cH^0 \underline{\Omega}^k_{Z}) \ge 2$.
\end{proof}

We now set up some notation under the hypothesis of Theorem~\ref{depth-vanishing}. Since $Z$
has isolated singularities and ${\rm depth}(\shO_Z)\geq 2$, it follows that $Z$ is normal. Without loss of generality, we may assume that $Z$ is irreducible
and $P\in Z$ is a point such that $Z\smallsetminus\{P\}$ is smooth. Let $g\colon \widetilde{Z}\to Z$ be a projective morphism that is an isomorphism over
$Z\smallsetminus\{P\}$, with $\widetilde{Z}$ smooth, and such that $D:=g^{-1}(P)_{\rm red}$ is an SNC divisor.

According to the first statement the Lemma \ref{lem:depth2}, in order to prove Theorem \ref{depth-vanishing}, it suffices to show that under its hypotheses we have 
$$\depth (\underline{\Omega}^k_{Z}) \ge 2.$$
We already know that this depth is $\geq 1$, hence it is enough to show that 
$H^1_{{\mathfrak m}_P}(\underline{\Omega}^k_{Z})=0$, where ${\mathfrak m}_P$ is the maximal ideal defining $P$. 
We may further assume that $Z$ is affine. By Remark~\ref{rmk:k1}, we may assume that $k\geq 1$.
 Using Theorem~\ref{Steenbrink} and the exact triangle (\ref{eqn2}), we have an isomorphism
$$\underline{\Omega}_Z^k\simeq \derR g_*\Omega_{\widetilde{Z}}^k({\rm log}\,D)(-D),$$
hence in order to prove the theorem it suffices to show that 
\begin{equation}\label{eqn:reduction}
H^1_D\big(\widetilde{Z}, \Omega_{\widetilde{Z}}^k({\rm log}\,D)(-D)\big) = 0.
\end{equation}

\medskip

The rest of the section is devoted to proving this statement. The proof is independent from the rest of the paper, and involves some basic mixed Hodge theory.

The condition on ${\rm depth}(\shO_Z)$ will be used via the following lemma. 

\begin{lemma}\label{conseq_depth}
If $Z$ is a variety with isolated singularities and ${\rm depth}(\shO_Z)\geq k+2$, then
$$R^qg_*\shO_{\widetilde{Z}}=0\quad\text{for}\quad 1\leq q\leq k.$$
\end{lemma}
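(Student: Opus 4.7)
The plan is to apply Grothendieck duality to $g$, together with Grauert--Riemenschneider vanishing, in order to translate the depth hypothesis on $\shO_Z$ into the required vanishing. Let $d=\dim Z$ and let $C$ denote the cone of the canonical morphism $\shO_Z\to Rg_*\shO_{\widetilde{Z}}$. Since $g$ is an isomorphism over $Z\setminus\{P\}$, and $Z$ is normal (as an isolated singularity with depth $\geq 2$), one has $g_*\shO_{\widetilde{Z}}=\shO_Z$, so $C$ is supported at $P$ with $\shH^0(C)=0$ and $\shH^q(C)\simeq R^qg_*\shO_{\widetilde{Z}}$ for $q\geq 1$. The goal is thus to prove $\shH^q(C)_P=0$ for $1\leq q\leq k$.

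By Grothendieck duality applied to the proper map $g$ and by Grauert--Riemenschneider vanishing,
$$\derR \shH om_{\shO_Z}(Rg_*\shO_{\widetilde{Z}},\omega_Z^\bullet)\simeq Rg_*\omega_{\widetilde{Z}}^\bullet\simeq g_*\omega_{\widetilde{Z}}[d],$$
a complex concentrated in cohomological degree $-d$. Applying $\derR \shH om(-,\omega_Z^\bullet)$ to the defining triangle of $C$ yields an exact triangle
$$\derR \shH om(C,\omega_Z^\bullet)\to g_*\omega_{\widetilde{Z}}[d]\to\omega_Z^\bullet\overset{+1}{\longrightarrow},$$
whose long exact sequence of cohomology sheaves produces canonical isomorphisms
$$\shH^{i+1}\big(\derR \shH om(C,\omega_Z^\bullet)\big)\simeq\shH^i(\omega_Z^\bullet)\quad\text{for all}\quad i>-d.$$

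The depth hypothesis enters via local duality at $P$: the stalk $\shH^{j}(\omega_Z^\bullet)_P$ is Matlis dual to $H^{-j}_{\mathfrak{m}_P}(\shO_{Z,P})$, so ${\rm depth}(\shO_{Z,P})\geq k+2$ forces $\shH^{j}(\omega_Z^\bullet)_P=0$ for $-k-1\leq j\leq -1$, and the vanishing also holds at $j=0$ because $\dim Z\geq 2$. The inequality $\dim Z\geq \mathrm{depth}(\shO_Z)\geq k+2$ ensures $-d<-k-1$, so the isomorphisms above give $\shH^i\big(\derR \shH om(C,\omega_Z^\bullet)\big)_P=0$ for $-k\leq i\leq 0$. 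To finish, note that each $\shH^q(C)_P$ is Artinian, being a coherent sheaf supported at the single point $P$; the hyperext spectral sequence for $\derR \shH om(C,\omega_Z^\bullet)$ then degenerates by local duality for Artinian modules, identifying $\shH^i\big(\derR \shH om(C,\omega_Z^\bullet)\big)_P$ with the Matlis dual of $\shH^{-i}(C)_P$. The vanishing just established therefore translates into $\shH^q(C)_P=0$ for $0\leq q\leq k$, which is precisely the desired conclusion. The main technical subtlety lies in carefully tracking degree shifts throughout Grothendieck and Matlis duality, since $\omega_Z^\bullet$ is genuinely a complex rather than a single sheaf.
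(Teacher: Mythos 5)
Your proof is correct, but it takes a genuinely different route from the paper's. The paper follows the approach of \cite[Proposition~4.3]{ABW}: it first reduces to the case that $Z$ is projective, then picks an ample line bundle $\shL$ on $Z$, applies Kawamata--Viehweg vanishing to $H^i(\widetilde{Z}, g^*\shL^{-m})$, translates the depth hypothesis via Serre duality into the vanishing of $H^p(Z,\shL^{-m})$ for $p\le k+1$ and $m\gg 0$, and reads off the vanishing of $R^qg_*\shO_{\widetilde{Z}}$ from the Leray spectral sequence for $g$ (using that these higher direct images are punctual). Your argument instead stays entirely local at the singular point $P$: you dualize the defining triangle of the cone $C$ of $\shO_Z\to \derR g_*\shO_{\widetilde{Z}}$ against $\omega_Z^\bullet$, use Grothendieck duality for $g$ together with Grauert--Riemenschneider to identify $\derR\shH om(\derR g_*\shO_{\widetilde{Z}},\omega_Z^\bullet)$ with $g_*\omega_{\widetilde{Z}}[d]$, and then exploit local duality at $P$ (depth of $\shO_{Z,P}$ controls $\shH^j(\omega_Z^\bullet)_P$) plus Matlis duality for the finite-length cohomology of $C$. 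The two proofs rest on the same two inputs -- a Grauert--Riemenschneider-type vanishing on $\widetilde{Z}$ and the conversion of the depth hypothesis into vanishing of certain cohomology of $\omega_Z^\bullet$ -- but where the paper globalizes and plays these off against each other through cohomology on a projective model, you arrange the same inputs into a single local duality computation. Your version has the advantage of making manifest that the statement is purely local and requires no projectivization; the paper's version is slightly more elementary in its use of duality (only Serre duality on a projective variety). Both are valid.

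Two small points worth flagging for cleanliness. First, your statement that $\shH^0(\omega_Z^\bullet)_P=0$ ``because $\dim Z\ge 2$'' is fine (the dualizing complex of an irreducible variety of positive dimension has no cohomology in degree $0$), but it would read more uniformly to derive it, like the other vanishings, from depth: local duality gives $\shH^0(\omega_Z^\bullet)_P^\vee\simeq H^0_{\mathfrak{m}_P}(\shO_{Z,P})$, which vanishes as soon as ${\rm depth}(\shO_{Z,P})\ge 1$. Second, the degeneration step at the end is cleanest if phrased as: since $C$ has finite-length cohomology, $\derR\Gamma_{\mathfrak{m}_P}(C)\simeq C$, and local duality $\derR\shH om(C,\omega_Z^\bullet)_P\simeq \derR\Gamma_{\mathfrak{m}_P}(C)^\vee$ then identifies $\shH^i(\derR\shH om(C,\omega_Z^\bullet))_P$ with $\shH^{-i}(C)_P^\vee$ because Matlis duality is exact on finite-length modules; this avoids having to say anything about spectral sequence degeneration.
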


\begin{proof}
We follow the approach of \cite[Proposition~4.3]{ABW}, but include the proof since in \emph{loc. cit.} the authors make the stronger assumption that $Z$ is Cohen-Macaulay.\footnote{Note that in the Cohen-Macaulay case an even stronger statement appears in \cite{Kovacs1}.} Since $Z$ has isolated singularities, it is easy to see that we may assume that $Z$ is projective. In this case, if $\shL$ is an ample
line bundle on $Z$, then Kawamata-Viehweg vanishing gives
\begin{equation}\label{eq1_conseq_depth}
H^i\big(\widetilde{Z},g^*\shL^{-m}\big)=0\quad\text{for all}\quad i<d=\dim(Z),~ m>0.
\end{equation}
For a given $m>0$, consider the Leray spectral sequence
\begin{equation}\label{eqn:SS}
E_2^{p,q}=H^p\big(Z,R^qg_*\shO_{\widetilde{Z}} \otimes\shL^{-m}\big)\implies H^{p+q}\big(\widetilde{Z},g^*\shL^{-m}\big).
\end{equation}
If $q>0$, then $R^qg_*(\shO_{\widetilde{Z}})$ has $0$-dimensional support, hence $E_2^{p,q}=0$ for $p>0$.
On the other hand, since our assumptions imply that $Z$ is normal, we have $g_* \shO_{\widetilde{Z}}=\shO_Z$, and since 
${\rm depth}(\shO_Z)\geq k+2$, it follows that if $m\gg 0$, then for all $p\leq k+1$ we have
$$E_2^{p,0}=H^p\big(Z,\shL^{-m})\simeq H^{d-p}\big(Z, \shL^m \otimes \omega_Z^\bullet [-d]\big)^{\vee}=0.$$
Indeed, for the vanishing on the right, note that the depth hypothesis implies that the object $A^\bullet = \omega_Z^\bullet [-d]$ can have cohomologies only in degrees $0, \ldots, d - k -2$. On the other hand, the hypercohomology group $H^{d-p}\big(Z, \shL^m \otimes \omega_Z^\bullet [-d]\big)$ is computed by a spectral sequence whose contributing $E_2$-terms are 
$H^i (Z, \shL^m \otimes\cH^j A^\bullet)$, with $i + j = d - p$. Since $m \gg 0$, the only term that could contribute corresponds to 
$i =0$, but in this case $\cH^{d - p} A^\bullet = 0$ since $p \le k+1$.

Going back to the spectral sequence in ($\ref{eqn:SS}$), we keep the assumption that $m\gg 0$.
Note that if $r\geq 2$, then $d_r\colon E_r^{0,q}\to E_r^{r,q-r+1}$ 
is $0$ unless $r=q+1>k+1$, while 
$d_r\colon E_r^{-r,q+r-1}\to E_r^{0,q}$ is always $0$. We thus conclude that if $q\leq k$, then
$E_{\infty}^{0,q}=E_2^{0,q}$ and this vanishes because of (\ref{eq1_conseq_depth}).
If we assume in addition that $q\geq 1$, then we conclude that $R^qg_*\shO_{\widetilde{Z}}=0$, since this sheaf
has $0$-dimensional support.
\end{proof}

\begin{corollary}\label{fiber_cohomology}
If $Z$ is a variety with isolated singularities and ${\rm depth}(\shO_Z)\geq k+2$, then
$$H^q(D,\shO_D)=0\quad\text{for}\quad 1\leq q\leq k.$$
\end{corollary}

\begin{proof}
It is shown in the proof of \cite[Lemma~1.2]{Namikawa} that, for each $q$, the vanishing of $R^q g_* \shO_{\widetilde{Z}}$ implies
$H^q (D,\shO_D)=0$ (the statement in \emph{loc. cit.} assumes that $Z$ has rational singularities, but the proof gives in fact this implication for any variety with isolated singularities).
\end{proof}

We can now prove the main result; part of the proof follows an argument in \cite{SvS}.

\begin{proof}[Proof of ${\rm (}\ref{eqn:reduction}{\rm )}$]
We write $D=\sum_{i=1}^ND_i$, and define 
$$\Omega^k_{D^{(q)}}:=\bigoplus_{i_1<\cdots<i_q}\Omega^k_{D_{i_1}\cap\ldots\cap D_{i_q}}.$$
We then have 
an exact complex on $\widetilde{Z}$:
$$C_k^{\bullet}:\,\,\,\,0\to \Omega_{\widetilde{Z}}^k({\rm log}\,D)(-D)\to\Omega_{\widetilde{Z}}^k\to \Omega^k_{D^{(1)}}\to \Omega^k_{D^{(2)}}\to\cdots$$
(for example, the case $k=1$ is treated in \cite[Lemma~4.1]{MOP}, but the proof therein extends to arbitrary $k$). 
Let 
$$\Mmod_k :={\rm ker}\big(\Omega^k_{D^{(1)}}\to \Omega^k_{D^{(2)}}\big).$$

\noindent 
\emph{Claim:} We have
 $$H^1_D\big(\widetilde{Z},\Omega_{\widetilde{Z}}^k({\rm log}\,D)(-D)\big)=0 \iff  H^0(\widetilde{Z}, \Mmod_k)=0.$$

In order to prove this, note first that the short exact sequence
$$0\to \Omega_{\widetilde{Z}}^k({\rm log}\,D)(-D)\to\Omega_{\widetilde{Z}}^k\to\Mmod_k\to 0$$
gives a long exact sequence of local cohomology
$$0=H^0_D(\widetilde{Z},\Omega_{\widetilde{Z}}^k)\to H^0_D(\widetilde{Z},\Mmod_k)\to H^1_D\big(\widetilde{Z}, \Omega_{\widetilde{Z}}^k({\rm log}\,D)(-D)\big)
\to H_D^1(\widetilde{Z},\Omega_{\widetilde{Z}}^k)\to H_D^1(\widetilde{Z},\Mmod_k).$$
Of course, $\Mmod_k$ is supported on $D$, hence
$$H^0_D(\widetilde{Z},\Mmod_k)=H^0(\widetilde{Z},\Mmod_k).$$
The claim thus follows if we show that the map
\begin{equation}\label{eq1_prop_equiv1}
H_D^1(\widetilde{Z},\Omega_{\widetilde{Z}}^k)\to H_D^1(\widetilde{Z},\Mmod_k)
\end{equation}
is injective. This is essentially shown in the proof of \cite[Theorem~1.3]{SvS}, but since it is not stated there in this form, we recall the main steps in the argument.

First, Steenbrink's vanishing theorem (see \cite[Theorem~2]{Steenbrink}) gives
$$R^{d-1}g_* \big( \Omega_{\widetilde{Z}}^{d-k}({\rm log}\,D)(-D)\big)=0,$$
where $d=\dim(Z)$ (note that $k\leq d-2$). Using the Local Duality theorem and relative duality, we deduce from this that
\begin{equation}\label{eq1_prop_equiv2}
H_D^1\big(\widetilde{Z},\Omega^k_{\widetilde{Z}}({\rm log}\,D)\big)=H_{{\mathfrak m}_P}^1\big(\derR g_*\Omega_{\widetilde{Z}}^k({\rm log}\,D)\big)=0.
\end{equation}
Second, it is shown in \cite[p.~99]{SvS} via an argument using mixed Hodge structures that if
$$\Nmod_k=\Omega^k_{\widetilde{Z}}({\rm log}\,D)/\Omega^k_{\widetilde{Z}}({\rm log}\,D)(-D),$$
then the inclusion $\Mmod_k\hookrightarrow \Nmod_k$ induces an isomorphism $H^0(\widetilde{Z},\Mmod_k)\simeq
H^0(\widetilde{Z},\Nmod_k)$. Note that we have a commutative diagram with exact rows
$$
\begin{tikzcd}
0 \rar & \Omega_{\widetilde{Z}}^k\rar \dar& \Omega_{\widetilde{Z}}^k({\rm log}\,D)\rar\dar & {\mathcal Q}_k\dar{\rm Id}\rar & 0\\
0 \rar & \Mmod_k \rar & \Nmod_k \rar & {\mathcal Q}_k \rar & 0.
\end{tikzcd}
$$
By considering the connecting homomorphisms in the long exact sequences of cohomology and local cohomology, we obtain
the commutative diagram
$$
\begin{tikzcd}
H_D^0(\widetilde{Z},{\mathcal Q}_k) \dar{\alpha}\rar{\beta} & H^1_D(\widetilde{Z}, \Omega_{\widetilde{Z}}^k)\dar{\gamma} \\
H^0(\widetilde{Z},{\mathcal Q}_k)\dar{\rm Id} \rar & H^1(\widetilde{Z}, \Omega_{\widetilde{Z}}^k)\dar{\delta}\\
H^0(\widetilde{Z},{\mathcal Q}_k)\rar{\rho} & H^1(\widetilde{Z},\Mmod_k).
\end{tikzcd}
$$
Note that $\alpha$ is an isomorphism since ${\mathcal Q}_k$ is supported on $D$ and $\beta$ is an isomorphism by (\ref{eq1_prop_equiv2}). Since $H^0(\widetilde{Z},\Mmod_k)\to
H^0(\widetilde{Z},\Nmod_k)$ is an isomorphism, it follows that $\rho$ is injective. We thus conclude from the above diagram that the composition
$\delta\circ\gamma$ is injective and this gets identified with the map (\ref{eq1_prop_equiv1}) since ${\mathcal M}_k$ is supported on $D$.
This completes the proof of the claim.

Our goal is therefore to show that $H^0(\widetilde{Z}, \Mmod_k)=0$.
If we apply $H^q(-)$  to the complex $C_p^{\bullet}$ and ignore the first two terms, we obtain the following complex
$$0\longrightarrow H^q\big(\widetilde{Z}, \Omega^p_{D^{(1)}}\big)\overset{d^{p,q}_1}\longrightarrow H^q\big(\widetilde{Z}, \Omega^p_{D^{(2
)}}\big)\overset{d^{p,q}_2}\longrightarrow\cdots.$$
With this notation, for every $p$ and $q$, the $(p,q)$ component of the pure Hodge structure on the graded piece 
${\rm Gr}^W_{p+q}H^{p+q+i}(D,{\mathbf C})$  is given by 
$${\rm Gr}^W_{p+q}H^{p+q+i}(D,{\mathbf C})^{p,q}={\rm ker}(d^{p,q}_{i+1})/{\rm im}(d^{p,q}_i)$$
(with the convention that $d^{p,q}_i=0$ for $i\leq 0$); 
see \cite[Part II, 1]{ElZein} for a detailed description of the mixed Hodge structure
on the cohomology of $D$. 

We thus see that in order to complete the proof, it is enough to show that 
$${\rm Gr}^W_kH^k(D,{\mathbf C})^{k,0}=0.$$
By Hodge symmetry, this is equivalent to
\begin{equation}\label{eq_depth_vanishing}
{\rm Gr}^W_kH^k(D,{\mathbf C})^{0,k}=0.
\end{equation}
On the other hand, we have
$${\rm Gr}_F^0H^k(D,{\mathbf C})\simeq H^k(D,\shO_D)=0,$$
where the vanishing follows from Corollary~\ref{fiber_cohomology}.
Since $D$ is compact, we have the identification $W_kH^k(D,\CC)=H^k(D,\CC)$, 
so that we have a surjective morphism
$${\rm Gr}_F^0H^k(D,{\mathbf C})\to {\rm Gr}_F^0{\rm Gr}^W_kH^k(D,{\mathbf C})={\rm Gr}^W_kH^k(D,{\mathbf C})^{0,k}.$$
We thus obtain the vanishing in (\ref{eq_depth_vanishing}) and this completes the proof.
\end{proof}

\subsection{$h$-differentials and reflexive differentials}\label{scn:hdiff}
The characterization of local cohomological dimension and the local vanishing results in the previous sections have consequences regarding $h$-differentials on singular spaces.  Recall that a recent result of Kebekus-Schnell \cite[Corollary 1.12]{KeS}
states that if $Z$ is a variety with rational singularities, then for all $p$ the $h$-differentials ${\Omega_h^p}|_{Z}$ of \cite{HJ}  coincide with the reflexive differentials 
$\Omega_Z^{[p]} := (\Omega_Z^p)^{\vee \vee}$. 

Here we show, using Theorem \ref{depth-vanishing}, that when $Z$ has isolated singularities this holds under a weaker hypothesis, at least for forms of low degree. 

\begin{proof}[Proof of Theorem \ref{thm:hisolated}]
The sheaves of $h$-differentials ${\Omega_h^k}|_{Z}$ are identified in \cite[Theorem 7.12]{HJ} with $\cH^0 \underline{\Omega}^k_{Z}$, hence equivalently we will show that the natural morphism
$$\cH^0 \underline{\Omega}^k_{Z} \to \Omega_Z^{[k]}$$
in an isomorphism under the hypothesis of the theorem. Since the two sheaves are isomorphic on the smooth locus of 
$Z$, it suffices in turn to show that the sheaf $\cH^0 \underline{\Omega}^k_{Z}$ satisfies the $S_2$ property. Equivalently, 
if $\I$ is the ideal sheaf of the singular locus of $Z$, we need to show that 
\begin{equation}\label{desire}
\depth (\I, \cH^0 \underline{\Omega}^k_{Z}) \ge 2.
\end{equation}

Since $Z$ has isolated singularities, using Theorem~\ref{depth-vanishing} we deduce that the hypothesis implies
$R^{n-2} f_* \Omega_Y^{n-k} (\log E) = 0$. Lemma~\ref{lem:depth2} then gives
$$
\depth (\cH^0 \underline{\Omega}^k_{Z}) \ge 2.
$$

By Auslander-Buchsbaum it follows that for each 
$x\in Z$, in some open neighborhood of $x$ we can find a resolution with locally free $\shO_X$-modules of finite rank
$$0 \to \shF_{n-2} \to \cdots \to \shF_0 \to \cH^0 \underline{\Omega}^k_{Z} \to 0.$$
Since the singular locus of $Z$ is $0$-dimensional, for each $i$ we have $\depth (\I, \shF_i) = n$.
If we denote 
$$\shG_{i-1} : = {\rm Coker} ( \shF_i \to \shF_{i-1}),$$
then it follows using basic properties of depth that $\depth (\I, \shG_{i-1}) \ge i +1$ for $0\leq i\leq n-1$. In particular we obtain that 
$\cH^0 \underline{\Omega}^k_{Z} = \shG_0$ satisfies ($\ref{desire}$).
This completes the proof of the theorem.
\end{proof}

\begin{remark}\label{rmk:extension}
It is known that if $Z$ is normal, then the isomorphism  ${\Omega_h^k}|_{Z} \simeq \Omega_Z^{[k]}$ implies that 
the $k$-forms on $Z_{\rm reg}$ extend to $k$-forms on any resolution of singularities $\pi \colon \widetilde{Z} \to Z$. 
This consequence of Theorem \ref{thm:hisolated} is known in greater generality: van Straten 
and Steenbrink \cite[Theorem 1.3]{SvS} have shown that if $Z$ is any variety with isolated singularities, then $k$-forms extend for $k \le \dim Z - 2$.
\end{remark}

\begin{remark}
When $k = 0$, we have in fact that if ${\rm depth} (\shO_Z) \ge 2$ and $Z$ is Du Bois away from a finite set of points, then the canonical morphism 
$$\shO_Z \to \cH^0 \underline{\Omega}^0_{Z} \simeq  {\Omega_h^0}|_{Z}$$
is an isomorphism, or in other words $Z$ is weakly normal. Indeed, in this case we have a short exact sequence
$$0 \to \shO_Z \to \cH^0 \underline{\Omega}^0_{Z} \to \tau \to 0,$$
where $\tau$ is supported in dimension $0$, and hence has depth $0$ if it is nonzero. On the other hand, if ${\rm depth} (\shO_Z) \ge 2$, then $R^{n-2} f_* \omega_Y (E) = 0$ (see Remark \ref{rmk:k1}), hence again by Lemma~\ref{lem:depth2} we have ${\rm depth}(\cH^0 \underline{\Omega}^0_{Z}) \ge 2$. The only way this can happen is to have $\tau = 0$.
\end{remark}

\medskip

Another source for the type of vanishing needed in the proof of Theorem \ref{thm:hisolated}  is Corollary \ref{explicit-vanishing}. We record the special case $p = n-k$ and $q = n-2$ needed here:

\begin{lemma}\label{lem:c}
Assuming that $Z$ has codimension $r$, we have 
$$R^{n-2} f_* \Omega_Y^{n-k} (\log E) = 0 \,\,\,\,\,\,{\rm for}\,\,\,\, k \le \left[ \frac{n-1}{r}\right] - 2.$$
Moreover, if $Z$ is normal, the same holds for 
$$k \le \left[ \frac{n}{r+1}\right]  +  \left[ \frac{n-1}{r+1}\right] - 2.$$
\end{lemma}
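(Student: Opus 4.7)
The plan is to simply invoke Corollary \ref{explicit-vanishing} with the substitution $p = n-k$ and $q = n-2$, and observe that the hypotheses of that corollary translate precisely into the numerical bounds on $k$ stated here. No additional Hodge-theoretic input is required.

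Concretely, in the first part of Corollary \ref{explicit-vanishing}, the vanishing $R^q f_* \Omega_Y^p(\log E) = 0$ holds whenever $p + q \ge 2n - \bigl[\tfrac{n-1}{r}\bigr]$. Substituting $p + q = (n-k) + (n-2) = 2n - k - 2$, this condition becomes
$$2n - k - 2 \ge 2n - \Bigl[\tfrac{n-1}{r}\Bigr],$$
which rearranges to $k \le \bigl[\tfrac{n-1}{r}\bigr] - 2$. This gives the first assertion.

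For the second assertion, assuming $Z$ normal, the Huneke--Lyubeznik improvement in Corollary \ref{explicit-vanishing} gives vanishing whenever $p + q \ge 2n - \bigl[\tfrac{n}{r+1}\bigr] - \bigl[\tfrac{n-1}{r+1}\bigr]$. The same substitution $p + q = 2n - k - 2$ turns this into
$$k \le \Bigl[\tfrac{n}{r+1}\Bigr] + \Bigl[\tfrac{n-1}{r+1}\Bigr] - 2,$$
which is exactly the bound claimed. There is no real obstacle: the statement is a transcription of Corollary \ref{explicit-vanishing} into the indexing convention (form degree $n-k$, cohomological degree $n-2$) that is needed for the application to $h$-differentials via Lemma \ref{lem:depth2}.
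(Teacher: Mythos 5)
Your proposal is correct and is precisely the paper's own argument: the text right before the lemma says "We record the special case $p = n-k$ and $q = n-2$ needed here," so the lemma is by design just a transcription of Corollary~\ref{explicit-vanishing} in the indexing used for the $h$-differential applications. Your arithmetic ($p+q = 2n-k-2$ and rearranging) checks out for both bounds.
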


In general this only tells us about the depth of ${\Omega_h^k}|_{Z}$ at closed points, which is only a step towards the stronger $S_2$-property.  When $Z$ has isolated singularities however, the stronger statement is true, as in 
Theorem \ref{thm:hisolated}.

\begin{corollary}\label{cor:DT}
With the notation above we have the following:

\noindent
(i) If ${\rm depth} (\shO_Z) \ge 3$, then  $\depth({\Omega_h^1}|_{Z}) \ge 2$.

\noindent
(ii) If $Z$ has codimension $r$ and $k \le \left[ \frac{n-1}{r}\right] - 2$, then $\depth({\Omega_h^k}|_{Z}) \ge 2$. The same conclusion holds if $Z$ is in addition normal, and $k \le \left[ \frac{n}{r+1}\right]  +  \left[ \frac{n-1}{r+1}\right] - 2$.

\noindent
(iii) Under the assumptions of $(ii)$, if $Z$ has isolated singularities, then ${\Omega_h^k}|_{Z} \simeq \Omega_Z^{[k]}$.
\end{corollary}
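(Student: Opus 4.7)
The plan is to string together the results already established in \S\ref{scn:hdiff}: in each case one first produces the vanishing $R^{n-2} f_* \Omega_Y^{n-k}(\log E) = 0$, then invokes Lemma~\ref{lem:depth2} to pass to a depth estimate for $\cH^0 \underline{\Omega}^k_Z \simeq {\Omega_h^k}|_Z$, and finally for part (iii) uses the isolated singularities hypothesis to promote depth at closed points to the $S_2$ property, exactly as in the proof of Theorem~\ref{thm:hisolated}.

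For (i), I would simply invoke Remark~\ref{rmk:k1}, which asserts that Conjecture~\ref{depth-vanishing-conj} holds unconditionally for $k=1$: if $\depth(\shO_Z) \ge 3$, then $R^{n-2} f_* \Omega_Y^{n-1}(\log E) = 0$. Applying the implication in Lemma~\ref{lem:depth2} (from the vanishing of $R^{n-2} f_* \Omega_Y^{n-k}(\log E)$ to the depth bound on $\cH^0 \underline{\Omega}^k_Z$) with $k=1$ yields $\depth(\cH^0 \underline{\Omega}^1_Z) \ge 2$, which is the same as $\depth({\Omega_h^1}|_Z) \ge 2$ via the identification ${\Omega_h^k}|_Z \simeq \cH^0 \underline{\Omega}^k_Z$ from \cite{HJ}.

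For (ii), I would apply Lemma~\ref{lem:c}, which provides the vanishing $R^{n-2} f_* \Omega_Y^{n-k}(\log E) = 0$ in precisely the stated ranges on $k$ in terms of $r = \codim Z$ (with the improved range when $Z$ is normal). Then Lemma~\ref{lem:depth2} again gives $\depth(\cH^0 \underline{\Omega}^k_Z) = \depth({\Omega_h^k}|_Z) \ge 2$.

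Part (iii) is the only non-routine step, but the argument used in the proof of Theorem~\ref{thm:hisolated} applies verbatim. Namely, having $\depth({\Omega_h^k}|_Z) \ge 2$ is a priori only a statement about depth at closed points; to conclude ${\Omega_h^k}|_Z \simeq \Omega_Z^{[k]}$ one needs the $S_2$ condition, i.e.\ $\depth(\mathcal{I}, \cH^0 \underline{\Omega}^k_Z) \ge 2$ where $\mathcal{I}$ is the ideal of the singular locus. Since $Z$ has isolated singularities, $\mathcal{I}$ cuts out a $0$-dimensional subscheme, so locally on $X$ a finite free $\shO_X$-resolution of $\cH^0 \underline{\Omega}^k_Z$ of length $\le n-2$ (from Auslander-Buchsbaum using the depth bound) yields, by a standard iteration on the cokernels $\shG_i$ with the long exact sequences of depth, the required estimate $\depth(\mathcal{I}, \cH^0 \underline{\Omega}^k_Z) \ge 2$. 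The comparison morphism $\cH^0 \underline{\Omega}^k_Z \to \Omega_Z^{[k]}$, being an isomorphism on the smooth locus with both sheaves reflexive ($S_2$) on $Z$, is then an isomorphism globally. The main point to keep in mind is that (i)-(ii) produce only the weaker depth inequality, and it is the isolated singularity assumption in (iii) that is doing the work of promoting this to reflexivity.
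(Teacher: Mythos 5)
Your proposal matches the paper's proof essentially exactly: it cites Remark~\ref{rmk:k1} for (i), Lemma~\ref{lem:c} for (ii), Lemma~\ref{lem:depth2} to pass from local vanishing to the depth bound on $\cH^0\underline{\Omega}^k_Z \simeq {\Omega_h^k}|_Z$, and for (iii) imports the Auslander--Buchsbaum iteration from the proof of Theorem~\ref{thm:hisolated} to upgrade depth at closed points to the $S_2$ property, exactly as the paper indicates. Your final remark that the isolated-singularity hypothesis is what promotes the depth estimate to reflexivity is accurate and reflects the paper's reasoning.
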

\begin{proof}
This follows from Lemma \ref{lem:depth2}, Remark \ref{rmk:k1} and Lemma \ref{lem:c}. For (iii) we use in addition the same argument as in the proof of Theorem \ref{thm:hisolated}.
\end{proof}


\section*{References}
\begin{biblist}

\bib{ABW}{article}{
   author={Arapura, D.},
   author={Bakhtary, P.},
   author={W\l odarczyk, J.},
   title={Weights on cohomology, invariants of singularities, and dual
   complexes},
   journal={Math. Ann.},
   volume={357},
   date={2013},
   number={2},
   pages={513--550},
}

\bib{Blanco}{article}{
   author={Blanco, G.},
   title={An algorithm for Hodge ideals},
   journal={preprint arXiv:2102.11124, to appear in Math. Comp.},
   date={2021},
}

\bib{BS}{book}{
   author={B\u anic\u a, C.},
   author={St\u ana\c sil\u a, O.},
   title={Algebraic methods in the global theory of complex spaces},
   publisher={John Wiley \& Sons },
   date={1976},
}

\bib{Bhatt+}{article}{
author={Bhatt, B.},
author={Blickle, M.},
author={Lyubeznik, G.},
author={Singh,A.},
author={Zhang, W.},
title={in preparation},
}

\bib{Bruns}{book}{
   author={Bruns, W.},
   author={Vetter, U.},
   title={Determinantal rings},
   series={Lecture Notes in Mathematics},
   volume={1327},
   publisher={Springer-Verlag, Berlin},
   date={1988},
   pages={viii+236},
}

\bib{BMS}{article}{
   author={Budur, N.},
   author={Musta\c{t}\v{a}, M.},
   author={Saito, M.},
   title={Bernstein-Sato polynomials of arbitrary varieties},
   journal={Compos. Math.},
   volume={142},
   date={2006},
   number={3},
   pages={779--797},
}

\bib{DDSM}{article}{
   author={Dao, H.},
   author={De Stefani, A.},
   author={Ma, L.},
   title={Cohomologically full rings},
   journal={Int. Math. Res. Not.},
   date={2021},
   number={17},
   pages={13508--13545},
}

\bib{DT}{article}{
   author={Dao, H.},
   author={Takagi, S.},
   title={On the relationship between depth and cohomological dimension},
   journal={Compositio Math.},
   volume={152},
   date={2016},
   number={4},
   pages={876-888},
  
}

\bib{DuBois}{article}{
   author={Du Bois, P.},
   title={Complexe de de Rham filtr\'{e} d'une vari\'{e}t\'{e} singuli\`ere},
   journal={Bull. Soc. Math. France},
   volume={109},
   date={1981},
   number={1},
   pages={41--81},
}


\bib{EMS}{article}{
   author={Eisenbud, D.},
   author={Musta\c{t}\v{a}, M.},
   author={Stillman, M.},
   title={Cohomology on toric varieties and local cohomology with monomial
   supports},
   note={Symbolic computation in algebra, analysis, and geometry (Berkeley,
   CA, 1998)},
   journal={J. Symbolic Comput.},
   volume={29},
   date={2000},
   number={4-5},
   pages={583--600},
}

\bib{ElZein}{article}{
   author={El Zein, F.},
   title={Mixed Hodge structures},
   journal={Trans. Amer. Math. Soc.},
   volume={275},
   date={1983},
   number={1},
   pages={71--106},
}

\bib{EV}{article}{
   author={Esnault, H.},
   author={Viehweg, E.},
   title={Rev\^etements cycliques},
   conference={
      title={Algebraic threefolds},
      address={Varenna},
      date={1981},
   },
   book={
      series={Lecture Notes in Math.},
      volume={947},
      publisher={Springer, Berlin-New York},
   },
   date={1982},
   pages={241--250},
      }

\bib{Faltings}{article}{
   author={Faltings, G.},
   title={\"Uber lokale Kohomologiegruppen hoher Ordnung},
   journal={J. Reine Angew. Math.},
   volume={313},
   date={1980},
   pages={43--51},
}



\bib{GNPP}{collection}{
   author={Guill\'{e}n, F.},
   author={Navarro Aznar, V.},
   author={Pascual Gainza, P.},
   author={Puerta, F.},
   title={Hyperr\'{e}solutions cubiques et descente cohomologique},
   series={Lecture Notes in Mathematics},
   volume={1335},
   note={Papers from the Seminar on Hodge-Deligne Theory held in Barcelona,
   1982},
   publisher={Springer-Verlag, Berlin},
   date={1988},
   pages={xii+192},
}


\bib{Hartshorne-LC}{book}{
       author={Hartshorne, R.},
       title={Local cohomology},  
       series={Lecture Notes in Mathematics},  
       volume={41},
       publisher={Springer},
       date={1967},
}

\bib{Hartshorne}{article}{
     author={Hartshorne, R.},
     title={Cohomological dimension of algebraic varieties},
     journal={Ann. of Math.},
     volume={88},
     date={1968},
     number={3},
     pages={403--450},
}

\bib{Hartshorne2}{article}{
     author={Hartshorne, R.},
     title={On the de Rham cohomology of algebraic varieties},
     journal={Inst. Hautes \'Etudes Sci. Publ. Math.},
     volume={45},
     date={1975},
     pages={5--99},
}

\bib{HTT}{book}{
   author={Hotta, R.},
   author={Takeuchi, K.},
   author={Tanisaki, T.},
   title={D-modules, perverse sheaves, and representation theory},
   publisher={Birkh\"auser, Boston},
   date={2008},
}

\bib{HJ}{article}{
   author={Huber, A.},
   author={J\"{o}rder, C.},
   title={Differential forms in the h-topology},
   journal={Algebr. Geom.},
   volume={1},
   date={2014},
   number={4},
   pages={449--478},
}

\bib{HK}{article}{
   author={Huneke, C.},
   author={Koh, J.},
   title={Cofiniteness and vanishing of local cohomology modules},
   journal={Math.Proc. Camb. Phil. Soc.},
   volume={110},
   date={1991},
   pages={421--429},
}

\bib{HL}{article}{
   author={Huneke, C.},
   author={Lyubeznik, G.},
   title={On the vanishing of local cohomology modules},
   journal={Invent. Math.},
   volume={102},
   date={1990},
   number={1},
   pages={73--93},
}

\bib{Iyengar}{article}{
   author={Iyengar, S.},
   title={Depth for complexes, and intersection theorems},
   journal={Math. Z.},
   volume={230},
   date={1999},
   number={3},
   pages={545--567},
   issn={0025-5874},
}

\bib{JKSY}{article}{
author={Jung, S.-J.},
author={Kim, I.-K.},
author={Saito, M.},
author={Yoon, Y.},
title={Hodge ideals and spectrum of isolated hypersurface singularities},
journal={Ann. Inst. Fourier}, 
volume={72},
date={2022},
number={2},
pages={465--510},
}

\bib{Saito_et_al}{article}{
author={Jung, S.-J.},
author={Kim, I.-K.},
author={Saito, M.},
author={Yoon, Y.},
title={Higher Du Bois singularities of hypersurfaces},
journal={preprint arXiv:2107.06619}, 
date={2021},
}

\bib{KeS}{article}{
   author={Kebekus, S.},
   author={Schnell, C.},
   title={Extending holomorphic forms from the regular locus of a complex space to a resolution of singularities},
   journal={J. Amer. Math. Soc.},
   volume={34},
   date={2021},
   pages={315-368},
}

\bib{KK}{article}{
   author={Koll\'{a}r, J.},
   author={K{}ov\'{a}cs, S.},
   title={Deformations of log canonical and $F$-pure singularities},
   journal={Algebr. Geom},
   volume={7},
   date={2020},
   number={6},
   pages={758-780},
}

\bib{Kovacs1}{article}{
   author={Kov\'{a}cs, S.},
   title={Rational, log canonical, Du Bois singularities: on the conjectures
   of Koll\'{a}r and Steenbrink},
   journal={Compositio Math.},
   volume={118},
   date={1999},
   number={2},
   pages={123--133},
   issn={0010-437X},
}


\bib{KS2}{article}{
   author={Kov\'acs, S.},
   author={Schwede, K.},
   title={Hodge theory meets the minimal model program: a survey of log canonical and Du Bois singularities}, 
   journal={Topology of stratified spaces, 51--94, Math. Sci. Res. Inst. Publ.},
   volume={58}, 
   publisher={Cambridge Univ. Press, Cambridge}, 
   date={2011},
}

\bib{KS}{article}{
   author={Kov\'acs, S.},
   author={Schwede, K.},
   title={Du Bois singularities deform},
   journal={Minimal models and extremal rays (Kyoto, 2011), Adv. Stud. in Pure Math., Math. Soc. Japan},
   volume={70},
   date={2016},
   pages={49--65},
}

\bib{Kustin}{article}{
   author={Kustin, A.},
   title={Canonical complexes associated to a matrix},
   journal={J. Algebra},
   volume={460},
   date={2016},
   pages={60--101},
}

\bib{Lazarsfeld}{book}{
       author={Lazarsfeld, R.},
       title={Positivity in algebraic geometry II},  
       series={Ergebnisse der Mathematik und ihrer Grenzgebiete},  
       volume={49},
       publisher={Springer-Verlag, Berlin},
       date={2004},
}     

\bib{Lyubeznik0}{article}{
   author={Lyubeznik, G.},
   title={On the local cohomology modules $H^i_{{\germ a}}(R)$ for ideals
   ${\germ a}$ generated by monomials in an $R$-sequence},
   conference={
      title={Complete intersections},
      address={Acireale},
      date={1983},
   },
   book={
      series={Lecture Notes in Math.},
      volume={1092},
      publisher={Springer, Berlin},
   },
   date={1984},
   pages={214--220},
}

\bib{Lyubeznik}{article}{
      author={Lyubeznik, G.},
      title={Finiteness properties of local cohomology modules
(an application of D-modules to Commutative Algebra)},
      journal={Invent. Math.}, 
      volume={113},
      date={1993}, 
      pages={41--55},
}


\bib{MOP}{article}{
   author={Musta\c{t}\u{a}, M.},
   author={Olano, S.},
   author={Popa, M.},
   title={Local vanishing and Hodge filtration for rational singularities},
   journal={J. Inst. Math. Jussieu},
   volume={19},
   date={2020},
   number={3},
   pages={801--819},
}

\bib{MP4}{article}{
      author={Musta\c t\u a, M.},
      author={Popa, M.},
      title={Restriction, subadditivity, and semicontinuity theorems for Hodge ideals},
      journal={Int. Math. Res. Not.}, 
      date={2018}, 
      number={11},
      pages={3587--3605},
}

\bib{MP1}{article}{
      author={Musta\c t\u a, M.},
      author={Popa, M.},
      title={Hodge ideals},
      journal={Memoirs of the AMS}, 
      volume={262},
      date={2019}, 
      number={1268},
}

\bib{MP5}{article}{
      author={Musta\c t\u a, M.},
      author={Popa, M.},
      title={Hodge ideals for $\QQ$-divisors: birational approach},
      journal={J. de l'\'Ecole Polytechnique}, 
      volume={6},
      date={2019}, 
      pages={283--328},
}

\bib{MP2}{article}{
      author={Musta\c t\u a, M.},
      author={Popa, M.},
      title={Hodge ideals for $\QQ$-divisors, $V$-filtration, and minimal exponent},
      journal={Forum of Math., Sigma}, 
      volume={8},
      date={2020}, 
      pages={41pp},
}

\bib{MP3}{article}{
     author={Musta\c t\u a, M.},
     author={Popa, M.},
     title={Hodge filtration, minimal exponent, and local vanishing},
     journal={ Invent. Math.},
     volume={220},
     date={2020},
     number={2},
     pages={453--478},
}

\bib{MP6}{article}{
author={Musta\c t\u a, M.},
     author={Popa, M.},
     title={Hodge ideals and minimal exponents of ideals},
     journal={Rev. Roumaine Math. Pures Appl.},
     volume={65},
     date={2020},
    number={3},
    pages={327--354},
}

\bib{MOPW}{article}{
     author={Musta\c t\u a, M.},
     author={Olano, S.},
     author={Popa, M.},
     author={Witaszek, J.},
     title={The Du Bois complex of a hypersurface and the minimal exponent},
     journal={preprint arXiv:2105.01245, to appear in Duke Math. J.},
     date={2021},
}

\bib{MSS}{article}{
   author={Ma, L.},
   author={Schwede, K.},
   author={Shimomoto, K.},
   title={Local cohomology of du Bois singularities and applications to families},
   journal={Compositio Math.},
   volume={153},
   date={2017},
   pages={2147--2170},
}

\bib{Namikawa}{article}{
   author={Namikawa, Y.},
   title={Deformation theory of singular symplectic $n$-folds},
   journal={Math. Ann.},
   volume={319},
   date={2001},
   number={3},
   pages={597--623},
}

\bib{Ogus}{article}{
     author={Ogus, A.},
     title={Local cohomological dimension of algebraic varieties},
     journal={Ann. of Math.},
     volume={98},
     date={1973},
     number={2},
     pages={327--365},
}

\bib{Perlman}{article}{
   author={Perlman, M.},
   title={Mixed Hodge structure on local cohomology with support in determinantal varieties},
   journal={preprint arXiv:2102.04369 },
   date={2021},
}

\bib{PR}{article}{
   author={Perlman, M.},
   author={Raicu, C.},
   title={Hodge ideals for the determinant hypersurface},
   journal={preprint arXiv:2003.09874, to appear in Selecta Math.},
   date={2020},
}

\bib{PS}{article}{
   author={Peskine, C.},
   author={Szpiro, L.},
   title={Dimension projective finie et cohomologie locale},
   journal={Publ. Math. Inst. Hautes \'Etudes Sci.},
   volume={42},
   date={1973},
   pages={47-119},
}

\bib{PetersSteenbrink}{book}{
   author={Peters, C.},
   author={Steenbrink, J.},
   title={Mixed Hodge structures},
   series={Ergebnisse der Mathematik und ihrer Grenzgebiete. 3. Folge. A
   Series of Modern Surveys in Mathematics [Results in Mathematics and
   Related Areas. 3rd Series. A Series of Modern Surveys in Mathematics]},
   volume={52},
   publisher={Springer-Verlag, Berlin},
   date={2008},
   pages={xiv+470},
}

\bib{Saito-MHP}{article}{
   author={Saito, M.},
   title={Modules de Hodge polarisables},
   journal={Publ. Res. Inst. Math. Sci.},
   volume={24},
   date={1988},
   number={6},
   pages={849--995},
}

\bib{Saito-MHM}{article}{
   author={Saito, M.},
   title={Mixed Hodge modules},
   journal={Publ. Res. Inst. Math. Sci.},
   volume={26},
   date={1990},
   number={2},
   pages={221--333},
}

\bib{Saito-B}{article}{
   author={Saito, M.},
   title={On $b$-function, spectrum and rational singularity},
   journal={Math. Ann.},
   volume={295},
   date={1993},
   number={1},
   pages={51--74},
}

\bib{Saito-HC}{article}{
   author={Saito, M.},
   title={Mixed Hodge complexes on algebraic varieties},
   journal={Math. Ann.},
   volume={316},
   date={2000},
   number={2},
   pages={283--331},
}

\bib{Saito-LOG}{article}{
   author={Saito, M.},
   title={Direct image of logarithmic complexes and infinitesimal invariants of cycles},
   conference={
      title={Algebraic cycles and motives. Vol. 2},
   },
   book={
      series={London Math. Soc. Lecture Note Ser.},
      volume={344},
      publisher={Cambridge Univ. Press, Cambridge},
   },
   date={2007},
   pages={304--318},
   }

\bib{Saito-MLCT}{article}{
      author={Saito, M.},
	title={Hodge ideals and microlocal $V$-filtration},
	journal={preprint arXiv:1612.08667}, 
	date={2016}, 
}

\bib{Saito-LCD}{article}{
      author={Saito, M.},
      title={Some remarks on local cohomological dimension and rectified ${\mathbb Q}$-homological
      depth of complex algebraic varieties},
      journal={preprint},
      date={2021},
      }

\bib{Schwede}{article}{
   author={Schwede, K.},
   title={A simple characterization of du Bois singularities},
   journal={Compositio Math.},
   volume={143},
   date={2007},
   number={4},
   pages={813-828},
}

\bib{Schwede2}{article}{
   author={Sch{}wede, K.},
   title={$F$-injective singularities are Du Bois},
   journal={Amer. J. Math.},
   volume={131},
   date={2009},
   number={2},
   pages={445-473},
}

\bib{Stacks}{article}{
  author= {The Stacks Project Authors},
  title= {\itshape Stacks Project},
  eprint={https://stacks.math.columbia.edu},
  date= {2021},
 }

\bib{Steenbrink2}{article}{
   author={Steenbrink, J.H.M.},
   title={Mixed Hodge structure on the vanishing cohomology},    
   journal={Real
and complex singularities, Proc. Ninth Nordic Summer School/NAVF Sympos. Math.,
Oslo, 1976, Sijthoff and Noordhoff, Alphen aan den Rijn,},
   volume={130},
   date={1977},
   pages={525-563},
}

\bib{Steenbrink}{article}{
   author={Stee{}nbrink, J.H.M.},
   title={Vanishing theorems on singular spaces},
   journal={Ast\'erisque},
   volume={130},
   date={1985},
   pages={330-341},
}

\bib{SvS}{article}{
   author={van Straten, D.},
   author={Steenbrink, J.},
   title={Extendability of holomorphic differential forms near isolated
   hypersurface singularities},
   journal={Abh. Math. Sem. Univ. Hamburg},
   volume={55},
   date={1985},
   pages={97--110},
}

\bib{Varbaro}{article}{
   author={Varbaro, M.},
   title={Cohomological and projective dimensions},
   journal={Compositio Math.},
   volume={149},
   date={2013},
   number={7},
   pages={1203-1210},
}

\end{biblist}

\end{document}